\def\@tocline#1#2#3#4#5#6#7{\relax
  \ifnum #1>\c@tocdepth 
  \else
    \par \addpenalty\@secpenalty\addvspace{#2}%
    \begingroup \hyphenpenalty\@M
    \@ifempty{#4}{%
      \@tempdima\csname r@tocindent\number#1\endcsname\relax
    }{%
      \@tempdima#4\relax
    }%
    \parindent\z@ \leftskip#3\relax \advance\leftskip\@tempdima\relax
    \rightskip\@pnumwidth plus4em \parfillskip-\@pnumwidth
    #5\leavevmode\hskip-\@tempdima
      \ifcase #1
       \or\or \hskip 1em \or \hskip 2em \else \hskip 3em \fi%
      #6\nobreak\relax
    \dotfill\hbox to\@pnumwidth{\@tocpagenum{#7}}\par
    \nobreak
    \endgroup
  \fi}
\newcommand{\red}[1]{{\color{red}#1}}
\newcommand{\comment}[1]{{\iffalse{\red{#1}}\fi}}
\newcommand{\Is}{I}
\newcommand{\Js}{J}
\newcommand{\As}{\mathsf{A}}
\newcommand{\Qs}{\mathsf{Q}}
\newcommand{\Bs}{\mathsf{B}}
\newcommand{\Pv}{\Pi^{\vee}}
\newcommand{\wc}{\widetilde{\circ}}
\newcommand{\dv}{d^{\vee}}
\newcommand{\tn}{\widetilde{n}}
\newcommand{\tN}{\widetilde{N}}
\newcommand{\mD}{\widetilde{\Delta}}
\newcommand{\mIw}{\widetilde{\Iw}}
\newcommand{\mI}{\widetilde{\I}}
\newcommand{\mm}{\widetilde{\mf{m}}}
\newcommand{\Tm}{\widetilde{\mathbf{T}}}
\newcommand{\Tmw}{\widetilde{\mathbf{T}}^{\flat}}
\newcommand{\Pm}{\widetilde{\P}}
\newcommand{\Pmw}{\widetilde{\P}^{\flat}}
\newcommand{\ti}{\mc{X}}
\newcommand{\thqn}{\widetilde{\mc{Q}}_v}
\newcommand{\tqn}{\widetilde{\mc{Q}}_v^{\fin}}
\newcommand{\n}{n}
\newcommand{\tR}{\widetilde{R}}
\newcommand{\tcc}{\widetilde{\cc}}
\newcommand{\tQ}{\widetilde{\mc{Q}}}
\newcommand{\tu}{\tilde{u}}
\newcommand{\ttv}{\tilde{v}}
\newcommand{\tw}{\widetilde{w}}
\newcommand{\ty}{\widetilde{y}}
\newcommand{\tx}{\widetilde{x}}
\newcommand{\tAw}{{\widetilde{A}^{\flat}}}
\newcommand{\tC}{{\widetilde{C}}}
\newcommand{\tCw}{{\widetilde{C}^{\flat}}}
\newcommand{\bmu}{\bbmu}
\DeclareMathOperator{\wf}{W}
\DeclareMathOperator{\Span}{Span}
\DeclareMathOperator{\rk}{rank}
\DeclareMathOperator{\ad}{ad}
\newcommand{\z}{\mathbf{z}}
\DeclareMathOperator{\Av}{Av}
\DeclareMathOperator{\fin}{fin}
\DeclareMathOperator{\Hom}{Hom}
\DeclareMathOperator{\Aut}{Aut}
\DeclareMathOperator{\Supp}{Supp}
\DeclareMathOperator{\Iw}{Iw}
\DeclareMathOperator{\iw}{Iw}
\DeclareMathOperator{\wid}{width}
\DeclareMathOperator{\len}{length}
\DeclareMathOperator{\Ad}{Ad}
\DeclareMathOperator{\chare}{char}
\newcommand{\ve}{\mathbf{v}}
\newcommand{\bv}{b^{\vee}}
\newcommand{\bG}{\mathbf{G}}
\newcommand{\h}{\wt{h}}
\renewcommand{\a}{\gamma}
\renewcommand{\b}{\beta}
\newcommand{\gamv}{\gamma^{\vee}}
\renewcommand{\a}{a}
\newcommand{\s}{\mf{s}}
\newtheorem*{de}{Definition}
\newtheorem*{nthm}{Theorem}
\newtheorem*{nprop}{Proposition}
\newtheorem*{nlem}{Lemma}
\newtheorem*{ncor}{Corollary}
\newtheorem*{nclaim}{Claim}
\newtheorem*{nconj}{Conjecture}
\newtheorem*{nrem}{Remark} 
\theoremstyle{remark}
\newcommand{\be}[1]{\begin{eqnarray} \label{#1}}
\newcommand{\ee}{\end{eqnarray}}
\newcommand{\tpoint}[1]{\subsubsection{#1}}
\newcommand{\spoint}{\subsubsection{}}
\numberwithin{equation}{section}
\DeclareMathOperator{\diag}{diag}
\newcommand{\xv}{\xi^{\vee}}
\newcommand{\gv}{\gamma^{\vee}}
\newcommand{\av}{a^{\vee}}
\newcommand{\unp}{\Ie_{\leq,K}}
\newcommand{\whit}{\mathtt{Wh}}
\DeclareMathOperator{\whi}{Wh}
\newcommand{\bH}{\mathbf{H}}
\newcommand{\tB}{\widetilde{B}}
\newcommand{\aw}{\mathbb{W}}
\newcommand{\dd}{\mathbf{d}}
\newcommand{\cc}{\mathbf{c}}
\newcommand{\I}{I}
\newcommand{\cpro}{\mf{c}}
\newcommand{\cprow}{\mf{c}^{\flat}}
\newcommand{\valu}{\mathsf{val}}
\newcommand{\la}{\langle}
\newcommand{\ra}{\rangle}
\newcommand{\lv}{\Lambda^{\vee}}
\newcommand{\mv}{\mu^{\vee}}
\newcommand{\mf}[1]{\mathfrak{#1}}
\newcommand{\rr}{\rightarrow}
\newcommand{\mc}[1]{\mathcal{#1}}
\newcommand{\wt}[1]{\widetilde{#1}}
\newcommand{\Lv}{\Lambda^{\vee}}
\renewcommand{\lv}{\lambda^{\vee}}
\newcommand{\w}{\wt{w}}
\newcommand{\dw}{\dot{w}}
\newcommand{\zee} {\mathbb{Z}}
\newcommand{\C} {\mathbb{C}}
\renewcommand{\O}{\mathcal{O}}
\newcommand{\qn}{{\mathcal Q}^{\fin}_{v}}
\newcommand{\qnf}{{\mathcal Q}^{\fin}_{v}}
\renewcommand{\c}{\mathbf{c}}
\renewcommand{\b}{\mathbf{b}}
\newcommand{\B}{\mathsf{B}}
\newcommand{\Aw}{A^{\flat}}
\newcommand{\W}{\mathcal{W}}
\newcommand{\resi}{\mathsf{res}}
\newcommand{\ZZ}{\mathbb{Z}}
\newcommand{\gam}{\gamma}
\newcommand{\lam}{\lambda}
\newcommand{\kk}{\kappa}
\newcommand{\dynkradius}{0.065*\dynkstep}
\newcommand{\dynkstep}{0.7cm}
\newcommand{\dynklinewidth}{0.03cm}
\newcommand{\quadscale}{0.9}
\newcommand{\scallg}{0.85}
\newcommand{\scalsm}{0.15}
\newcommand{\scallgd}{0.85}
\newcommand{\scalsmd}{0.15}
\newcommand{\scallgt}{0.83}
\newcommand{\scalsmt}{0.17}
\newcommand{\scallgq}{0.817}
\newcommand{\scalsmq}{0.183}
\newcommand{\scalsmard}{0.9}
\newcommand{\scallgard}{0.1}
\newcommand{\scalsmart}{0.9}
\newcommand{\scallgart}{0.1}
\newcommand{\scalsmarq}{0.9}
\newcommand{\scallgarq}{0.1}
\newcommand{\dynkdot}[2]{
\draw[line width = \dynklinewidth, fill=white] (\dynkstep *#1,\dynkstep *#2) circle (\dynkradius);}
\newcommand{\dynkline}[4]{
\draw[line width = \dynklinewidth] (\dynkstep*\scallg*#1+\dynkstep*\scalsm*#3,\dynkstep*\scallg*#2+\dynkstep*\scalsm*#4) -- (\dynkstep*\scallg*#3+\dynkstep*\scalsm*#1,\dynkstep*\scallg*#4+\dynkstep*\scalsm*#2);}
\newcommand{\dynkcdots}[4]{
\draw[line width = \dynklinewidth] 
(\dynkstep*\scallg*#1+\dynkstep*\scalsm*#3,\dynkstep*\scallg*#2+\dynkstep*\scalsm*#4) -- (\dynkstep*0.7*#1+\dynkstep*0.3*#3,\dynkstep*0.7*#2+\dynkstep*0.3*#4);
\draw[line width = \dynklinewidth] 
(\dynkstep*\scallg*#3+\dynkstep*\scalsm*#1,\dynkstep*\scallg*#4+\dynkstep*\scalsm*#2) -- (\dynkstep*0.7*#3+\dynkstep*0.3*#1,\dynkstep*0.7*#4+\dynkstep*0.3*#2);
\draw[dotted, line width = \dynklinewidth] 
(\dynkstep*0.7*#1+\dynkstep*0.3*#3,\dynkstep*0.7*#2+\dynkstep*0.3*#4) -- (\dynkstep*0.7*#3+\dynkstep*0.3*#1,\dynkstep*0.7*#4+\dynkstep*0.3*#2);
}
\newcommand{\dynkdoubleline}[4]{
\draw[line width = 3*\dynklinewidth,] 
(\dynkstep*\scallgd*#1+\dynkstep*\scalsmd*#3,\dynkstep*\scallgd*#2+\dynkstep*\scalsmd*#4) -- (\dynkstep*\scallgd*#3+\dynkstep*\scalsmd*#1,\dynkstep*\scallgd*#4+\dynkstep*\scalsmd*#2);
\draw[white, line width = 1*\dynklinewidth,] 
(\dynkstep*\scallgd*#1+\dynkstep*\scalsmd*#3,\dynkstep*\scallgd*#2+\dynkstep*\scalsmd*#4) -- (\dynkstep*\scallgd*#3+\dynkstep*\scalsmd*#1,\dynkstep*\scallgd*#4+\dynkstep*\scalsmd*#2);
}
\newcommand{\dynktripleline}[4]{
\draw[line width = 4*\dynklinewidth] 
(\dynkstep*\scallgt*#1+\dynkstep*\scalsmt*#3,\dynkstep*\scallgt*#2+\dynkstep*\scalsmt*#4) -- (\dynkstep*\scallgt*#3+\dynkstep*\scalsmt*#1,\dynkstep*\scallgt*#4+\dynkstep*\scalsmt*#2);
\draw[white, line width = 2*\dynklinewidth] 
(\dynkstep*\scallgt*#1+\dynkstep*\scalsmt*#3,\dynkstep*\scallgt*#2+\dynkstep*\scalsmt*#4) -- (\dynkstep*\scallgt*#3+\dynkstep*\scalsmt*#1,\dynkstep*\scallgt*#4+\dynkstep*\scalsmt*#2);
\draw[line width = \dynklinewidth] 
(\dynkstep*\scallgt*#1+\dynkstep*\scalsmt*#3,\dynkstep*\scallgt*#2+\dynkstep*\scalsmt*#4) -- (\dynkstep*\scallgt*#3+\dynkstep*\scalsmt*#1,\dynkstep*\scallgt*#4+\dynkstep*\scalsmt*#2);}
\newcommand{\dynkquadrupleline}[4]{
\draw[line width = \quadscale*5.5*\dynklinewidth] 
(\dynkstep*\scallgq*#1+\dynkstep*\scalsmq*#3,\dynkstep*\scallgq*#2+\dynkstep*\scalsmq*#4) -- (\dynkstep*\scallgq*#3+\dynkstep*\scalsmq*#1,\dynkstep*\scallgq*#4+\dynkstep*\scalsmq*#2);
\draw[white, line width = \quadscale*3.5*\dynklinewidth] 
(\dynkstep*\scallgq*#1+\dynkstep*\scalsmq*#3,\dynkstep*\scallgq*#2+\dynkstep*\scalsmq*#4) -- (\dynkstep*\scallgq*#3+\dynkstep*\scalsmq*#1,\dynkstep*\scallgq*#4+\dynkstep*\scalsmq*#2);
\draw[line width = \quadscale*2.5*\dynklinewidth] 
(\dynkstep*\scallgq*#1+\dynkstep*\scalsmq*#3,\dynkstep*\scallgq*#2+\dynkstep*\scalsmq*#4) -- (\dynkstep*\scallgq*#3+\dynkstep*\scalsmq*#1,\dynkstep*\scallgq*#4+\dynkstep*\scalsmq*#2);
\draw[white, line width = \quadscale*0.5*\dynklinewidth] 
(\dynkstep*\scallgq*#1+\dynkstep*\scalsmq*#3,\dynkstep*\scallgq*#2+\dynkstep*\scalsmq*#4) -- (\dynkstep*\scallgq*#3+\dynkstep*\scalsmq*#1,\dynkstep*\scallgq*#4+\dynkstep*\scalsmq*#2);}
\newcommand{\dynkarrowheadto}[4]{
\draw[white, line width = 0.5*\dynklinewidth,  decoration={markings,mark=at position 1 with {\arrow[line width = 1*\dynklinewidth, scale = 2, color = black]{>}}},
    postaction={decorate}]
(\dynkstep*0.5*#1+\dynkstep*0.5*#3,\dynkstep*0.5*#2+\dynkstep*0.5*#4) -- (\dynkstep*\scallgard*#3+\dynkstep*\scalsmard*#1,\dynkstep*\scallgard*#4+\dynkstep*\scalsmard*#2);
}
\newcommand{\dynkarrowheadfrom}[4]{
\draw[white, line width = 0.5*\dynklinewidth, decoration={markings,mark=at position 1 with {\arrow[line width = 1*\dynklinewidth, scale = 2, color = black]{>}}},
    postaction={decorate}]
(\dynkstep*0.5*#3+\dynkstep*0.5*#1,\dynkstep*0.5*#4+\dynkstep*0.5*#2) -- (\dynkstep*\scallgard*#1+\dynkstep*\scalsmard*#3,\dynkstep*\scallgard*#2+\dynkstep*\scalsmard*#4);
}
\newcommand{\dynktrarrowheadto}[4]{
\draw[line width = 0.5*\dynklinewidth,  decoration={markings,mark=at position 1 with {\arrow[line width = 1*\dynklinewidth, scale = 2, color = black]{>}}},
    postaction={decorate}]
(\dynkstep*0.5*#1+\dynkstep*0.5*#3,\dynkstep*0.5*#2+\dynkstep*0.5*#4) -- (\dynkstep*\scallgart*#3+\dynkstep*\scalsmart*#1,\dynkstep*\scallgart*#4+\dynkstep*\scalsmart*#2);
}
\newcommand{\dynktrarrowheadfrom}[4]{
\draw[line width = 0.5*\dynklinewidth, decoration={markings,mark=at position 1 with {\arrow[line width = 1*\dynklinewidth, scale = 2, color = black]{>}}},
    postaction={decorate}]
(\dynkstep*0.5*#3+\dynkstep*0.5*#1,\dynkstep*0.5*#4+\dynkstep*0.5*#2) -- (\dynkstep*\scallgart*#1+\dynkstep*\scalsmart*#3,\dynkstep*\scallgart*#2+\dynkstep*\scalsmart*#4);
}
\newcommand{\dynkqarrowheadto}[4]{
\draw[white, line width = 0.1*\dynklinewidth,  decoration={markings,mark=at position 1 with {\arrow[line width = 1.2*\dynklinewidth, scale = 2, color = black]{>}}},
    postaction={decorate}]
(\dynkstep*0.5*#1+\dynkstep*0.5*#3,\dynkstep*0.5*#2+\dynkstep*0.5*#4) -- (\dynkstep*\scallgarq*#3+\dynkstep*\scalsmarq*#1,\dynkstep*\scallgarq*#4+\dynkstep*\scalsmarq*#2);
}
\newcommand{\dynkdoublelineto}[4]{
\dynkdoubleline{#1}{#2}{#3}{#4}
\dynkarrowheadto{#1}{#2}{#3}{#4}
}
\newcommand{\dynkdoublelinefrom}[4]{
\dynkdoubleline{#1}{#2}{#3}{#4}
\dynkarrowheadfrom{#1}{#2}{#3}{#4}
}
\newcommand{\dynkdoublelinetofr}[4]{
\dynkdoubleline{#1}{#2}{#3}{#4}
\dynkarrowheadfrom{#1}{#2}{#3}{#4}
\dynkarrowheadto{#1}{#2}{#3}{#4}
}
\newcommand{\dynktriplelineto}[4]{
\dynktripleline{#1}{#2}{#3}{#4}
\dynktrarrowheadto{#1}{#2}{#3}{#4}
}
\newcommand{\dynktriplelinefrom}[4]{
\dynktripleline{#1}{#2}{#3}{#4}
\dynktrarrowheadfrom{#1}{#2}{#3}{#4}
}
\newcommand{\dynkquadruplelineto}[4]{
\dynkquadrupleline{#1}{#2}{#3}{#4}
\dynkqarrowheadto{#1}{#2}{#3}{#4}
}
\begin{document}

%
\address{University of Alberta \\ Department of Mathematical and Statistical Sciences, CAB 632 \\ Edmonton, Alberta T6G 2G1}
\email{patnaik@ualberta.ca, puskas@ualberta.ca}

\newcommand{\tA}{\widetilde{A}}
\newcommand{\Pshi}{\widetilde{\Phi}}
\newcommand{\tG}{\widetilde{G}}
\newcommand{\tK}{\widetilde{K}}
\newcommand{\tU}{\widetilde{U}}
\newcommand{\an}{\mathbf{a}}
\newcommand{\nn}{\mathbf{n}^-}
\newcommand{\tT}{\widetilde{\mathbf{T}}}
\newcommand{\tGg}{\tG^{gen}}

\newcommand{\un}{\mf{u}}
\newcommand{\mq}{\mathbf{\mu}_{q-1}}
\newcommand{\Q}{\mathsf{Q}}
\newcommand{\tH}{\widetilde{H}}

\newcommand{\g}{\mathbf{g}}

\newcommand{\T}{\mathbf{T}}
\newcommand{\mT}{\widetilde{\mathbf{T}}}

\newcommand{\spw}{\widetilde{\mathbf{1}}_{\psi}}
\newcommand{\hs}{\widetilde{\mathbf{1}}}

\newcommand{\tve}{\widetilde{\ve}}

\newcommand{\sx}{\mathsf{x}}
\newcommand{\sh}{\mathsf{h}}
\newcommand{\sw}{\mathsf{w}}
\newcommand{\bh}{\mathbf{h}}
\newcommand{\bx}{\mathbf{x}}
\newcommand{\bw}{\mathbf{w}}
\newcommand{\bE}{\mathcal{E}}
\newcommand{\bA}{\mathbf{A}}
\renewcommand{\sc}{\mathsf{c}}

\newcommand{\tI}{\widetilde{I}}
\newcommand{\tIm}{\widetilde{I}^-}

\newcommand{\Qf}{\mathsf{Q}}
\newcommand{\gf}{\mathbb{g}}

\newcommand{\tL}{\widetilde{\Lambda}}
\newcommand{\tav}{\widetilde{a}^{\vee}}
\newcommand{\ta}{\widetilde{a}}
\newcommand{\tLv}{\widetilde{\Lambda}^{\vee}}
\newcommand{\mult}{S}
\newcommand{\ring}{\C^{\fin}_{v, \gf, \mult}[\Lv]}
\newcommand{\nice}{\C^{\fin}_{v, \gf, \mult}[\tLv]}
\newcommand{\nicer}{\C^{\fin}_{v, \gf}(\widetilde{\Lambda}^{\vee})}
\newcommand{\nuv}{{\nu^{\vee}}}
\newcommand{\nicetwist}{\nicer [\la s, t \ra_\star]}


\newcommand{\rh}[1]{\mathbf{H {#1}}}

\renewcommand{\I}{\mc{I}}
\renewcommand{\Iw}{\mc{I}^{\flat}}
\newcommand{\hQ}{\widehat{\mc{Q}}}
\newcommand{\hqn}{\mc{Q}_v}

\newcommand{\sfQ}{\mathsf{Q}}
\newcommand{\Rv}{R^{\vee}}
\newcommand{\Qv}{Q^{\vee}}

\newcommand{\sfq}{\mathsf{Q}}
\newcommand{\sfB}{\mathsf{B}}
\newcommand{\sfb}{\mathsf{B}}

\newcommand{\cw}{\c^{\flat}}
\newcommand{\qnd}{\qn[W]^{\vee}}
\newcommand{\Tw}{\T^{\flat}}
\newcommand{\mul}{\mathsf{m}}
\newcommand{\ct}{\mathrm{ct}}

\renewcommand{\P}{\mc{P}}
\newcommand{\Pw}{\mc{P}^{\flat}}

\newcommand{\p}{\vec{p}}

\newcommand{\thv}{\theta^{\vee}}

\title{Metaplectic Covers of Kac-Moody groups and Whittaker Functions}
\author{Manish M. Patnaik and Anna Pusk\'{a}s}

\maketitle

\begin{abstract} Starting from some linear algebraic data (a Weyl-group invariant bilinear form) and some arithmetic data (a bilinear Steinberg symbol), we construct a cover of a Kac-Moody group generalizing the work of Matsumoto. Specializing our construction over non-archimedean local fields, for each positive integer $n$ we obtain the notion of $n$-fold metaplectic covers of Kac-Moody groups. In this setting, we prove a Casselman-Shalika type formula for Whittaker functions. \end{abstract}

\tableofcontents

\section{Introduction} 

Let $\As$ be a generalized Cartan matrix and $F$ some field. Using some auxiliary data, one may construct a corresponding Kac-Moody group $G$ over $F.$ Our goals in this paper are two-fold. 

\begin{enumerate}[(i)] 
\item Starting with some linear algebraic data (a Weyl group invariant quadratic form on a maximal torus of $G$) and some arithmetic data on $F$ (a Steinberg symbol), we construct a central extension of $G$\footnote{We work here in the ``simply connected'' case, a notion which will be explained below in \S \ref{s:rd}} following the classical construction of Matsumoto \cite{mat}.  In case $F$ is a non-archimedean local field and if one chooses the $n$-th order Hilbert symbol ($n \geq 1$ an integer), this leads to the notion of a $n$-fold metaplectic cover of $G$ . 

\item We explain how to generalize the Casselman-Shalika formula \cite{ca:sh} for unramified Whittaker functions to our metaplectic covers. The strategy here is as in our previous work \cite{pat:whit}, \cite{pat:pus}, but some new combinatorial work is necessary. Our results are sharpest for affine Kac-Moody groups in a manner that will be explained later in this introduction. 
 
\end{enumerate} 

This work was partly motivated by a desire to understand the Fourier coefficients of Eisenstein series on these new metaplectic Kac-Moody groups and their conjectured link to the theory of multiple Dirichlet series. The results in (ii) above are expected to play the main local tool in such a study. We present our constructions in more detail in \S \ref{s:intro-covers} - \S \ref{s:intro-whit}, make some comments on connection with the existing literature in \S \ref{s:intro-lit}, and then fix some basic notation in \S \ref{s:intro-not}. 


\subsection{Summary of our results}

\tpoint{Construction of covering groups (after Matsumoto \cite{mat})} \label{s:intro-covers} Let $G_o$ be split, simple, and simply connected group with Weyl group $W_o$ defined over some field $F$. Let $\Qs_o$ be a $W_o$-invariant integral quadratic form on the cocharacter lattice of $G_o$ and $(\cdot, \cdot): F^* \times F^* \rr A$ be a Steinberg symbol (cf. \S \ref{stein-symb}) with target some abelian group $A.$  Following the work of Matsumoto \cite{mat} (cf. also the works of Steinberg \cite{stein}, Moore \cite{moore}, and Kubota \cite{kub}), one can attach a covering group to $G_o$ using $\Qs_o$ and $(\cdot, \cdot)$. In the restrictive setting in which we are working, the choice of $\Qs_o$ amounts to just a choice of an integer\footnote{Matsumoto's original construction does not actually have a general $\Qs_o$. He uses what amounts to the unique $\Qs_o$ which takes value $1$ on the short coroots.}, but we still choose to phrase things with this more involved notation following the example of Deligne and Brylinski \cite{del:bry} who construct covers in more general contexts (i.e. non-split, reductive groups) starting from similar data. In any event, we find the use of $\Qs_o$ a useful notational device in the general Kac-Moody context.  Matsumoto's construction proceeds in three steps.

\begin{enumerate}[M1.] 
\item From $\Qs_o$ and the symbol $(\cdot, \cdot),$ one constructs a central extension of the torus $H_o$ by the group $A.$ Furthermore, one defines a family of automorphisms of this cover $\{ \mf{s}_i \}$ indexed by the simple roots of $G_o$ and satisfying the braid relations.

\item Let $N_o$ be the normalizer of the torus $H_o$ and $N_{o, \zee}$ its integral version (sometimes also called the \emph{extended Weyl group}). Using an explicit presentation of $N_{o, \zee}$ (obtained earlier by Tits \cite{tits:norm}) together with the automorphisms $\{ \mf{s}_i \}$ introduced in the previous step, one constructs a central extension $\tN_o$ of $N_o$ that restricts to the previous extension of $H_o.$

\item Using the Bruhat decomposition, one can define the fiber product (of sets) $S_o:= G_o \times_N \tN_o.$ Matsumoto constructs a group of operators $E_o$ acting on $S_o$ whose action is seen to be simply transitive. This fact along with the explicit form of the operators comprising $E_o$ allows one to verify that $E_o$ is a central extension of $G_o$ with kernel $A$ and satisfying a number of desirable properties.
\end{enumerate}

In generalizing the above to a Kac-Moody context, we could perhaps have chosen any of the existing constructions (not all of which are equivalent) for Kac-Moody groups. We choose here the functorial approach of Tits \cite{tits:km} \footnote{At some points though (cf. \eqref{Ua:dec}), we find it convenient to refer to results from Carbone-Garland \cite{car:gar} who construct a group, based on representation theory, which is shown to be a ``homomorphic image'' of the Tits construction.}, whose input is a ``root datum'' consisting of a quadruple $(Y, \{ y_i \}, X, \{ x_i \})$ where $Y, X$ are to the play the role of the cocharacter and character lattice respectively of the group, and $y_i$ and $x_i$ the coroots and roots respectively.  In this setup the ``simply connected" groups will \emph{roughly} be the ones for which $Y$ is spanned by the $\{ y_i \}$, though some further care is required when $\As$ is not of full rank. See \S \ref{s:rd}.

To carry over Matsumoto's strategy, we first need to fix an integral Weyl group invariant form $\Qs$ on $Y.$ In the ``simply-connected'' case just described, these are again easily classified (cf. Proposition \ref{Q-fill}). Starting from the choice of such $\Qs$ and a positive integer $n$, we can also form a ``metaplectic root datum'' as in \cite{macn:ps} or \cite{weis:sp}. We use this new root datum to describe the Casselman-Shalika formula and the metaplectic Demazure-Lusztig operators, but we do not actually need the construction of the metaplectic dual group. In fact, our restrictions on the cardinality of the residue field mask almost all of the subtleties of this dual group which Weissman has constructed in\cite{weis:sp}. In the affine setting, we tabulate the possibilities for metaplectic dual root systems in Table \ref{table:metaplecticrootsystem}.  Fixing $\Qs$ and choosing some Steinberg symbol, Matsumoto's strategy carries over with little change to the Kac-Moody context. 

\begin{enumerate}[KM1.]

\item The construction of the cover of the torus and its family of automorphisms follows as in \cite{mat}. The possible complications involving non-degeneracy of the Cartan matrix (e.g. the loop rotation in the affine case) are conveniently taken care of using the ``$\Qs$- formalism.''  

\item The next step is to obtain a presentation for a group $N$ that plays a role analogous to the normalizer of the torus in the finite-dimensional context, and then refine this to a presentation of the ``integral'' version of this normalizer. We follow the classical arguments of Tits here \cite{tits:cts} adapted with almost no change to the Kac-Moody context. Here we use the simply-connected assumption to obtain a simple presentation for $N$, though it should be possible to remove this assumption.

\item Finally, the construction of a cover $E$ as a group of operators on a set $S$ (again constructed as a fiber product using the cover of $N$ and the Bruhat decompostion) follows as in Matsumoto. Recall also that Matsumoto's argument involves a rank two check, and at first glance it might seem that some new rank 2 Kac-Moody root systems could intervene. However, this is actually not the case (see \S \ref{rank2}). 
\end{enumerate}

After constructing the cover, we also need to verify certain splitting properties over both general fields and over local fields. Using the concrete realization of the cover $E$ as a group of operators on $S,$ one immediately verifies a set of axioms which Tits \cite[\S 5.2]{tits:km} has described \footnote{These generalize the notion of $(B, N)$-pairs, and in fact must incorporate the existence of two (in general non-equivalent) $BN$-pairs which Kac-Moody groups possess.} and which imply the various decompositions one would like the group to have (e.g. Bruhat \emph{and} Birkhoff factorizations). By imposing here some assumptions relating the cardinality of the residue field of our local field and the degree $n$ of the cover, we construct explicitly a splitting of the ``maximal compact'' subgroup of $G$. We do not address any uniqueness questions concerning the splittings we use. 

\tpoint{Whittaker functions and the Casselman-Shalika formula} \label{s:intro-whit} Whereas the construction of the covers of a general Kac-Moody group $G$ does not pose any real technical difficulty, some new work (i.e. not contained in \cite{pat:pus}, \cite{pat:whit}) of a Kac-Moody, \emph{not} metaplectic, nature is required to obtain the Casselman-Shalika formula.  Before describing this, we first note that to define Whittaker functions in the finite-dimensional case, one uses certain Whittaker functionals defined as integrals over unipotent subgroups. Taking advantage of the algebraic nature of these integrals (whose integrands have large groups of invariance), we can rewrite them as sums and it is the latter description which carries over to the infinite-dimensional context, where the notion of integration with respect to a (e.g. Haar) measure is problematic. This was the approach taken in \cite{bk:sph}, and it was used again in \cite{pat:whit} to study unramified Whittaker functions  for (untwisted) affine Kac-Moody groups over a local field. To make sense of this definition, one must still prove ``finiteness'' results to ensure that the sums involved are well-defined. In \emph{op. cit}, we used the main finiteness result of \cite{bgkp} which worked in the context of (untwisted) affine Kac-Moody groups. Recently, H\'ebert \cite{heb} (building on earlier work of Gaussent and Rousseau \cite{gau:rou:sph}) has proven similar finiteness results for general Kac-Moody groups, thereby allowing us to extend the definition of Whittaker function to these groups. The extension of this definition to metaplectic covers is straightforward once some basic structure theory of the group over a local field is established.

As for computing the Whittaker function, the same strategy as in \cite{pat:whit}, \cite{pat:pus} can be used here (\emph{N.B.} a version of this approach for spherical functions, which motivated the Whittaker story, appeared earlier in \cite{bkp}): first one breaks up the Whittaker function into ``Iwahori-Whittaker'' pieces, then one shows that each of these pieces can be expressed via certain Demazure-Lusztig-type operators, and finally one reassembles this sum using some combinatorial identities. For finite dimensional groups, the operators in question were first described by Brubaker-Bump-Licata \cite{bbl} in the non-metaplectic context and their metaplectic analogue, built now from the Weyl group action of Chinta-Gunnells \cite{cg}, appeared in \cite{cgp}. Essentially the same definitions of these operators works in the general Kac-Moody context, and in fact Lee and Zhang \cite{lee:zh} have already considered the extension of the Chinta-Gunnells action to Kac-Moody root systems. Our approach is similar to theirs, though we adopt the ``metaplectic root datum'' framework which perhaps clarifies somewhat the role of imaginary roots (they are just the imaginary roots in the new metaplectic Kac-Moody root system). Next, we note that the same decomposition argument as in \cite{pat:whit}, \cite{pat:pus} expresses the Kac-Moody Whittaker function as a sum of Iwahori-Whittaker pieces. A recursion argument using intertwiners (\cite[Corollary 5.4]{pat:pus}) shows that each of these Iwahori-Whittaker pieces is expressed via the application of a metaplectic Demazure-Lusztig operator. However, in reassembling this sum of Demazure-Lusztig operators a technical complexity arises, as the sum is now over an infinite set (the Kac-Moody Weyl group) and one must contend with certain ``convergence'' issues. Similar issues arose in the affine (non-metaplectic) case of \cite{pat:whit}, but the arguments resolving these issues in \emph{op. cit} seemed restricted to the affine case.  

To explain the issues more concretely, suppose that $g=\pi^{\lv}$ is an element in the (metaplectic) torus where $\pi$ is the uniformizer in our local field, and $\lv$ is some dominant coweight. As in the finite dimensional case, the computation of the Whittaker function $\W$ on $G$ reduces to considerations of its values on these torus elements, and we write $\W(\pi^{\lv})$ and  $\W_w(\pi^{\lv})$ for the values of the Whittaker and Iwahori-Whittaker functions, the latter indexed by an element $w$ in the Weyl group of the Kac-Moody root system. Now, the decomposition mentioned in the previous paragraph is $\W (\pi^{\lv}) = \sum_w \W_w(\pi^{\lv})$, where both sides are \emph{numbers}, they calculate some $p$-adic sums. On the other hand, each $\W_w(\pi^{\lv})$ is given in terms of some Demazure-Lusztig operator. More precisely, there is some operator $T_w$ acting on the coweight lattice and depending on some formal parameters. Then our claim is that if one considers the formal expression $T_w(\lv)$ and specializes the parameters suitably, one recovers the value of the corresponding $p$-adic sum $\W_w(\pi^{\lv})$. However, even though we can specialize each of the $T_w(\lv)$, it remains to be seen that the infinite sum \be{inf:sym-intro} \sum_{w \in W} T_w(\lv) \ee can still be specialized. In the affine case, Cherednik \cite[Lemma 2.19]{cher:ma} proved a polynomiality result which implies this is possible, and we need a Kac-Moody version of this result. We do this in Theorem \ref{form:sym}, and note that our argument (though borrowing certain key ideas from the Cherednik's) is actually different from his even in the affine setting. 

We now briefly explain our proof of the polynomiality of the symmetrizer (\ref{inf:sym-intro}) as this will allow us to describe a second technical complexity. Before showing the symmetrizer is ``polynomial'', we first show (cf. Theorem \ref{form:sym}, (1)) that the symmetrizer exists in some formal completion. Just this fact and some simple algebraic manipulations (cf. \cite{bkp}, \cite{pat:whit}, and \cite{cgp} for the metaplectic variant) show that (\ref{inf:sym-intro}) is proportional to a different sum of operators over the Weyl group\footnote{The right hand side of (\ref{prop-intro}) without the $\mf{m}$ factor is the expression which has already been considered by Lee and Zhang \cite{lee:zh}}, which we shall denote by $\sum_w I_w.$ In other words, we have \be{prop-intro} \sum_w T_w(\lv) = \mf{m} \sum_w I_w(\lv), \ee where $\mf{m}$ is some Weyl group invariant coefficient of proportionality.  The Casselman-Shalika formula for the Whittaker function is just $\mf{m} \, \sum_w I_w(\lv).$ In the finite-dimensional case, the factor is $\mf{m}=1$. In the affine case, one has a precise formula for $\mf{m}$ by virtue of Macdonald's constant term conjecture (resolved by Cherednik), but for a general Kac-Moody context we know essentially nothing about it, except for a certain ``polynomiality'' result of Viswanath \cite[\S 7.1]{vis}. It is in this sense that our results are sharpest for affine root systems-- in general, they are just expressed in terms of this unknown factor $\mf{m}$. Returning to the polynomiality of (\ref{inf:sym-intro})-- it follows from three facts: well-definedness in some completion, the proportionality result \eqref{prop-intro}, and Viswanath's polynomiality result for $\mf{m}.$ 

The second technical complexity can now be described. In fact, the link between $\mf{m}$ and the constant term conjecture of Macdonald (and also Viswanath's result) is not direct. As we indicated before, the same strategy outlined above also works to calculate spherical functions, but one must use slightly different Demazure-Lusztig operators, say $T'_w\, (w \in W)$. One must then consider a sum of the form (\ref{inf:sym-intro}) with each $T_w$ replaced by a $T_w'$ and again show a proportionality result as in (\ref{prop-intro}), where the coefficient of proportionality is now written as $\mf{m}'.$ The $I_w$ must also be modified to some other $I'_w.$ Now, it is $\mf{m}'$ which has a direct link with the work of Macdonald and Viswanath. In the affine case, using an indirect argument \cite{pat:whit}-- comparing asymptotics of the spherical and Whittaker functions at level of the $p$-adic group-- we showed that $\mf{m}=\mf{m}',$ thus allowing us to use the formula of Macdonald-Cherednik. What we observe here is that one can show algebraically, without any recourse to $p$-adic spherical functions, that $\mf{m}= \mf{m}'.$

\tpoint{Relation to Existing Literature} \label{s:intro-lit} In \cite{zhu:weil} Y. Zhu has introduced an analogue of the Weil representation for symplectic loop groups and used it to construct a two-fold cover of such groups. He has further calculated the symbol for his groups (cf. \S 3 \emph{op. cit}), and they are described in terms of the $t$-adic (where $t$ is the \emph{loop} direction) valuation. On the other hand, the symbols for the groups we construct are different, being sensitive to the arithmetic of the field over which the loop group is being defined.  It would be very interesting to understand the link (if any) between these two constructions, especially given the rich applications of Zhu's representation (cf. \cite{gar:zhu1}, \cite{gar:zhu2} ).

In another direction, A. Diaconu and V. Pasol have proposed in a general context \cite{di:pa}, and I. Whitehead has studied in detail for affine root systems \cite{white}, the notion of multiple Dirichlet series having infinite groups of symmetries. These series can be axiomatically characterized and generalize the known constructions from the finite-dimensional theory \cite{wmds}.  Moreover, it is expected that the local part of these series should be connected to the Whittaker functions we consider here, generalizing a link expected (and in many cases known) for finite-dimensional groups.  However, A. Diaconu and I. Whitehead have informed us that there is an interesting discrepancy between their proposal and ours. In fact, they have also computed this difference explicitly in certain cases where they have shown that it is entirely encapsulated by a normalizing factor expressed as an infinite product over imaginary roots.  It would be interesting to investigate these issues further as both of our proposals deviate from the ``naive'' one by different, non-trivial  modifications along the imaginary root directions.

%

\tpoint{Acknowledgements} M.P. and A.P. were supported from the Subbarao Professorship in Number Theory and an NSERC discovery grant. We would like to thank Howard Garland and Yongchang Zhu for some helpful discussions about covers of loop groups, Adrian Diaconu and Ian Whitehead for informative discussions and correspondence about their work on multiple Dirichlet series, and Dinakar Muthiah for various helpful discussions throughout the writing process.

\subsection{Basic Notation} \label{s:intro-not}

\spoint \label{functor} Bold faced objects denote functors (of groups usually) and the corresponding roman letters will denote the field-valued points. For example $\bG$ will be a group-valued functor and $G$ will denote $\bG(F)$ where $F$ is a field, assumed to be specified implicitly in our discussion.  

\spoint \label{p-adic} In this paper $F$ will denote an arbitrary field, and $\mc{K}$ will denote a non-archimedean local field. In the latter case, let $\O \subset \mc{K}$ be the integral subring, and denote the valuation map $\valu: \mc{K}^* \rr \zee.$ Let $\pi \in \O$ be a uniformizing element, and $\kk:= \O / \pi \O$ be the residue field, whose size we denote by $q.$ Denote also by $\varpi: \O \rr \kappa$ the natural quotient map.

\spoint \label{stein-symb} Let $A$ be an abelian group. Then a \emph{bilinear Steinberg symbol} is a map $(\cdot, \cdot): F^* \times F^*  \rr A$ such that 
\begin{enumerate}[(1)]
\item \label{bim} $(\cdot, \cdot)$ is bimultiplicative: $(x, yz)  = (x, y) (x, z) $ and $(xy, z) = (x, z) (y, z)$ 
\item \label{K} $(x, 1-x) = 1$ if $x \neq 1.$
\end{enumerate} 

Let us record here some simple consequences of the above conditions (see \cite{mat}):

\begin{nlem} \label{stein} Let $x, y \in F^*$ and let $(\cdot, \cdot)$ be a bilinear Steinberg symbol. The following identities hold
\begin{enumerate}[(i)]
\item \label{1:x} $(1, x) = (x, 1) =1$

\item \label{pow} For any integer $n$ we have $(x, y)^n=(x, y^n) = (x^n, y)$

\item \label{x:-x}$(x, -x)=1$

\item \label{skew:sym} (Skew-symmetry) $(x, y)^{-1} =  (y, x)$

\item \label{x:x} $(x, x)^2=1$ and so $(x, x)= \{ \pm 1\}$ for any $x \in \mc{K}^*$

\item \label{x:-1}$(x^{-1}, x)=(x^{-1}, -1)$ and also $(x^{-1}, -1)^2=1$

\end{enumerate}
\end{nlem}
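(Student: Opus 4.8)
The plan is to derive each identity from the two defining axioms of a bilinear Steinberg symbol (bimultiplicativity and $(x,1-x)=1$), proceeding in an order so that later parts may use earlier ones. These are all standard consequences recorded in Matsumoto \cite{mat}, so the proof is a sequence of short formal manipulations rather than anything deep; the only mildly delicate point is to keep track of which field elements are allowed to be $1$ or $0$ when invoking the second axiom.

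\medskip

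First, for \eqref{1:x}: from bimultiplicativity $(1,x)=(1\cdot 1,x)=(1,x)(1,x)$, so $(1,x)=1$, and symmetrically $(x,1)=1$. Part \eqref{pow} is then an immediate induction on $n\geq 0$ using bimultiplicativity in each slot, extended to negative $n$ by noting $(x,y)(x,y^{-1})=(x,1)=1$ so $(x,y^{-1})=(x,y)^{-1}$. For \eqref{x:-x}, the key trick (valid for $x\neq 1$, and trivially true for $x=1$ by \eqref{1:x}) is the algebraic identity $-x=\dfrac{1-x}{1-x^{-1}}$; applying bimultiplicativity in the second slot gives $(x,-x)=(x,1-x)\,(x,1-x^{-1})^{-1}$, and the first factor is $1$ by the Steinberg axiom while $(x,1-x^{-1})=(x,1-x^{-1})$; one then rewrites $1-x^{-1}=-x^{-1}(1-x)$ and uses \eqref{pow} together with $(x,x)\cdot(x^{-1},x)=(1,x)=1$ to cancel everything. (Alternatively one can follow Matsumoto's exact chain; the point is that $(x,-x)=1$ falls out of the Steinberg relation applied to both $x$ and $x^{-1}$.) For \eqref{skew:sym}, expand $1=(xy,-xy)$ using \eqref{x:-x} and bimultiplicativity into $(x,-x)(x,y)(y,x)(y,-y)$; by \eqref{x:-x} the outer two factors vanish, leaving $(x,y)(y,x)=1$, i.e. $(x,y)^{-1}=(y,x)$.

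\medskip

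For \eqref{x:x}: set $y=x$ in \eqref{skew:sym} to get $(x,x)^{-1}=(x,x)$, hence $(x,x)^2=1$; over $\mc{K}^*$ (or any field) this forces $(x,x)$ to be a square root of the identity in $A$, which in the relevant situations means $(x,x)\in\{\pm 1\}$. Finally \eqref{x:-1}: write $(x^{-1},x)=(x^{-1},-1)(x^{-1},-x)$ by bimultiplicativity in the second slot, and observe $(x^{-1},-x)=(x^{-1},-x^{-1})^{-1}\cdot(\text{power adjustments})$, or more cleanly $(x^{-1},-x)=1$ directly from \eqref{x:-x} applied to $x^{-1}$ after writing $-x=(-x^{-1})^{-1}$ and using \eqref{pow}; this yields $(x^{-1},x)=(x^{-1},-1)$. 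The companion statement $(x^{-1},-1)^2=1$ follows since $(x^{-1},-1)=(x^{-1},x)$ and, by \eqref{pow} and \eqref{skew:sym}, $(x^{-1},x)^2=(x^{-1},x^2)=(x^{-2},x^2)^{?}$— more simply, $(x^{-1},x)(x,x)= (1,x)=1$ by bimultiplicativity in the first slot wait — use instead $(x^{-1},-1)^2 = (x^{-1},-1)(x^{-1},-1) = (x^{-2},-1) = (x^{-2}, (-1)) $ and combine with $(x^{-1},-1)=(x^{-1},x)$, $(x,-1)=(x,x)^{?}$; cleanly: $(x^{-1},-1)^2=(x^{-1},1)=1$ since $(-1)^2=1$ and \eqref{pow} gives $(x^{-1},-1)^2=(x^{-1},(-1)^2)=(x^{-1},1)=1$.

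\medskip

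The main (and essentially only) obstacle is organizing \eqref{x:-x}: all the skew-symmetry and the parts \eqref{x:x}, \eqref{x:-1} cascade trivially from it, so the one genuine computation is the factorization $-x=(1-x)/(1-x^{-1})$ (for $x\neq 0,1$) together with careful bookkeeping of the degenerate case $x=1$. Everything else is pure formal group arithmetic with bimultiplicativity.
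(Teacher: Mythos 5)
Your derivation is in substance the standard Matsumoto argument, which is all the paper itself offers (the lemma is stated with a bare citation to \cite{mat}, so there is no in-paper proof to diverge from), and parts (i), (ii), (iv), (v) are handled correctly and in a consistent logical order. The one step that does not close as written is your first route to (iii). Starting from $-x=(1-x)/(1-x^{-1})$ you correctly reduce to computing $(x,1-x^{-1})$, but the substitution $1-x^{-1}=-x^{-1}(1-x)$ only gives $(x,1-x^{-1})=(x,-x^{-1})$, hence $(x,-x)=(x,-x^{-1})^{-1}$; writing $-x^{-1}=(-x)\,x^{-2}$ and using (ii) then yields $(x,-x)^2=(x,x)^2$, which is not the claim. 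The step that actually finishes (iii) is the one you relegate to a parenthetical: apply the Steinberg axiom to $x^{-1}$ (valid since $x\neq 1$) to get $(x^{-1},1-x^{-1})=1$, and use bimultiplicativity in the \emph{first} slot, $(x,1-x^{-1})\,(x^{-1},1-x^{-1})=(1,1-x^{-1})=1$, to conclude $(x,1-x^{-1})=1$ and hence $(x,-x)=(x,1-x)\,(x,1-x^{-1})^{-1}=1$, the case $x=1$ being covered by (i). With (iii) secured, your proofs of (iv)--(vi) are fine; the visible false starts in (vi) do resolve into the correct computations, namely $(x^{-1},-x)=(x^{-1},(-x^{-1})^{-1})=(x^{-1},-x^{-1})^{-1}=1$ for the first claim and $(x^{-1},-1)^2=(x^{-1},(-1)^2)=(x^{-1},1)=1$ for the second, but the exploratory fragments should be excised from any final write-up.
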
 

\tpoint{Hilbert symbols} \label{s:HS} For $n > 0$ a positive integer, and denote by $\bmu_n \subset \mc{K}$ the set of $n$-th roots of unity. Assume $(n, \chare \,  \mc{K})=1$ and that $| \bmu_n |=n$. The $n$-th order Hilbert symbol (see e.g. \cite[\S 9.2, 9.3]{ser:lf}) is a bilinear map $( \cdot, \cdot)_n : \mc{K}^* \times \mc{K}^* \rr \bmu_n.$ In the tame case, i.e. $(q, n)=1,$ then  $q \equiv 1 \mod n$ and there is an explicit formula for the Hilbert symbol, \be{tame} (x, y)_n = \varpi((-1)^{ab} y^a/x^b )^{\frac{q-1}{n}} \ee where $a=\valu(x)$ and $b=\valu(y)$. As $n$ is fixed throughout our paper, we often drop it from our notation. Note that $(\cdot, \cdot)$ is a bilinear Steinberg symbol (cf. \cite[Chapter V, Proposition 3.2]{neu}), and in the tame case the above formula \eqref{tame} shows that $(\cdot, \cdot)$ is unramified, i.e. $(x, y)=1$ if $x, y \in \O^*.$ 

To avoid certain sign issues, we sometimes make the stronger assumption that $q \equiv 1 \mod{2n}.$ Under this assumption, we have $(-1, -1)=(-1, x)=1$ for $x \in \mc{K}^*$, and also  \be{q:2} (\pi, \pi)=1 \text{ and } (\pi, u) = \varpi(u)^{\frac{q-1}{n}} \text{ for } u \in \mc{O}^*. \ee Though simplifying our formulas, this assumption that $q \equiv 1 \mod 2n$ has a rather drastic effect on the metaplectic $L$-groups of Weissman (cf. \cite[\S 4.4.3]{weis:sp}).

\tpoint{Gauss sums} \label{s:GS} Let $\psi: \mc{K} \rr \C^*$ be an additive character with conductor $\O,$ i.e., $\psi$ is trivial on $\O$ and non-trivial on $\pi^{-1}\O,$  and let $\sigma: \O^* \rr \C^*$ be a multiplicative character. Then define \be{gauss:gen} \g(\sigma, \tau) = \int_{\O^*} \sigma(u') \tau(u') du' \ee to be a \emph{Gauss sum}, where $du'$ is the Haar measure on $\mc{K}$ giving $\O^*$ volume $q-1.$ In case  \be{sig:tau} \begin{array}{lcr} \sigma(u) = (u, \pi)_n^{-k} & \text{ and } & \tau(u) = \psi(- \pi^{-1} u), \end{array} \text{ for } u \in \mc{K}^* \ee we denote the corresponding Gauss sum by $\g_k$, and note that we have \cite{neu} \be{g:sum} \g_k = \g_l \text{ if } n | k-l, \, \g_0 = -1, \text{ and if } k \neq 0 \mod n, \text{ then } \g_k \g_{-k} = q, \ee where for the last equality we must again assume that $q \equiv 1 \mod 2n$.

\tpoint{$p$-adic specialization} \label{p-spe} We shall introduce formal parameters $v, \gf_k \, ( k\in \zee)$ when discussing the Chinta-Gunnells action in \S \ref{cg-not}. To make the link to Whittaker integrals (sums), we use the  specialization, \be{p-spe:1} v=q^{-1},\ \gf_i = \g_i \text{ and with } q \equiv 1 \mod 2n. \ee
We refer to this as the ``$p$-adic specialization''  from now on.

\section{Kac-Moody Root systems and Metaplectic Structures} \label{section:notation}

\subsection{Cartan Data, Weyl groups, Root data, Lie algebras} 

\tpoint{Generalized Cartan matrices} \label{gcms} 
Fix a finite set $\Is$ whose cardinality is denoted $r=|\Is|$. A square matrix $\As= (a_{ij})_{i, j \in \Is}$ with integral entries is called a \emph{Generalized Cartan Matrix} (gcm) if $a_{ii}=2$, for $i \in \Is$; $a_{ij} \leq 0$ for $i \neq j$; and $a_{ij} =0$ implies that $a_{ji} =0.$ Such a matrix is said to be \emph{symmetrizable} if it admits a decomposition $\As= D \cdot B$ where $D$ is a diagonal matrix with positive, rational entries and $B$ is a symmetric matrix. In this case, $B$ is sometimes called a (rational) symmetrization of $\As$, and we write $D= \diag(\epsilon_1, \ldots, \epsilon_{|\Is|})$ where $\epsilon_i \in \mathbb{Q}\, (i \in \Is)$.  The classification of indecomposable (i.e the associated Dynkin diagram is connected) gcms begins with the following,

\begin{nprop}\label{indec-class} \cite[Theorem 4.3, Corollary 4.3, Lemma 4.5]{kac} Let $\As$ be an indecomposable gcm. Then exactly one of the following three conditions hold, \begin{enumerate}[(1)]
\item All principal minors\footnote{The associated principal minors of $\As$ are the determinants of the matrices $\As_J=(a_{ij})_{i, j \in J}$, where $J \subset \Is$ is any subset.}  of $\As$ are positive. In this case $\As$ is said to be of finite type 
\item There is a vector $\delta\in \mathbb{Q}_{> 0}^{|I|}$ such that $\As \delta=0$. In fact $\delta$ is unique up to a constant factor. In this case $\As$ is said to be of affine type 
\item In all other cases, $\As$ is said to be of indefinite type.
\end{enumerate} \end{nprop}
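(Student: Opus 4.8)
The plan is to reduce the statement to a purely linear-algebraic trichotomy for real matrices with non-positive off-diagonal entries and then translate it back into the three conditions. Call a real matrix $M=(m_{ij})_{i,j\in\Is}$ \emph{admissible} if $m_{ij}\le 0$ for $i\ne j$ and $m_{ij}=0\Leftrightarrow m_{ji}=0$ --- so both $\As$ and $\As^{T}$ are admissible --- and \emph{connected} if the graph on $\Is$ with an edge $\{i,j\}$ whenever $a_{ij}\ne 0$ is connected (this is the indecomposability hypothesis). For a connected admissible $M$ I would prove: exactly one of the following alternatives holds, and the \emph{same} alternative holds for $M$ and for $M^{T}$: (i) there is $u\in\R^{\Is}$ with all coordinates positive such that $Mu$ has all coordinates positive; (ii) there is such a $u$ with $Mu=0$; (iii) there is such a $u$ with $Mu$ having all coordinates negative. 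Mutual exclusivity is immediate: if $u$ realizes one alternative for $M$ and $v$ with positive coordinates realizes a different one for $M^{T}$, then evaluating $v^{T}Mu$ as $\sum_i v_i(Mu)_i$ on one hand and as $(M^{T}v)^{T}u=\sum_i(M^{T}v)_iu_i$ on the other forces a strict sign contradiction; this same computation also forces $M$ and $M^{T}$ into the same case.

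The core of the argument --- and the only place connectedness is genuinely used --- is the following lemma: if $M$ is connected admissible, $u\ge 0$ coordinatewise, $u\ne 0$, and $Mu\ge 0$, then in fact $u>0$; indeed, if some $u_i=0$, then by connectedness one may choose such an $i$ adjacent to a $j$ with $u_j>0$, and $(Mu)_i=\sum_{k\ne i}m_{ik}u_k$ is a sum of non-positive terms containing the strictly negative term $m_{ij}u_j$, contradicting $(Mu)_i\ge 0$. Granting this, exhaustiveness follows by a compactness argument: maximize $f(u)=\min_i(Mu)_i$ over the simplex $\{u\ge 0,\ \sum_i u_i=1\}$, with maximum $m$ attained at $u^{*}$. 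If $m>0$ then $Mu^{*}>0$ and a small translate $u^{*}+\varepsilon\mathbf{1}$ lands in case (i); if $m\le 0$ then no non-zero $u\ge 0$ has $Mu>0$, and an analysis of the cone $\{u\ge 0:\ Mu\ge 0\}$ via the lemma together with a theorem of the alternative (Gordan--Stiemke type) produces a $u>0$ with $Mu\le 0$, after which the lemma separates the subcases $Mu=0$ (case (ii)) from $Mu$ having a negative coordinate, which after a further small translate upgrades to case (iii). In case (ii) one also gets $\ker M$ one-dimensional by the standard ``sliding'' argument: if $v\in\ker M$ were not proportional to $u$, a suitable combination $u+cv$ would be nonnegative, non-zero, and have a vanishing coordinate, contradicting $u+cv>0$ from the lemma applied to $M(u+cv)=0$.

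It then remains to match the three cases with conditions (1)--(3) for $M=\As$. Case (i) is equivalent to (1): if $\As u>0$ with $u>0$, then for any $J\subseteq\Is$ and $i\in J$ one has $(\As_J u_J)_i=(\As u)_i-\sum_{j\notin J}a_{ij}u_j\ge(\As u)_i>0$ since the deleted off-diagonal entries are $\le 0$, so every principal submatrix is again of type (i) on each of its connected components; and a type-(i) admissible matrix has positive determinant, because $t\mapsto\det\!\big(t\As+(1-t)\,\mathrm{Id}\big)$ is continuous on $[0,1]$, equals $1$ at $t=0$, and never vanishes --- each $t\As+(1-t)\,\mathrm{Id}$ is again of type (i), hence non-singular, since if it killed a vector $v$ then the coordinatewise $|v|$ would satisfy $M|v|\le 0$, contradicting type (i) by pairing with a positive vector for $M^{T}$. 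Hence all principal minors of $\As$ are positive. Conversely, an admissible matrix with all principal minors positive admits $u>0$ with $\As u>0$: this is the classical characterization of non-singular M-matrices, and in the symmetrizable case $\As=DB$ it is just Sylvester's criterion applied to $B$. Case (ii) is equivalent to (2): a positive null vector exists by the hypothesis of the case, it may be chosen rational because $\As$ has integer entries, and the lemma (plus the sliding argument) shows it is automatically strictly positive and unique up to a scalar; conversely $\As\delta=0$ with $\delta\in\mathbb{Q}_{>0}^{\Is}$ puts $\As$ in case (ii) by mutual exclusivity. Finally (3) is by definition the remaining case, so there is nothing further to check.

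The main obstacle is thus the lemma on connected admissible matrices and, through it, the exhaustiveness of the trichotomy in the case $m\le 0$ --- this is where connectedness of the Dynkin diagram is genuinely needed --- together with the citation of M-matrix theory for the converse ``all principal minors positive $\Rightarrow$ finite type''. Everything else is sign bookkeeping, a routine compactness argument, and the determinant homotopy.
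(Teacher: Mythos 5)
The paper offers no proof of this proposition at all --- it is quoted verbatim from Kac's book --- so the only meaningful comparison is with the standard Vinberg--Kac argument, and your plan is a faithful reconstruction of it: the key lemma ($Mu\ge 0$, $u\ge 0$, $u\ne 0$ for connected admissible $M$ forces $u>0$), the pairing $v^{T}Mu$ for exclusivity and for transferring the alternative between $M$ and $M^{T}$, the sliding argument for $\dim\ker=1$, and the determinant homotopy are all correct and are exactly the right tools. Note also that for the proposition \emph{as literally stated} (where (3) is defined as ``all other cases''), far less is needed: only that (1) and (2) exclude each other (immediate, since positive principal minors force $\det\As\neq 0$) and that $\delta$ in (2) is unique up to scalar, which your lemma plus the sliding argument already deliver.

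There are, however, two places in the exhaustiveness branch $m\le 0$ where the argument as written would fail. First, the Gordan--Stiemke-type alternative dual to ``no nonzero $u\ge 0$ has $Mu>0$'' produces a vector $v\ge 0$, $v\ne 0$ with $M^{T}v\le 0$ --- i.e.\ a vector for the \emph{transpose}, and only non-negative rather than strictly positive; your lemma does not upgrade it, since ``$M^{T}v\le 0$, $v\ge 0$, $v\ne 0$'' does \emph{not} imply $v>0$ (the sign in the key computation goes the wrong way). One has to route through the cone $\{u\ge 0: Mu\ge 0\}$ for $M$ itself: if it is nonzero you land in case (ii) (any $u>0$ with $Mu\ge 0$, $Mu\ne 0$ could be promoted to $Mu'>0$, contradicting $m\le 0$); if it is $\{0\}$ you separate cones and then still need the $M\leftrightarrow M^{T}$ transfer. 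Second, the ``further small translate'' upgrading $Mu\le 0$, $Mu\ne 0$, $u>0$ to $Mu'<0$ is false for the natural reading $u'=u+\varepsilon\mathbf{1}$: for $M=\bigl(\begin{smallmatrix}2&-1&0\\-1&2&-3\\0&-3&2\end{smallmatrix}\bigr)$ and $u=(1,2,2)$ one has $Mu=(0,-3,-8)$ but $M\mathbf{1}=(1,-2,-1)$, so the first coordinate of $M(u+\varepsilon\mathbf{1})$ becomes positive. The correct (and standard) move is an iterated \emph{directed} perturbation $u\mapsto u+\varepsilon e_{j}$ with $j$ chosen so that $(Mu)_{j}<0$ and $j$ adjacent in the Dynkin diagram to an index where $(Mu)_{i}=0$; this strictly shrinks the zero set and terminates by connectedness. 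Both defects are repairable with the tools you already have, but they sit precisely at the step you yourself identify as the crux, so they should be written out.
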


In the case when $\As$ is of affine type, there is a further classification into \emph{twisted} and \emph{untwisted} types. The corresponding Dynkin diagrams are displayed in Figures \ref{figure:untwistedaffineDynkins} and \ref{figure:twistedaffineDynkins} (the labelling convention for the nodes will be described in \S \ref{sec:affine} below).

\tpoint{Cartan Data} \label{s:cd} A Cartan datum is a pair $(\Is, \cdot)$ where $\Is$ is a finite set and $\cdot$ is a symmetric $\zee$-valued bilinear form on the free abelian group $\zee[\Is]$ satisfying,
 \begin{enumerate}[(1)] 
\item $i \cdot i \in \{ 2, 4, 6, \ldots \}$ for $i \in \Is$
\item $2 \frac{ i \cdot j}{i \cdot i} \in \{ 0, -1, -2, \ldots \}$ for $i, j \in \Is$, $i \neq j$
\end{enumerate}Given a Cartan datum $(\Is, \cdot)$, the matrix $\As= (a_{ij})_{i,j\in \Is}$ where \be{car:gcm} a_{ij} = 2 \frac{i \cdot j}{i \cdot i} \ee is a gcm. We say $(\Is, \cdot)$ is of finite, affine, or indefinite type to mean ``the associated gcm to $(\Is, \cdot)$ is of finite, affine, or indefinite type.'' Note that the type of $(\Is, \cdot)$ can also be checked by applying the criterion of Proposition \ref{indec-class} to the matrix of the bilinear form defining the Cartan datum $(\Is, \cdot)$, i.e. $L=(\ell_{ij})$ where $\ell_{ij} = i \cdot j.$ Indeed, each row of $L$ is a rescaling of the corresponding row of $\As$ by a positive, rational number.

\tpoint{Braid and Weyl groups}\label{tpoint:BraidGroups} Given a Cartan datum $(\Is, \cdot)$ with associated gcm $\As$ we define (possibly infinite) integers $h_{i j}$  for $i, j \in \Is$ according the following rules. \begin{enumerate}[(1)]
\item If $(i \cdot i) (j \cdot j) - (i \cdot j)^2 > 0$ (equivalently, $a_{ij}a_{ji}<4$) then $h_{i j}$ is defined by the equation \be{hij:1} \cos^2 \frac{\pi}{h_{ij}} = \frac{ i \cdot j }{i \cdot i} \, \frac{ j \cdot i }{j \cdot j} = \frac{a_{ij} \, a_{ji}} {4}. \ee We observe that $h_{ij} = h_{ji},$ and tabulate the possibilities (for finite $h_{ij}$) in Table \ref{h:tab}. 

\begin{table}[h!]
\centering
\caption{Relations for Coxeter groups}
\label{h:tab}
\begin{tabular}{l|llll}
$h_{ij}$         & 2 & 3 & 4 & 6 \\ \hline
$a_{ij} a_{ji}$ & 0 & 1 & 2 & 3 
\end{tabular}
\end{table}

\item If $(i \cdot i) (j \cdot j) - (i \cdot j)^2 \leq 0$, we set $h_{i j} = h_{j i} = \infty$.
\end{enumerate} The \emph{braid group} associated to $(\Is, \cdot)$, which we denote by $B(\Is, \cdot),$ is the free group generated by symbols $s_i \, (i \in I)$ equipped with relations \be{bd:rel} \underbrace{s_i s_j s_i \cdots }_{h_{ij}} = \underbrace{s_j s_i s_j \cdots }_{h_{ij}} \text{ for } i \neq j  \ee where both sides have $h_{ij} < \infty$ terms. If we further impose the relation $s_i^2=1$ for all $i \in \Is$ we obtain the \emph{Weyl group} $W(\Is, \cdot)$. Note that both $B(I, \cdot)$ and $W(I, \cdot)$ only depend on the associated gcm $\As$ so we often just write these as $B(\As)$ or $W(\As).$

\tpoint{Coxeter groups}\label{tpoint:Coxeter} The pair $(W(\As), S)$ where $S=\{ s_i,\ i \in \Is \}$ described in the previous paragraph forms a Coxeter system (see \cite[Ch. IV, \S 1.3, D\'efinition 3]{bour}). Note that every element $s \in S$ satisfies $s^2=1$ (i.e. $S^{-1}=S$) so words in $S$ are just products of elements from $S.$ We refer to \cite[Ch. IV]{bour} for the definitions of reduced expressions, the length function $\ell: W \rr \zee$, etc. The following is an easy consequence of \cite[Ch. IV, \S1.5, Proposition 4 and Lemma 4]{bour}.

\begin{nlem}\label{Weyl-moves} Any word in $S$ can be transferred to its reduced expression in $W$ by a sequence of the following : \begin{enumerate} \item[$(E_1)$] Delete a consecutive subword of the form $s \, s$ where $s \in S$
\item[$(E_2)$] Replace a consecutive subword as in the left hand side of \eqref{bd:rel} with the right hand side of \eqref{bd:rel}. \end{enumerate} \end{nlem}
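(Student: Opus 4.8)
The statement to prove is Lemma~\ref{Weyl-moves}: any word in $S$ can be transformed into a reduced expression in $W$ by deleting subwords $ss$ and applying braid moves.

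\medskip

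\textbf{Plan of proof.} The strategy is an induction on the length (as an unreduced word) of the given word $\sw = s_{i_1} \cdots s_{i_k}$, with $w := s_{i_1} \cdots s_{i_k}$ the corresponding element of $W$ and $\ell(w)$ its length. If $k = \ell(w)$ the word is already reduced and there is nothing to do. If $k > \ell(w)$, then the word is not reduced, so the plan is to locate a pair of positions $p < q$ in the word such that the subword $s_{i_p} \cdots s_{i_q}$ can be shortened by exactly two letters using only moves of type $(E_1)$ and $(E_2)$; after performing this, the word has length $k-2$ and still represents $w$, so by the induction hypothesis it can be carried to a reduced expression by further $(E_1)$, $(E_2)$ moves. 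Concatenating the sequences of moves finishes the argument.

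\medskip

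\textbf{The key input.} The heart of the matter is the Exchange Condition for Coxeter systems, which is exactly what \cite[Ch.~IV, \S1.5, Proposition 4]{bour} provides: if $s_{i_1} \cdots s_{i_k}$ is \emph{not} reduced, then there exist indices $1 \le p < q \le k$ such that
\[
s_{i_1} \cdots s_{i_k} = s_{i_1} \cdots \widehat{s_{i_p}} \cdots \widehat{s_{i_q}} \cdots s_{i_k}
\]
(omitting the $p$-th and $q$-th letters), and moreover $s_{i_p} \cdots s_{i_q} = s_{i_{p+1}} \cdots s_{i_{q-1}}$, equivalently the word $s_{i_p} s_{i_{p+1}} \cdots s_{i_{q-1}}$ equals $s_{i_{p+1}} \cdots s_{i_{q-1}} s_{i_q}$ as elements of $W$. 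The remaining task is to check that this equality of group elements can be realized by the combinatorial moves $(E_1)$, $(E_2)$ \emph{on words}. For this one invokes the second cited result, \cite[Ch.~IV, \S1.5, Lemma~4]{bour} (or more precisely Matsumoto's/Tits' theorem on words): any two reduced expressions for the same element of $W$ differ by a sequence of braid moves $(E_2)$. One applies this as follows: first reduce $s_{i_{p+1}} \cdots s_{i_{q-1}}$ to a reduced word $\mathbf{r}$ using the induction hypothesis on the shorter word $s_{i_{p+1}} \cdots s_{i_{q-1}}$ (so using $(E_1)$, $(E_2)$); then $s_{i_p} \mathbf{r}$ and $\mathbf{r} \, s_{i_q}$ are two words representing the same element, and if this element has length $\ell(\mathbf{r}) + 1$ both are reduced, hence related by $(E_2)$-moves; transporting $s_{i_p}$ from the front to the back (as $s_{i_q}$) across the braid moves, and then one further $(E_1)$-move to cancel $s_{i_q} s_{i_q}$ at the appropriate place, deletes two letters from the original word. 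The only subtlety is the case $\ell(\text{of the element } s_{i_{p+1}}\cdots s_{i_{q-1}}) < q - 1 - p$, i.e.\ the inner word is itself not reduced; but then by induction on word-length we may first shorten \emph{it}, which already removes two letters from the original word and we are done without even needing the exchange step at the outer level.

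\medskip

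\textbf{Anticipated main obstacle.} The genuinely delicate point is bookkeeping: one must be careful that the moves $(E_1)$ and $(E_2)$ are applied only to \emph{consecutive} subwords, as the statement demands, and that the braid moves guaranteed abstractly by Matsumoto's theorem are indeed available here because the relations \eqref{bd:rel} defining $W(\As)$ are precisely the Coxeter relations with the $h_{ij}$ of \S\ref{tpoint:BraidGroups}. Once the reduction to the Exchange Condition plus Matsumoto's theorem is set up, each individual step is routine; the work is in threading the induction so that the "shorten by two letters" step is legitimately a finite composite of $(E_1)$, $(E_2)$ moves on consecutive subwords. I would organize the writeup around a single induction on word length, quoting the two cited results from \cite{bour} as black boxes and spelling out only the transport-of-$s_{i_p}$-across-braid-moves bookkeeping.
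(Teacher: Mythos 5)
Your overall strategy --- induct on word length, use the exchange/deletion condition to locate a pair of positions that can be removed, realize the removal by transporting $s_{i_p}$ across the inner word via braid moves and then cancelling $s_{i_q}s_{i_q}$ --- is the standard derivation and is surely what the paper intends (the paper offers no proof beyond the citation to Bourbaki). But there is one unhandled case. To invoke Matsumoto--Tits you need \emph{both} words $s_{i_p}\mathbf{r}$ and $\mathbf{r}\,s_{i_q}$ to be reduced, i.e.\ the element they represent must have length $\ell(\mathbf{r})+1$ rather than $\ell(\mathbf{r})-1$. You flag the case where the inner word $s_{i_{p+1}}\cdots s_{i_{q-1}}$ is non-reduced, but not the case where the inner word is reduced and yet prepending $s_{i_p}$ \emph{shortens} the element. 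This genuinely occurs for a pair $(p,q)$ supplied by the bare deletion condition: for the word $a\,a\,b\,a\,b$ in type $A_2$ (relation $aba=bab$), the pair $(1,5)$ is a valid deletion pair, the inner word $aba$ is reduced, but $a\cdot aba$ has length $2$, so $s_{i_1}\mathbf{r}=aaba$ and $\mathbf{r}s_{i_5}=abab$ are two \emph{non-reduced} words for the same element, and the theorem on reduced decompositions says nothing about them. Your assertion that the only subtlety is non-reducedness of the inner word is therefore not correct, and as written the "if this element has length $\ell(\mathbf{r})+1$" hypothesis is never discharged.

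The gap is closed by choosing the pair carefully instead of taking an arbitrary one: let $q$ be \emph{minimal} such that the prefix $s_{i_1}\cdots s_{i_q}$ is non-reduced. Then $s_{i_1}\cdots s_{i_{q-1}}$ is reduced, and the exchange condition (applied to right multiplication by $s_{i_q}$) yields $p\le q-1$ with $s_{i_p}\cdots s_{i_{q-1}}=s_{i_{p+1}}\cdots s_{i_q}$ in $W$, where the left-hand word is reduced because it is a consecutive subword of a reduced word, and the right-hand word is then reduced because it has the same number of letters and represents the same element. Matsumoto--Tits now applies directly, no preliminary reduction of the inner word is needed, and your transport-and-cancel bookkeeping goes through verbatim; the induction on total word length then finishes the proof exactly as you describe.
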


\tpoint{Kac-Moody algebras} \label{s:la} Let $\As$ be a \emph{symmetrizable}\footnote{In the general case, one has to quotient out the algebra we construct below by a certain ideal, which is zero in the symmetrizable case}  gcm and let $r= |I |$. Then one can attach a Lie algebra $\mf{g}(\As)$ to this data. Let us review a few points about the construction and establish some basic notation. We refer to \cite{gar:lep} for further details. Note that what we call $\mf{g}(\As)$ here is what is referred to in \emph{op. cit} as $\mf{g}^e(\As)$ (it is sometimes called the extended Kac-Moody algebra).

\begin{enumerate}[(1)]

\item Define $\mf{g}'(\As)$ as the Lie algebra over $\C$ with $3\, r$ generators $e_i, f_i, h_i \, (i \in \Is)$ subject to: \be{serre-pres} \begin{array}{lcccr} [h_i, h_j]=0, & [e_i, f_j] = \delta_{ij} h_i, & [ h_i, e_j] = a_{ij} e_j, & \ [h_i, f_j] = - a_{ij} f_j & \text{ for } i, j \in \Is  \\ 
(\ad\, e_i)^{a_{ij}+1}  e_j = 0, &(\ad\, f_i)^{a_{ij}+1}  f_j = 0 & & \text{ for } i, j \in \Is, \, i \neq j.    \end{array} \ee For each $r$-tuple $(n_1, \ldots, n_r)$ of non-negative (resp. non-positive integers), define $\mf{g}'(n_1, \ldots, n_r)$ to be the space spanned by the elements \be{e-comm} [e_{i_1}, [ e_{i_2}, \ldots, [e_{i_{r-1}}, e_{i_r} ] \ldots ] \, ] \ \ (resp., \,  [f_{i_1}, [ f_{i_2}, \ldots, [f_{i_{r-1}}, f_{i_r} ] \ldots ] \, ] \ee where each $e_j$ (resp. $f_j$) occurs $| n_j |$-times in the above expressions. Set $\mf{g}'(0, \ldots, 0):= \mf{h}'$ the linear span of $h_i \, (i \in \Is)$ which is seen to be an abelian Lie algebra of dimension $r.$ One then has \be{g: dec} \mf{g}'(\As) = \bigoplus_{(n_1, \ldots, n_r) \in \zee^{r}} \mf{g}(n_1, \ldots, n_r) \ee where we make the convention that $\mf{g}'(n_1, \ldots, n_r)=0$ for all tuples which contain both positive and negative integers. Then we define derivations $\dd_i \, (i \in \Is)$ of $\mf{g}'(A)$ by requiring $\dd_i$ act as the scalar $n_i$ on $\mf{g}'(n_1, \ldots, n_r).$ Let $\mf{d}_0$ be the vector space spanned by the commuting derivations $\dd_i \, (i \in \Is)$ of $\mf{g}'(\As)$. For $\mf{d} \subset \mf{d}_0$ be some subspace, we define the Lie algebra \be{g-ext} \mf{g}(\As) = \mf{g}'(\As) \rtimes \mf{d}. \ee The choice  $\mf{d}$ to choose will (in cases in which it matter) be specified. For the moment, we only impose the following condition. To state it let $\mf{h}:= \mf{h}' \oplus \mf{d}$, and define $a_i \in \mf{h}^{\ast} \, (i \in \Is)$ by specifying $a_i(h) = [h, e_i]$ for all $h \in \mf{h}$. Then the condition we impose on $\mf{d}$ is that the $\{ a_i \}_{i \in \Is}$ are linearly independent. If $\As$ is non-degenerate, we can choose $\mf{d}=0$, but our requirement forces it to be of dimension at least $|\Is| - \rk(\As).$ 

\item  \label{gen:sym} We have assumed $\As$ is symmetrizable. Fix a symmetrization $\As=DB$ as in \S \ref{gcms} with 
$D= \diag(\epsilon_1, \ldots, \epsilon_{|\Is|}).$ Then define elements $\av_i:= h_i/\epsilon_i\, (i \in \Is)$ and define a bilinear form $(\cdot, \cdot)$ on $\mf{h}:$ \be{sym:frm} (\av_i, \av_j ) = b_{ij}  \epsilon_i \epsilon_j  \text{ for } i, j \in \Is \text{ and }  (v,w) = 0 \text{ for } v, w \in \mf{d}. \ee Then $(\cdot, \cdot)$ is non-degenerate (cf. \cite[Lemma 2.1b]{kac}), and so induces an isomorphism $\nu: \mf{h} \rr \mf{h}^{\ast};$ denote by $(\cdot, \cdot)$ the induced bilinear form on $\mf{h}^{\ast}.$ Under this isomorphism $\nu(\av_i) = \epsilon_i a_i$ and \be{in:aij} (a_i, a_j) = a_{ij}/\epsilon_i. \ee

\item For each $\varphi \in \mf{h}^*$ let $\mf{g}(\As)^{\varphi}:= \{ x \in \mf{g}(\As) \mid [ h, x] = \varphi(h) x \text{ for all } h \in \mf{h} \}.$ Let $R$ denote the set of all $\varphi$ such that $\mf{g}^{\varphi} \neq 0$, which we call the roots of $\mf{g}(\As)$. Note that $\Pi:= \{ a_1, \ldots, a_r \} \subset R$, and $\Pi$ is called the set of simple roots, and in fact every element in $R$ is written as a linear combination of elements from $\Pi$ with either all non-negative or non-positive coefficients.  The $\zee$-module spanned by $\Pi$ will be denoted by $Q$ and called the root lattice. One defines the notion of positive elements $Q_+$ as those which are non-negative integral linear combinations of $\Pi$, and hence we obtain also a notion of positive and negative roots $R_+$ and $R_-$. If $a \in R$ we often just write $a >0$ (or $a<0$) to mean $a \in R_+$ (or $a\in R_-$). We also define the set of simple coroots as $\Pi^{\vee}:= \{ \av_1, \ldots, \av_r \}$ and define the coroot lattice $\Qv$ as their $\zee$-span. The notion of $\Qv_+, \Qv_-$ is also defined analogously to what is written above. Finally if $\varphi \in R$ is written as $\varphi = \sum_{i \in \Is} c_i a_i$, then we set $\varphi^{\vee}:= \sum_{i \in \Is} a_i \av_i$. In this way, we obtain the set of coroots $R^{\vee}$ which has a base $\Pi^{\vee}.$

\item We denote by $\la \cdot, \cdot \ra: \mf{h} \times \mf{h}^{\ast} \rr \C$ the dual pairing. There is an action of $W(\As)$ on $\mf{h}$ (and also $\mf{h}^{\ast}$) using the usual formulas: for $h \in \mf{h}, h' \in \mf{h}^{\ast}.$ \be{w:act:h} \begin{array}{lcr} s_i (h) = h - \la \av_i, h \ra a_i &\text{ and } & s_i (h') = h' - \la h', a_i \ra \av_i \end{array}. \ee  The roots $R$ are partitioned into the set of $R_{re}$ of real roots, which are roots in the $W(\As)$-orbit of the simple roots $\Pi$, and the set $R_{im}$ of imaginary roots, which are fixed by $W(\As).$ Similarly we partition $R^{\vee}$ into the real coroots $R^{\vee}_{re}$ and the imaginary ones $R^{\vee}_{im}.$  For each $a \in R$ we define the multiplicity of the root $a$ as the integer $\mul(a):= \dim_{\C} \mf{g}^a,$ and record here that if $a \in R_{re}$ then $\mul(a)=1$. It is also known that the form $(\cdot, \cdot)$ on $\mf{h}$ constructed above is $W$-invariant (cf. \cite[Proposition 2.10]{gar:lep}).

\item We can construct automorphisms of $\mf{g}(\As)$ for each $i \in \Is$ using the following expression (cf \cite[\S 3.6]{kac}) \be{s:aut} s_i^{\ast}:= \exp \ad e_i \, \exp \ad f_i \exp \ad e_i. \ee Let $W^{\ast} \subset \Aut(\mf{g})$ be the subgroup generated by $s_i^* \,(i \in \Is).$ The map $\upsilon: W^* \rr W, s_i^* \mapsto s_i$ is a homomorphism and in fact $s_i^*|_{\mf{h}}= s_i.$ For each $a \in R_{re}$ there exists $w^* \in E^*$ and $i \in \Is$ such that $w^*(a_i)= a$. We define the \emph{dual bases} for each $a \in R_{re}$ as  
\be{Ea} E_{a}:= w^*\{ e_i, -e_i \}, \ee where we note that from \cite[(3.3.2) and subsequent remarks]{tits:km} this definition depends only on $a$ and not on the choice of $i$ or $w^*$. The two element sets $E_a$  will play an important role later when deciding certain signs which arise from the action of the Weyl group on unipotent subgroups.   

\newcommand{\m}{\mathsf{m}}

\item An element $\lambda \in \mf{h}^*$ is called a weight if $\lambda(\av_i) \in \zee$ for $i \in \Is$ and if $\lambda(d) \in \zee$ for $d \in \mf{d}.$ A weight is called dominant if $\lambda(\av_i) \geq 0$ for $i \in \Is$. We define the fundamental weights $\omega_i \, (i \in \Is)$ by requiring $\omega_i(\av_j) = \delta_{ij}$ and $\omega_i|_{\mf{d}}=0$. Also, we set \be{rho:def} \rho = \sum_i \omega_i \ee so that $\la \rho, \av_i \ra =1$ for $i \in \Is$.  We define the dominance order $\leq$  as follows: $\lambda \leq \mu$ if $\mu - \lambda \in Q_+.$

\renewcommand{\mult}{\mathsf{m}}

\end{enumerate}

\newcommand{\D}{\mf{D}}

\tpoint{Root datum}\label{s:rd} A \emph{root datum} of type $(\Is, \cdot)$ will be a quadruple $\mf{D}= (Y,\  \{y_i \}_{i \in \Is},\ X,\ \{ x_i \}_{i \in \Is}),$ such that 
\begin{enumerate}[(1)]
\item $Y, X$ are free $\zee$-modules of finite rank equipped with a perfect pairing $\la \cdot, \cdot \ra: Y \times X \rr \zee,$ i.e. \be{duals} Y = \Hom_{\zee}(X, \zee) & \text{ and } & X = \Hom_{\zee}(Y, \zee). \ee 
\item For each $i \in \Is$, $y_i \in Y$ and $x_i \in X.$ These elements are linearly independent (over $\zee$) and \be{cd:1}  \la y_i, x_j \ra = 2 \frac{i \cdot j}{i \cdot i} = a_{ij} \ee
\end{enumerate} We shall often refer to the collection $(I, \cdot, \mf{D})$  or $(\As, \mf{D})$ as a root datum.\footnote{Again, note that the definition of a root datum of type $(\Is, \cdot)$ is only sensitive to the associated gcm $\As.$}  The \emph{dimension} of a root datum will be the rank of $Y$.

Given a root datum we write $Q_{\mf{D}} \subset X$ and $\Qv_{\mf{D}} \subset Y$ to be the $\zee$-span of $\{ x_i \}_{i \in \Is}$ and $\{ y_i \}_{i \in \Is}$ respectively. One can also define the \emph{real} roots (coroots) of a root datum as $R_{re, \D}$ as the $W(\As)$-orbit of $\{ x_i \}$ (resp. $\{ y_i \}$). 

Consider also the maps $\Omega_i\, (i \in \Is)$ from $Y \rr \zee$ which are defined by $\Omega_i(y_j) = \delta_{ij}$ and $\Omega_i(y)=0$ if $y \notin \Qv_{\D}$ and let $\Lambda_{\mf{D}}:= \Span_{\zee} \{ \Omega_i \}_{i \in \Is}.$ Similarly we define $\Omega^{\vee}_i \, (i \in \Is)$ and $\Lv_{\D}.$ 

We say that $\mf{D}$ is a \emph{simply-connected} if $\Omega_i \in X$ for all $i \in \Is$. In this case \be{Y:sc} Y= \Qv_{\mf{D}}  \oplus Y_{0}, \text{ where } Y_{0}= \cap_{i \in \Is} \ker(\Omega_i: Y \rr \zee) \ee and regarding $y \in Y$ as the map $X \rr \zee, \, x \mapsto \la y, x \ra,$ \be{X:sc} X = \Lambda_{\mf{D}} \oplus X_0 \text{ where } X_0= \cap_{i \in \Is} \ker(y_i: X \rr \zee). \ee 

The action of $W(\As)$ on $Y$ and $X$ is implemented via the same formulas as in (\ref{w:act:h}) with $\av_i$ and $a_i$ replaced by $y_i$ and $x_i$ respectively. 

\spoint \label{s:gD} Let $\D= ( Y, \{ y_i \} , X, \{ x_i \})$ be a given root datum with gcm $\As,$ and define $\mf{g}_{\D}$ to be the  Lie algebra constructed as follows (the notation is as in the previous paragraph). \begin{enumerate}\item First construct $\mf{g}'(\As) \rtimes \mf{d}_0$ as in \S \ref{s:la} (1). 
\item Identify the coroots $\av_i$ with $y_i$, the roots $a_i$ with $x_i.$ Then $\Qv_{\D}$ and $Q_{\D}$ can be identified with $\Qv$ and $Q$ respectively, and $R_{re, \D}$ and $R^{\vee}_{re, \D}$ can be identified with $R_{re}$ and $R_{re}^{\vee}$ respectively. 
\item Writing $\mf{h}_{\D} = Y \otimes_{\zee} \C$, our identification in (2) shows $\mf{h}' \subset \mf{h}_{\D}.$ Each element $y \in Y$ can be regarded as scalar operator on $\mf{g}(\As)^a\, (a \in R)$ acting by $\la d, a \ra.$  Letting $\mf{d}_{\D} := Y_0 \otimes_{\zee} \C,$ we can then identify each $\mf{d}_{\D} \subset \mf{d}_0.$
\item Finally we set $\mf{g}_{\D}:= \mf{g}'(\As) \rtimes \mf{d}_{\D}$ which has Cartan subalgebra $\mf{h}' \oplus \mf{d}_{\D}.$ Note that if we require the $a_i (i \in \Is)$ (i.e., the $x_i\, (i \in \Is)$ ) to be linearly independent, we must have $\dim_{\C} \mf{d}_{\D} \geq | \Is | - \rk(\As).$ 

\end{enumerate}

\tpoint{Duality} \label{s:duality} Fix a Cartan datum $(\Is, \cdot)$ and define the dual Cartan datum $(\Is, \cdot^t)$ where $i \, \cdot^t j:= j \cdot i.$ If $\As$ is the gcm associated to $(\Is, \cdot)$ then one can verify that the gcm associated to $(\Is, \cdot^t)$ is just $\As^t,$ the transpose of $\As.$ If we moreover have a root datum $(\Is, \cdot, \mf{D}$) with $\mf{D}= (Y,\ \{ y_i \},\ X,\ \{ x_i \}),$ then we can define the \emph{dual Cartan datum} as $(I, \cdot^t, \mf{D}^t)$ where $\mf{D}^t = (X,\ \{ x_i \},\ Y,\ \{ y_i \}).$ One verifies the axioms easily in this case, as well as the fact that duality preserves the trichotomy of Proposition \ref{indec-class}. Note that the duals of untwisted affine types could however be twisted (see Table \ref{table:metaplecticrootsystem}).

\tpoint{Inversion sets}\label{tpoint:inversion_sets} For each $w \in W,$ we may consider the inversion sets \be{inv-set} \begin{array}{lcr} R(w):= \{ a \in R_+ \mid w ^{-1} a \in R_- \} & \text{ and } & R_{-}(w):= w^{-1} R(w)= \{ a \in R_- \mid w a > 0 \} \end{array}. \ee If $w$ is written as a reduced word $w= s_{k_1} \cdots s_{k_r}$ then it is well known (cf. \cite[Ch. VI, 6, Corollary 2]{bour}) that $R(w)$ is enumerated by the following elements (which is independent of the reduced decomposition):   \be{eq:list_cterms}
\beta_1:=a_{k_{1}},\ \beta_{2}:=s_{k_1}(a_{k_2}),\ \beta_{3}:=s_{k_1}s_{k_2}(a_{k_3}), \ldots, \, \beta_r=s_{k_1}s_{k_2} \, \cdots \, s_{k_{r-1}} ( a_{k_r} ). \ee Moreover, we record here the identity (\cite[Proposition 2.5]{gar:lep}) \be{rho:flip} \rho - w \rho = \sum_{\beta \in R(w)} \beta. \ee In the proof of Lemma \ref{large-len}, we shall need the following probably well-known generalization of the above. As we could not find a proof in the literature, we supply the easy argument below. \begin{nlem}\label{lem:inverseset} Consider the not necessarily reduced product of simple reflections \be{whatform} w=s_{k_1}s_{k_2}\cdots s_{k_r} \ee and let $\hat{w}$ be the reduced word in $W$ corresponding to (\ref{whatform}). The set $\{\beta_1,\beta_2,\ldots , \beta_r\}$ defined by 
\begin{equation}\label{eq:list_cterms}
\beta_1:=a_{k_{1}},\ \beta_{2}:=s_{k_1}(a_{k_2}),\ \beta_{3}:=s_{k_1}s_{k_2}(a_{k_3}), \ldots \beta_r=s_{k_1}s_{k_2}\cdots s_{k_{r-1}} ( a_{k_r} )
\end{equation}
is then the union of $R(\hat{w})$ and some sets of the form $\{a,-a\}$ for $a\in R_+.$
\end{nlem}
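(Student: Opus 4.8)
The plan is to induct on $r$, the number of simple reflections appearing in the (not necessarily reduced) word \eqref{whatform}. For $r=0$ the claim is vacuous. For the inductive step, write $w=s_{k_1}w'$ where $w'=s_{k_2}\cdots s_{k_r}$, and apply the inductive hypothesis to $w'$: the set $\{s_{k_1}\beta_2,\dots,s_{k_1}\beta_r\}=\{a_{k_2},s_{k_2}(a_{k_3}),\dots\}$ (the ``$c$-terms'' for $w'$) is the union of $R(\hat{w'})$ and finitely many two-element sets $\{a,-a\}$ with $a\in R_+$. Applying $s_{k_1}$ to everything, the set $\{\beta_2,\dots,\beta_r\}$ is the union of $s_{k_1}R(\hat{w'})$ and some sets $\{s_{k_1}a,-s_{k_1}a\}$; since $s_{k_1}$ permutes $R$ and sends $\pm a$ to $\mp s_{k_1}a$ or $\pm s_{k_1}a$, each such pair is again of the form $\{b,-b\}$ for some $b\in R_+$ (replacing $b$ by $-b$ if necessary). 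So it remains to understand $\{\beta_1\}\cup s_{k_1}R(\hat{w'})=\{a_{k_1}\}\cup s_{k_1}R(\hat{w'})$ and compare it with $R(\hat w)$.

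The key combinatorial input is the standard dichotomy for how the inversion set changes under left multiplication by a simple reflection. There are two cases according to whether $\ell(s_{k_1}\hat{w'})=\ell(\hat{w'})+1$ or $\ell(s_{k_1}\hat{w'})=\ell(\hat{w'})-1$; equivalently whether $a_{k_1}\notin R(\hat{w'})$... more precisely, the relevant condition is whether $\hat{w'}{}^{-1}a_{k_1}>0$ or $<0$. In the length-increasing case, $\hat w=s_{k_1}\hat{w'}$ is reduced and $R(\hat w)=\{a_{k_1}\}\sqcup s_{k_1}R(\hat{w'})$, so $\{\beta_1\}\cup s_{k_1}R(\hat{w'})=R(\hat w)$ exactly, with no extra pairs. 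In the length-decreasing case, $\hat w$ has a reduced expression not beginning with $s_{k_1}$, one checks $a_{k_1}\in s_{k_1}R(\hat{w'})$ (because $a_{k_1}=s_{k_1}(-a_{k_1})$ and $-a_{k_1}\in R(\hat{w'})$ here... actually $-a_{k_1}\notin R_+$, so one argues instead that $s_{k_1}R(\hat{w'})=R(\hat w)\cup\{a_{k_1}\}$), hence $\{a_{k_1}\}\cup s_{k_1}R(\hat{w'})=R(\hat w)\cup\{a_{k_1}\}$, and since $\beta_1=a_{k_1}$ appears a second time from $s_{k_1}R(\hat{w'})$... here the multiset has $a_{k_1}$ twice, which does not match ``union with pairs $\{a,-a\}$''. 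This is the subtlety: one must track multisets versus sets carefully, and in the length-decreasing case the extra occurrence of $a_{k_1}$ must get paired off against a $-a_{k_1}$ that arises elsewhere.

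To handle this cleanly I would instead phrase the induction at the level of multisets and prove the sharper statement: the multiset $\{\!\{\beta_1,\dots,\beta_r\}\!\}$ equals $R(\hat w)$ together with a collection of pairs, where a ``pair'' contributes either $\{\!\{a,-a\}\!\}$ (for $a\in R_+$) or $\{\!\{a,a\}\!\}$ (a repeated root), and then observe at the end that a repeated real root $\{\!\{a,a\}\!\}$ together with the fact that $a\in R(\hat w)$ already accounts for it — i.e. reorganize so the leftover is genuinely of the form $\{a,-a\}$ as sets. Concretely: when $\ell$ decreases, $-a_{k_1}\notin R_+$ so $a_{k_1}\in s_{k_1}R(\hat{w'})$ is impossible; rather the point is $a_{k_1}\notin R(\hat w)$ and $s_{k_1}R(\hat{w'})=R(\hat w)\sqcup\{a_{k_1}\}$, giving $\{\beta_1\}\cup s_{k_1}R(\hat{w'})=R(\hat w)\cup\{\!\{a_{k_1},a_{k_1}\}\!\}$; but $a_{k_1}$ occurring twice is the same as $R(\hat w)$ plus the extra pair. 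Since we only claim ``union with some sets $\{a,-a\}$'' and our set-theoretic union already absorbs the duplicate into $R(\hat w)\cup\{a_{k_1}\}$...

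Given these delicate bookkeeping issues, the cleanest route — and the one I would actually write up — is to avoid multiplicities entirely by using the characterization via the word rather than $R(\hat w)$ directly. Recall that for a reduced word the $c$-terms $\beta_j$ are distinct positive roots; for a non-reduced word, apply Lemma \ref{Weyl-moves}: transform \eqref{whatform} to $\hat w$ by moves $(E_1)$ and $(E_2)$. Moves of type $(E_2)$ (braid moves) permute the multiset $\{\!\{\beta_j\}\!\}$ without changing it (this is the classical fact that the list \eqref{eq:list_cterms} is independent of reduced decomposition, proved verbatim for the relevant rank-two sub-root-systems). A move of type $(E_1)$ deletes a consecutive $ss$; I would show that deleting such a subword removes exactly a pair $\{\beta_j,\beta_{j+1}\}$ from the list which is of the form $\{a,-a\}$ for some $a\in R_+$ — indeed if $w=u\,s\,s\,v$ as words and $u$ corresponds to the prefix with $c$-terms up to index $j-1$, then $\beta_j=u(a_s)$ and $\beta_{j+1}=us(a_s)=u(-a_s)=-\beta_j$, and exactly one of $\pm\beta_j$ lies in $R_+$. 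Iterating: each $(E_1)$ strips off one $\{a,-a\}$ pair, each $(E_2)$ does nothing to the multiset, and after all moves we are left with the $c$-terms of the reduced word $\hat w$, namely $R(\hat w)$ as a set of distinct positive roots. This shows the original multiset $\{\!\{\beta_1,\dots,\beta_r\}\!\}$ is $R(\hat w)$ plus a finite disjoint collection of pairs $\{a,-a\}$, $a\in R_+$, which is exactly the assertion (a fortiori true for the underlying set). The main obstacle is the careful verification that an $(E_1)$ move indeed peels off precisely an opposite pair and that $(E_2)$ moves leave the multiset invariant — the latter is the standard rank-two computation underlying the well-definedness of \eqref{eq:list_cterms}, which I would cite from \cite[Ch. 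VI]{bour} and the analogous argument in \cite{gar:lep}, checking it extends to the infinite $h_{ij}$ situation (where no $(E_2)$ move of that form is ever available, so there is nothing to check).
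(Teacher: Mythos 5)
Your final argument (the one you say you would actually write up) is correct and is essentially the paper's proof: the paper likewise invokes Lemma \ref{Weyl-moves}, shows that a braid move leaves the list \eqref{eq:list_cterms} unchanged via the rank-two identity \eqref{eq:setsofposrootsinrank2}, and shows that inserting/deleting a consecutive $s\,s$ contributes exactly a pair $\beta_p=u(a_i)$, $\beta_{p+1}=u(-a_i)=-\beta_p$ (the paper runs the moves from the reduced word up to $w$ rather than from $w$ down, which is immaterial). The abandoned inductive attempt and the multiset bookkeeping are harmless but unnecessary; the set-level statement is all that is claimed.
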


\begin{proof} By Lemma \ref{Weyl-moves}, the product in \eqref{whatform} can be built from a reduced word $\hat{w}$ by the repeated application of the following two moves:
\begin{enumerate}
\item[(A)] Change the sequence $\{k_1,\ldots ,k_r\}$ by replacing elements according to the braid relation \eqref{bd:rel}; i.e. replacing the $h:=h_{ij}$ elements $k_{t}=i,$ $k_{t+1}=j,$ $k_{t+2}=i,$ $\ldots $ by the $h$ elements $k_{t}=j,$ $k_{t+1}=i,$ $k_{t+2}=j,$ $\ldots$ 

\item[(B)] Add a pair $k_p=k_{p+1}=i$ somewhere in the sequence. 
\end{enumerate}
It suffices to show that the operation (A) does not change the set in \eqref{eq:list_cterms}, while the operation (B) adds a pair $\{a,-a\}$ for some $a\in R_+.$
To see the effect of operation (A), observe that when $\hat{w_0}:=s_is_js_i\cdots =s_js_is_j\cdots $ ($h$ factors on both sides) is a braid relation, then $\hat{w_0}$ is a reduced word and the above provide two distinct reduced expressions for it. Using the remarks preceding the Lemma, we see that the $h$-element sets 
\begin{equation}\label{eq:setsofposrootsinrank2}
\{a_i,s_ia_j,s_is_ja_i,\dots  \} \text{ and } \{a_j,s_ja_i,s_js_ia_j,\dots  \}
\end{equation}
are identical. The set $\{\beta_{t}, \beta_{t+1},\ldots ,\beta_{t+m}\}$ before and after performing the operation (A) is the image of this $h$-element by $s_{k_1}s_{k_2}\cdots s_{k_{t-1}}.$ Furthermore, the elements $\beta_1,\ldots \beta_{t-1}$ and $\beta_{t+m},\ldots ,\beta_r$ are clearly unchanged by operation (A). 

To see the effect of operation (B), assume that $k_p=k_{p+1}=i.$ First the set $\{\beta_1, \ldots ,\beta_{p-1},\beta_{p+2},\ldots ,\beta_r\}$ is unchanged if $s_{k_p}s_{k_{p+1}}=s_is_i=1$ is omitted from the product \eqref{whatform}. Next note that  $\beta_h$ and $\beta_{h+1}$ are a root and its negative, since 
\be{}\beta_p=s_{k_1}\cdots s_{k_{p-1}}a_{k_p}=s_{k_1}\cdots s_{k_{p-1}}a_i \text{ and } \beta_{p+1}=s_{k_1}\cdots s_{k_{p-1}}s_{k_p}a_{k_p+1}=s_{k_1}\cdots s_{k_{p-1}}(-a_i).\ee Thus the proof is completed. 

\end{proof}

\tpoint{Prenilpotent pairs} \label{prenilp} In the course of defining his group functor, Tits uses the following notion. A set $\Psi \subset R_{re}$ of roots is said to be \emph{pre-nilpotent} if there exists $w, w' \in W$ such that $w \Psi \subset R_+$ and $w' \Psi \subset R_-.$ If such a set $\Psi$ is also closed, i.e. $a, b \in \Psi, a+b \in R$ implies $a + b \in \Psi,$ we shall say that $\Psi$ is \emph{ nilpotent }.  Given any prenilpotent pair of roots $\{ a, b \}$ we then define the sets \be{prenilp:ab} \begin{array}{lcr} [a, b] = (\mathbb{N}a + \mathbb{N}b) \cap R & \text{ and } & ]a, b[ \, = \, [a, b] \setminus \{ a, b \}. \end{array}. \ee These are finite, and if $a, b > 0$ (or $a, b < 0$) form a prenilpotent pair, then we can find $w \in W$ such that $[a,b] \subset R(w)$.

\subsection{Quadratic Forms and Metaplectic Structures} \label{sec-q-st}

\spoint \label{s:qform} Let $(\Is, \cdot)$ be a Cartan datum with gcm $\As$ and $\mf{D}= (Y, \{ y_i \}, X, \{ x_i \})$ be a root datum. Let $\Qs: Y \rr \zee$ be a $W:=W(\As)$-invariant quadratic form, i.e. a quadratic form such that $\Qs(y) = \Qs(w y)$ for $w \in W$ where the $W$-action on $Y$ is as in \S \ref{s:rd}. The associated bilinear form to $\Qs$ is denoted $\Bs: Y \times Y \rr \zee$ is \be{B:Q} \Bs(y_1, y_2) := \Qs(y_1+y_2) - \Qs(y_1) - \Qs(y_2) \text{ for } y_1, y_2 \in Y. \ee Note that if $\Qs$ is $W$-invariant, then so is $\Bs$, i.e. $\Bs(wy_1, wy_2)=\Bs(y_1, y_2)$ for $w \in W$ and $y_1, y_2 \in Y.$ If $B$ is $W$-invariant,  then (cf. \cite[Lemma 4.5]{del:bry} or \cite[Lemma 1.2]{weis:sp}), \be{bq:db} \B(y, y_i) = \la y, x_i \ra \Qs(y_i) \text { for } y \in Y. \ee 

\begin{nlem} Let $i, j \in \Is$ be such that $a_{ij} = \la y_i, x_j \ra$ is non-zero. If $\Qs(y_j) \neq 0,$ then $\Qs(y_i) \neq 0$ and  \be{Q:car} \frac{\Qs(y_i)}{\Qs(y_j)} =  \frac{a_{ij}}{a_{ji}}. \ee \end{nlem}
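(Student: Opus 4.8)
The plan is to exploit the symmetry of the bilinear form $\Bs$ together with the two instances of identity \eqref{bq:db}. First, recall from \eqref{B:Q} that $\Bs$ is symmetric in its two arguments, and that since $\Qs$ is $W$-invariant so is $\Bs$, so the hypothesis of the statement leading to \eqref{bq:db} is met. Applying \eqref{bq:db} with $y = y_j$ and the simple root $x_i$ gives
\[
\Bs(y_j, y_i) = \la y_j, x_i \ra \, \Qs(y_i) = a_{ji}\, \Qs(y_i),
\]
while applying it with $y = y_i$ and the simple root $x_j$ gives
\[
\Bs(y_i, y_j) = \la y_i, x_j \ra \, \Qs(y_j) = a_{ij}\, \Qs(y_j).
\]
By the symmetry $\Bs(y_i, y_j) = \Bs(y_j, y_i)$ we conclude
\[
a_{ji}\, \Qs(y_i) = a_{ij}\, \Qs(y_j).
\]

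Now I would invoke the defining property of a generalized Cartan matrix (\S\ref{gcms}): $a_{ij} = 0$ if and only if $a_{ji} = 0$. Since by hypothesis $a_{ij} \neq 0$, this forces $a_{ji} \neq 0$, so we may divide and obtain $\Qs(y_i) = \tfrac{a_{ij}}{a_{ji}} \Qs(y_j)$. As $\Qs(y_j) \neq 0$ by assumption and $a_{ij}/a_{ji}$ is a nonzero rational number, the right-hand side is nonzero, hence $\Qs(y_i) \neq 0$ and the identity \eqref{Q:car} holds.

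There is no real obstacle here: the entire argument is the symmetry of $\Bs$ combined with \eqref{bq:db}. The only point requiring (trivial) care is the appeal to the gcm axiom to ensure $a_{ji}\neq 0$, which is what legitimizes forming the quotient; without it one would only get the relation $a_{ji}\Qs(y_i) = a_{ij}\Qs(y_j)$, insufficient to deduce $\Qs(y_i)\neq 0$.
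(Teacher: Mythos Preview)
Your proof is correct and follows essentially the same approach as the paper: both apply \eqref{bq:db} twice (once for each simple root) and use the symmetry of $\Bs$ to obtain $a_{ji}\,\Qs(y_i) = a_{ij}\,\Qs(y_j)$, then invoke the gcm axiom $a_{ij}\neq 0 \Leftrightarrow a_{ji}\neq 0$ to divide. The paper arranges the same computation as a chain of ratios, but the content is identical.
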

\begin{proof} If $\Qs(y_j) \neq 0$ and $\la y_i, x_j \ra \neq 0$ then also $\Bs(y_i, y_j) \neq 0$ by (\ref{bq:db}). Using (\ref{bq:db}) again, we write \be{Q:rat} \frac{\Qs(y_i)}{\Qs(y_j)} = \frac{\Bs(y_j , y_i) }{\la y_j, x_i \ra}   \frac{\la y_i, x_j \ra}{\Bs(y_i , y_j) } = \frac{\la y_i, x_j \ra}{ \la y_j, x_i \ra} = \frac{a_{ij}}{a_{ji}}, \ee where in the first equality we have also used the fact that $\la y_j, x_i \ra \neq 0,$ which follows from the assumption that $\la y_i, x_j \ra \neq 0.$  The assertion about $\Qs(y_i)$ follows immediately. \end{proof}

\begin{nrem} If the Dynkin diagram of $\As$ is connected and if $\Qs(y_j) =0$ for some $j$, then $\Qs=0$ on $\Qv_{\D}.$ \end{nrem}

\spoint Let $\As_o$ be a finite-type gcm with associated root system $R_o$, coroot lattice $\Qv_o$ and Weyl group $W_o$. Then one knows (cf. \cite[Proposition 3.10]{weis:pho}) that there exists a unique $W_o$-invariant, quadratic form $\Qs$ on $\Qv_o$ which takes the value $1$ on all short coroots. Moreover, every $\zee$-valued $W$-invariant form on $\Qv_o$ is an integer multiple of $\Qs.$ Generalizing this, we have 

\renewcommand{\qq}{\mathbb{Q}}

\begin{nprop} \label{Q-fill}Let $(\Is, \cdot, \D)$ with $\mf{D}= (Y, \{ y_i \}, X, \{ x_i \})$ be a root datum with symmetrizable, indecomposable gcm $\As$   
\begin{enumerate}[(1)]
\item There exists a $W$-invariant, $\zee$-valued quadratic form on $Y$

\item Every $W$-invariant, $\mathbb{Q}$-valued quadratic form on $Q_{\D}^{\vee} \otimes_{\zee} \mathbb{Q}$ is determined uniquely by the single value $\Q(\av_j),$ where $j \in \Is$ can be chosen arbitrarily.

\end{enumerate}

 \end{nprop}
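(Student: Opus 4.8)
The plan is to reduce everything to the real coroot lattice $\Qv_{\D}$, build the form there by specifying values on simple coroots, and then extend it to all of $Y$. For part (2), I would work inside the $\mathbb{Q}$-vector space $V := \Qv_{\D}\otimes_{\zee}\mathbb{Q}$. The key observation is the relation \eqref{bq:db}, $\Bs(y, y_i) = \la y, x_i\ra\,\Qs(y_i)$, which says that the bilinear form $\Bs$ is completely determined on all of $Y$ by the scalars $\Qs(y_i)$, $i\in\Is$ (since the $x_i$ span a finite-index subgroup of $X$ and the pairing is perfect over $\mathbb{Q}$). Combined with \eqref{Q:car}, once one single value $\Qs(y_j)=c$ is fixed, every other $\Qs(y_i)$ with $i$ adjacent to $j$ in the Dynkin diagram is forced to equal $c\cdot a_{ij}/a_{ji}$; by indecomposability (connectedness of the Dynkin diagram) one propagates along a spanning tree and concludes that \emph{all} the $\Qs(y_i)$ are determined by $c$, hence $\Bs$ is determined on $V$, hence $\Qs$ is determined on $V$ (a quadratic form over a $\mathbb{Q}$-vector space is recovered from its associated bilinear form via $\Qs(y) = \tfrac12\Bs(y,y)$). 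This gives uniqueness. For existence of a $\mathbb{Q}$-valued $W$-invariant form on $V$: take the symmetrization $\As = DB$ from \S\ref{gcms}, and check that $\Bs(y_i,y_j) := \epsilon_i\epsilon_j b_{ij}$ (suitably normalized) defines a symmetric bilinear form on $V$ which, by the standard computation behind \eqref{w:act:h}, is $W$-invariant; the induced quadratic form is the desired one (up to rational scalar, which is why any value $\Q(\av_j)$ can be prescribed).

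For part (1), I would use the simply-connected-type splitting or, more robustly, the decomposition of $Y$ relative to $\Qv_{\D}$. Here the subtlety is that $\Qv_{\D}$ need not be a direct summand of $Y$ and the $W$-invariant form on $Y$ must be $\zee$-valued, not merely $\mathbb{Q}$-valued. The strategy: first produce a $\mathbb{Q}$-valued $W$-invariant quadratic form $\Qs_0$ on $V$ as in part (2); since $W$ acts trivially on any $W$-invariant complement and on $Y/\Qv_{\D}$-directions, one extends $\Qs_0$ by any integral quadratic form in those remaining directions. Then observe $W$-invariance only constrains the restriction to $\Qv_{\D}$ (because $W$ acts by reflections $s_i$ which move $y$ only in the $y_i$-direction, and $R_{re,\D}$ lies in $\Qv_{\D}$). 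Finally, clear denominators: multiply $\Qs_0$ by a large enough positive integer $m$ so that $m\,\Qs_0$ takes integer values on the lattice $\Qv_{\D}$ (equivalently, so that $m\,\Bs$ is integral on $\Qv_{\D}\times\Qv_{\D}$ and $m\,\Qs_0(y_i)\in\zee$ for all $i$), and then — using \eqref{bq:db} — check that the resulting form is automatically $\zee$-valued on all of $Y$ because $\Bs(y,y_i)=\la y,x_i\ra\,\Qs(y_i)$ with $\la y,x_i\ra\in\zee$. One then assembles the global form on $Y = \Qv_{\D} + (\text{complement})$ and verifies integrality of cross terms, using again that cross terms are governed by \eqref{bq:db}.

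The main obstacle I anticipate is precisely the integrality bookkeeping in part (1): ensuring the form is $\zee$-valued on \emph{all} of $Y$ and not just on $\Qv_{\D}$, in the case where $\Qv_{\D}$ has infinite index or is not a direct summand, and where the Dynkin diagram has full rank so there is no ``$Y_0$'' piece to absorb slack. The relation \eqref{bq:db} is the crucial tool here — it shows that integrality of $\Qs$ on the $y_i$ forces integrality of $\Bs(y,y_i)$ for all $y\in Y$ — but one still has to be careful that a quadratic form integral on a spanning set of $\mathbb{Q}$-lattice and with integral associated bilinear form need not itself be integral (the diagonal values $\Qs(y)=\tfrac12\Bs(y,y)+\sum \text{(integral corrections)}$ can introduce halves). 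I would handle this by expressing an arbitrary $y\in Y$ in terms of the $y_i$ and a basis of a complement, expanding $\Qs(y)$ via $\Bs$, and checking term by term that \eqref{bq:db} kills all potential denominators; if a stray factor of $2$ survives, one simply doubles the form, which is harmless since part (2) only claims uniqueness up to the free choice of $\Q(\av_j)$.
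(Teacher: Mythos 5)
Your proposal is correct and follows essentially the same route as the paper: part (2) is exactly the paper's argument (propagate the values $\Qs(\av_i)$ along the connected Dynkin diagram via \eqref{Q:car}, then recover $\Qs$ from $\Bs$ using \eqref{B:Q} and \eqref{bq:db}), and part (1) likewise rests on producing a rational $W$-invariant form from the symmetrization and then clearing denominators by an integer scalar. The paper's proof of (1) is shorter -- it takes the standard invariant form \eqref{sym:frm} on all of $\mf{h}_{\D}$ at once rather than extending from $\Qv_{\D}$, which sidesteps the cross-term bookkeeping you worry about; note also that $W$-invariance is actually \emph{equivalent} to \eqref{bq:db} holding for all $y \in Y$, so it does constrain the cross terms $\Bs(y,y_i)$ for $y$ outside $\Qv_{\D}$, not only the restriction to $\Qv_{\D}$ (your later appeal to \eqref{bq:db} for the cross terms is the correct fix).
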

\begin{proof} We begin with $(1)$: if $\As$ is symmetrizable, then in \eqref{sym:frm} we have constructed a $\mathbb{Q}$-valued invariant, bilinear form on $\mf{h}_{\D}$ and hence a $\mathbb{Q}$-valued invariant, quadratic form on $Y= \Qv \oplus Y_0$ (see \ref{Y:sc}). Some multiple of it will then be $\zee$-valued, so the existence of an integral, $W$-invariant form has been proven. 

As for $(2)$, suppose $\Qs$ is any rational quadratic form on $Q_{\D}^{\vee} \otimes_{\zee} \mathbb{Q}$. Then by the previous Lemma \ref{Q-fill}, if $j \in \Is$ is some node in the Dynkin diagram such that $\Qs(\av_j) \neq 0$, the value of $\Qs(\av_i)$ is determined (and non-zero) for all nodes attached to $j.$  If $\As$ is indecomposable, the Dynkin diagram is connected so the desired claim follows since fixing the values of $\Qs(\av_k)$ for all $k \in \Is$ fixes $\Qs$ (one uses (\ref{B:Q}) and (\ref{bq:db}) to verify this).  

\end{proof}

\newcommand{\rat}{\mathbb{Q}}

\tpoint{Metaplectic structures and root datum}\label{s:met-rts} Following \cite{weis:sp}, we define a \emph{metaplectic structure} on $(\Is, \cdot, \mf{D})$ to be a pair $(\Qs, n)$ where $\Qs$ is a $W:=W(\Is, \cdot)$-invariant quadratic form on $Y$ and $n$ is a positive integer.

\begin{nlem} \label{new-mt-cd} Let $(\Qs, n)$ be a metaplectic structure on $(\Is, \cdot, \mf{D}).$ Define \be{new-mt-cd:1} i \; \wc \;  j := \frac{n^2}{n(y_i) \, n(y_j) } i \cdot j, \ee where the $n(y_i) \, (i \in \Is)$ is the smallest positive integers satisfying \be{ni:def} n(y_i) \, \Qs(y_i) \equiv 0 \mod n. \ee  Then $(\Is, \wc)$ is again a Cartan datum of the same type (where type is defined in Proposition \ref{indec-class}) as $(I, \cdot),$ with associated gcm $(\Is, \wc)$ is \be{met-car} \wt{\As} = \left(\frac{ n(y_i) }{n(y_j)} a_{ij} \right)_{i,j\in \Is}.\ee

 \end{nlem}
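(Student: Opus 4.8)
My plan is to check the two defining conditions of a Cartan datum for $(\Is,\wc)$ directly, to read off its associated gcm, and then to compare types. The first step is to replace $n(y_i)$ by something easier to compute with: the minimality in \eqref{ni:def} says exactly that $n(y_i)=n/d_i$, where $d_i:=\gcd(n,\Qs(y_i))$ (with the convention $\gcd(n,0)=n$, so that $\Qs(y_i)=0$ gives $n(y_i)=1$). Plugging this into \eqref{new-mt-cd:1} collapses the definition to the much cleaner formula $i\wc j=d_i d_j\,(i\cdot j)$, which is visibly symmetric and $\zee$-valued. From it $i\wc i=d_i^{2}\,(i\cdot i)$, and since $d_i\ge 1$ and $i\cdot i\in\{2,4,6,\dots\}$ this lies in $\{2,4,6,\dots\}$ — the first Cartan-datum axiom of \S\ref{s:cd}.

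For the second axiom, the same substitution gives, for $i\ne j$, $2\,(i\wc j)/(i\wc i)=(d_j/d_i)\,a_{ij}=(n(y_i)/n(y_j))\,a_{ij}$; this is $\le 0$ because $a_{ij}\le 0$, and — once we know $(\Is,\wc)$ is a Cartan datum — it is by \eqref{car:gcm} precisely the $(i,j)$ entry of the associated gcm, which is \eqref{met-car}. So everything reduces to the single integrality claim $d_i\mid d_j a_{ij}$, and \textbf{this divisibility is the step I expect to be the crux}. I would dispatch it by cases controlled by the Lemma yielding \eqref{Q:car}. If $a_{ij}=0$ there is nothing to check. If $a_{ij}\ne 0$ (hence $a_{ji}\ne 0$) but $\Qs(y_j)=0$, then by that Lemma $\Qs(y_i)\ne 0$ would force $\Qs(y_j)\ne 0$, so $\Qs(y_i)=0$, whence $d_i=d_j=n$ and the claim is trivial. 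In the remaining case $\Qs(y_i),\Qs(y_j)\ne 0$, the Lemma gives $a_{ij}\Qs(y_j)=a_{ji}\Qs(y_i)$ (equivalently this follows from \eqref{bq:db} and the symmetry of $\Bs$), and I would prove $d_i\mid d_j a_{ij}$ one prime at a time: for a prime $p$ with valuation $v_p$, writing $\eta=v_p(n)$, $\mu=v_p(\Qs(y_i))$, $\nu=v_p(\Qs(y_j))$, the relation reads $v_p(a_{ij})+\nu=v_p(a_{ji})+\mu$, and since $v_p(a_{ij}),v_p(a_{ji})\ge 0$ one gets $\min(\eta,\mu)\le\eta+v_p(a_{ij})$ and $\min(\eta,\mu)\le\mu+v_p(a_{ji})=\nu+v_p(a_{ij})$, hence $v_p(d_i)=\min(\eta,\mu)\le\min(\eta,\nu)+v_p(a_{ij})=v_p(d_j)+v_p(a_{ij})$. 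This yields $d_i\mid d_j a_{ij}$ and finishes the verification that $(\Is,\wc)$ is a Cartan datum with gcm $\wt\As$ as in \eqref{met-car}.

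Finally, for the assertion that the type is unchanged, the point to exploit is that \eqref{met-car} exhibits $\wt\As=E\As E^{-1}$ with $E=\diag(n(y_1),\dots,n(y_r))$ an invertible diagonal matrix. Since $\wt a_{ij}=0\iff a_{ij}=0$, the two matrices have the same Dynkin diagram, so it suffices to argue componentwise and apply Proposition \ref{indec-class} to an indecomposable piece. Conjugation by a diagonal matrix restricts to principal submatrices, $\wt\As_J=E_J\As_J E_J^{-1}$, so $\det\wt\As_J=\det\As_J$ and criterion (1) (all principal minors positive) holds for $\wt\As$ iff for $\As$; likewise $\wt\As\,\delta=0\iff\As\,(E^{-1}\delta)=0$ with $E^{-1}\delta\in\mathbb{Q}^{|\Is|}_{>0}\iff\delta\in\mathbb{Q}^{|\Is|}_{>0}$, so criterion (2) transfers, and hence so does the indefinite case. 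Therefore $(\Is,\wc)$ has the same finite/affine/indefinite type as $(\Is,\cdot)$.
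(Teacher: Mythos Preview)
Your proof is correct and follows essentially the same approach as the paper: a direct verification of the Cartan-datum axioms (which the paper outsources to \cite{weis:sp}, while you carry it out explicitly, including the key divisibility $d_i\mid d_j a_{ij}$), followed by a diagonal-rescaling argument for the preservation of type. The only minor difference is that for the type claim the paper compares the bilinear-form matrices $L=(i\cdot j)$ and $\wt L=(i\wc j)=DLD$ (a congruence, invoking the remark in \S\ref{s:cd} that the trichotomy can be read off from $L$), whereas you compare the Cartan matrices $\As$ and $\wt\As=E\As E^{-1}$ (a conjugation); both transfer the criteria of Proposition~\ref{indec-class} in the same way.
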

\begin{proof} One may verify as in \cite[p.95]{weis:sp} that $(I, \wc)$ is again a Cartan datum (note that the proof there assumed that $(I, \circ)$ was of finite type, but the same argument works in general). The matrix of the form $\wc$ is obtained from that of $\cdot$ from a change of basis of by a diagonal matrix with positive, rational entries. Hence $(I, \cdot)$ is of the same type as $(I, \wc)$ according to the trichotomy of Proposition \ref{indec-class}. The last claim is clear.  \end{proof}

\renewcommand{\t}[1]{\widetilde{#1}}

For fixed $(\Is, \cdot, \mf{D})$ with a metaplectic structure $(\Qs, n) $ construct $(\Is, \wc)$ as in the Proposition. Following \cite[Construction 1.3]{weis:sp} we now set \begin{itemize} 
\item $\t{Y}:= \{ y \in Y \mid B(y, y') \in n \zee \text{ for all } y' \in Y \}$
\item $\t{y}_i:= n(y_i) y_i $ for $i \in I$
\item $\t{X}:= \{ x \in X \otimes \mathbb{Q} \mid \la y, x \ra \in \zee \text{ for all } y \in \t{Y} \}$
\item $\t{x}_i:= n(y_i)^{-1} x_i$ for $i \in I$
\end{itemize}

Then one can verify as in \cite[Construction 1.3]{weis:sp} that $\widetilde{\mf{D}}= (\t{Y}, \{\t{y}^{\vee}_i\}_{i\in \Is},\t{X}, \{\t{x}^{\vee}_i\}_{i\in \Is}) $ is a root datum for $(\Is, \wc)$ and we let $\mf{g}_{\wt{\D}}$ the corresponding Lie algebra.  The roots (coroots) for this Lie algebra will be denoted by $\tR$ (resp. $\tR^{\vee}$)-- its simple roots and coroots are identified with $\tx_i$ and $\ty_i,$ etc.

\subsection{Affine Root Systems} \label{sec:affine}

\spoint \label{aff:DB} Let $\As$ be an affine Cartan matrix of rank $\ell$. Then $\As$ is positive semi-definite and the null space of $\As$ is one dimensional. Let $\delta = (d_1, \ldots, d_{\ell+1})$ be the unique vector in $\zee_{>0}$ with relatively prime entries (cf. \cite[Theorem 4.8 (b)]{kac}) which spans this space. Similarly, the transpose ${}^t\As$ is again an affine Cartan matrix, and we define the analogous vector $\delta^{\vee}=(\dv_1, \ldots, \dv_{\ell+1})$ in its null space. From the classification of affine Cartan matrices, we have that $\dv_{\ell+1}=1$ for all affine types (cf. Table \ref{table:affinerootsystem}).  Defining $\epsilon_{i}:= d_i \, (\dv_i)^{-1} \text{ for } i \in \Is$  we have (cf. \cite[Remark 6.1]{kac}) that $\As= D \cdot B$ where $B$ is a symmetric Cartan matrix and where $D= \diag(\epsilon_1, \ldots, \epsilon_{\ell+1})$ is a symmetrization.

\begin{table}[h!]
$$\begin{array}{l|l|l|l}
\text{Type of $\As$} & \text{Type of ${}^t\As$} & \delta = (d_1,\ldots ,d_{\ell +1}) & \delta ^{\vee }=(\dv_1,\ldots ,\dv_{\ell +1}) \\ \hline \hline 
A_1 ^{(1)} & A_1 ^{(1)} & (1,1) & (1,1)\\ \hline 
A_{\ell} ^{(1)} \ (\ell \geq 2) & A_{\ell} ^{(1)} \ (\ell \geq 2)  & (1,\ldots ,1) & (1,\ldots ,1)\\ \hline 
B_{\ell} ^{(1)} \ (\ell \geq 3) & A_{2\ell-1}^{(2)} \ (\ell \geq 3) & (1,2,\ldots ,2,1) & (1,2,\ldots ,2,1,1)\\ \hline 
C_{\ell} ^{(1)} \ (\ell \geq 2) & D_{\ell+1}^{(2)} \ (\ell \geq 2) & (2,\ldots ,2,1,1) & (1,\ldots ,1)\\ \hline 
D_{\ell}^{(1)} \ (\ell \geq 4) & D_{\ell}^{(1)} \ (\ell \geq 4) & (1, 2, \ldots ,2, 1, 1, 1) & (1, 2, \ldots ,2, 1, 1, 1)\\ \hline 
E_6^{(1)} & E_6^{(1)} & (1,2,3,2,1,2,1) & (1,2,3,2,1,2,1)\\ \hline 
E_7^{(1)} & E_7^{(1)} & (2,3,4,3,2,1,2,1) & (2,3,4,3,2,1,2,1)\\ \hline 
E_8^{(1)} & E_8^{(1)} & (2,3,4,5,6,4,2,3,1) & (2,3,4,5,6,4,2,3,1) \\ \hline 
F_4^{(1)} & E_6^{(2)} & (2,3,4,2,1) & (2,3,2,1,1)\\ \hline 
G_2^{(1)} & D_4^{(3)} & (2,3,1) & (2,1,1)\\ \hline \hline
A_{2}^{(2)} & A_{2}^{(2)} & (1,2) & (2,1) \\ \hline 
A_{2\ell}^{(2)} \ (\ell \geq 2) & A_{2\ell}^{(2)} \ (\ell \geq 2) & (2,\ldots, 2, 1, 2) & (2,\ldots ,2,1)\\ \hline 
A_{2\ell-1}^{(2)} \ (\ell \geq 3) & B_{\ell} ^{(1)} \ (\ell \geq 3) & (1,2,\ldots ,2,1,1) & (1,2,\ldots ,2,1) \\ \hline 
D_{\ell+1} ^{(2)} \ (\ell \geq 2) & C_{\ell}^{(1)} \ (\ell \geq 2) & (1,\ldots ,1) & (2,\ldots ,2,1,1) \\ \hline 
E_6^{(2)} & F_4^{(1)} & (2,3,2,1,1) & (2,3,4,2,1)\\ \hline 
D_4^{(3)} & G_2^{(1)} & (2,1,1) & (2,3,1)
\end{array}$$
\caption{The affine types with dual pairs.}
\label{table:affinerootsystem}
\end{table}

The matrix obtained from $\As$ by deleting the $\ell+1$st row and column will be denoted by $\As_o.$ It is a Cartan matrix of finite type, as is ${}^t\As_o.$ Note that if we fix a symmetrization of $\As$, then this induces one on $\As_o$ as well, which we denote by $\As_o = D_o B_o,$ where both $D_o,\ B_o$ are obtained from $D,\ B$ by deleting the $\ell+1$st row and column.  

Let $\mf{g}:=\mf{g}(\As)$ be the Kac-Moody algebra associated to $\As$ where one chooses $\mf{d}$ to be the span of $\dd:= \dd_{\ell+1}$ (cf. \S \ref{s:la}, (1) for notation). Also, set $\mf{g}_o:= \mf{g}(\As_o)$ be the Kac-Moody algebras to $\As_o$ respectively (where the corresponding $\mf{d}=0$).  We denote with a subscript ``o''  objects corresponding to  $\As_o$: e.g. $R_o$ denotes the set of roots, $W_o:= W(\As_o)$ the Weyl group, $\Pv_o \subset \Pv$ the simple coroots, $\rho_o$ is as in (\ref{rho:def}), etc. and by $R, W, \rho$, etc. similar object for $\As$.  

Below we follow the conventions of \cite[\S 4.7]{kac} in associating a Dynkin diagram to $\As$: the vertices correspond to$i\in \Is$, and the (labelled) edges are constructed as follows (recall for affine types, we always have $a_{ij}a_{ji}\leq 4$): the vertices $i$ and $j$ are connected by $|a_{ij}|$ lines if $|a_{ij}| \geq |a_{ji}|,$ and these lines are equipped with an arrow pointing in the direction of $i$ if $|a_{ij}|>1;$ 


\begin{figure}[h!]
\begin{tabular}{b{2cm} m{5cm} b{1cm} m{4cm}}
$A_1^{(1)}$ & \begin{tikzpicture}
 \dynkdot{0}{0}
 \dynkdot{1}{0}
 \dynkdoublelinetofr{0}{0}{1}{0}
 \draw (0*\dynkstep,0*\dynkstep) node [anchor = north] {$1$};
 \draw (1*\dynkstep,0*\dynkstep) node [anchor = north] {$2$};
 \end{tikzpicture} & $E_{6}^{(1)}$ & \begin{tikzpicture}
 \dynkdot{0}{0}
 \dynkdot{1}{0}
 \dynkdot{2}{0}
 \dynkdot{3}{0}
 \dynkdot{4}{0}
 \dynkdot{2}{1}
 \dynkdot{2}{2}
 \dynkline{0}{0}{1}{0}
 \dynkline{1}{0}{2}{0}
 \dynkline{2}{0}{3}{0}
 \dynkline{3}{0}{4}{0}
 \dynkline{2}{0}{2}{1}
 \dynkline{2}{1}{2}{2}
 \draw (0*\dynkstep,0*\dynkstep) node [anchor = north] {$1$};
 \draw (1*\dynkstep,0*\dynkstep) node [anchor = north] {$2$};
 \draw (2*\dynkstep,0*\dynkstep) node [anchor = north] {$3$};
 \draw (3*\dynkstep,0*\dynkstep) node [anchor = north] {$4$};
 \draw (4*\dynkstep,0*\dynkstep) node [anchor = north] {$5$};
 \draw (2*\dynkstep,1*\dynkstep) node [anchor = west] {$6$};
 \draw (2*\dynkstep,2*\dynkstep) node [anchor = west] {$7$};
 \end{tikzpicture}  \\
$A_{\ell}^{(1)}\ (\ell\geq 2)$ & \begin{tikzpicture}
\dynkdot{0}{0}
\dynkdot{1}{0}
\dynkdot{3}{0}
\dynkdot{4}{0}
\dynkdot{2}{1}
\dynkline{0}{0}{1}{0}
\dynkline{3}{0}{4}{0}
\dynkline{0}{0}{2}{1}
\dynkline{4}{0}{2}{1}
\dynkcdots{1}{0}{3}{0}
\draw (0*\dynkstep,0*\dynkstep) node [anchor = north] {$1$};
\draw (1*\dynkstep,0*\dynkstep) node [anchor = north] {$2$};
\draw (3*\dynkstep,0*\dynkstep) node [anchor = north] {$\ell-1$};
\draw (4*\dynkstep,0*\dynkstep) node [anchor = north] {$\ell$};
\draw (2*\dynkstep,1*\dynkstep) node [anchor = south] {$\ell +1$};
\end{tikzpicture} & $E_{7}^{(1)}$ & \begin{tikzpicture}
\dynkdot{0}{0}
\dynkdot{1}{0}
\dynkdot{2}{0}
\dynkdot{3}{0}
\dynkdot{4}{0}
\dynkdot{5}{0}
\dynkdot{6}{0}
\dynkdot{3}{1}
\dynkline{0}{0}{1}{0}
\dynkline{1}{0}{2}{0}
\dynkline{2}{0}{3}{0}
\dynkline{3}{0}{4}{0}
\dynkline{5}{0}{4}{0}
\dynkline{5}{0}{6}{0}
\dynkline{3}{0}{3}{1}
\draw (0*\dynkstep,0*\dynkstep) node [anchor = north] {$8$};
\draw (1*\dynkstep,0*\dynkstep) node [anchor = north] {$1$};
\draw (2*\dynkstep,0*\dynkstep) node [anchor = north] {$2$};
\draw (3*\dynkstep,0*\dynkstep) node [anchor = north] {$3$};
\draw (4*\dynkstep,0*\dynkstep) node [anchor = north] {$4$};
\draw (5*\dynkstep,0*\dynkstep) node [anchor = north] {$5$};
\draw (6*\dynkstep,0*\dynkstep) node [anchor = north] {$6$};
\draw (3*\dynkstep,1*\dynkstep) node [anchor = west] {$7$};
\end{tikzpicture}  \\
$B_{\ell}^{(1)}\ (\ell \geq 3)$ & \begin{tikzpicture}
\dynkdot{0}{0}
\dynkdot{1}{0}
\dynkdot{2}{0}
\dynkdot{4}{0}
\dynkdot{5}{0}
\dynkdot{1}{1}
\dynkline{0}{0}{1}{0}
\dynkline{1}{0}{1}{1}
\dynkline{1}{0}{2}{0}
\dynkdoublelinefrom{4}{0}{5}{0}
\dynkcdots{2}{0}{4}{0}
\draw (0*\dynkstep,0*\dynkstep) node [anchor = north] {$1$};
\draw (1*\dynkstep,0*\dynkstep) node [anchor = north] {$2$};
\draw (2*\dynkstep,0*\dynkstep) node [anchor = north] {$3$};
\draw (4*\dynkstep,0*\dynkstep) node [anchor = north] {$\ell -1$};
\draw (5*\dynkstep,0*\dynkstep) node [anchor = north] {$\ell$};
\draw (1*\dynkstep,1*\dynkstep) node [anchor = south] {$\ell +1$};
\end{tikzpicture} & $E_{8}^{(1)}$ & \begin{tikzpicture}
\dynkdot{0}{0}
\dynkdot{1}{0}
\dynkdot{2}{0}
\dynkdot{3}{0}
\dynkdot{4}{0}
\dynkdot{5}{0}
\dynkdot{6}{0}
\dynkdot{7}{0}
\dynkdot{5}{1}
\dynkline{0}{0}{1}{0}
\dynkline{1}{0}{2}{0}
\dynkline{2}{0}{3}{0}
\dynkline{3}{0}{4}{0}
\dynkline{5}{0}{4}{0}
\dynkline{5}{0}{6}{0}
\dynkline{7}{0}{6}{0}
\dynkline{5}{0}{5}{1}
\draw (0*\dynkstep,0*\dynkstep) node [anchor = north] {$9$};
\draw (1*\dynkstep,0*\dynkstep) node [anchor = north] {$1$};
\draw (2*\dynkstep,0*\dynkstep) node [anchor = north] {$2$};
\draw (3*\dynkstep,0*\dynkstep) node [anchor = north] {$3$};
\draw (4*\dynkstep,0*\dynkstep) node [anchor = north] {$4$};
\draw (5*\dynkstep,0*\dynkstep) node [anchor = north] {$5$};
\draw (6*\dynkstep,0*\dynkstep) node [anchor = north] {$6$};
\draw (7*\dynkstep,0*\dynkstep) node [anchor = north] {$7$};
\draw (5*\dynkstep,1*\dynkstep) node [anchor = west] {$8$};
\end{tikzpicture}  \\
& & \\
$C_{\ell}^{(1)}\ (\ell \geq 2)$ & \begin{tikzpicture}
\dynkdot{0}{0}
\dynkdot{1}{0}
\dynkdot{3}{0}
\dynkdot{4}{0}
\dynkdoublelinefrom{0}{0}{1}{0}
\dynkdoublelineto{3}{0}{4}{0}
\dynkcdots{1}{0}{3}{0}
\draw (0*\dynkstep,0*\dynkstep) node [anchor = north] {$\ell +1$};
\draw (1*\dynkstep,0*\dynkstep) node [anchor = north] {$1$};
\draw (3*\dynkstep,0*\dynkstep) node [anchor = north] {$\ell -1$};
\draw (4*\dynkstep,0*\dynkstep) node [anchor = north] {$\ell$};
\end{tikzpicture} & $F_{4}^{(1)}$ & \begin{tikzpicture}
\dynkdot{0}{0}
\dynkdot{1}{0}
\dynkdot{2}{0}
\dynkdot{3}{0}
\dynkdot{4}{0}
\dynkline{0}{0}{1}{0}
\dynkline{1}{0}{2}{0}
\dynkdoublelinefrom{2}{0}{3}{0}
\dynkline{3}{0}{4}{0}
\draw (0*\dynkstep,0*\dynkstep) node [anchor = north] {$5$};
\draw (1*\dynkstep,0*\dynkstep) node [anchor = north] {$1$};
\draw (2*\dynkstep,0*\dynkstep) node [anchor = north] {$2$};
\draw (3*\dynkstep,0*\dynkstep) node [anchor = north] {$3$};
\draw (4*\dynkstep,0*\dynkstep) node [anchor = north] {$4$};
\end{tikzpicture}  \\
$D_{\ell}^{(1)}\ (\ell \geq 4)$ & \begin{tikzpicture}
\dynkdot{0}{0}
\dynkdot{1}{0}
\dynkdot{2}{0}
\dynkdot{4}{0}
\dynkdot{5}{0}
\dynkdot{1}{1}
\dynkdot{4}{1}
\dynkline{0}{0}{1}{0}
\dynkline{1}{0}{1}{1}
\dynkline{1}{0}{2}{0}
\dynkline{4}{0}{5}{0}
\dynkline{4}{0}{4}{1}
\dynkcdots{2}{0}{4}{0}
\draw (0*\dynkstep,0*\dynkstep) node [anchor = north] {$1$};
\draw (1*\dynkstep,0*\dynkstep) node [anchor = north] {$2$};
\draw (2*\dynkstep,0*\dynkstep) node [anchor = north] {$3$};
\draw (4*\dynkstep,0*\dynkstep) node [anchor = north] {$\ell-2$};
\draw (4*\dynkstep,1*\dynkstep) node [anchor = south] {$\ell$};
\draw (5*\dynkstep,0*\dynkstep) node [anchor = west] {$\ell-1$};
\draw (1*\dynkstep,1*\dynkstep) node [anchor = south] {$\ell +1$};
\end{tikzpicture} & $G_{2}^{(1)}$ & \begin{tikzpicture}
\dynkdot{0}{0}
\dynkdot{1}{0}
\dynkdot{2}{0}
\dynkline{0}{0}{1}{0}
\dynktriplelinefrom{1}{0}{2}{0}
\draw (0*\dynkstep,0*\dynkstep) node [anchor = north] {$3$};
\draw (1*\dynkstep,0*\dynkstep) node [anchor = north] {$1$};
\draw (2*\dynkstep,0*\dynkstep) node [anchor = north] {$2$};
\end{tikzpicture} 
\end{tabular}
\caption{The untwisted affine Dynkin diagrams.}
\label{figure:untwistedaffineDynkins}
\end{figure}
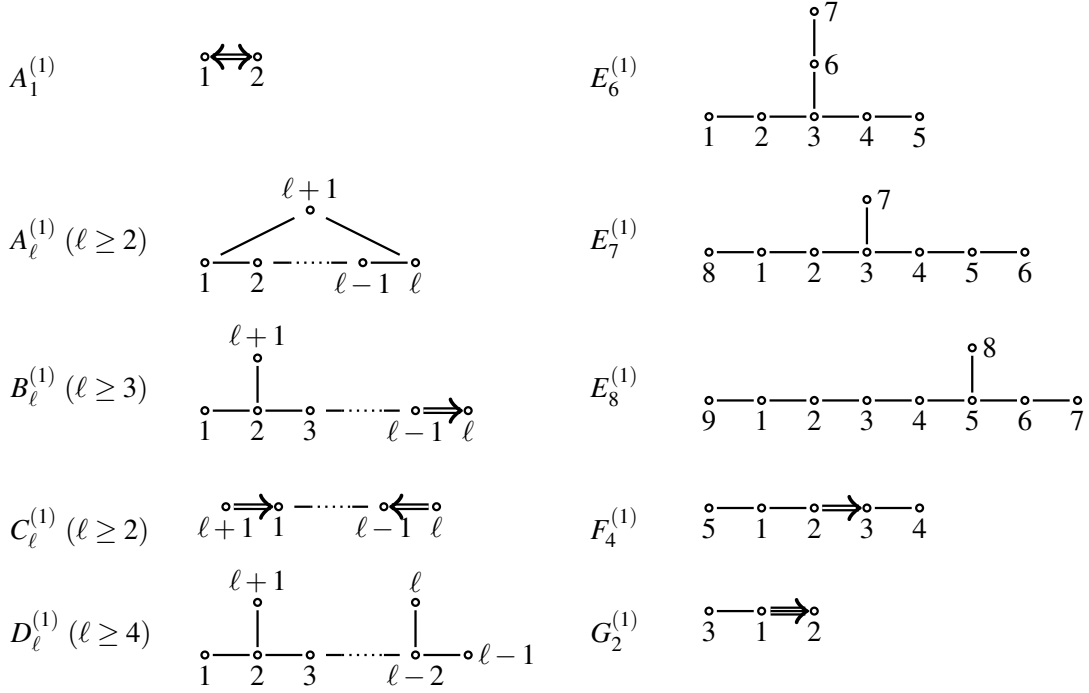

\begin{figure}[h!]
\begin{tabular}{b{2.5cm} m{4.5cm} b{2cm} m{4cm}}
$A_{2}^{(2)}$ & \begin{tikzpicture} 
\dynkdot{0}{0}
\dynkdot{1}{0}
\dynkquadruplelineto{0}{0}{1}{0}
\draw (0*\dynkstep,0*\dynkstep) node [anchor = north] {$2$};
\draw (1*\dynkstep,0*\dynkstep) node [anchor = north] {$1$};
\end{tikzpicture} & $D_{\ell+1}^{(2)}\ (\ell \geq 2)$ & \begin{tikzpicture}
\dynkdot{0}{0}
\dynkdot{1}{0}
\dynkdot{3}{0}
\dynkdot{4}{0}
\dynkdoublelineto{0}{0}{1}{0}
\dynkdoublelinefrom{3}{0}{4}{0}
\dynkcdots{1}{0}{3}{0}
\draw (0*\dynkstep,0*\dynkstep) node [anchor = north] {$\ell +1$};
\draw (1*\dynkstep,0*\dynkstep) node [anchor = north] {$1$};
\draw (3*\dynkstep,0*\dynkstep) node [anchor = north] {$\ell -1$};
\draw (4*\dynkstep,0*\dynkstep) node [anchor = north] {$\ell$};
\end{tikzpicture}\\
$A_{2\ell}^{(2)}\ (\ell \geq 2)$ & \begin{tikzpicture} 
\dynkdot{0}{0}
\dynkdot{1}{0}
\dynkdot{3}{0}
\dynkdot{4}{0}
\dynkdoublelineto{0}{0}{1}{0}
\dynkdoublelinefrom{3}{0}{4}{0}
\dynkcdots{1}{0}{3}{0}
\draw (0*\dynkstep,0*\dynkstep) node [anchor = north] {$\ell +1$};
\draw (1*\dynkstep,0*\dynkstep) node [anchor = north] {$1$};
\draw (3*\dynkstep,0*\dynkstep) node [anchor = north] {$\ell -1$};
\draw (4*\dynkstep,0*\dynkstep) node [anchor = north] {$\ell$};
\end{tikzpicture} & $E_{6}^{(2)}$ & \begin{tikzpicture}
\dynkdot{0}{0}
\dynkdot{1}{0}
\dynkdot{2}{0}
\dynkdot{3}{0}
\dynkdot{4}{0}
\dynkline{0}{0}{1}{0}
\dynkline{1}{0}{2}{0}
\dynkdoublelineto{2}{0}{3}{0}
\dynkline{3}{0}{4}{0}
\draw (0*\dynkstep,0*\dynkstep) node [anchor = north] {$5$};
\draw (1*\dynkstep,0*\dynkstep) node [anchor = north] {$1$};
\draw (2*\dynkstep,0*\dynkstep) node [anchor = north] {$2$};
\draw (3*\dynkstep,0*\dynkstep) node [anchor = north] {$3$};
\draw (4*\dynkstep,0*\dynkstep) node [anchor = north] {$4$};
\end{tikzpicture} \\
$A_{2\ell-1}^{(2)}\ (\ell \geq 3)$ & \begin{tikzpicture}
\dynkdot{0}{0}
\dynkdot{1}{0}
\dynkdot{2}{0}
\dynkdot{4}{0}
\dynkdot{5}{0}
\dynkdot{1}{1}
\dynkline{0}{0}{1}{0}
\dynkline{1}{0}{1}{1}
\dynkline{1}{0}{2}{0}
\dynkdoublelineto{4}{0}{5}{0}
\dynkcdots{2}{0}{4}{0}
\draw (0*\dynkstep,0*\dynkstep) node [anchor = north] {$1$};
\draw (1*\dynkstep,0*\dynkstep) node [anchor = north] {$2$};
\draw (2*\dynkstep,0*\dynkstep) node [anchor = north] {$3$};
\draw (4*\dynkstep,0*\dynkstep) node [anchor = north] {$\ell -1$};
\draw (5*\dynkstep,0*\dynkstep) node [anchor = north] {$\ell$};
\draw (1*\dynkstep,1*\dynkstep) node [anchor = south] {$\ell +1$};
\end{tikzpicture} & 
$D_{4}^{(3)}$ & \begin{tikzpicture}
\dynkdot{0}{0}
\dynkdot{1}{0}
\dynkdot{2}{0}
\dynkline{0}{0}{1}{0}
\dynktriplelineto{1}{0}{2}{0}
\draw (0*\dynkstep,0*\dynkstep) node [anchor = north] {$3$};
\draw (1*\dynkstep,0*\dynkstep) node [anchor = north] {$1$};
\draw (2*\dynkstep,0*\dynkstep) node [anchor = north] {$2$};
\end{tikzpicture}
\end{tabular}
\caption{The twisted affine Dynkin diagrams.}
\label{figure:twistedaffineDynkins}
\end{figure}

\spoint We would like to describe in more detail the relation between $\mf{h}_o$ and $\mf{h}.$ To do so, first set \be{cc} \cc := \dv_1 \av_1 + \cdots \dv_{\ell+1} \av_{\ell+1} = \dv_1 \av_1 + \cdots +\dv_{\ell} \av_{\ell} + \av_{\ell+1} \ee and define $ \theta^{\vee}:= \cc - \av_{\ell+1},$ and observe that $\la \cc , a_i \ra =0$ for $i \in \Is$ and that $\cc$ is the minimal, positive imaginary coroot of $\mf{g}(\As)$ i.e. every other positive imaginary coroot of $\mf{g}(\As)$ is an integral multiple of $\cc.$   We have chosen $\dd \in \mf{h}$ which satisfies the condition \be{d:def} \la \dd, a_i \ra =0 \text{ for } i \in \Is_o, \la \dd , \, a_{\ell+1} \ra =1. \ee With these choices, we find that $\{ \av_1, \ldots, \av_{\ell+1}, \dd \}$ form a basis of $\mf{h}.$ On the other hand, $\{ \av_1, \ldots, \av_{\ell} \}$ form a basis for $\mf{h}_o$ and we have decompositions \be{H:dec} \mf{h} = \C \cc \oplus \mf{h}_o \oplus \C \dd, \ee with respect to which we write elements of $\mf{h}$ as $y = m \cc + y_o + n \dd$ where $m, n \in \C$ and $y_o \in \mf{h}_o.$ The non-degenerate bilinear form $(\cdot, \cdot)$ introduced in \S \ref{s:la} (with respect to the standard symmetrization of $\As$) takes the following explicit form, \be{ext-form} (\av_i, \av_j) &=& d_i (\dv_j)^{-1} a_{ij}  \text{ for } i, j \in \Is\\ 
(\av_i, \dd) &=& 0 \, (i \in \Is_o); \ \ 
(\av_{\ell+1}, \dd) = d_{\ell+1}; \ \   
(\dd, \dd)=0. \ee We also record here the additional formulas, \be{pair:h} \begin{array}{lcr} (\cc, \av_i) =0  \, \, (i \in \Is); & (\cc, \cc)=0 ; & (\cc, \dd)=1 .\end{array}. \ee

\newcommand{\Qve}{Q_e^{\vee}}

\tpoint{Simply connected root datum}  Let $\mf{D}= (Y, \{ y_i \} , X , \{ x_i \})$ be a simply-connected root datum and $\mf{g}_{\D}$ the corresponding Lie algebra Then by \eqref{Y:sc} and \S \ref{s:gD}, we can make the following identifications:  $y_i$ with $\av_i,$ $x_i$ with $a_i$, $\mf{h}:= Y \otimes_{\zee} \C,$ $Y_0 = \zee \dd$ and \be{XY;sc} \begin{array}{lcr} Y=\Qv \oplus \zee \dd& \text{ and } & X= \{ \lambda \in \mf{h}^* \mid \la \av_i, \lam \ra \in \zee \text{ and } \la \dd, \lam \ra \in \zee \}. \end{array} \ee

\begin{nlem}  \label{Q-fill:aff} Let $(I, \cdot)$ be a Cartan datum of affine type, and $\mf{D}=(Y, \{ y_i \}, X, \{ x_i \})$ be a simply connected root datum with identifications made as above. 
\begin{enumerate} 
\item Fix an integral, $W_o$-invariant form $\Qs_o.$ Then there exists a unique $W$-invariant, $\zee$-valued form on $Y$ extending $\Qs_o$ and satisfying $\Qs(\dd)=0.$ 
\item There exists a $W$-invariant, integral quadratic form $\Qs$ on $Y$ such that $\Qs(\dd)=0$ and such that $\min_{i \in \Is} \Qs(\av_i) = 1$. Every $W$-invariant, integral quadratic form on $Y$ which takes value 0 on $\dd$ is an integer multiple of it. \end{enumerate}

\end{nlem}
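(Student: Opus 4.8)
The plan is to work with the explicit decomposition $Y=\Qv\oplus\zee\dd$, where $\Qv=\Qv_o\oplus\zee\cc$ (the latter because $\av_{\ell+1}=\cc-\theta^{\vee}$ with $\theta^{\vee}\in\Qv_o$), as supplied by \S\ref{s:gD} and \eqref{XY;sc}; I will use throughout that $\cc$ is $W$-fixed with $\la\cc,a_i\ra=0$, that $s_j(\dd)=\dd$ for $j\in\Is_o$, and that $s_{\ell+1}(\dd)=\dd-\av_{\ell+1}=\theta^{\vee}-\cc+\dd$. For the uniqueness in (1): if $\Qs,\Qs'$ are both $W$-invariant integral extensions of $\Qs_o$ with $\Qs(\dd)=\Qs'(\dd)=0$, set $\Phi:=\Qs-\Qs'$ with bilinear form $\Bs_\Phi$. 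On $\Qv\otimes_\zee\mathbb{Q}$, $\Phi$ is a $W$-invariant form vanishing on $\av_j$ for $j\in\Is_o$, hence $\Phi|_{\Qv}=0$ by Proposition~\ref{Q-fill}(2). Then invariance under $s_i$ ($i\in\Is_o$) gives $\Bs_\Phi(\av_i,\dd)=\Bs_\Phi(-\av_i,\dd)=-\Bs_\Phi(\av_i,\dd)$, so $\Bs_\Phi(\av_i,\dd)=0$; invariance under $s_{\ell+1}$ gives $\Bs_\Phi(\av_{\ell+1},\dd)=-\Bs_\Phi(\av_{\ell+1},\dd)+2\Phi(\av_{\ell+1})=-\Bs_\Phi(\av_{\ell+1},\dd)$, so $\Bs_\Phi(\av_{\ell+1},\dd)=0$. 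Thus $\Bs_\Phi(\Qv,\dd)=0$, and together with $\Phi|_{\Qv}=0=\Phi(\dd)$ this forces $\Phi\equiv0$ on $Y=\Qv\oplus\zee\dd$.

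For existence in (1) I would simply write down the candidate
\[
\Qs(v_o+m\cc+n\dd):=\Qs_o(v_o)+mn\,\Qs_o(\theta^{\vee})\qquad(v_o\in\Qv_o,\ m,n\in\zee),
\]
which is manifestly $\zee$-valued, restricts to $\Qs_o$ on $\Qv_o$, and satisfies $\Qs(\dd)=0$ (and also $\Qs(\cc)=0$, $\Qs(\av_{\ell+1})=\Qs_o(\theta^{\vee})$). Since $W$ is generated by the $s_i$, it suffices to verify $\Qs\circ s_i=\Qs$; for $i\in\Is_o$ this is immediate, as $s_i$ fixes $\cc$ and $\dd$, preserves $\Qv_o$, and $\Qs_o$ is $W_o$-invariant. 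For $s=s_{\ell+1}$ I would expand $s(v_o+m\cc+n\dd)$ in the decomposition $\Qv_o\oplus\zee\cc\oplus\zee\dd$ using $s(v_o)=v_o-\la v_o,a_{\ell+1}\ra\av_{\ell+1}$, $s(\cc)=\cc$, $s(\dd)=\theta^{\vee}-\cc+\dd$, and compute: the outcome is that $\Qs(sy)-\Qs(y)$ equals an integer multiple of $\la v_o,a_{\ell+1}\ra\,\Qs_o(\theta^{\vee})+\Bs_o(v_o,\theta^{\vee})$, where $\Bs_o$ is the bilinear form of $\Qs_o$. Hence $W$-invariance comes down to the identity $\Bs_o(v_o,\theta^{\vee})=-\la v_o,a_{\ell+1}\ra\,\Qs_o(\theta^{\vee})$ for all $v_o\in\Qv_o$. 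This is where the affine structure genuinely intervenes: $\theta^{\vee}=\cc-\av_{\ell+1}$ is a real coroot of the finite subsystem $R_o$, so the $W_o$-equivariant form of \eqref{bq:db} gives $\Bs_o(v_o,\theta^{\vee})=\la v_o,\theta\ra\Qs_o(\theta^{\vee})$ with $\theta\in R_o$ the root dual to $\theta^{\vee}$, while the standard description of the affine simple root, $a_{\ell+1}=\delta-\theta$ together with $\la v_o,\delta\ra=0$ on $\mf h_o$, gives $\la v_o,a_{\ell+1}\ra=-\la v_o,\theta\ra$. I expect this $s_{\ell+1}$-verification — correctly unwinding the affine reflection and matching the cross terms against \eqref{bq:db} — to be the one genuinely delicate step; the rest is bookkeeping. (An alternative route to existence is to rescale a suitable multiple of the standard invariant form $\tfrac12(\cdot,\cdot)$ of \eqref{ext-form}, but checking integrality on $Y$ there is a bit fiddly, so I prefer the explicit formula.)

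For (2), take $\Qs_o$ to be the canonical $W_o$-invariant integral form on $\Qv_o$ taking value $1$ on short coroots (the finite-type fact recalled just before Proposition~\ref{Q-fill}) and let $\Qs$ be its extension from (1). For $i\in\Is_o$ one has $\Qs(\av_i)=\Qs_o(\av_i)\ge1$, with equality whenever $a_i$ is a long simple root — which occurs for some $i$ in every finite type — and $\Qs(\av_{\ell+1})=\Qs_o(\theta^{\vee})\ge1$ since $\Qs_o$ is positive-valued on nonzero coroots; hence $\min_{i\in\Is}\Qs(\av_i)=1$. For the last assertion, if $\Qs'$ is any $W$-invariant integral form on $Y$ with $\Qs'(\dd)=0$, then $\Qs'|_{\Qv_o}=c\,\Qs_o$ for some $c\in\zee$ by the finite-type classification, so $\Qs'$ and $c\,\Qs$ are both $W$-invariant integral extensions of $c\,\Qs_o$ vanishing on $\dd$, whence $\Qs'=c\,\Qs$ by the uniqueness already proved in (1).
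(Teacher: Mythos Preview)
Your argument is considerably more detailed than the paper's, which dispatches part (1) in one line by citing Proposition~\ref{Q-fill}(2) and handles part (2) by observing from \eqref{Q:rat} that the tuple $(\Qs(\av_1),\dots,\Qs(\av_{\ell+1}))$ is proportional to $(\varepsilon_1,\dots,\varepsilon_{\ell+1})$ and then inspecting Table~\ref{table:affinerootsystem} to find an index with $\varepsilon_i=1$. Your uniqueness argument in (1) is correct and makes explicit the passage from $\Qv$ to $Y=\Qv\oplus\zee\dd$ that the paper leaves to the reader; your treatment of (2) via the canonical finite-type form is also correct and arguably cleaner than the table inspection.

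There is, however, a genuine gap in your existence argument for (1), precisely at the step you flagged as delicate. Your justification of the identity $\Bs_o(v_o,\theta^{\vee})=-\la v_o,a_{\ell+1}\ra\,\Qs_o(\theta^{\vee})$ invokes two facts --- that $\theta^{\vee}=\cc-\av_{\ell+1}$ is a coroot of $R_o$, and that $a_{\ell+1}=\delta-\theta$ --- both of which fail for type $A_{2\ell}^{(2)}$. In that case $d_{\ell+1}=2$ (see Table~\ref{table:affinerootsystem}), so $a_{\ell+1}=\tfrac12(\delta-\theta')$ rather than $\delta-\theta$, and $\theta^{\vee}=\sum_{i\le\ell}d^{\vee}_i\av_i=2(\av_1+\cdots+\av_\ell)$ is twice an element of $\Qv_o$ and not a coroot (already for $\ell=1$ one gets $\theta^{\vee}=2\av_1$, which is not a coroot of $A_1$). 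So \eqref{bq:db} does not apply to $\theta^{\vee}$ directly.

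The identity you need is nonetheless true, and the quickest uniform fix is the alternative you set aside: the standard form of \eqref{ext-form} is $W$-invariant with $(\dd,\dd)=0$, and its restriction to $\Qv_o$ is a nonzero rational multiple $c\,\Qs_o$; running your own $s_{\ell+1}$-computation for that form yields the identity with $\Qs_o$ replaced by $c\,\Qs_o$, and dividing by $c$ gives what you want. (Integrality of the resulting extension is then immediate: $\Qs(\av_{\ell+1})=\Qs_o(\theta^{\vee})\in\zee$ since $\theta^{\vee}\in\Qv_o$, and $\Bs(\dd,\av_i)=\la\dd,a_i\ra\Qs(\av_i)\in\zee$.) Alternatively you can simply verify the $A_{2\ell}^{(2)}$ case by hand.
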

\begin{proof} The first part follows from Proposition \ref{Q-fill} (2). As for the first, note from (\ref{Q:rat}) that $\Qs(\av_i)/ \Qs(\av_j) = \varepsilon_i/ \varepsilon_j$, i.e. $(\Qs(\av_1),\ldots ,\Qs(\av_{\ell +1}))$ is the multiple of $(\varepsilon_1,\ldots ,\varepsilon_{\ell+1}).$ However, an inspection of Table \ref{table:affinerootsystem} (and noting that $\varepsilon_i = d_i(\dv_i)^{-1}$) shows that there is always $i \in \Is$ such that $\varepsilon_i=1$. Thus $\Qs$ exists, and the rest of (2) follows again from Proposition \ref{Q-fill}.\end{proof}

\begin{nrem} \label{rmk:Qsets} In Table \ref{table:metaplecticrootsystem}, we list the values of $\Qs(\av_i)$ for $i \in \Is.$ Observe that $\{\Qs(\av_i)\mid i\in \Is \}$ is $\{1\},$ $\{1,2\},$ or $\{1,3\}$ for every affine type except $A_{2\ell}^{(2)}$ ($\ell\geq 1$).  \end{nrem}

\tpoint{Metaplectic Cartan matrices} Fix $(I, \cdot, \D)$ a simply-connected root datum and let $\Qs$ be the $W$-invariant, integral quadratic form on $Y$ constructed in Lemma \ref{Q-fill:aff}. For each integer $n \geq 1$, we thus have a metaplectic structure $(\Qs, n)$ on $\mf{D}$ to which we can apply the constructions of \S \ref{s:met-rts}. In particular we have defined the metaplectic Cartan matrices $\wt{\As}$ in \eqref{met-car}, and we wish to tabulate the possibilities here. 

Write $\wt{\As} = \left(\wt{a_{ij}}\right)_{i,j\in \Is}.$ Since $n(\av_i) \, (i \in \Is)$ is the smallest positive integers satisfying $n(\av_i) \, \Qs(\av_i) \equiv 0 \mod n,$ we have $n(\av_i) = n/\gcd(n, \Qs(\av_i)).$ Hence we may write
\be{eq:met-car-Q} \wt{a_{ij}} = \frac{ \gcd(n, \Qs(\av_j)) }{\gcd(n, \Qs(\av_i))} a_{ij} .\ee

%

\begin{nlem}\label{lem:sameordual}
Let $\wt{\As}$ be the metaplectic gcm corresponding to the simply connected root datum $(\Is, \cdot, \mf{D})$ and metaplectic structure $(\Qs, n)$ where the root datum is of an affine type {\em{other than}} $A_{2\ell}^{(2)}$ ($\ell\geq 1$). Then the type of $\wt{\As}$ is equal to $\As$ or to $\As^t.$
\end{nlem}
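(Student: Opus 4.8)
The plan is to reduce the statement to a case check over the affine types using the explicit formula \eqref{eq:met-car-Q} for the entries of $\wt{\As}$, namely $\wt{a_{ij}} = \frac{\gcd(n,\Qs(\av_j))}{\gcd(n,\Qs(\av_i))} a_{ij}$. First I would invoke Remark \ref{rmk:Qsets}: for every affine type other than $A_{2\ell}^{(2)}$, the set $\{\Qs(\av_i)\mid i\in\Is\}$ is one of $\{1\}$, $\{1,2\}$, or $\{1,3\}$. In the case $\{1\}$ all $\Qs(\av_i)$ are equal, so $\wt{a_{ij}}=a_{ij}$ and $\wt{\As}=\As$ — nothing to prove. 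So the substance is the two-valued cases $\{1,m\}$ with $m\in\{2,3\}$.

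In the two-valued case, partition $\Is = \Is_1 \sqcup \Is_m$ according to whether $\Qs(\av_i)=1$ or $\Qs(\av_i)=m$. Set $g = \gcd(n,m)$, which is $1$ or $m$. If $g=1$ (i.e. $n$ coprime to $m$), then $\gcd(n,\Qs(\av_i))=1$ for all $i$ and again $\wt{\As}=\As$. If $g=m$ (i.e. $m\mid n$), then $\gcd(n,\Qs(\av_i))$ is $1$ on $\Is_1$ and $m$ on $\Is_m$, so $\wt{a_{ij}} = a_{ij}$ when $i,j$ are in the same block, $\wt{a_{ij}} = m\,a_{ij}$ when $i\in\Is_1, j\in\Is_m$, and $\wt{a_{ij}} = m^{-1}a_{ij}$ when $i\in\Is_m, j\in\Is_1$. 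I would then observe that this is exactly the rescaling that occurs when one rereads an affine Cartan matrix against its transpose: recall from \S\ref{gcms} that $\As = DB$ with $B$ symmetric and $D=\diag(\epsilon_i)$, and from \S\ref{aff:DB} that $\epsilon_i = d_i(\dv_i)^{-1}$; moreover \eqref{Q:rat} gives $\Qs(\av_i)/\Qs(\av_j) = \epsilon_i/\epsilon_j$. Hence $\Qs(\av_i) = c\,\epsilon_i$ for a common constant $c$, so the block $\Is_m$ is precisely $\{i : \epsilon_i = m\,\epsilon_{\min}\}$, and the claimed transformation $a_{ij}\mapsto \frac{\epsilon_i}{\epsilon_j}a_{ij} \cdot (\text{const})$ — which on nearest neighbours multiplies by $m^{\pm 1}$ across a double/triple bond — sends $a_{ij} = 2\frac{\epsilon_j b_{ij}}{\text{(sym. data)}}$ to the transposed Cartan matrix. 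The cleanest way to nail this is: since $B$ is symmetric, $a_{ij}\epsilon_i^{-1}$... I would instead argue directly that $\wt{\As}^t = \wt{\As}$ would force $m=1$, so $\wt{\As}$ is not symmetric, and that $(\wt{a_{ij}})$ agrees with $(a_{ji})$ by checking that $\wt{a_{ij}} = a_{ji}$ reduces to $\frac{\gcd(n,\Qs(\av_j))}{\gcd(n,\Qs(\av_i))} a_{ij} = a_{ji}$, i.e. to $\frac{\Qs(\av_j)}{\Qs(\av_i)} = \frac{a_{ji}}{a_{ij}}$ on pairs with $a_{ij}\neq 0$ — which is precisely \eqref{Q:car} from the Lemma preceding Proposition \ref{Q-fill}, applied with the roles of $i,j$ swapped, once one knows $\gcd(n,\Qs(\av_i)) = \Qs(\av_i)$ for both $i$ (true when $m\mid n$, since then every value in $\{1,m\}$ divides $n$).

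Putting this together: when $m\nmid n$ we get $\wt\As = \As$; when $m\mid n$, for each pair $i\neq j$ with $a_{ij}\neq 0$ we have $\gcd(n,\Qs(\av_i))=\Qs(\av_i)$ and $\gcd(n,\Qs(\av_j))=\Qs(\av_j)$, so by \eqref{eq:met-car-Q} and \eqref{Q:car}, $\wt{a_{ij}} = \frac{\Qs(\av_j)}{\Qs(\av_i)}a_{ij} = \frac{a_{ji}}{a_{ij}}\cdot a_{ij} = a_{ji}$; and for $a_{ij}=0$ we have $a_{ji}=0$ too, so $\wt{a_{ij}}=0=a_{ji}$. Hence $\wt\As = \As^t$. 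Since diagonal entries are $2$ in all cases, this covers the whole matrix, proving $\wt\As\in\{\As,\As^t\}$.

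The main obstacle is not any single deep fact but rather making sure the hypothesis of \eqref{Q:car} — that $\Qs(\av_j)\neq 0$ — is always met, and that $\gcd(n,\Qs(\av_i))$ genuinely equals $\Qs(\av_i)$ for \emph{all} $i$ simultaneously when $m\mid n$; both follow because, by Lemma \ref{Q-fill:aff} and the indecomposability (connectedness) of the affine diagram, $\Qs(\av_i)\neq 0$ for every $i$ and all values lie in $\{1,m\}$ with $m\mid n$. The one genuinely case-dependent input is Remark \ref{rmk:Qsets}, i.e. that the excluded family $A_{2\ell}^{(2)}$ is the \emph{only} affine type whose $\Qs$-value set is not among $\{1\},\{1,2\},\{1,3\}$; for all other types the two-value (or one-value) structure above applies verbatim, which is exactly why that family must be excluded from the statement.
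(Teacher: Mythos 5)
Your proposal is correct and follows essentially the same route as the paper's proof: reduce via Remark \ref{rmk:Qsets} to the cases $\{1\}$ and $\{1,p\}$ with $p\in\{2,3\}$, observe $\wt{\As}=\As$ when $p\nmid n$, and when $p\mid n$ use $\gcd(n,\Qs(\av_i))=\Qs(\av_i)$ together with \eqref{eq:met-car-Q} and \eqref{Q:car} to get $\wt{a_{ij}}=a_{ji}$ (with the trivial $a_{ij}=0$ case handled separately). The brief detour through the symmetrization $\epsilon_i$ is unnecessary, as you yourself conclude before reverting to the direct computation that the paper also uses.
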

\begin{proof}
If $\Qs(\av_i)=1$ for every $i\in \Is$ then \eqref{eq:met-car-Q} implies that $\wt{\As}=\As.$ If this is not the case, then Remark \ref{rmk:Qsets} above implies that $\{\Qs(\av_i)\mid i\in \Is \}=\{1,p\}$ where $p=2$ or $p=3.$ If $p\nmid n$ then $\gcd(n,\Qs(\av_i))=1$ for any $i\in \Is$ and hence \eqref{eq:met-car-Q} implies that $\wt{\As}=\As .$ We show that if $p|n$ then $\wt{\As}=\As ^t.$ This amounts to proving that $\wt{a_{ij}}=a_{ji}$ for any $i,j\in \Is.$ 

First note that since $\Qs(\av_i)$ is equal to $1$ or $p$ ($i\in \Is$) and $p|n,$ we have 
\begin{equation}\label{eq:sameasgcd}
\Qs(\av_i) = \gcd(n,\Qs(\av_i)) \ (i\in \Is).
\end{equation}
Let us fix $i,j\in \Is.$ If $a_{ij}=a_{ji}=0$ then $\wt{a_{ij}} = \wt{a_{ji}}=0$ as well, in particular $\wt{a_{ij}}=a_{ji}=0$. If $a_{ij}a_{ji}\neq 0,$ then \eqref{eq:met-car-Q}, \eqref{Q:car} and \eqref{eq:sameasgcd} implies
$$\wt{a_{ij}} = \frac{ \gcd(n, \Qs(\av_j)) }{\gcd(n, \Qs(\av_i))} a_{ij} = \frac{ \Qs(\av_j) }{\Qs(\av_i)} a_{ij} =\frac{a_{ji}}{a_{ij}}\cdot a_{ij}=a_{ji}.$$
This completes the proof.
\end{proof}

Table \ref{table:metaplecticrootsystem} shows each affine type with the type of $\wt{A}$ for each positive integer $n.$ Lemma \ref{lem:sameordual} above implies that the last column of the table is correct in every row other than the rows corresponding to $A_{2\ell}^{(2)}$ ($\ell\geq 1$). It remains to check the case of $A_{2\ell}^{(2)}$ ($\ell\geq 1$).

\begin{table}[h!]
$$\begin{array}{l|l|l}
\text{Type of $\As$} & (\Qs(\av_1),\ldots ,\Qs(\av_{\ell +1}))& \text{Type of $\wt{A}$} \\ \hline \hline 
A_1 ^{(1)} & (1,1)  & A_1 ^{(1)} \text{ for any $n$} \\ \hline 
A_{\ell} ^{(1)} \ (\ell \geq 2)  & (1,\ldots ,1) & A_{\ell} ^{(1)} \text{ for any $n$} \\ \hline 
B_{\ell} ^{(1)} \ (\ell \geq 3)  & (1,\ldots ,1, 2,1) & B_{\ell} ^{(1)} \text{ if $n$ is odd} \\ 
 & &  A_{2\ell-1}^{(2)}  \text{ if $n$ is even}\\ \hline 
C_{\ell} ^{(1)} \ (\ell \geq 2) & (2,\ldots ,2,1,1) & C_{\ell} ^{(1)} \text{ if $n$ is odd} \\ 
 & &  D_{\ell+1}^{(2)}  \text{if $n$ is even}\\ \hline 
D_{\ell}^{(1)} \ (\ell \geq 4) &  (1, \ldots , 1) & D_{\ell}^{(1)} \text{ for any $n$} \\ \hline 
E_6^{(1)} & (1,\ldots ,1) & E_6^{(1)} \text{ for any $n$} \\ \hline 
E_7^{(1)} & (1,\ldots ,1) & E_7^{(1)} \text{ for any $n$} \\ \hline 
E_8^{(1)} & (1,\ldots ,1) & E_8^{(1)} \text{ for any $n$} \\ \hline 
F_4^{(1)} & (1,1,2,2,1) & F_4^{(1)} \text{ if $n$ is odd}\\ 
 & &  E_6^{(2)} \text{ if $n$ is even}\\ \hline 
G_2^{(1)} &  (1,3,1) & G_2^{(1)} \text{ if $3\nmid n$} \\
 & & D_4^{(3)} \text{ if $3\mid n$} \\ \hline \hline
A_{2}^{(2)} &  (1,4) & A_{2}^{(2)} \text{ if $n$ is odd}\\
 & & A_{1}^{(1)} \text{ if $n$ is even, $4\nmid n$}\\ 
 & & (A_{2}^{(2)}) ^{\vee}\equiv A_{2}^{(2)}  \text{ if $4|n$}\\ \hline 
A_{2\ell}^{(2)} \ (\ell \geq 2) &  (2,\ldots ,2, 1, 4) & A_{2\ell}^{(2)} \text{ if $n$ is odd}\\
 & & D_{\ell+1}^{(2)} \text{ if $n$ is even, $4\nmid n$}\\ 
 & & (A_{2\ell}^{(2)})^{\vee}\equiv A_{2\ell}^{(2)}  \text{ if $4|n$}\\ \hline 
A_{2\ell-1}^{(2)} \ (\ell \geq 3) &  (2, \ldots, 2, 1, 2) & A_{2\ell-1}^{(2)} \text{ if $n$ is odd} \\ 
 & & B_{\ell} ^{(1)} \text{ if $n$ is even}\\ \hline 
D_{\ell+1} ^{(2)} \ (\ell \geq 2) & (1,\ldots ,1,2,2) & D_{\ell+1}^{(2)}  \text{ if $n$ is odd} \\ 
 & & C_{\ell} ^{(1)}  \text{ if $n$ is even}\\ \hline 
E_6^{(2)} &  (2,2,1,1,2) & E_6^{(2)} \text{ if $n$ is odd}\\ 
 & & F_4^{(1)}  \text{ if $n$ is even}\\ \hline 
D_4^{(3)} &  (3,1,3) & D_4^{(3)} \text{ if $3\nmid n$} \\
 & & G_2^{(1)} \text{ if $3\mid n$} 
\end{array}$$
\caption{Metaplectic systems corresponding to each affine type}
\label{table:metaplecticrootsystem}
\end{table}

To check the last column of the table for $A_{2\ell}^{(2)},$ note that if $n$ is odd, $\gcd(n,\Qs(\av_i))=1$ and hence $\wt{a_{ij}}=a_{ij}$ is immediate. Furthermore if $4|n,$ then $\gcd(n,\Qs(\av_i))=\Qs(\av_i)$ ($i\in \Is$), so by the proof of Lemma \ref{lem:sameordual} $\wt{\As}=\As$ follows. If $4\nmid n$ but $n$ is even, then the statements can be proved by explicitly computing the Cartan matrix. 

The gcm of type $A_{2}^{(2)}$ can be read off from the Dynkin diagram in Figure \ref{figure:twistedaffineDynkins}, it is $\As= \begin{pmatrix} 2 & -1 \\ -4 & 2 \end{pmatrix}$
%
Let $n$ be even, but not divisible by $4,$ then $\gcd(n,\Qs(\av_1))=1$ and $\gcd(n,\Qs(\av_1))=2,$ hence 
\begin{equation}\label{eq:tildeCartanm:A22}
\wt{\As} = \left(\begin{array}{cc}
2 & -2\\
-2 & 2
\end{array}\right),
\end{equation}
a gcm of type $A_2^{(1)}.$

The gcm of type $A_{2}^{(2 \ell)}$ can again be read off from \ref{figure:twistedaffineDynkins}, it is 
\begin{equation}\label{eq:Cartanm:A22l}
\As = \left(\begin{array}{ccccccc}
2 & -1 & 0 & \cdots & 0 & 0 & \boxed{-1} \\
-1 & 2 & -1 & & & & \\
0 & -1 & 2 & & & & \vdots \\
\vdots & & &  & -1 & & \\
0 & & & -1 & 2 & \boxed{-2} & 0\\
0 & & & & \boxed{-1} & 2 & 0\\
\boxed{-2} & 0 & 0 & \cdots & 0 & 0  & 2\\
\end{array}\right)
\end{equation}
The boxed entries $a_{ij}$ are the only nonzero ones with $\Qs(\av _i)\neq \Qs(\av_j).$ When $4\nmid n$ but $n$ is even, then $\gcd(n,\Qs(\av_i))=2$ unless $i=\ell,$ and $\gcd(n,\Qs(\av_{\ell}))=1,$ hence $\wt{\As}$ is as follows:
\begin{equation}\label{eq:Cartanm:A22lt}
\wt{\As} = \left(\begin{array}{ccccccc}
2 & -1 & 0 & \cdots & 0 & 0 & \boxed{-1} \\
-1 & 2 & -1 & & & & \\
0 & -1 & 2 & & & & \vdots \\
\vdots & & &  & -1 & & \\
0 & & & -1 & 2 & \boxed{-1} & 0\\
0 & & & & \boxed{-2} & 2 & 0\\
\boxed{-2} & 0 & 0 & \cdots & 0 & 0  & 2\\
\end{array}\right).
\end{equation}
This gcm corresponds to the Dynkin diagram 
\begin{equation}\label{proofeq:Dynk:A22l}
\begin{tikzpicture} 
\dynkdot{0}{0}
\dynkdot{1}{0}
\dynkdot{3}{0}
\dynkdot{4}{0}
\dynkdoublelineto{0}{0}{1}{0}
\dynkdoublelinefrom{3}{0}{4}{0}
\dynkcdots{1}{0}{3}{0}
\draw (0*\dynkstep,0*\dynkstep) node [anchor = north] {$\ell +1$};
\draw (1*\dynkstep,0*\dynkstep) node [anchor = north] {$1$};
\draw (3*\dynkstep,0*\dynkstep) node [anchor = north] {$\ell -1$};
\draw (4*\dynkstep,0*\dynkstep) node [anchor = north] {$\ell$};
\end{tikzpicture}
\end{equation} 
and hence it is of type $D_{\ell+1}^{(2)}.$

\subsection{Rank 2 Considerations} \label{rank2}

\newcommand{\bavpx}{{m}}
\newcommand{\abvpy}{{n}}
\newcommand{\prpairz}{{z}}

In this section we shall present some elementary results on rank two root systems which will be used in the proofs of Lemma \ref{w:h} and in \S \ref{lp:case2} and \S \ref{lp:case3} below.

%
%

\spoint We begin with a purely combinatorial result,

\begin{nlem} \label{gf:fla} There exist polynomials $f_k(X), g_k(X) \in \zee[X]$ ($k\geq 0$) such that 
\begin{eqnarray}
g_{k+1}(X)= & f_k(X)-g_k(X); \label{eq:g_rec}\\ 
f_{k+1}(X)= & X \cdot g_{k+1}(X)-f_k(X) \label{eq:f_rec}
\end{eqnarray} and $f_0(X)=1,$ $g_0(X)=0.$
\end{nlem}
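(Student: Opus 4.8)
The statement is purely formal: the two displayed recursions, read together, express each pair $(f_{k+1},g_{k+1})$ explicitly in terms of the previously constructed pair $(f_k,g_k)$. So the plan is simply to \emph{define} the polynomials by the recursion and verify by induction on $k$ that they lie in $\zee[X]$ and satisfy \eqref{eq:g_rec} and \eqref{eq:f_rec}.

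Concretely, I would set $f_0(X):=1$ and $g_0(X):=0$, both evidently in $\zee[X]$, which handles the base case $k=0$. For the inductive step, suppose $f_k,g_k\in\zee[X]$ have been constructed. Define $g_{k+1}(X):=f_k(X)-g_k(X)$; since $\zee[X]$ is closed under subtraction, $g_{k+1}\in\zee[X]$, and \eqref{eq:g_rec} holds by construction. Then define $f_{k+1}(X):=X\cdot g_{k+1}(X)-f_k(X)$; closure of $\zee[X]$ under multiplication by $X$ and under subtraction gives $f_{k+1}\in\zee[X]$, and \eqref{eq:f_rec} holds by construction. This completes the induction and proves existence.

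The only point worth checking — and it is not really an obstacle — is that the recursion is well-posed, i.e.\ that there is no circularity: $g_{k+1}$ is defined using only the already-constructed $f_k,g_k$, and $f_{k+1}$ is defined using $f_k$ together with the just-constructed $g_{k+1}$, so the definition proceeds unambiguously in the order $g_0,f_0,g_1,f_1,g_2,f_2,\dots$. If it turns out to be convenient for the rank-two applications in \S\ref{rank2}, the same induction can simultaneously record that $\deg f_k=k$ and $\deg g_k=k-1$ for $k\ge 1$, each with leading coefficient $1$ (indeed $f_{k+1}=Xg_{k+1}-f_k$ raises the degree by one at each step, and $g_{k+1}=f_k-g_k$ has degree $\deg f_k=k$); but this extra bookkeeping is not needed for the statement as phrased.
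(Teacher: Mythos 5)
Your proposal is correct and is essentially the paper's argument: the paper likewise treats the recursions as a well-posed recursive definition (remarking that they "uniquely determine" $f_k$ and $g_k$) and offers no further proof of existence, reserving the induction for the closed-form expressions \eqref{g:f-1}--\eqref{g:f-2}. Your side remarks on degrees and leading coefficients are accurate and are indeed the facts the paper later uses in the rank-two lemmas.
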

The above uniquely determine $f_k(X), g_k(X),$ and a straightforward induction shows that in fact \be{g:f-1}
g_k(X) &= & \sum_{i=0}^{k-1} (-1)^i\cdot \binom{2k-1-i}{i}\cdot X^{k-1-i} \\ \label{g:f-2}
f_k(X) & = & \sum_{j=0}^{k} (-1)^j\cdot \binom{2k-j}{j}\cdot X^{k-j}. \ee

\spoint Let $\Is = \{ 1, 2 \}$ and let  \be{As:2} \As := \begin{pmatrix} 2 & \bavpx \\ \abvpy & 2 \end{pmatrix}. \ee Denote the simple roots by $\Pi = \{ a, b \},$ and let $W$ be the Weyl group, every element of which is of the form 
\begin{equation}\label{eq:w_form}
(s_as_b)^k,\ (s_b s_a)^k,\ s_b(s_as_b)^k\text{ or }s_a(s_bs_a)^k \ (k\geq 0).
\end{equation} Using the polynomials constructed in the previous paragraph, we can now state the following result.

\begin{nlem} \label{lem:rk2wgprops} The elements of $W$ act on the simple roots $a$ and $b$ as follows: 
\begin{eqnarray}
(s_as_b)^k(a)= & f_k(\prpairz)\cdot a+(-\abvpy)\cdot g_k(\prpairz)\cdot b \label{eq:even_st_a_on_a}\\
s_b(s_as_b)^k(a)= & f_k(\prpairz)\cdot a+(-\abvpy)\cdot g_{k+1}(\prpairz)\cdot b \label{eq:odd_st_b_on_a}\\
(s_bs_a)^k(a)= & -f_{k-1}(\prpairz)\cdot a+(-\abvpy)\cdot (-g_{k}(\prpairz))\cdot b \label{eq:even_st_b_on_a}\\
s_a(s_bs_a)^k(a)= & (-f_k(\prpairz))\cdot a+(-\abvpy)\cdot (-g_k(\prpairz))\cdot b \label{eq:odd_st_a_on_a}\\
(s_bs_a)^k(b)=  & f_k(\prpairz)\cdot b+(-\bavpx)\cdot g_k(\prpairz)\cdot a\label{eq:even_st_b_on_b}\\
s_a(s_bs_a)^k(b)=  & f_k(\prpairz)\cdot b+(-\bavpx)\cdot g_{k+1}(\prpairz)\cdot a \label{eq:odd_st_a_on_b}\\
(s_as_b)^k(b)=  & -f_{k-1}(\prpairz)\cdot b+(-\bavpx)\cdot (-g_{k}(\prpairz))\cdot a \label{eq:even_st_a_on_b}\\
s_b(s_as_b)^k(b)=  & (-f_k(\prpairz))\cdot b+(-\bavpx)\cdot (-g_k(\prpairz))\cdot a \label{eq:odd_st_b_on_b}
\end{eqnarray}
where $\bavpx=\la b,\av \ra ,$ $\abvpy=\la a,\bv \ra $ and $z=\bavpx \, \abvpy.$ \end{nlem} 
\begin{proof} We have by definition that 
\begin{equation}\label{eq:simpleactions}
s_a(a)=-a,\ s_a(b)=b-\bavpx a,\ s_b(a)=a-\abvpy b,\ s_b(b)=-b.
\end{equation} 
The identities \eqref{eq:odd_st_b_on_a}, \eqref{eq:even_st_b_on_a} and \eqref{eq:odd_st_a_on_a} all follow from \eqref{eq:even_st_a_on_a} by \eqref{eq:simpleactions}. Note also that \eqref{eq:even_st_b_on_b} follows from \eqref{eq:even_st_a_on_a} by symmetry (exchanging $a$ and $b$ exchanges $x$ and $y$). Then again \eqref{eq:odd_st_a_on_b} \eqref{eq:even_st_a_on_b} and \eqref{eq:odd_st_b_on_b} follow from \eqref{eq:even_st_b_on_b} by \eqref{eq:simpleactions}. Thus it suffices to show \eqref{eq:even_st_a_on_a} for every $k.$ One may prove \eqref{eq:even_st_a_on_a} and the recursions \eqref{eq:g_rec} and \eqref{eq:f_rec} at the same time by a straightforward induction on $k$ using \eqref{eq:simpleactions} and the linearity of the Weyl group action. (For instance for $k=0,$ we have $f_k(z)=1$ and $g_k(z)=0.$)
\end{proof}

\spoint \label{s:swap} Next, we analyze those root systems in which there are non-trivial elements in $W$ stabilizing a simple root. These are actually all finite dimensional.

\begin{nlem}\label{lem:eltsfixingroot} Suppose there exists $w \in W, w \neq 1$ so that $w(a)=a$. Then one of the following holds: 
\begin{enumerate}[(i)]
\item $A_1 \times A_1,$ \ \  $\la b,\av\ra =\la a,\bv\ra =0$ and $w=s_b$
\item $B_2,$ \ \ $\la b,\av\ra \cdot \la a,\bv\ra =2$ and $w=w^{-1}=s_bs_as_b$
\item $G_2,$ \ \ $\la b,\av\ra \cdot \la a,\bv\ra =3$ and $w=s_b(s_as_b)^2$ or its inverse $s_a(s_bs_a)^3.$
\end{enumerate}
\end{nlem}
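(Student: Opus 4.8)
Here is how I would approach Lemma \ref{lem:eltsfixingroot}.

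The plan is to read the condition $w(a)=a$ straight off the explicit formulas of Lemma \ref{lem:rk2wgprops}. Write $m=\langle b,\av\rangle$, $n=\langle a,\bv\rangle$, $z=mn$ as there, and recall from \eqref{eq:w_form} that every $w\in W$ has exactly one of the shapes $(s_as_b)^k$, $s_b(s_as_b)^k$, $(s_bs_a)^k$, $s_a(s_bs_a)^k$ with $k\geq 0$. Since $\{a,b\}$ is a basis of $\zee a\oplus\zee b$, equations \eqref{eq:even_st_a_on_a}--\eqref{eq:odd_st_a_on_a} turn $w(a)=a$ into the pair of conditions: the $b$-coefficient vanishes, and the $a$-coefficient equals $1$; i.e.\ $g_k(z)=0$ (resp.\ $g_{k+1}(z)=0$, depending on the shape of $w$) together with $f_k(z)=\pm 1$ with the appropriate sign. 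First I would dispose of the degenerate case $n=0$: then $m=0$ by the gcm axioms, the type is $A_1\times A_1$, $W=\{1,s_a,s_b,s_as_b\}$, and since $s_a(a)=s_as_b(a)=-a$ while $s_b(a)=a$, the only possibility is $w=s_b$ — this is case (i). So from here on $n\neq 0$, equivalently $z>0$, and the $b$-coefficient condition is genuinely $g_\bullet(z)=0$.

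Next I would extract from \eqref{eq:g_rec}--\eqref{eq:f_rec} the three-term recursion $g_{k+1}(X)=(X-2)g_k(X)-g_{k-1}(X)$ with $g_0=0$, $g_1=1$, together with $f_k(X)=(X-1)g_k(X)-g_{k-1}(X)$ for $k\geq 1$ (and $f_0=1$); this is an immediate consequence of eliminating $f$ between the two recursions. From the recursion, a one-line induction gives that for $z\geq 4$ the sequence $\bigl(g_k(z)\bigr)_{k\geq 0}$ is strictly increasing (indeed $g_{k+1}-g_k=(z-3)g_k-g_{k-1}\geq g_k-g_{k-1}>0$), so $g_k(z)\neq 0$ for all $k\geq 1$. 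Since each of the four shapes, for $k\geq 1$, requires $g_k(z)=0$ or $g_{k+1}(z)=0$ — the leftover cases $k=0$ being handled on the spot, $s_b(a)=a-nb\neq a$ and $s_a(a)=-a$ — this rules out $z\geq 4$ entirely. Hence $z\in\{1,2,3\}$, i.e.\ the root system is of type $A_2$, $B_2$, or $G_2$, and by Table \ref{h:tab} the order of $s_as_b$ in $W$ is $h=3,4,6$ respectively.

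Finally I would run the three remaining cases by direct computation: for fixed $z\in\{1,2,3\}$ the pair $\bigl(g_k(z),f_k(z)\bigr)$ is periodic of period $h$, so one lists the indices $k$ meeting the condition attached to each shape and then discards those for which the relation $(s_as_b)^h=1$ already forces $w=1$. For $z=1$ every surviving solution collapses to $w=1$, so $A_2$ contributes nothing (consistent with its absence from the statement). For $z=2$ one is left with the single nontrivial element $w=s_bs_as_b$ (it appears as $s_b(s_as_b)^k$ with $k\equiv 1\bmod 4$ and as $s_a(s_bs_a)^k$ with $k\equiv 2\bmod 4$, which represent the same element), and $s_bs_as_b$ is visibly an involution — case (ii). For $z=3$ one is left with the single nontrivial element $w=s_b(s_as_b)^2=s_a(s_bs_a)^3$, the equality holding because $(s_as_b)^3=(s_bs_a)^3$ is central in the dihedral group $I_2(6)$ — case (iii). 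The part that needs genuine care, and where a careless argument would go wrong, is precisely this last bookkeeping: identifying which distinct values of $k$ give the same group element modulo $(s_as_b)^h=1$, so that one neither invents spurious stabilizers nor misses a genuine one; everything else is a finite mechanical check against Lemma \ref{lem:rk2wgprops}.
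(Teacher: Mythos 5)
Your argument is correct, and its skeleton matches the paper's: dispose of the $\la a,\bv\ra=0$ case by hand, observe that any nontrivial $w$ with $w(a)=a$ forces $g_j(z)=0$ for some $j\geq 1$, reduce to $z\leq 3$, and finish with a finite check in types $A_2$, $B_2$, $G_2$. The one place where you genuinely diverge is the reduction to $z\leq 3$. The paper argues group-theoretically: from $g_k(z)=0$ and the recursion one gets $f_k(z)=-f_{k-1}(z)$, so $v=(s_as_b)^k$ acts as $\pm\mathrm{id}$ on the span of $a,b$; hence $v$ or $v^2$ is the identity, $s_as_b$ has finite order, and the classification of rank-two Coxeter groups (\cite[Proposition 3.13]{kac}, cf.\ Table \ref{h:tab}) forces $z\leq 3$. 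You instead eliminate $f$ to get the three-term recursion $g_{k+1}=(X-2)g_k-g_{k-1}$ and show by a one-line induction that $\bigl(g_k(z)\bigr)_{k\geq 1}$ is strictly increasing (hence nonvanishing) once $z\geq 4$. Your route is more elementary and self-contained — it avoids the appeal to Kac entirely and quantifies exactly why large $z$ fails — while the paper's is shorter on the page and makes the conceptual point that a stabilizer of a simple root can only exist when the rank-two subsystem is finite. Your closing bookkeeping for $z\in\{1,2,3\}$ (tracking which residues of $k$ modulo the order $h$ of $s_as_b$ survive, and identifying coincidences such as $s_b(s_as_b)^2=s_a(s_bs_a)^3$ in $G_2$) is exactly the ``easily compute $f_k(z)$ and $g_k(z)$ for the finitely many elements of $W$'' step the paper leaves implicit, and you carry it out correctly.
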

\begin{proof}
We use the notation and results of Lemma \ref{lem:rk2wgprops} freely. First note that $m=0$ holds if and only if $n=0.$ In this case $s_a$ and $s_b$ commute, $W \, \cong (\ZZ/2\ZZ)^2$ and $w(a)=a$ for a nontrivial $w$ implies $w=s_b,$ as in (i). From now on we assume $xy\neq 0.$ Then it follows from the formulas in Lemma \ref{lem:rk2wgprops} that if $w$ is nontrivial and $w(a)=a$ then there is a $k>0$ such that $g_k(z)=0.$ Then for $v=(s_as_b)^k$ we have that by \eqref{eq:even_st_a_on_a} and \eqref{eq:even_st_a_on_b}
\begin{equation}\label{eq:v_on_aandb}
v(a)=f_k(z)\cdot a \text{ and } v(b)=-f_{k-1}(z)\cdot b
\end{equation}
and by \eqref{eq:f_rec} $f_k(z)=z\cdot g_k(z)-f_{k-1}(z)=-f_{k_1}(z).$
This implies that $v(a)=a$ and $v(b)=b$ or $v(a)=-a$ and $v(b)=-b$ \cite[Proposition 5.1 b)]{kac}. Thus either $v$ or $v^2$ is the identity in $W$ \cite[(3.12.1)]{kac}, and the order of $(s_as_b)$ is finite in $W$ (a divisor of $2k$). By \cite[Proposition 3.13]{kac}) (cf. \ref{h:tab}) this implies $z=xy\leq 3.$ Then $W=\la s_a,s_b\ra$ is finite, and $a,b$ are simple roots of a root system of type $A_2,$ $B_2=C_2$ or $G_2.$ In each of these cases we can easily compute $f_k(z)$ and $g_k(z)$ for the finitely many elements of $W,$ and check that (ii) or (iii) is satisfied.
\end{proof}

\spoint \label{s:fix} We shall also need to analyze those rank two root systems in which the two simple roots are in the same Weyl group orbit. Again, this only happens for finite-dimensional root systems.

\begin{nlem}\label{lem:weylatob} If there exists $w\in W$ so that $w(a)=b$ then $\la b,\av\ra=\la a,\bv\ra=-1$ (i.e. $a$ and $b$ are simple roots of a root system of Dynkin type $A_2$) and $w=s_as_b$ or its inverse $(s_bs_a)^2.$
\end{nlem}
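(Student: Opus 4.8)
The plan is to convert the hypothesis $w(a)=b$ into a statement about the polynomials $f_k$ of Lemma~\ref{gf:fla}, using the explicit Weyl action of Lemma~\ref{lem:rk2wgprops}, and then to show that this forces $z=1$, after which identifying $w$ is immediate.

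Write $m=\la b,\av\ra$, $n=\la a,\bv\ra$ and $z=mn$; since the Cartan matrix \eqref{As:2} is a gcm, $m,n\le 0$ and $m=0\iff n=0$. First I would dispose of the degenerate case: if $m=0$ then $n=0$, so $s_a$ and $s_b$ commute, $W\cong(\ZZ/2\ZZ)^2$, and the $W$-orbit of $a$ is $\{a,-a\}$, which cannot contain $b$ because $a$ and $b$ are linearly independent. So $m,n$ are nonzero, hence $\le -1$ each, and $z=mn\ge 1$. Now assume $w(a)=b$; then $w\ne 1$. Writing $w$ in one of the four shapes of \eqref{eq:w_form} and reading off the coefficient of $a$ in $w(a)$ from the appropriate identity among \eqref{eq:even_st_a_on_a}, \eqref{eq:odd_st_b_on_a}, \eqref{eq:even_st_b_on_a}, \eqref{eq:odd_st_a_on_a}, that coefficient equals $\pm f_k(z)$ or $\pm f_{k-1}(z)$. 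Since it must vanish and $f_0(X)=1$, we conclude $f_j(z)=0$ for some $j\ge 1$.

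Next I would show that $f_j(z)=0$ for an integer $z\ge 1$ forces $z=1$. Eliminating $g_k$ between \eqref{eq:g_rec} and \eqref{eq:f_rec} gives the three-term recursion $f_{k+1}(X)=(X-2)f_k(X)-f_{k-1}(X)$ with $f_0=1$, $f_1=X-1$. For $z=2$ this reads $f_{k+1}=-f_{k-1}$, so $(f_k(2))_{k\ge 0}$ is $4$-periodic with values $\pm 1$; for $z=3$ it reads $f_{k+1}=f_k-f_{k-1}$, so $(f_k(3))_{k\ge 0}$ is $6$-periodic with values in $\{\pm 1,\pm 2\}$; and for $z\ge 4$ a one-line induction gives $1=f_0(z)<f_1(z)<f_2(z)<\cdots$, since once $f_k>f_{k-1}>0$ we have $f_{k+1}=(z-2)f_k-f_{k-1}\ge 2f_k-f_{k-1}>f_k$. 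In each of these cases $f_j(z)\ne 0$ for every $j$, contradicting the previous paragraph. Hence $z=1$, i.e.\ $m=n=-1$, and $a,b$ are the simple roots of a root system of type $A_2$.

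Finally I would pin down $w$. Type $A_2$ is none of $A_1\times A_1$, $B_2$, $G_2$, so by Lemma~\ref{lem:eltsfixingroot} no nontrivial element of $W$ fixes $a$; hence there is at most one $w$ with $w(a)=b$. Using \eqref{eq:simpleactions} with $m=n=-1$ one computes $s_b(a)=a+b$ and then $s_as_b(a)=s_a(a+b)=-a+(b+a)=b$, so $w=s_as_b$; and since $(s_bs_a)^3=1$ (the braid relation, $h_{12}=3$ when $a_{12}a_{21}=1$, cf.\ Table~\ref{h:tab}) we also have $(s_bs_a)^2=(s_bs_a)^{-1}=s_as_b=w$, which is the stated description. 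I expect the only genuine work to be the case $z\ge 4$, i.e.\ excluding zeros of the $f_j$ for the infinite rank-two root systems; the monotonicity induction above settles it, and as a sanity check one may note $f_k(X)=\sin\!\big((2k+1)\tfrac{\theta}{2}\big)\big/\sin\!\big(\tfrac{\theta}{2}\big)$ when $X=2+2\cos\theta$, so all real zeros of $f_k$ lie in $(0,4)$ and the only positive integer among them is $1$. The remaining steps are routine bookkeeping with Lemma~\ref{lem:rk2wgprops}.
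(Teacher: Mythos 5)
Your proof is correct, and its skeleton coincides with the paper's: use Lemma~\ref{lem:rk2wgprops} to translate $w(a)=b$ into the vanishing of the $a$-coefficient, hence $f_j(z)=0$ for some $j\ge 1$, deduce $z=1$, and then identify $w$ among the finitely many elements of the $A_2$ Weyl group. The one substantive difference is how you rule out $z\ge 2$: the paper observes from the closed formula \eqref{g:f-2} that $f_k$ has leading coefficient $1$ and constant term $\pm 1$, so any integer root has absolute value $1$ (rational root theorem), whereas you derive the three-term recursion $f_{k+1}=(X-2)f_k-f_{k-1}$ and handle $z=2,3$ by periodicity and $z\ge 4$ by a monotonicity induction. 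Both are valid; the paper's is shorter, while yours avoids invoking the explicit binomial formula and, as you note, makes transparent via the Chebyshev-type identity why all real zeros of $f_k$ lie in $(0,4)$. Your final step also differs slightly and is arguably cleaner: you get uniqueness of $w$ from Lemma~\ref{lem:eltsfixingroot} (trivial stabilizer of $a$ in type $A_2$) plus one explicit computation $s_as_b(a)=b$, rather than tabulating $f_k(1),g_k(1)$ for $k\le 3$ as the paper does. One cosmetic remark: you correctly observe that $(s_bs_a)^2=s_as_b$ in $A_2$, so the two expressions in the statement name the same element; this matches the intended conclusion.
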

\begin{proof}
It follows from the formulas of Lemma \ref{lem:rk2wgprops} that if $w(a)=b$ then $f_k(z)=0$ for some $k\geq 0,$ where $z=\la b,\av\ra\cdot \la a,\bv\ra$ 
It follows from Lemma \ref{gf:fla} that the leading coefficient of $f_k$ is $1$ and the constant term of $f_k$ is $\pm 1$ for every $k.$\footnote{This also follows more directly by induction from $f_0(z)=1,$ $g_z=0$ and the recursions \eqref{eq:g_rec} and \eqref{eq:f_rec}.} The integer $z$ is a root of $f_k$, thus $|z|=1.$ This implies that $z=1$ and $\la b,\av\ra=\la a,\bv\ra=-1,$ and the order of $(s_as_b)$ is $3.$ One may compute $f_k(z)$ and $g_k(z)$ for $k\leq 3$ from $g_0(z)=0$ and $f_0(z)=1$ and \eqref{eq:g_rec} and \eqref{eq:f_rec}. From these values and the formulas of Lemma \ref{lem:rk2wgprops} we can conclude that $w(a)=b$ implies $w=s_as_b$ or $w=(s_bs_a)^2.$ \end{proof}

\section{Combinatorics of Infinite Symmetrizers}

In this section we study two (infinite) sums over the Weyl group of a Kac-Moody root system.The first, termed a ``simple symmetrizer'' (cf. \S \ref{sym-sim}), corresponds roughly to the \emph{answer} provided by  Casselman and Shalika \cite{ca:sh} for the unramified Whittaker function. It is also, with the exception of the correction factor $\mf{m}$ described below, the same symmetrizer analyzed by Lee and Zhang \cite{lee:zh}. The second, termed a ``Hecke symmetrizer'' (cf. \S \ref{hecke-sym}), is built out of certain Demazure-Lusztig type operators and corresponds to a group-theoretic decomposition of the unramified Whittaker function (cf. \eqref{Iwa-Whit}).  The ``non-metaplectic'' analogue of the results presented here have been studied elsewhere \cite{cher:ma}, \cite{bkp} for (untwisted) affine root systems. We revisit the argument in \cite{cher:ma} and show  in \S \ref{sec:pf-prop-non-met} how it can can be extended, using crucially a result of Viswanath \cite{vis}, to a more general Kac-Moody setting. The metaplectic generalization then follows easily in \S \ref{sec-met-sym} using standard properties of the Chinta-Gunnells action and the formalism of metaplectic root data.

\subsection{Non-metaplectic Symmetrizers } \label{sec-non-met-sym}

\newcommand{\Cfv}{\C_v^{\fin}}
\newcommand{\Cv}{\C_v}

\tpoint{Notation}  Throughout this section,  $(\Is, \cdot)$ will be a Cartan datum with associated gcm $\As$ and $\mf{g}(\As)$ the corresponding Lie algebra. We keep the notations of \S \ref{s:la} and additionally write $r:= | \Is |,$ so the sets $\Pi$ and $\Pi^{\vee}$ will be enumerated as $\{a_1, \ldots, a_r \}$ and $\{ \av_1, \ldots, \av_r \}.$ Also introduce a formal parameter $v$ and let \be{Cfin}\begin{array}{lcr} \Cfv:=\C[v] & \text{ and } & \Cv:= \C[[v]] \end{array} \ee denote the ring of polynomials and power series respectively in this parameter.

\spoint \label{expansions} Let  $\mc{Q}:=  \C[[e^{-\av_1}, \ldots, e^{-\av_r}]]$ be the ring of power series in the indeterminates $e^{-\av_i} (i \in \Is)$ subject to the usual group algebra relations $e^{-\av} e^{-\bv} = e^{- \av - \bv}$. We then set \be{q:qfin} \begin{array}{lcr}
\qnf = \Cfv \otimes_{\C} \mc{Q} & \text{ and } & \hqn = \C[[v]] \otimes_{\C} \mc{Q} \end{array} \ee Sometimes we refer to the former ring as the set of \emph{$v$-finite} elements in the latter. The action of $W$ on $\Qv$ does \emph{not} induce an action on the completion $\mc{Q}$ (or $\qnf$ or $\hqn$): the multiplication \be{mult:try} a_{\sigma} [ \sigma] b_{\tau} [\tau] = a_{\sigma} b_{\tau}^{\sigma} [\sigma \tau] \text{ with } \sigma, \tau \in W \text{  and } a_{\sigma}, b_{\tau} \in \hqn \ee may not be a well-defined in $\hqn$ as $b_{\tau}^{\sigma}$, the application of $\sigma$ to $b_{\tau},$ may not be contained in $\hqn$. So we just view $\qn$ and $\hqn$ as $\C$-vector spaces in the sequel, but \eqref{mult:try} is an important heuristic rule to bear in mind (and indeed sometimes it does produce meaningful answers). We also consider the vector space  \be{qn:vs} \hqn[W]^{\vee}:= \left\lbrace \sum_{\sigma \in W} a_{\sigma} [\sigma], \,  \text{ where } a_\sigma \in \hqn \right\rbrace. \ee  We may repeat the above construction replacing $\hqn$ with $\qn.$ The space so obtained will be denoted $\qn[W]^{\vee}$, and called the set of $v$-finite elements in $\hqn[W]^{\vee}.$

\tpoint{Some rational functions and their expansions} \label{rats} Next, we introduce the rational functions of a formal variable $X$ which are to  play a central role in this part\footnote{In the sequel, expressions with a ${}^{\flat}$ will signify ``Whittaker'' variants of their unadorned ``spherical'' counterparts}, \be{b:c} \begin{array}{lcr} \b(X) = \frac{v -1}{1- X}, &  \cw(X) = \frac{ 1 - v X^{-1}}{1 - X}, &  \c(X) = \frac{ 1 - v X}{1 - X} \end{array}. \ee 
We record here the expansions of these rational functions in powers of $X$ and $X^{-1}$ respectively, 
\be{b:exp} 
\b(X) & = & (v- 1) ( 1+ X + X^2 + \cdots) = (1-v) ( X^{-1} + X^{-2} + \cdots ) \\ 
\c(X) &=& 1 + (1-v) X + (1-v) X^2 + \cdots = v + (v - 1) X^{-1} + (v - 1) X^{-2} + \cdots \\ 
\cw(X) &=& -v X^{-1} + (1-v) + (1-v)X + \cdots = - X^{-1} + (v-1) X^{-2} + (v-1) X^{-3}  + \cdots \ee 
\comment{Changed last line from 
\be{} \cw(X) &=& -v X^{-1} + (1-v) + (1-v)X + \cdots = - X^{-1} + v X^{-2} + (v-1) X^{-3} + (v-1)X^{-4} + \cdots. \ee 
PA-09-06}
We shall also need the following simple observation, 
\be{c:aa} \c(X) \c(X^{-1})  = \cw(X) \cw(X^{-1}); \ee 
moreover, both expressions are equal to 
\be{c:aa-exp}  v + (-1+ 2 v - v^2) X + (-2 + 4 v - 2v^2)  X^2 + \cdots  \ee

\noindent For a root $a \in R$ we shall write \be{bc:2} \b(\av):= \b(e^{\av}), \, \c(\av):= \c(e^{\av}), \, \cw(\av):= \cw(e^{\av}). \ee  Using the expansions above, they can be seen to lie in $\qnf.$ If $a  \in R_{+}$ for example, one takes the above expansions in $X^{-1}$ and substitutes the value $X= e^{\av}.$ 

\tpoint{A result on Poincare polynomials} \label{sec:mac-vis} For each root $a \in R$, recall the notion of root multiplicity $\mul(\av)$ introduced in \S \ref{s:la} (4) . Consider the following infinite product  \be{Del} \Delta = \prod_{ a \in R_+} \left( \frac{1 - v e^{ -\av}}{1 - e^{  -\av}} \right)^{\mul(\av)} . \ee  
It is easy to see that the expansion of $\Delta$ in negative powers of the coroots lies in $\qn$ and in fact it is a unit in this ring.  For each $w \in W$ we may also consider the following element whose expansion again lies in $\qn$, \be{Del:w} \Delta^w =\prod_{ a \in R_+} \left( \frac{1 - v e^{ - w \av}}{1 - e^{  - w \av}} \right)^{\mul(\av)}. \ee Generalizing a result of Macdonald \cite[(3.8)]{mac:aff} for affine root systems, Viswanath has shown the following,

\begin{nprop}\label{vis} \cite[\S 7.1]{vis}\footnote{The element $\mf{m}$ is $P_0(t)^{-1}$ in  notation of \cite{vis}.} \label{visw} There exists an element $\mf{m} \in \qn$ such that \be{vis:id} \mf{m} \, \sum_{w \in W} \Delta^w =  \sum_{w \in W} v^{\ell(w)}. \ee  \end{nprop}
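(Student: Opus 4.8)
My plan is to invoke this directly: it is the main theorem of Viswanath \cite[\S 7.1]{vis} (which itself generalizes Macdonald's identity \cite[(3.8)]{mac:aff} for affine root systems), so for this proposition I would simply quote \cite{vis}. To indicate why the statement holds, here is the shape the argument takes. The first task is to make sense of the left-hand side. A short computation with the expansions of \S\ref{rats} shows that the coefficient of $e^{0}$ in $\Delta^{w}$ equals $v^{\ell(w)}$: under $w$ the positive imaginary coroots stay positive and $\mul(\cdot)$ is $W$-invariant, while exactly $\ell(w)$ of the positive real coroots $\av$ have $w\av$ negative, and each such factor contributes constant term $v$ (from $\c(X)=v+(v-1)X^{-1}+\cdots$) whereas the others contribute constant term $1$. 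In particular $\sum_{w\in W}\Delta^{w}$ is \emph{not} an element of $\qn$; but since there are only finitely many $w$ of each length, one checks --- filtering first by powers of $v$, then by height of coroots --- that the coefficient of each monomial stabilizes, so the sum converges in the completion $\hqn=\C[[v]]\otimes_{\C}\mc{Q}$. Carrying out this convergence check is step one.

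Step two is to read off the structural form. By step one, $S:=\sum_{w}\Delta^{w}$ has $e^{0}$-coefficient $\sum_{w}v^{\ell(w)}$, hence $S=\bigl(\sum_{w}v^{\ell(w)}\bigr)\cdot u$ for a unit $u\in\hqn$ whose $e^{0}$-coefficient is $1$; one then sets $\mf{m}:=u^{-1}$, which automatically lies in $\hqn$ with $e^{0}$-coefficient $1$ and satisfies $\mf{m}\,\sum_{w}\Delta^{w}=\sum_{w}v^{\ell(w)}$. To organize this --- and, more importantly, to get any handle on $\mf{m}$ --- one uses that $S$ is formally $W$-stable under reindexing $w\mapsto w_{0}w$; since $W$ does not act on the completion, this has to be made precise through the functional equations that $\b(\av_{i}),\c(\av_{i}),\cw(\av_{i})$ satisfy, i.e.\ the Demazure--Lusztig-type relations of \S\ref{rats}, which is the same mechanism underlying the Hecke symmetrizer treated later in this section.

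The genuine content --- and the step I expect to be the real obstacle --- is that $\mf{m}$ lies in the \emph{$v$-finite} ring $\qn$, i.e.\ that its coefficients are polynomials in $v$ rather than merely power series. This is exactly the ``polynomiality of the symmetrizer'' emphasized in the introduction, and it does not follow from the formal manipulations above. Viswanath establishes it by an induction along the root system: the correction factor $\mf{m}$ records the discrepancy coming from imaginary roots (so $\mf{m}=1$ in finite type), and along each imaginary direction the relevant subsystem is of affine type, where Macdonald's explicit product formula for $\mf{m}$ makes the $v$-finiteness visible; an induction on rank then propagates it to the general symmetrizable Kac--Moody case. Since \cite[\S 7.1]{vis} carries this out in precisely the generality needed here, I would cite it for the present proposition and reserve the new work for its application in Theorem \ref{form:sym}.
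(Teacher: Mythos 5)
Your plan coincides with the paper's: Proposition \ref{vis} is presented there purely as a citation of Viswanath \cite[\S 7.1]{vis} (with $\mf{m}=P_0(t)^{-1}$ in his notation) and no independent proof is given, so quoting \cite{vis} is exactly what the paper does. Your preliminary observations are sound --- $[e^{0}]\Delta^{w}=v^{\ell(w)}$ and the reduction of existence of $\mf{m}\in\hqn$ to the genuine issue of $v$-finiteness are correct --- though your sketch of Viswanath's internal induction is heuristic and not load-bearing, since the citation carries the whole weight here.
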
 

\newcommand{\hI}{\widehat{\mc{I}}}
\newcommand{\hIw}{\widehat{\mc{I}}^{\flat}}

\tpoint{Simple Symmetrizers} \label{sym-sim} For each $w \in W,$ the expression $\mf{m} \, \Delta^w$ has an expansion in $\qn$ as both $\mf{m}$ and $\Delta^w$ do.  Hence, we may consider the following expressions in $\qnd,$ 
\be{I}  \I &:=&  \, \sum_{ w \in W}  \Delta^w [w], \hskip .5cm  \hI :=\mf{m}\I \\
\label{I:w} \Iw &:=& \  \, \Delta \, \sum_{w \in W}  (-1)^{\ell(w)}  \left( \prod_{a \in R(w) } e^{ - \av} \right)  [w], \hskip .5cm  \hIw :=\mf{m}\Iw.    \ee 
Note that we may also write $\Iw=  \, \Delta \, \sum_{w \in W}  (-1)^{\ell(w)} e^{ w \rho^{\vee} - \rho^{\vee}} [w],$ which draws out the connection to the Weyl character formula more explicitly. In other words, if we ``apply'' $\Iw$ to the element $e^{\lv}$ with $\lv \in \Lv_+$ we obtain the expression (which lives in the space defined in \ref{loo-space} below) \be{app-Iw} \Iw(e^{\lv}) = \prod_{a \in R_+} (1 - v e^{-\av})^{\mul(\av)} x_{\lv} \ee where $x_{\lv}$ is the Weyl-Kac character of the irreducible representation of $\mf{g}$ with highest weight $\lv.$

\tpoint{Demazure-Lusztig operators} \label{hecke-sym} 
For $i\in I$, we use the rational functions introduced in \S \ref{rats}, to define the following elements which have expansions that lie in $\qnf[W]^{\vee}$ 
\be{Ts} \T_{w_i} &:=& \c(a_i) [w_i] + \b(a_i) [1], \\ \Tw_{w_i} &:=& \cw(a_i) [w_i] + \b(a_i) [1]. \ee 
One verifies that the braid relations holds for these elements (see \cite[Proposition 6.1]{pat:whit} and \cite{lus-K}), and hence we can form $\Tw_w$ and $\T_w$ in the usual way:  if $w=w_{i_1}\cdots w_{i_k}$ is a reduced decomposition with each $w_{i_j}$ a simple reflection in $W$, we have a well-defined expression (independent of the reduced decomposition chosen)
\be{T:exp-1}
\T_w &= &\T_{w_{i_1}} \, \cdots \, \T_{w_{i_k}}\\
&= &(\c(a_{i_1})[w_{i_1}]+\b(a_{i_1})[1])\cdots (\c(a_{i_r})[w_{i_k}]+\b(a_{i_k})[1]) \label{T:exp-2}.\ee 

\noindent Expanding and moving all the expression involving $\c$ and $\b$ to the left, we obtain formally 
\be{T:exp-3} \T_w = \sum_{\sigma \in W} A_{\sigma}(w) [\sigma], \ee 
where $A_{\sigma}(w)$ are some rational functions in $\b, \c$. Similarly, we may write \be{T:exp-3:w} \Tw_w = \sum_{\sigma \in W} A^{\flat}_{\sigma}(w) [\sigma], \ee 
where $A_{\sigma}(w)$ are some rational functions in $\b, \cw$.

\newcommand{\Cw}{C^{\flat}}

\tpoint{Hecke Symmetrizers} \label{heck-sym} Consider now the following \emph{Hecke symmetrizers} \be{Ts} \begin{array}{lcr} \mc{P}:= \sum_{ w \in W} \T_w & \text{ and } & \mc{P}^{\flat} := \sum_{ w \in W} \Tw_w \end{array}. \ee  We need to give a precise meaning to these expressions. To formulate this problem more explicitly, we first use \eqref{T:exp-3} and \eqref{T:exp-3:w} to write  
\be{C:sig} \begin{array}{lcr} C_{\sigma} = \sum_{w \in W} A_{\sigma}(w) & \text{ and } & \Cw_{\sigma} = \sum_{w \in W} \Aw_{\sigma}(w)  \end{array} \ee 
so that formally
\be{P:Csig} \begin{array}{lcr} \P = \sum_{ w \in W} \T_w = \sum_{\sigma \in W} C_{\sigma} [\sigma] & \text{ and } & \Pw = \sum_{ w \in W} \Tw_w = \sum_{\sigma \in W} \Cw_{\sigma} [\sigma]. \end{array}  \ee 
It remains to see that the expansion of $C_{\sigma}, \Cw_{\sigma}$ is well-defined for each $\sigma \in W.$ To be more precise, we introduce the following terminology: for a formal sum $f = \sum_{\lv \in \Lv} c_{\lv} e^{\lv}$ with $c_{\lv}$ coefficients (in some space), we write \be{coeff:term} [e^{\mv}] \, f = c_{\mv} \text{  if } f = \sum_{\lv \in \Lv} c_{\lv} e^{\lv}. \ee 

\begin{de} We say that the element $\mc{P}$ (resp. $\Pw$) has a well-defined expansion in $\hqn[W]^{\vee}$ if the following is satisfied: \begin{enumerate}[(a)] \item For each $\mv \in \Lv$ we have $[e^{\mv}] C_{\sigma} \in \hqn$ \item If $\mv \notin \Lv_-$ then $[e^{\mv}] C_{\sigma}=0$ \end{enumerate} Similarly we can define when $\mc{P}$ (resp. $\Pw$) has a well-defined expansion in $\qn[W]^{\vee}.$ \end{de}

\tpoint{Statement of Main proportionality result}  We now have all of the terminology to state our main result on the two symmetrizers introduced above.  The following result is due to Cherednik  \cite[Lemma 2.19]{cher:ma} in the affine context.

\begin{nthm} \label{form:sym} With the notion of expansions defined above, the elements $\P$ and $\Pw$ above are well-defined. More precisely, we have that \begin{enumerate} \item (Weak Cherednik Lemma) The elements $\P$ and $\Pw$ have well-defined expansions in $\hqn[W]^{\vee}.$ 

\item There exist $W$-invariant elements $\mf{c}$ and $\cprow$ in $\hqn$ such that as elements in $\hqn[W]^{\vee}$  \be{eq:sph} \begin{array}{lcr} \mc{P} =  \mf{c} \, \I & \text{ and } &  \Pw = \cprow \,  \Iw \end{array}, \ee where $\I, \Iw$ were the ``simple symmetrizers'' defined in (\ref{I}, \ref{I:w}). Moreover, we have $\mf{c} = \cprow = \mf{m} ,$ where $\mf{m}$ is as in (\ref{vis:id}). 

 \item (Strong Cherednik Lemma) The elements $\P$ and $\Pw$ have well-defined expansions in $\qn[W]^{\vee}$ and both equalities in (\ref{eq:sph}) can be viewed in $\qn[W]^{\vee}.$

\end{enumerate}
\end{nthm}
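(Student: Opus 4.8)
The plan is to prove (1) first, as the substantive combinatorial input (this is where the argument departs from Cherednik's); then to deduce (2) from Hecke‑algebra manipulations together with Viswanath's identity \eqref{vis:id}; and finally to obtain (3) as a formal consequence of (2) and the polynomiality $\mf{m}\in\qn$ (Proposition \ref{vis}).

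\textbf{Part (1).} Fix a reduced word $w=w_{i_1}\cdots w_{i_N}$, $N=\ell(w)$, and expand $\T_w=\prod_{j=1}^{N}\bigl(\c(a_{i_j})[w_{i_j}]+\b(a_{i_j})[1]\bigr)$ using the rule \eqref{mult:try}. A choice of a set $S\subseteq\{1,\dots,N\}$ of ``$\c$‑positions'' contributes a term $\bigl(\prod_{j\in S}\c(\beta_j^{S})\bigr)\bigl(\prod_{j\notin S}\b(\gamma_j^{S})\bigr)[w_S]$, where $w_S=\prod_{j\in S}w_{i_j}$ is the (in general non‑reduced) subword and the $\beta_j^{S},\gamma_j^{S}$ are the partial‑product translates of the $a_{i_j}$, exactly of the shape appearing in the enumeration of inversion sets preceding Lemma \ref{lem:inverseset}. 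Two facts are needed. First, $A_\sigma(w)$ vanishes unless $\sigma\le w$ in the Bruhat order (subwords of reduced words), and every monomial of $A_\sigma(w)$ lies in $\Lv_-$: by Lemma \ref{lem:inverseset} the multiset $\{\beta_j^{S}\}\cup\{\gamma_j^{S}\}$ is a genuine inversion set $R(w_S)$ together with some pairs $\{a,-a\}$, and one then reads off from the expansions \eqref{b:exp} that every factor, whether a $\c$ or a $\b$, expands into non‑positive powers of the coroots. Second --- and this is the crux --- one needs a \emph{depth bound}: for $\sigma\le w$ and $\mv\in\Lv$ there is $N_0=N_0(\sigma,\mv)$ such that $[e^{\mv}]A_\sigma(w)=0$ whenever $\ell(w)>N_0$, because expressing a fixed $\sigma$ by an ever‑longer word forces ever more of the factors to be $\b$'s at positive roots or cancelling pairs, each of which strictly raises the depth of the resulting monomial. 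This is the content of the forthcoming Lemma \ref{large-len}, and it is the main obstacle in the whole argument. Granting it, for fixed $\sigma,\mv$ only finitely many $w$ contribute to $[e^{\mv}]C_\sigma$, so each $C_\sigma$ defines an element of $\hqn$ supported on $\Lv_-$; hence $\P$ has a well‑defined expansion in $\hqn[W]^{\vee}$, and the identical analysis of the $\Tw_{w_i}$ (whose $\cw$'s also expand in non‑positive coroot powers by \eqref{b:exp}) gives the same for $\Pw$.

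\textbf{Part (2).} A direct calculation from \eqref{b:exp} (using also \eqref{c:aa}) gives the quadratic relations $\T_{w_i}^{2}=(v-1)\T_{w_i}+v$ and $\Tw_{w_i}^{2}=(v-1)\Tw_{w_i}+v$. Now that the sums converge, pairing $w\leftrightarrow w_iw$ in $\P=\sum_w\T_w$ gives $\T_{w_i}\P=v\P$ and pairing $w\leftrightarrow ww_i$ gives $\P\T_{w_i}=v\P$ (and likewise for $\Pw$). Writing $\P=\sum_\sigma C_\sigma[\sigma]$, using $v-\b(a_i)=\c(a_i)$, and dividing by $\c(a_i)$ in the domain $\hqn$, the relation $\P\T_{w_i}=v\P$ becomes $C_{\sigma w_i}=\tfrac{\c(\sigma a_i)}{\c(-\sigma a_i)}\,C_\sigma$; iterating along a reduced word and recognising the right‑hand product as the cocycle $\Delta^{w}/\Delta$ (using that $w_i$ flips only the $i$‑th positive root and that $\mul$ is $W$‑invariant, so $\Delta^{w_i}/\Delta=\c(a_i)/\c(-a_i)$) yields $C_w=\tfrac{\Delta^{w}}{\Delta}\,C_1$, i.e. $\P=\mf{c}\,\I$ with $\mf{c}:=C_1\,\Delta^{-1}\in\hqn$ ($\Delta$ being a unit in $\qn$). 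Applying $\T_{w_i}$ to this identity, using $[w_i]\I=\I$ and $\T_{w_i}\P=v\P$, and that $\I$ is a left non‑zero‑divisor, forces $\c(a_i)\mf{c}^{w_i}=\c(a_i)\mf{c}$, hence $\mf{c}^{w_i}=\mf{c}$ for all $i$, so $\mf{c}$ is $W$‑invariant. Evaluating $\P=\mf{c}\,\I$ at $e^{0}$ --- legitimate once convergence is known --- and using $\T_{w_i}(e^{0})=(\c(a_i)+\b(a_i))e^{0}=v\,e^{0}$, so $\P(e^{0})=\sum_w v^{\ell(w)}e^{0}$ while $\I(e^{0})=\sum_w\Delta^{w}$, we get $\mf{c}\sum_w\Delta^{w}=\sum_w v^{\ell(w)}$; comparing with \eqref{vis:id} and cancelling the non‑zero‑divisor $\sum_w\Delta^{w}$ gives $\mf{c}=\mf{m}$. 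For the Whittaker symmetrizer, the key point is that the numerator $f:=\prod_{a\in R_+}(1-ve^{-\av})^{\mul(\av)}$ of $\Delta$ is a unit with $f^{w_i}/f=\c(a_i)/\cw(a_i)$, whence $f\,\T_{w_i}\,f^{-1}=\Tw_{w_i}$ and therefore $\Tw_w=f\,\T_w\,f^{-1}$ for all $w$; in particular the $[1]$‑coefficient of $\Tw_w$ equals that of $\T_w$. Running the argument above verbatim for $\Pw$ produces $\Pw=\cprow\,\Iw$ with $\cprow$ $W$‑invariant, and comparing $[1]$‑coefficients gives $\cprow\Delta=C_1=\mf{m}\Delta$, so $\cprow=\mf{m}$. (Using the Weyl–Kac denominator cocycle $g^{w}/g=(-1)^{\ell(w)}e^{\rho^{\vee}-w\rho^{\vee}}$ for $g=\prod_{a\in R_+}(1-e^{-\av})^{\mul(\av)}$, one checks $f\,\I\,f^{-1}=\Iw$, which is consistent with \eqref{app-Iw}.)

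\textbf{Part (3).} By (2) we have $C_\sigma=\mf{m}\,\Delta^{\sigma}$ in $\hqn$. Since $\mf{m}\in\qn$ by Proposition \ref{vis} and $\Delta^{\sigma}\in\qn$ (the remark after \eqref{Del:w}), the product $C_\sigma$ lies in $\qn$, while the support in $\Lv_-$ is already known from (1). Hence $\P$ has a well‑defined expansion in $\qn[W]^{\vee}$, and since $\I\in\qn[W]^{\vee}$ (its $[w]$‑coefficient $\Delta^{w}$ is $v$‑finite) the identity $\P=\mf{m}\,\I$ holds in $\qn[W]^{\vee}$. The same applies to $\Pw=\mf{m}\,\Iw$, since the $[w]$‑coefficient of $\Iw$ is $\Delta\,(-1)^{\ell(w)}\prod_{a\in R(w)}e^{-\av}$, which is $v$‑finite with support in $\Lv_-$.
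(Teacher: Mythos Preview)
Your argument for Part~(1) contains a genuine gap. You claim that for fixed $\sigma$ and $\mv$ there is an $N_0$ with $[e^{\mv}]A_\sigma(w)=0$ whenever $\ell(w)>N_0$, on the grounds that ``cancelling pairs'' strictly raise the depth of the monomial. But this is precisely where the argument fails: by \eqref{c:aa-exp} one has $\c(a)\c(-a)=v+O(e^{-\av})$, so a cancelling pair contributes a term of coroot--depth \emph{zero}, not positive depth --- what it raises is the power of $v$. Consequently infinitely many $w$ can (and in general do) contribute to $[e^{\mv}]C_\sigma$; the contributions simply occur at higher and higher $v$-degrees. This is exactly why Part~(1) only yields $\hqn$ rather than $\qn$, and why the upgrade to $v$-finiteness must go through Viswanath's result in Part~(2). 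You also misattribute this to Lemma~\ref{large-len}: that lemma establishes $v$-adic divisibility proportional to $\len(\p)$, not vanishing. The paper's argument for Part~(1) is genuinely two-variable: Lemma~\ref{large-wid} bounds $\wid(\p)$ in terms of $|\mv|$ (using that sufficiently long gaps between $\c$-positions force some $\b$-factors to sit at \emph{positive} roots, via Claim~\ref{nclaim:boundedlongword}), and then Lemma~\ref{large-len} shows that for bounded width and large $\ell(w)$ the length $\len(\p)$ is large, forcing high $v$-divisibility. You have collapsed these two separate bounds into a single (false) depth bound and omitted the width analysis entirely. A secondary inaccuracy: Lemma~\ref{lem:inverseset} applies only to the $\c$-position roots $\{\beta_j:j\in\p\}$ (which enumerate the inversion set of the subword $[\p]$), not to the full multiset $\{\beta_j\}\cup\{\gamma_j\}$ as you state.

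Your Parts~(2) and~(3) are essentially correct and track the paper's argument. One point worth noting: your observation that $\Tw_{w_i}=f\,\T_{w_i}\,f^{-1}$ for $f=\prod_{a>0}(1-ve^{-\av})^{\mul(\av)}$, hence $A_1^\flat(w)=A_1(w)$ and $C_1^\flat=C_1$, is a clean alternative to the paper's direct combinatorial proof of $C_1=C_1^\flat$ in Corollary~\ref{consts} (which argues via cancelling pairs and \eqref{c:aa}). Both reach the same conclusion $\cprow=\mf{c}$.
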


We explain the proof of this result in \S \ref{sec:pf-prop-non-met}, but in the remainder of this section we explain some more explicit formulas for the factors $\mf{c}$ and $\mf{c}^{\flat}$ in the affine case.

\tpoint{Formulas for $\mf{c}$ and $\cprow$ in the affine case} \label{sec:ctfla} For affine root systems, the constant term conjecture of Macdonald and its resolution by Cherednik provides an explicit formula for $\mf{c}$, and hence also one for $\cprow$ by Theorem \ref{form:sym}(2).  We follow here the exposition in \cite{mac:for}, but note that our  expression $\Delta$ differs from the one in \emph{op. cit} by the presence of the imaginary roots: to make this more explicit, write \be{Del'} \begin{array}{lcr} \Delta' = \prod_{ a\in R_{+, re}} \frac{1 - v e^{- \av}}{1- e^{-\av}} & \text{ and } & \mf{i} = \prod_{i=1}^{\ell} \prod_{n=1}^{\infty} \frac{ 1- v e^{  -n \cc}}{1 - e^{  -n \cc}}  \end{array}  \ee where $\cc$ is the minimal imaginary coroot introduced in (\ref{cc}). Then $\Delta = \Delta' \, \mf{i}.$ Changing every root to its negative in what Macdonald refers to as $\Delta$ gives $1/\Delta'$ in our terminology. Since the root system is isomorphic to its negative, identities in \emph{op. cit} remain true if we replace Macdonald's $\Delta$ by our $1/\Delta'.$


If $f \in \hqn,$ which we may write as a (possibly infinite) sum  \be{f-ct} f = \sum_{\mv \in Q_-^{\vee} } c_{\mv} \, e^{\mv}, \ee its \emph{constant term} is defined as \be{ct} \ct(f):= \sum_{n \in \zee_{\leq 0} } c_{n \cc} \, e^{ n \cc}. \ee   
\begin{nprop}\cite[(3.8)]{mac:for} \label{mac-ct}  For affine root systems, we have $\mf{m} = {\ct(\Delta ^{-1})}.$ \end{nprop}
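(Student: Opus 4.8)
The plan is to recognize $\ct(\Delta^{-1})$ as the element $\mf{m}$ via the latter's defining property. By Proposition \ref{visw} — whose affine instance is the Macdonald identity cited there — $\mf{m}$ is an element of $\hqn = \C[[v]]\ten_\C\mc{Q}$ satisfying $\mf{m}\sum_{w\in W}\Delta^w = \sum_{w\in W}v^{\ell(w)}$, and it is the \emph{unique} such element: $\hqn$ is an integral domain and $\sum_{w\in W}\Delta^w$ is nonzero, since its coefficient of $e^0$ is the power series $\sum_{w\in W}v^{\ell(w)}$ (the $w$-term contributes $v^{\ell(w)}$ to that coefficient, as one reads off the expansion of $\c(X)$ in \eqref{b:exp}). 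Hence it suffices to check that $\ct(\Delta^{-1})$ — which lies in $\hqn$ and is $W$-invariant, being supported on the $W$-fixed elements $\ZZ_{\le0}\cc$ (cf. \eqref{pair:h}) — satisfies the same relation, i.e. that $\ct(\Delta^{-1})\cdot\sum_{w\in W}\Delta^w = \sum_{w\in W}v^{\ell(w)}$.

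The next step is to strip off the imaginary roots. Writing $\Delta = \Delta'\,\mf{i}$ as in \eqref{Del'} and setting $(\Delta')^w := \prod_{a\in R_{+,re}}\frac{1 - v e^{-w\av}}{1 - e^{-w\av}}$, the factor $\mf{i}$ is a unit of $\C[[v]][[e^{-\cc}]]$ with constant term $1$, and it is $W$-invariant because $\cc$ is the $W$-fixed minimal imaginary coroot. Since every positive imaginary coroot is a positive multiple of $\cc$, the imaginary part of $\Delta^w$ is again $\mf{i}$, so $\Delta^w = (\Delta')^w\,\mf{i}$ and $\sum_{w}\Delta^w = \mf{i}\sum_{w}(\Delta')^w$. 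Moreover $\ct$ is linear over the subring $\C[[v]][[e^{-\cc}]]$ — extracting the $\ZZ_{\le0}\cc$-part commutes with multiplying by a series in $e^{-\cc}$ — so $\ct(\Delta^{-1}) = \mf{i}^{-1}\ct((\Delta')^{-1})$. Multiplying, the factors $\mf{i}^{\pm1}$ cancel and the identity to be proved reduces to the purely ``real'' statement
\[
\ct\bigl((\Delta')^{-1}\bigr)\cdot\sum_{w\in W}(\Delta')^w \; = \; \sum_{w\in W} v^{\ell(w)} .
\]

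This last identity is precisely Macdonald's constant-term conjecture for the affine root system attached to $\As$, in the form recorded in \cite[(3.8)]{mac:for}, whose proof is Cherednik's theorem. The only subtlety is the translation of conventions: as explained in the paragraph preceding the statement, Macdonald's $\Delta$ corresponds to our $1/\Delta'$ under the involution $\av\mapsto-\av$ (which fixes $W$ and interchanges $R_+$ and $R_-$), and one must also match his deformation parameter with our $v$ and his normalization of the constant term with our $\ct$; granting this dictionary, \cite[(3.8)]{mac:for} yields the displayed equation verbatim. Combining it with the cancellation of $\mf{i}$ from the previous paragraph gives $\ct(\Delta^{-1})\sum_w\Delta^w = \sum_w v^{\ell(w)}$, and then the uniqueness of $\mf{m}$ from Proposition \ref{visw} forces $\mf{m} = \ct(\Delta^{-1})$. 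The main obstacle is thus not mathematical but bookkeeping — pinning down the exact correspondence between the conventions of \cite{mac:for} and ours (sign of roots, $v$ versus $v^{-1}$, the normalization of $\ct$, and the separation of the imaginary-root factor $\mf{i}$) — after which no analytic input beyond Cherednik's resolution of the constant-term conjecture is required.
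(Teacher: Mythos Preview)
The paper gives no proof of this proposition; it is stated as a direct citation of Macdonald's result \cite[(3.8)]{mac:for}. Your write-up is therefore not so much an alternative proof as an unpacking of what that citation says: the uniqueness argument (that $\sum_w\Delta^w$ is a nonzerodivisor in $\hqn$, with $e^0$-coefficient $\sum_w v^{\ell(w)}$) and the factoring off of the imaginary-root part $\mf{i}$ are both correct and make explicit why Macdonald's identity is equivalent to $\mf{m}=\ct(\Delta^{-1})$.

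There is, however, a genuine misidentification in your last paragraph. The displayed identity
\[
\ct\bigl((\Delta')^{-1}\bigr)\cdot\sum_{w\in W}(\Delta')^w \;=\; \sum_{w\in W} v^{\ell(w)}
\]
is \emph{not} the Macdonald constant-term conjecture, and its proof does \emph{not} require Cherednik's theorem. This is Macdonald's own ``formal identity'' \cite[(3.8)]{mac:for}, which he proves directly. The constant-term conjecture (resolved by Cherednik) is the separate, deeper statement that $\ct((\Delta')^{-1})$ admits an explicit product formula; the paper invokes that only later, at \cite[(3.12)]{mac:for}, to obtain Proposition~\ref{mcformula}. So your appeal to Cherednik is unnecessary here: once you have reduced to the displayed identity, you should simply cite Macdonald for it, exactly as the paper does. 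The distinction matters because the paper's logic separates the two inputs---Macdonald's identity pins down $\mf{m}$ as a constant term, and only afterwards does Cherednik's theorem evaluate that constant term.
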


To go further, we need to introduce some more notation. For $R$ our affine root system, let $R_o$ denote the underlying finite dimensional one as specified in \S \ref{aff:DB}. Consider the Poincare series of $W_o,$ \be{poin-fin} W_o(v) = \sum_{w \in W_o} v^{\ell(w)} .\ee Then, it is known \cite[(2.6)]{mac:poin} that $W_o(v)$ has a product decomposition, \be{poin-fin:dec} W_o(v) = \prod_{i=1}^{\ell} \frac{1 - v^{m_i+1}}{1-v} \ee for (uniquely specified) integers $m_1, \ldots, m_{\ell}$ which are called the \emph{exponents} of $R_o.$ The following is a rephrasal by Macdonald \cite[(3.5)]{mac:poin} of a characterization of the exponents due to Kostant (cf. \cite{kostant}). Let $\psi: \zee \rr A$ be \emph{any} map from $\zee$ to a multiplicative Abelian group $A$, and for each $\alpha^{\vee} \in R_o^{\vee},$ we set $| \alpha^{\vee} | := \la \alpha^{\vee}, \rho_o \ra.$  Then 
\begin{eqnarray} \label{mac} \prod_{\alpha \in R_{o, +}^{\vee}}  \frac{\psi (1 + |\alpha^{\vee}| )}{\psi (| \alpha^{\vee} |)} = \prod_{j=1}^{\ell} \frac{\psi (m_j+1 )}{\psi (1)}. \end{eqnarray}  Now from the proof of the constant term conjecture by Cherednik (cf. \cite[p.201, (3.12)]{mac:for}) one finds that for affine, untwisted ADE types,  
\begin{eqnarray} \label{che} \ct(\Delta^{-1})  = \mf{i}^{-1} \ct(\Delta'^{-1}) = \mf{i}^{-1} \prod_{\alpha^{\vee} \in R^{\vee}_{o, +}} \prod_{i=1}^{\infty} \frac{ (1 - v^{|\alpha^{\vee}|}e^{-i \cc})  (1 - v^{|\alpha^{\vee}|}e^{-i \cc}) }{ (1 - v^{|\alpha^{\vee}|-1}e^{-i \cc}) (1 - v^{|\alpha^{\vee}|+1}e^{-i \cc}) }. \end{eqnarray}  Applying \eqref{mac} we obtain that 
\begin{eqnarray} \ct(\Delta'^{-1}) = \left( \prod_{i=1}^{\infty}  \frac{ 1 - ve^{-i \cc}}{1- e^{-i \cc}} \right) ^{\ell} \prod_{j=1}^{\ell} \prod_{i=1}^{\infty} \frac{1 - v^{m_j}e^{-i \cc}}{1 - v^{m_j+1} e^{-i \cc} } . \end{eqnarray} 


\begin{nprop} \label{mcformula} For an untwisted affine root system $R$ of type ADE with underlying finite root system $R_o$ , 
\begin{eqnarray} \label{corr} \mf{m} =  \ct(\Delta^{-1}) =  \prod_{j=1}^{\ell} \prod_{i=1}^{\infty} \frac{1 - v^{m_j}e^{-i \cc } }{1 - v^{m_j+1} e^{-i \cc }}, 
\end{eqnarray} where the $m_j$ are the exponents of $R_o$. 
\end{nprop}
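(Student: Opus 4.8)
The proof is essentially a matter of assembling the ingredients recorded in the paragraphs preceding the statement, so I will only indicate the sequence of steps. The starting point is Proposition~\ref{mac-ct}, which identifies $\mf{m}$ with $\ct(\Delta^{-1})$. Next I would use the factorization $\Delta=\Delta'\cdot\mf{i}$ from \eqref{Del'}, together with the observation that $\mf{i}$ is a unit of $\hqn$ all of whose monomials are multiples of $\cc$. The key elementary point is that the constant-term operator $\ct$ of \eqref{ct}, being the projection onto the part supported on $\zee\cc$, commutes with multiplication by any such power series: if $f=\sum_n a_n e^{n\cc}$ and $g=\sum_\mu c_\mu e^{\mu}$ then $\ct(fg)=f\cdot\ct(g)$, since a monomial $e^{\mu+n\cc}$ lies in $\zee\cc$ precisely when $e^{\mu}$ does. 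Applying this with $f=\mf{i}^{-1}$ and $g=\Delta'^{-1}$ gives the first equality of \eqref{che}, namely $\mf{m}=\mf{i}^{-1}\,\ct(\Delta'^{-1})$.

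The substantive step is the evaluation of $\ct(\Delta'^{-1})$. Here I would invoke Cherednik's resolution of the Macdonald constant term conjecture in the precise shape \eqref{che} (as recorded in \cite[p.~201, (3.12)]{mac:for}), valid for untwisted affine root systems of type ADE; this is exactly where the hypothesis of the Proposition enters, since for non-simply-laced or twisted types one must keep track of the long/short dichotomy and of the differing imaginary-root multiplicities. Then, for each fixed $i\geq 1$, I would apply the Kostant--Macdonald exponent identity \eqref{mac} twice — once with $\psi(n)=1-v^{n}e^{-i\cc}$ and once with $\psi(n)=1-v^{n-1}e^{-i\cc}$ — each time converting a product over the positive coroots $\alpha^{\vee}\in R^{\vee}_{o,+}$ (indexed through $|\alpha^{\vee}|=\la\alpha^{\vee},\rho_o\ra$) into a product over the exponents $m_1,\dots,m_\ell$. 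Multiplying the two outputs and then taking the product over all $i$ yields the formula
\[
\ct(\Delta'^{-1}) = \Bigl(\prod_{i=1}^{\infty}\frac{1-ve^{-i\cc}}{1-e^{-i\cc}}\Bigr)^{\ell}\ \prod_{j=1}^{\ell}\prod_{i=1}^{\infty}\frac{1-v^{m_j}e^{-i\cc}}{1-v^{m_j+1}e^{-i\cc}}
\]
already recorded just before the Proposition.

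Finally I would note that the definition of $\mf{i}$ in \eqref{Del'} can be rewritten as $\mf{i}=\bigl(\prod_{n\geq 1}\frac{1-ve^{-n\cc}}{1-e^{-n\cc}}\bigr)^{\ell}$, the inner factor being independent of the index running from $1$ to $\ell$; this is exactly the parenthesized $\ell$-th power in the displayed expression for $\ct(\Delta'^{-1})$. Substituting into $\mf{m}=\mf{i}^{-1}\,\ct(\Delta'^{-1})$ cancels that factor and leaves $\mf{m}=\prod_{j=1}^{\ell}\prod_{i=1}^{\infty}\frac{1-v^{m_j}e^{-i\cc}}{1-v^{m_j+1}e^{-i\cc}}$, which is the assertion. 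The hard work is entirely imported: Cherednik's theorem, Kostant's characterization of the exponents, and (underlying the very definition of $\mf{m}$ via Proposition~\ref{vis}) Viswanath's polynomiality. Within the proof itself the only obstacle is bookkeeping — checking that $\ct$ is linear over $\C[[v]][[e^{-\cc}]]$ so that $\mf{i}^{-1}$ may be pulled out, and matching the two applications of \eqref{mac} against the factor $\mf{i}$ so that the cancellation is exact.
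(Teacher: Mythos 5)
Your proposal is correct and follows exactly the route the paper takes: the paper's ``proof'' of Proposition \ref{mcformula} is precisely the chain of displayed identities in \S \ref{sec:ctfla} (Proposition \ref{mac-ct}, the factorization $\Delta=\Delta'\,\mf{i}$, Cherednik's formula \eqref{che}, the double application of the Kostant--Macdonald identity \eqref{mac}, and the cancellation of $\mf{i}$ against the $\ell$-th power factor), which you assemble in the same order. The only addition is your explicit verification that $\ct$ is linear over series supported on $\zee\cc$, a point the paper leaves implicit; this is a harmless and correct refinement.
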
 

For \emph{every} affine root system, there is a similar product formula. As the notation is a bit more involved, we simply refer to \cite[5.8.20]{mac:aff}.

\subsection{Proofs of Proportionality (non-metaplectic case)} \label{sec:pf-prop-non-met}

Our goal in this section is to prove Theorem \ref{form:sym}. We keep the notation from the previous section here.

\spoint \label{pf-out} Let us first outline the argument. To do so we need to introduce some further notation. Let $w \in W$ have reduced decomposition $w= w_{i_1} \cdots w_{i_k}$ and recall that the expansions (\ref{T:exp-3}) and (\ref{T:exp-3:w}). We need a more precise version of these expansions which we write as  %
%
%
%
%
%
\be{Tw:p2} \T_w = \sum_{\vec{p}} A_{\vec{p}}(w) [\vec{p}], \ee and which we now proceed to explain.  The sum (\ref{Tw:p2}) is indexed over increasing chains $\p =\{p<p'<p''<\cdots \}$ where the terms in this chain are integers between $1$ and $k,$ representing the places where terms with $[w_{i_j}]$ are chosen in the product \eqref{T:exp-2}. For each such $\p$ we also define an element in $W$ as 
\be{vec:p} [\vec{p}] := w_{i_{p}}w_{i_{p'}}w_{i_{p''}}\cdots \text{ if } \p =\{p<p'<p''<\cdots \} .\ee 
To obtain the expression for $A_{\p}(w)$ we begin with 
\be{Ap:w} \b(a_{i_1})[1]\cdots \b(a_{i_{p-1}})[1]\cdot  \c(a_{i_{p}})[w_{i_p}]\cdot  \b(a_{i_{p+1}})[1]\cdots \b(a_{i_{p'-1}})[1]\cdot  \c(a_{i_{p'}})[w_{i_{p'}}]\cdot  \b(a_{i_{p'+1}})[1] \cdots, \ee 
and pass the rational functions $\b$ and $\c$ ``to the left'' applying the rule (\ref{mult:try}) to pass rational functions past Weyl group elements. To describe the expression we obtain by this process, we first introduce the roots $\beta_i \in R_{re}$ as follows,

\begin{equation}\label{eq:betanotation}
\begin{split}
&\beta_1:=a_{i_1} \, , \ldots ,  \, \beta_p:=a_{i_{p}}, \\
&\beta_{p+1}:=w_{i_{p}}(a_{i_{p+1}}),\ldots ,\beta_{p'}:=w_{i_{p}}(a_{i_{p'}}),\\
&\beta_{p'+1}:=w_{i_{p}}w_{i_{p'}}(a_{i_{p'+1}}),\ldots , \beta_{p''}:=w_{i_{p}}w_{i_{p'}}(a_{i_{p''}}),\text{ etc.}
\end{split}
\end{equation}

\noindent Then one may verify that  \begin{equation}\label{eq:prodform}
A_{\p}(w) = \prod_{j\notin \p}\b(\beta_j)\cdot \prod _{j\in \p}\c (\beta _j),
\end{equation} where we write $j \in \p$ to mean that $j$ is one of the indices $p, p', \dots$ in the chain $\p,$  and we write $j \notin \p$ to mean those indices $j \in \{1, \ldots, k \} $ which are in the complement of the set $\{ j \mid j \in \p \}$.
 
In an entirely analogous manner, if we replace $\c$ with $\cw$ we may define the terms $\Aw_{\p}(w),$ and write an expression similar to \eqref{eq:prodform}. Note that the indexing sets of $\p$ in (\ref{Tw:p2}) and the $\beta_i$ defined in (\ref{eq:betanotation}) do not change from $A_{\p}(w)$ to $\Aw_{\p}(w).$ Expanding the rational functions $A_{\p}(w)$ and $\Aw_{\p}(w)$ in $\qn,$ we can view $\Tw_w, \T_w \in \qnd.$ Comparing with \eqref{Tw:p2} we find 
\be{comp-exp} A_{\sigma}(w) = \sum_{\substack{\p\\ [\p]=\sigma}} A_{\p}(w), \ee 
where of course the sum is over those $\p$ obtained from the chosen reduced decomposition of $w$ we started with. Similar results hold in the ${}^{\flat}$ case.  

For future reference we define two integers measuring the complexity of the $\p$ as above.
\begin{de} We define the the positive integers $\wid(\p)$ and $\len(\p)$ of $\p$ as follows,  \be{wid:len} \wid(\p) &=& \max\{ p, p'-p, p''-p', \ldots \}, \\ \len(\p) &=& \text{ number of terms in the chain }  \p. \ee  \end{de} 

Note that if $\ell(w)$ is large, then either $\wid(\p)$ or $\len(\p)$ must also be large.  The proof of Theorem \ref{form:sym} proceeds in several steps. Let $\sigma \in W.$ The argument below will pertain to establishing various properties of $C_{\sigma},$ the results for $\Cw_{\sigma}$ are analogous.

\begin{enumerate}[(1)]
\item First, we prove a result which implies that if $[\p]=\sigma$ and $\wid(\p)$ is large, then the contribution for any $w\in W$ from $A_{\p}(w)$ to $C_{\sigma}$ will arise with a factor of $e^{-\mv}$ with $\mv \in Q_+$ large (i.e. $\la \rho, \mv \ra$ is large). 

\item Next, we prove a result which implies that if $[\p]=\sigma$ and $\len(\p)$ is large, then the contribution for any $w \in W$ from $A_{\p}(w)$ to $C_{\sigma}$ will arise with a factor of $v^{n}$ with $n$ large.  Combining (1) and (2) we obtain the weak Cherednik Lemma.

\item Then we state a proportionality result in $\hqn[W]^{\vee}$ that shows $\P$ and $\Pw$ are proportional to the simple symmetrizers $\I,$ $\Iw$ respectively, with coefficients of proportionality $\cpro,$ $\cprow.$

\item Finally, we prove that the coefficients of proportionality $\cpro,$ $\cprow$ are equal to one another. In fact are both equal to $\mf{m}$ and hence lie in $\qn.$ Thus, we show that $\P$ and $\Pw$ are \emph{equal} to the symmetrizers $\hI,$ $\hIw$. As the latter are well-defined in $\qn[W]^{\vee}$ the strong Cherednik lemma follows. 

\end{enumerate}

\tpoint{Bounds in terms of $\wid(\p)$} For an element $\mv \in Q_+$, we shall write $|\mv | = \la \rho, \mv \ra$ from now on. Given $f = \sum_{\lv} c_{\lv} e^{\lv}$, with $c_{\lv}$ is some ring of coefficients, we say that $\mv$ occurs in the support of $f$ and write $\mv \in \Supp(f)$ if $[e^{\mv}] f \neq 0.$ 

\begin{nlem} \label{large-wid} Assume that $w \in W$ and $\p= \{ p < p' < \dots \}$ as above such that  $A_{\p}(w)\neq 0$. There exists a constant $\kappa=\kappa(\As)$ such that if $\mv$ occurs in the support of $A_{\p}(w),$ then \be{mv:kap} |\mv | \geq \kappa \wid(\p). \ee  \end{nlem}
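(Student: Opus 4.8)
The plan is to trace through the product formula \eqref{eq:prodform} for $A_{\p}(w)$ and show that the width of the chain $\p$ forces many factors of the form $\cw$ or $\b$ (more precisely, $\c$ and $\b$) evaluated at roots that are ``deep'', so that any monomial $e^{-\mv}$ appearing in the expansion of $A_{\p}(w)$ must have $|\mv| = \la \rho, \mv \ra$ large. The key point is the following: in the notation of \eqref{eq:betanotation}, the roots $\beta_j$ for $j$ ranging over a block of consecutive indices $\{p^{(t)}+1, \ldots, p^{(t+1)}\}$ (between two consecutive members of the chain) are exactly the roots $w_{i_{p^{(t)}}} w_{i_{p^{(t')}}} \cdots$ applied to the simple roots $a_{i_{p^{(t)}+1}}, \ldots, a_{i_{p^{(t+1)}}}$, i.e. they form (up to the overall Weyl element) the inversion-set list of a product of consecutive simple reflections, in the sense of Lemma~\ref{lem:inverseset}.

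**Main steps.** First, recall from \S\ref{rats} that each rational function $\b(X), \c(X), \cw(X)$, when expanded in the appropriate direction, has the form $(\text{const}) + (\text{higher order in } X^{\pm 1})$; crucially, for a \emph{positive} root $a$ the expansion of $\b(e^{\av})$, $\c(e^{\av})$, $\cw(e^{\av})$ in negative powers $e^{-\av}, e^{-2\av}, \ldots$ has its lowest-order term a constant (for $\c, \b$) or $e^{-\av}$ (for $\cw$), while for a \emph{negative} root $a = -a'$ with $a' > 0$, the expansion of $\b(e^{\av}) = \b(e^{-a'^{\vee}})$ etc. in powers of $e^{-a'^{\vee}}$ \emph{starts} at order $e^{-a'^{\vee}}$ (for $\b$ and for $\c$) — this is exactly \eqref{b:exp}, reading the expansion in $X$ rather than $X^{-1}$. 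Thus every factor $\b(\beta_j)$ or $\c(\beta_j)$ with $\beta_j < 0$ contributes a monomial $e^{-\mv}$ with $\mv$ lying in $\mathbb{N}_{>0}(-\beta_j)^{\vee} \subset Q^{\vee}_+\setminus\{0\}$, hence $|\mv| \geq |{-\beta_j}| \geq 1$.

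Second, I claim that within a block of width $m = p^{(t+1)} - p^{(t)}$, at least $\sim m/2$ of the roots $\beta_j$ are negative, or — more robustly — that the sum $\sum_{j} |{\text{the negative part of }\beta_j}|$ over the block grows linearly in $m$. Here is the mechanism. By Lemma~\ref{lem:inverseset}, the list $\{\beta_j : j \text{ in the block}\}$ (after removing the overall Weyl-group prefactor, which preserves heights up to a uniform bound — no, rather: heights are \emph{not} preserved by arbitrary $w$, so one must be more careful). The correct bookkeeping: the block of $\beta_j$'s, \emph{before} applying the overall prefactor $u := w_{i_1}\cdots w_{i_{p^{(t)}-1}}$, is by Lemma~\ref{lem:inverseset} the disjoint union of a genuine inversion set $R(\hat{v})$ of some $v \in W$ with $\ell(\hat v)$ at most $m$ together with several pairs $\{a, -a\}$; and a not-necessarily-reduced word of length $m$ in the rank-$r$ Coxeter group, when reduced, either stays of length comparable to $m$ (so $R(\hat v)$ is large and contains roots of bounded height but in number $\gtrsim m$, forcing total height $\gtrsim m$ after we note heights of positive roots are $\geq 1$) or collapses, producing $\gtrsim m$ cancelling pairs $\{a,-a\}$ — and each such pair contributes a $\b$ or $\c$ at the \emph{negative} root $-a$, hence a monomial with $|\mv| \geq 1$. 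Either way, summing the minimal $|\mv|$-contributions over the block gives a quantity $\geq c\cdot m$ for a constant $c = c(\As)$ depending only on the Cartan matrix (through the bound relating word length to the number of such pairs; this is where the finitely many rank-$2$ subsystems and Lemma~\ref{Weyl-moves} enter). Applying the prefactor $u$ to these monomials can only change $\mv$ to $u\mv$; since $|u\mv|$ need not equal $|\mv|$, the cleanest fix is to note that the \emph{total} monomial $e^{-\mv}$ of $A_{\p}(w)$ is a product over \emph{all} $j$, and one applies this block analysis to the \emph{last} block (the one with no prefactor issues) or, better, argues that $\mv \in Q_+$ and its $|\cdot|$-size is bounded below by the number of negative $\beta_j$'s across all blocks, using that $A_{\p}(w) \in \qnf$ lies in the completion in the $e^{-\av_i}$ directions so every monomial is a genuine element of $Q_+$. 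Taking $\kappa = \kappa(\As) := c/2$ (with room to spare) and using $\wid(\p) = \max_t (p^{(t+1)} - p^{(t)})$, so that the widest block alone contributes $\geq c \cdot \wid(\p)$, yields \eqref{mv:kap}.

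**The main obstacle.** The delicate point — and the one I expect to require the most care — is controlling the effect of the overall Weyl-group prefactor on heights: a priori $|u\mv|$ can be much smaller than $|\mv|$. The resolution is that we do not need to track one block in isolation; rather, one should argue directly that the \emph{full} product $A_{\p}(w) = \prod_{j\notin\p}\b(\beta_j)\prod_{j\in\p}\c(\beta_j)$, when each factor is replaced by its minimal-degree term, produces a monomial $e^{-\mv_0}$ with $\mv_0 \in Q_+$, and that any \emph{other} monomial in the expansion is $e^{-\mv_0 - \mv_1}$ with $\mv_1 \in Q_+$; hence $|\mv| \geq |\mv_0|$, and $|\mv_0|$ is computed \emph{inside the product}, where the successive Weyl elements act compatibly and the count of negative $\beta_j$'s (which is a property of the word, not of heights after twisting) directly lower-bounds $|\mv_0|$ by a standard inversion-counting argument à la \eqref{rho:flip}. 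Making this last sentence precise — i.e. identifying $\mv_0$ with a sum $\sum \beta_j^{-}$ over the negative $\beta_j$ and bounding its $\rho$-pairing below by the number of such $j$ — is the technical heart, and it is exactly where Lemma~\ref{lem:inverseset} (negative $\beta_j$ come in $\pm$ pairs from non-reduced subwords) plus the finiteness of rank-$2$ behaviour (Lemma~\ref{Weyl-moves}) combine to give the linear bound in $\wid(\p)$.
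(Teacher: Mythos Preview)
Your approach has a genuine gap, stemming from two compounding errors.

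\textbf{First, the sign is backwards.} For a \emph{positive} root $\gamma$, the expansion of $\b(e^{\gamma^{\vee}})$ in negative powers of the coroots is $(1-v)(e^{-\gamma^{\vee}} + e^{-2\gamma^{\vee}} + \cdots)$ by \eqref{b:exp}: every term is divisible by $e^{-\gamma^{\vee}}$, so $|\mv|$ picks up at least $1$. For a \emph{negative} root, the expansion of $\b$ has a nonzero constant term $(v-1)$ and contributes nothing. You state the opposite, and your subsequent counting of ``negative $\beta_j$'s'' is therefore counting the wrong thing.

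\textbf{Second, the structural identification is wrong.} Within a block $\{g+1, \ldots, g'-1\}$ between two consecutive chain elements, the roots $\beta_j$ are by \eqref{eq:betanotation} all of the form $\hat{w}(a_{i_j})$ for the \emph{same} fixed $\hat{w} = s_{i_p} s_{i_{p'}} \cdots s_{i_g}$: a single Weyl element applied to a sequence of simple roots. This is not an inversion-set list, and Lemma~\ref{lem:inverseset} does not apply here (that lemma governs the $\c$-factors indexed by $j \in \p$, and is used in the companion Lemma~\ref{large-len}, not here).

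What is actually needed is a bound on how many of the simple roots $a_{i_{g+1}}, \ldots, a_{i_{g'-1}}$ can be sent to \emph{negative} roots by $\hat{w}$. The paper supplies this via the following observation: the set $\Is_-(\hat{w}) = \{ i \in \Is : \hat{w}(a_i) < 0 \}$ indexes a sub-root-system all of whose positive roots lie in the finite inversion set $R(\hat{w}^{-1})$, hence $\As_{\Is_-(\hat{w})}$ is of finite type and the parabolic subgroup $\langle s_i : i \in \Is_-(\hat{w}) \rangle$ has a longest element of length at most some $d = d(\As)$ (uniform over all $\hat{w}$, since $\Is$ has only finitely many subsets). Since $s_{i_{g+1}} \cdots s_{i_{g'-1}}$ is a reduced word, no $d+1$ consecutive indices can all lie in $\Is_-(\hat{w})$, so at least $\lfloor \wid(\p)/(d+1) \rfloor$ of the $\beta_j$ in the widest block are positive. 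Each such $\b(\beta_j)$ then forces $|\mv| \geq 1$, giving $\kappa = 1/(d+1)$. Your argument never produces this finiteness input, and without it there is no mechanism to bound the number of positive $\beta_j$ from below.
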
 

\begin{proof} Choose any two consecutive indices $g, g'$ from our chain $\p$, so $0\leq g < g'$ and  $A_{\p}(w)$ is divisible by the factor \begin{equation}\label{eq:idprod}
s_{i_p} s_{i_{p'}} \cdots s_{i_g} \left(\prod_{j=g+1}^{g'-1}\b(a_{i_j})\right) = \left(\prod_{j=g+1}^{g'-1}\b(\hat{w} a_{i_j})\right) 
\end{equation} where $\hat{w}:= s_{i_p} s_{i_{p'}} \cdots s_{i_g}.$ Now if $\gamma:= \hat{w} \av_{i_j} > 0$ then when we expand the corresponding $\b( \hat{w}  a_{i_j})$ in negative powers of the coroots, every term which arises (cf. \eqref{b:exp}) will be divisible by $e^{ - \gamma}.$ We shall argue that if $\wid(\p)$ is large, then some proportion of the $a_{i_j}$ will in fact satisfy the property that $\hat{w}(a_{i_j}) > 0$. Indeed, this follows from the following simple result. 

\begin{nclaim}\label{nclaim:boundedlongword}
Let $\Is_{-}(w):= \{ i \in \Is \mid w(\a_i) < 0 \},$ and let $\Sigma_{+, w} := \la s_i \mid i \in \Is_-(w) \ra \subset W.$ Then $\Sigma_{+, w}$ is finite. Furthermore, if $d_w$ denotes the length of the longest element in $\Sigma_{+,w},$ then there exists a $d=d(\As)$ such that $d_w\leq d$ for every $w\in W.$
\end{nclaim}
\begin{proof}
Note that for any positive linear combination $\beta $ of the roots $\{a_i\mid i\in \Is_{-}(w)\}$, we have $w(\beta )<0.$ Recall the inversion set $R(w^{-1})$ has cardinality $\ell (w)$ (cf. \ref{tpoint:inversion_sets}). For a subset $\Js \subseteq \Is$ let $\As_{\Js}$ denote the gcm consisting of the rows and columns of $\As$ corresponding to the indices in $\Js.$ Then the above means that the set of positive real roots corresponding to the gcm $\As_{\Is_{-}(w)}$ is finite. Hence indecomposable components of $\As_{\Is_{-}(w)}$ are of finite type according to the trichotomy of Proposition \ref{indec-class}, and $\Sigma_{+, w}=W(\As_{\Is_{-}(w)})$ is finite as well \cite[Proposition 4.9]{kac}. The length $d_w$ of the long word in $W(\As_{\Is_{-}(w)})$ depends only on the subset $\Is_{-}(w)\subset \Is$ (not $w$). The finite set $\Is$ has finitely many subsets. Consequently, the lengths of such long words is bounded. 
\end{proof}

Now we note that $s_{i_{g+1}} \cdots \, s_{i_{g'-1}}$ is a segment of a reduced decomposition hence itself a reduced decomposition in $W$, and so has length $\wid(\p)$.  Using the claim, we let $d=d(\As)$ denote the upper bound for the length of the long word in $\Sigma_{+, \hat{w}}.$ Then we claim the sequence $i_{g+1}, \ldots, i_{g'-1}$ contains at least $\lfloor \frac{\wid(\p)}{d+1} \rfloor$ copies of the indices belonging to the set \be{Is+} \Is_+(\hat{w}):= \{ i \in \Is \mid w(a_i) > 0 \}. \ee Indeed, if there were a consecutive string of entries in the sequence $i_{g+1}, \ldots, i_{g'-1}$ of length greater than $d$ which did not contain an element from $\Is_+(\hat{w})$ this would imply the existence of a reduced word in $\Sigma_{+, \hat{w}}$ of length greater than $d.$ By the remarks on expansions of $\b(\cdot)$ preceding the Claim, the Lemma now follows. \end{proof} 

\newcommand{\zv}{\zeta^{\vee}}

\begin{nrem} \label{rmk:largewidth} From the Lemma, if $A_{\p}(w)$ contributes to $C_{\sigma}$ with large $\wid{p}$, then the corresponding contribution arises with a power of $e^{ - \mv}$ where $\mv \in \Qv_+$ with $| \mv | $ also large. Hence for fixed $\zv \in \Qv_-$ and $\sigma \in W$ we obtain from (\ref{C:sig}) and (\ref{comp-exp}) that  
\be{rest:wid} [e^{\zv}] C_{\sigma}=  \sum_{\substack{w, \p \\ [\p]=\sigma,\\ \wid(\p) < C }} [e^{\zv}] A_{\p}(w) ,\ee 
where $C:=C(\As, \zv)  > 0$ is some constant depending on $\zv$ and $\As.$ Note that the right hand side is still an infinite sum as there can be arbitrarily long Weyl group elements which admit $\p$ of small width. However, such $\p$ will necessarily have large length.

\end{nrem}

\tpoint{Bounds in terms of $\len(\p)$}  We now analyze the contributions of $A_{\p}(w)$ in terms of $\len(\p).$

\begin{nlem} \label{large-len} Fix $\mv \in \Qv$ and $\sigma \in W$, and let $\p$ be such that $[\p]=\sigma.$ Then for any $w \in W$, the expression $[e^{\mv}]A_{\p}(w)$  is divisible by $v^k$ for $k= \mathsf{O}(\len(\p))$ as $\len(\p) \rr \infty$ where the implicit constant\footnote{This is the usual ``big O'' notation} is dependent on $\mv$ and $R.$ \end{nlem}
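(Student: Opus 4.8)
The plan is to track where powers of $v$ enter the product formula \eqref{eq:prodform}. Recall from \eqref{b:c} and \eqref{b:exp}--\eqref{c:aa-exp} that each factor $\b(\beta_j)$ is divisible by $v-1$, i.e. has the shape $(v-1)(\text{monomials in the coroots})$, while each factor $\c(\beta_j)$ contributes either a leading $1$ (from the expansion in positive powers) or a leading $v$ (from the expansion in negative powers); in all cases the $X^m$-coefficient of $\c(X)$ for $m \neq 0$ is a multiple of $1-v$. So a priori each term of the chain $\p$ contributes at least one factor of $v-1$ \emph{except} when the corresponding $\c(\beta_j)$ is read off at its constant-in-$e^{\beta_j}$ term. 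The point is that this ``free'' choice can happen at most boundedly often once we fix the total monomial $e^{\mv}$ we are extracting.

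First I would make precise the degree bookkeeping. Fix $\mv \in \Qv$. In the expansion of $A_{\p}(w) = \prod_{j \notin \p}\b(\beta_j)\cdot\prod_{j\in\p}\c(\beta_j)$, a monomial $e^{\mv}$ arises by choosing, from each factor, a monomial $e^{c_j \beta_j}$ (with $c_j \le 0$ if $\beta_j > 0$ and the expansion is in negative powers, resp.\ $c_j \ge 0$ if $\beta_j < 0$), such that $\sum_j c_j\beta_j = \mv$. For each $j \in \p$ with $c_j = 0$ and $\beta_j > 0$ the factor $\c(\beta_j)$ can contribute its constant term $1$ (no factor of $v-1$), but by Lemma \ref{lem:inverseset} applied to the not-necessarily-reduced word $w_{i_p}w_{i_{p'}}\cdots$ obtained by reading off the chain, the multiset $\{\beta_j : j \in \p\}$ is $R([\p]^{\widehat{\ }})$ together with some pairs $\{a,-a\}$; since $[\p]=\sigma$ is fixed, $|R(\hat{\sigma})| = \ell(\sigma)$ is a constant, and the pairs $\{a,-a\}$ contribute, for each pair, at least one factor forced to carry a $v$ or a $v-1$ by the explicit expansions (a $\c$ read at a nonzero power of $X$). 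Thus at most $\ell(\sigma) + (\text{number of }\{a,-a\}\text{ pairs})$ indices in $\p$ can be ``$v$-free'', and one must still control the number of such pairs.

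Here is where I expect the main obstacle, and how I would get around it. The number of cancelling pairs $\{a,-a\}$ inside $\{\beta_j : j\in\p\}$ is not bounded in terms of $\sigma$ alone — but each such pair $\{a,-a\}$ forces, in extracting $e^{\mv}$, a contribution of the form $e^{c\, a}e^{c'(-a)}$ with $c\le 0 \le c'$ (or vice versa), and if both $c,c'$ are large in absolute value the net contribution to $\mv$ is small but the factors $\b$ or $\c$ picked up along the way carry high powers of $v$ (from the tails of \eqref{b:exp}, \eqref{c:aa-exp}). More precisely: any term of $\b(X)$ or of $\c(X)$ at $X^{\pm m}$ with $m \ge 1$ carries a factor $v-1$, and iterating along the chain, a term contributing $e^{\mv}$ with $\len(\p) = N$ but only $\mathsf{O}(1)$ net ``weight'' $|\mv|$ must, by the pigeonhole/telescoping argument already used in the proof of Lemma \ref{large-wid} (combined with the boundedness statement of Claim \ref{nclaim:boundedlongword}), involve at least $N - C(\mv,R)$ factors read at a nonzero power, each divisible by $v-1$ or $v$. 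Hence $[e^{\mv}]A_{\p}(w)$ is divisible by $v^{N - C(\mv,R)}$, which is the assertion with implied constant $C(\mv,R)$. I would write the argument by first reducing, via Lemma \ref{lem:inverseset}, to bounding the number of chain-indices whose associated root $\beta_j$ can be met at exponent $0$, then observing that the remaining indices each deposit a factor divisible by $v$ (after absorbing the innocuous $v-1$'s, using that we only need divisibility by a power of $v$ up to a unit in $\Cv$, since $v-1$ is not a unit but $v\cdot(\text{unit})$ divisibility is what \eqref{b:exp} actually gives for the negative-power tails — and for the positive-power tails the coefficient is $1-v$, so one instead argues these contribute to raising $|\mv|$ and invokes Lemma \ref{large-wid}), and finally combining the two regimes exactly as in step (2) of the outline in \S\ref{pf-out}.
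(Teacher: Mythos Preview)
There is a genuine gap: your argument repeatedly converts factors of $(v-1)$ into $v$-divisibility, and this is illegitimate. In $\Cfv=\C[v]$ the elements $v$ and $v-1$ are coprime, and in $\Cv=\C[[v]]$ the element $v-1$ is a unit; either way, a product of $(v-1)$'s gives no divisibility by any positive power of $v$. Concretely, your claim that ``a term contributing $e^{\mv}$ with $\len(\p)=N$ \ldots\ must involve at least $N-C(\mv,R)$ factors read at a nonzero power, each divisible by $v-1$ or $v$'' is both false and, even if true, would not yield $v^{N-C}$. It is false because one may read every $\c(\beta_j)$ at its constant term (contributing $e^0$ to the coroot monomial) and let the $\b$-factors make up the difference to $\mv$; and it is unhelpful because the nonzero-exponent coefficients in the expansions of $\b$ and $\c$ are $\pm(1-v)$, not $v\cdot(\text{unit})$ as you assert in your parenthetical. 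Your reading of \eqref{b:exp} is simply incorrect on this point.

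What you are missing is exactly the identity \eqref{c:aa}--\eqref{c:aa-exp}: once Lemma~\ref{lem:inverseset} hands you the $\{\beta_j:j\in\p\}$ as $R(\hat\sigma)$ together with pairs $\{a,-a\}$, you should \emph{multiply} the paired factors and use that
\[
\c(a)\c(-a)=v+(-1+2v-v^2)e^{-\av}+(-2+4v-2v^2)e^{-2\av}+\cdots.
\]
The constant term of this product is $v$ itself, and every other term carries a strictly negative coroot exponent $e^{-k\av}$, $k\ge 1$. Thus from each pair you extract either a genuine factor of $v$ (contributing $e^0$) or a term that lowers $\la\rho,\cdot\ra$ by at least $1$; since $\mv$ is fixed and all remaining factors ($\b$'s and the $\ell(\sigma)$ leftover $\c$'s) also contribute only in $\Qv_-$, at most $|\la\rho,\mv\ra|$ of the pairs can take the second option. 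Hence at least $(\len(\p)-\ell(\sigma))/2-|\la\rho,\mv\ra|$ pairs contribute a factor of $v$, which is the lemma. This is the paper's argument; your outline reaches the pairing step but then abandons it for the $(v-1)$ bookkeeping, which cannot close the gap.
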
 
\begin{proof} Consider again the expression \eqref{eq:prodform} for $A_{\p}(w)$, and let us now focus on the second factor, namely $\prod _{j\in \p}\c (\beta_j).$ Recall that by \eqref{eq:betanotation}, 
\begin{equation}\label{eq:list_cterms.1}
\beta_p:=a_{i_{p}},\ \beta_{p'}:=s_{i_{p}}(a_{i_{p'}}),\ \beta_{p''}:=s_{i_{p}}s_{i_{p'}}(a_{i_{p''}}), \ldots   
\end{equation}
and  $\sigma= [\p] = s_{i_p}s_{i_{p'}}s_{i_{p''}} \cdots .$  For simplicity, let us focus on the case that $\sigma=1.$  By Lemma \ref{lem:inverseset}, the set $\{\beta_p,\beta_{p'},\beta_{p''},\ldots \}$ consists \emph{entirely} of pairs $\{a,-a\},$ i.e.,  the factor $\prod _{j\in \p}\c (\beta_j)$ is a product of expressions of the form $\c(a) \c(-a)$ with $a \in R_+$. By the expansion in \eqref{c:aa-exp}, each such factor either contributes $v$ or $e^{- \av}$ to $A_{\p}(w).$ As $\mv$ is fixed, the number of times such a factor can contribute $e^{-\av}$ is finite (and depends on $\mv$). As $\len(\p) \rr \infty$ and the number of pairs $\{ a, -a \}$ in the list of $\beta$'s above grows, and so their contribution to $e^{\mv}[A_{\p}(w)]$ will almost always  appear with a factor of $v.$ The Lemma follows from these observations. \end{proof}

\tpoint{Proof of Theorem \ref{form:sym}(1)} We are now in a position to conclude the proof of the weak Cherednik Lemma. Suppose $\sigma \in W$ and $\mv \in \Lv$ are fixed, and let us analyze $[e^{\mv}]C_{\sigma}.$ By Remark \ref{rmk:largewidth}, we may restrict to terms with width bounded by $C:= C(\As, \mv)$. Since $\wid(\p)(\len(\p)+1)\geq \ell(w)$ if $\wid(\p)\leq C$, then $\len(\p)\geq \ell(w)/C-1.$ Hence, for $\ell(w)$ large any contribution must come from $\p$ with $\len(\p)$ large.  By the previous Lemma, the contribution $[e^{\mv}] A_{\p}(w)$ with a power of $v$ that is proportional to $\len(\p)$ and hence $\ell(w)$ (since $\wid(\p)$ was assumed bounded). So, for fixed $\mv$ and fixed $k,$ only finitely many $A_{\p}(w)$ may contribute to the coefficient of $v^k$ in $[e^{\mv}]C_{\sigma}.$ This implies that $[e^{\mv}]C_{\sigma} \in \hqn.$
 
\tpoint{Proof of Theorem \ref{form:sym}(2): proportionality of symmetrizers} In the previous section, we have made sense of the element $\mc{P}, \Pw$ as elements in $\hqn[W]^{\vee}.$ The expressions $\I, \Iw$ are also well-defined elements in $\hqn[W]^{\vee}$ (actually in $\qn[W]^{\vee}$) and we may show the following identities in $\hqn[W]^{\vee}$

\begin{nprop}\label{prop:sph} There exist $W$-invariant elements $\mf{c}$ and $\cprow$ in $\hqn$ such that as elements in $\hqn[W]^{\vee}$  \be{eq:prop:sph} \begin{array}{lcr} \mc{P} =  \mf{c} \, \I & \text{ and } &  \Pw = \cprow \,  \Iw \end{array} \ee \end{nprop}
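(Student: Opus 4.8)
The strategy is the standard intertwiner manipulation, carried out carefully in the completion $\hqn[W]^{\vee}$ whose well-definedness we have just established in part~(1). First I would record the key one-step identity for the Demazure--Lusztig operators: for each $i\in\Is$ one has, inside $\qnf[W]^{\vee}$,
\begin{equation*}
\T_{w_i}\,\c(a_i)^{-1} \cdot \bigl([1] + [w_i]\bigr) \ \text{is symmetric},
\end{equation*}
or more usefully the relation $\T_{w_i} = \c(a_i)[w_i] + \b(a_i)[1]$ together with $\c(a_i) + \c(-a_i)^{-1}\cdot(\text{something})$; concretely the point is the classical fact (see \cite[Proposition 6.1]{pat:whit}, \cite{lee:zh}) that $\T_{w_i}(1 + [w_i]\text{-type correction})$ intertwines the $W$-action, so that $\sum_{w}\T_w$ can be rewritten by pulling a common "Weyl denominator'' out. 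More precisely, the plan is: (i) show $\P = \sum_w \T_w$ satisfies $\T_{w_i}\cdot \P = \c(a_i)\,{}^{w_i}\!\P$ in an appropriate sense — i.e. $\P$ is an eigenvector for left multiplication by each $\T_{w_i}$ up to the scalar $\c(a_i)$ — by the usual telescoping $\sum_{w}\T_w = \sum_{\ell(w_iw)>\ell(w)}(\T_{w_i}\T_w + \T_w)$ and the quadratic relation $\T_{w_i}^2 = (\c(a_i)+\b(a_i))\T_{w_i} - (\text{lower})$, equivalently $\T_{w_i}(\T_{w_i} - \c(a_i) - \b(a_i)\cdots) $; (ii) deduce that $\P$ and $\I$ satisfy the same intertwining/eigenvector equations with respect to all $\T_{w_i}$; (iii) conclude $\P = \mf{c}\,\I$ for a coefficient $\mf{c}$ lying in the invariants, because the space of solutions to these simultaneous equations is one-dimensional over the $W$-invariants of $\hqn$. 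The argument for $\Pw$ and $\Iw$ is verbatim the same with $\c$ replaced by $\cw$ everywhere, yielding $\Pw = \cprow\,\Iw$.

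The $W$-invariance of $\mf{c}$ and $\cprow$ then follows formally: applying the simple-reflection intertwining relation to the identity $\P = \mf{c}\,\I$ on both sides and using that $\I$ already satisfies the same relation forces ${}^{w_i}\mf{c} = \mf{c}$ for every $i$, hence $\mf{c}\in\hqn^{W}$ (and this makes sense because $\mf{c}$, being a ratio of two explicitly $v$-finite-ish expressions in the commutative completion, does lie in $\hqn$ — here part~(1) is what guarantees $\P\in\hqn[W]^{\vee}$ so that the "coefficient of $[\,1\,]$'' comparison $\mf{c} = C_1 / (\text{coeff of }[1]\text{ in }\I) = C_1 / \Delta$ is a legitimate element of $\hqn$, since $\Delta$ is a unit in $\qn$). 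I would extract $\mf{c}$ explicitly as $\mf{c} = C_1\,\Delta^{-1}$, where $C_1 = \sum_{w}A_1(w)$ is the $[1]$-coefficient of $\P$; this identification is needed for part~(2) where one shows $\mf{c}=\mf{m}$.

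The main obstacle, and the place where the infinite-dimensionality really bites, is justifying that the manipulations in step~(i)--(iii) — telescoping the infinite sum $\sum_{w\in W}\T_w$, multiplying by $\T_{w_i}$, and reindexing — are legitimate inside $\hqn[W]^{\vee}$ rather than merely formal. For a finite Weyl group this is a one-line computation; here each $[\sigma]$-coefficient of $\P$ is an infinite sum of rational functions, and one must check that left multiplication by $\T_{w_i} = \c(a_i)[w_i]+\b(a_i)[1]$ is a well-defined operation on such elements (the issue flagged after \eqref{mult:try}: $\sigma$ applied to a power series in the $\hqn$-coefficient need not stay in $\hqn$) and that the reindexing $w\mapsto w_iw$ converges coefficient-by-coefficient. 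The way around this is to do everything at the level of the finite partial sums / the $\vec p$-expansion \eqref{Tw:p2}: one shows that for fixed $\sigma$ and fixed $\mv\in\Lv$, only finitely many $\p$ contribute to $[e^{\mv}]C_{\sigma}$ once we also track the power of $v$ (exactly as in Lemmas~\ref{large-wid} and~\ref{large-len}), so the intertwining identity, being true for each finite truncation, passes to the limit. I would therefore phrase step~(i) as an identity of truncated operators $\sum_{\ell(w)\le N}\T_w$ modulo terms that are $O(v^{c N})$ or supported far out in $\Qv_+$, then let $N\to\infty$; the bounds from \S\ref{pf-out}(1)--(2) are precisely what make this limit exist in $\hqn[W]^{\vee}$. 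Part~(3), the strong Cherednik lemma, is then deferred: it needs the identification $\mf{c}=\cprow=\mf{m}$ from part~(2) plus Viswanath's Proposition~\ref{vis}, since $\mf{m}\in\qn$ upgrades the $\hqn[W]^{\vee}$ statement to a $\qn[W]^{\vee}$ statement.
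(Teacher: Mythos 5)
Your proposal is correct and follows essentially the same route as the paper, which for this proposition simply defers to the standard telescoping/intertwining argument of \cite{bkp}*{Proposition 7.3.12} and \cite{pat:whit}*{Proposition 6.5}, together with the remark that those proofs go through verbatim once part (1) guarantees $\P,\Pw\in\hqn[W]^{\vee}$; your identification $\mf{c}=C_1\Delta^{-1}$ is exactly what the paper later uses in Corollary \ref{consts}. One small correction: the eigenvector relation is $\T_{w_i}\P=v\,\P$ (and $\P\,\T_{w_i}=v\,\P$), with eigenvalue $v=\c(a_i)+\b(a_i)$ rather than $\c(a_i)$; the coefficient-wise recursions these produce, $C_{w_i\sigma}=C_\sigma^{w_i}$ and $C_{\sigma w_i}/C_\sigma=\Delta^{\sigma w_i}/\Delta^{\sigma}$, are what force both $C_\sigma=\mf{c}\,\Delta^\sigma$ and the $W$-invariance of $\mf{c}$.
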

For the first equality, see Proposition 7.3.12 in \cite{bkp} (which is written for affine root systems, but the same argument works here). As for the second, see Proposition 6.5 in \cite{pat:whit}. Note that in \emph{loc. cit} these identities are proven under the assumption that  $\mc{P},$ $\Pw$ are in $\qn[W]^{\vee}$, however the proofs given in these sources continue to hold under the assumption that $\mc{P},$ $\Pw$ only lie in $\hqn[W]^{\vee}.$

\tpoint{On the equality $\cpro = \cprow$} In \cite{pat:whit}, it was argued from $p$-adic considerations, namely a comparison between Whittaker and spherical functions, that $\cprow= \cpro$ (when $v$ is specialized to some power of a prime). In fact, one does not need any such $p$-adic considerations and a simple, direct argument is possible. 

\begin{ncor} \label{consts} We have $\mf{c} = \cprow$. \end{ncor}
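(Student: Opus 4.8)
The plan is to deduce $\mf{c} = \cprow$ by comparing a single coefficient — the one of the identity element $[1]$ — in the two proportionality identities $\mc{P} = \mf{c}\,\I$ and $\Pw = \cprow\,\Iw$ of Proposition \ref{prop:sph}. From \eqref{I} the coefficient of $[1]$ in $\I = \sum_{w}\Delta^w[w]$ is $\Delta^w|_{w=1} = \Delta$ (cf. \eqref{Del:w}, \eqref{Del}), and from \eqref{I:w} the coefficient of $[1]$ in $\Iw$ is $\Delta\cdot(-1)^{\ell(1)}\prod_{a\in R(1)}e^{-\av} = \Delta$ as well, since $R(1)=\varnothing$. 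Since $\hqn$ is a ring and $\mf{c},\cprow$ are $W$-invariant, reading off the $[1]$-coefficients of the two identities gives $C_1 = \mf{c}\,\Delta$ and $\Cw_1 = \cprow\,\Delta$ in $\hqn$, where $C_1,\Cw_1$ are as in \eqref{C:sig} and are well-defined by Theorem \ref{form:sym}(1). Because $\Delta$ is a unit in $\hqn$, it then suffices to prove $C_1 = \Cw_1$.

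For this I would establish the stronger term-by-term equality $A_{\p}(w) = \Aw_{\p}(w)$ for every $w\in W$ (with a fixed reduced decomposition) and every chain $\p$ with $[\p] = 1$; summing over all such $w$ and $\p$ — legitimate coefficient-by-coefficient by the finiteness argument already used for Theorem \ref{form:sym}(1) — then yields $C_1 = \Cw_1$ via \eqref{comp-exp} and \eqref{C:sig}. By \eqref{eq:prodform}, $A_{\p}(w) = \prod_{j\notin\p}\b(\beta_j)\cdot\prod_{j\in\p}\c(\beta_j)$ and $\Aw_{\p}(w) = \prod_{j\notin\p}\b(\beta_j)\cdot\prod_{j\in\p}\cw(\beta_j)$, with the \emph{same} roots $\beta_j$ from \eqref{eq:betanotation} and identical $\b$-factors, so I only need $\prod_{j\in\p}\c(\beta_j) = \prod_{j\in\p}\cw(\beta_j)$. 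The hypothesis $[\p]=1$ means that the product of simple reflections $w_{i_p}w_{i_{p'}}w_{i_{p''}}\cdots$ indexed by $\p$ is the identity, so its reduced word is empty and has empty inversion set; the roots attached to this product are precisely $\{\beta_j : j\in\p\}$, so Lemma \ref{lem:inverseset} tells us this list is a disjoint union of pairs $\{a,-a\}$ with $a\in R_+$ (with the empty list when $\p=\varnothing$). Each such pair contributes $\c(e^{\av})\c(e^{-\av})$ to the left-hand product and $\cw(e^{\av})\cw(e^{-\av})$ to the right-hand one, and these coincide by the identity \eqref{c:aa}. Multiplying over all the pairs gives $A_{\p}(w) = \Aw_{\p}(w)$.

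Combining everything, $\mf{c}\,\Delta = C_1 = \Cw_1 = \cprow\,\Delta$, and cancelling the unit $\Delta$ gives $\mf{c} = \cprow$. I do not expect a genuine obstacle: the entire argument rests on the one algebraic observation that the Whittaker and spherical rational functions $\cw,\c$ become indistinguishable after forming the symmetrized products $\c(X)\c(X^{-1})$ versus $\cw(X)\cw(X^{-1})$ (equation \eqref{c:aa}), which is exactly the shape imposed on the $\c$-factors of $A_{\p}(w)$ when $[\p] = 1$. The only points requiring a little care are the bookkeeping in Lemma \ref{lem:inverseset} — that an empty inversion set really forces the $\beta_j$, $j\in\p$, into $\{a,-a\}$-pairs with no unpaired root surviving — and the rearrangement of the (convergent, by Theorem \ref{form:sym}(1)) sums defining $C_1$ and $\Cw_1$; both are already supplied by results proved earlier in this section.
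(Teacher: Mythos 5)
Your proposal is correct and follows essentially the same route as the paper: read off the $[1]$-coefficient of both proportionality identities to reduce to $C_1=\Cw_1$, observe via Lemma \ref{lem:inverseset} that for $[\p]=1$ the roots $\beta_j$ with $j\in\p$ pair off as $\{a,-a\}$, and conclude with the identity $\c(X)\c(X^{-1})=\cw(X)\cw(X^{-1})$ of \eqref{c:aa}. The only cosmetic difference is that you spell out the convergence/rearrangement justification (via Theorem \ref{form:sym}(1)) more explicitly than the paper does.
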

\begin{proof} Comparing the expansions $\mc{P} = \sum_{\sigma} C_{\sigma}[\sigma]$ and $\mathcal{P}_{\flat}= \sum_{\sigma \in W} C^{\flat}_{\sigma}[\sigma]$ with the definition of $\I$ and $\Iw$ we find that $\mf{c} \Delta = C_1 $ and $\mf{c}^\flat \Delta= C_1^{\flat}.$ Thus if we show that $C_1=C_1^{\flat}$ the Corollary follows. However, the fact that $C_1= C_1^{\flat}$ is a consequence of proof  of Lemma \ref{large-len}. Indeed, first note that \be{C:1} \begin{array}{lcr} C_1 = \sum_{w, \p , \, [\p]=1} A_{\p}(w)& \text{ and } C_1^{\flat} = \sum_{w, \p , \,  [\p]=1} A^{\flat}_{\p}(w) \end{array} \ee where \be{A:p:sharp} \begin{array}{lcr} A^{\flat}_{\p}(w)= \prod_{j\notin \p}\b(\beta_j) \cdot \prod _{j\in \p}\c^{\flat} (\beta _j) & \text{ and } & A_{\p}(w)= \prod_{j\notin \p}\b(\beta_j) \cdot \prod _{j\in \p}\c (\beta _j) \end{array} \ee with the $\beta_j$ defined as in (\ref{eq:betanotation}). By Lemma \ref{lem:inverseset} the collection of all $\beta_j, j \in \p$ which appear in either product are a collection of pairs $\{ a, -a \}$ for $a \in R_+.$ But $\c^{\flat}(a) \c^{\flat}(-a) = \c(a) \c(-a)$ as we observed in (\ref{c:aa}), and so indeed $C_1 = C_1^{\flat}$. \end{proof}

\tpoint{On $\cpro,$ $\cprow$ and $\mf{m}$} To complete (4) of the plan outlined in \ref{pf-out} it remains to show that $\cpro =\mf{m} .$ 

\begin{nlem}\label{lem:cproismfm}
We have $\cpro = \mf{m}.$
\end{nlem}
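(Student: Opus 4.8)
The idea is to show that $\cpro$, which by Proposition~\ref{prop:sph} satisfies $\mc{P} = \cpro\, \I$ as elements of $\hqn[W]^{\vee}$, must equal Viswanath's element $\mf{m}$ from Proposition~\ref{vis}. The key is to compare coefficients of a single basis vector $[1]$ on both sides of a suitable identity, after applying a "constant term" / evaluation-type operation that collapses the infinite sum over $W$ into the Poincar\'e series $\sum_{w} v^{\ell(w)}$.

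\medskip

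\textbf{Step 1: express $\mc{P}$ via the simple symmetrizer and extract the $[1]$-coefficient.} From $\mc{P} = \sum_\sigma C_\sigma[\sigma]$ and $\I = \sum_w \Delta^w[w]$, the relation $\mc{P} = \cpro\,\I$ gives in particular $C_1 = \cpro\,\Delta$ (the $[1]$-component), so $\cpro = C_1/\Delta$ in $\hqn$ (recall $\Delta$ is a unit in $\qn$, hence in $\hqn$). Thus it suffices to compute $C_1$, equivalently to evaluate $\cpro\,\Delta = C_1$.

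\medskip

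\textbf{Step 2: relate $C_1$ to the Poincar\'e series.} Here is where the "evaluation at $e^{-\av_i}\mapsto 0$" idea enters. Consider the ring homomorphism $\varepsilon\colon \hqn \to \C[[v]]$ sending each $e^{-\av_i}\mapsto 0$; under $\varepsilon$ one has $\varepsilon(\b(a_i)) = v-1$ and $\varepsilon(\c(a_i)) = 1$ (from the expansions in \eqref{b:exp}, taking the negative-power expansions and evaluating at $e^{\av}=\infty$, i.e. the constant terms $(1-v)\cdot 0 + \dots$; more precisely $\b(X)$ expanded in $X^{-1}$ has no constant term so $\varepsilon(\b(a_i))$ needs care — instead use that for $a_i>0$ simple, $\b(a_i)$ expanded in $e^{-\av_i}$ starts at $e^{-\av_i}$, giving $\varepsilon(\b(a_i))=0$, while $\varepsilon(\c(a_i))=1$, $\varepsilon(\cw(a_i))=0$). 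Wait — this must be set up so that $\varepsilon(\T_{w_i}) = [w_i]$, i.e. $\varepsilon$ kills the $[1]$-correction and the operators become the group algebra elements, whence $\varepsilon\bigl(\sum_w \T_w\bigr)$ formally equals $\sum_w[w]$; this is \emph{not} what we want either. The right move is the opposite specialization: expand everything in \emph{nonnegative} powers and send $e^{\av_i}\mapsto 0$ (equivalently work with the expansions where $\b(a_i)\to v-1$, $\c(a_i)\to 1$, $\Delta^w\to 1$). Under this, $\T_{w_i}\mapsto [w_i] + (v-1)[1]$, the Hecke-algebra generator with $\T_{w_i}^2 = (v-1)\T_{w_i} + v$, so the coefficient of $[1]$ in $\sum_{w\in W}\T_w$ evaluates to $\sum_{w\in W} v^{\ell(w)}$ by the standard Hecke-algebra computation (the "evaluation of the full symmetrizer" identity). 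Meanwhile $\I\mapsto \sum_w[w]$, so its $[1]$-coefficient evaluates to $1$; hence this specialization of $C_1$ equals $\sum_w v^{\ell(w)}$ and the same specialization of $\cpro\Delta$ equals $\cpro$ evaluated there. So we get $\overline{\cpro}\cdot 1 = \sum_w v^{\ell(w)}$ at the level of this specialization — but we need the honest identity in $\qn$, not just a specialization.

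\medskip

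\textbf{Step 3: upgrade to an identity in $\qn$ using $\I = \mf{m}^{-1}\sum v^{\ell(w)}\cdot(\text{something})$ — better, use Proposition~\ref{vis} directly.} The cleanest route: apply both sides of $\mc{P} = \cpro\,\I$ to the "evaluation" that sums the Weyl-group coordinates with weights, i.e. use that $\sum_{w}\Delta^w = \mf{m}^{-1}\sum_w v^{\ell(w)}$ from \eqref{vis:id}. Concretely, apply the $\C$-linear map $\hqn[W]^\vee \to \hqn$, $\sum_\sigma a_\sigma[\sigma]\mapsto \sum_\sigma a_\sigma$ (sum of all coefficients — this is well-defined on our symmetrizers because of the finiteness established in Theorem~\ref{form:sym}(1), as the total is $\sum_\sigma C_\sigma$ which converges coefficientwise). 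Applied to $\I$ it gives $\sum_w \Delta^w$; applied to $\mc{P} = \sum_w \T_w$ it gives $\sum_w \bigl(\sum_\sigma A_\sigma(w)\bigr) = \sum_w \pro_{\T}(w)$ where $\pro_{\T}(w)$ is the image of $\T_w$ under the augmentation $[\sigma]\mapsto 1$; but $\T_{w_i}\mapsto \c(a_i)+\b(a_i) = \frac{1-va_i}{1-a_i} + \frac{v-1}{1-a_i} = \frac{v - va_i}{1-a_i} = v$, so $\T_w \mapsto v^{\ell(w)}$ and $\mc{P}\mapsto \sum_w v^{\ell(w)}$. Therefore $\sum_w v^{\ell(w)} = \cpro \sum_w \Delta^w$ in $\hqn$. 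Comparing with \eqref{vis:id}, $\mf{m}\sum_w\Delta^w = \sum_w v^{\ell(w)}$, and since $\sum_w\Delta^w$ is a unit in $\qn$ (its constant term is $1$), we conclude $\cpro = \mf{m}$. The same computation with $\cw$ in place of $\c$ gives $\cw(a_i)+\b(a_i) = \frac{1-va_i^{-1}}{1-a_i}+\frac{v-1}{1-a_i}$, which after the substitution also augments to $v$ (using $a_i^{-1}\cdot(\text{unit})$; one checks $\cw(X)+\b(X)$ and $\c(X)+\b(X)$ have the same augmentation, consistent with Corollary~\ref{consts}), giving $\cprow = \mf{m}$ as well.

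\medskip

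\textbf{Main obstacle.} The delicate point is justifying that the augmentation map $[\sigma]\mapsto 1$ is well-defined and continuous on $\hqn[W]^\vee$ applied to $\mc{P}$ — i.e. that $\sum_{\sigma}C_\sigma$ converges in $\hqn$ and equals $\sum_w v^{\ell(w)}$ term-by-term in $w$. This is exactly the content guaranteed by the width/length bounds (Lemmas~\ref{large-wid}, \ref{large-len}) proved for Theorem~\ref{form:sym}(1): for fixed $\mv$ and fixed power of $v$, only finitely many $(w,\p)$ contribute to $[e^\mv]C_\sigma$ across all $\sigma$, so the rearrangement $\sum_\sigma\sum_{\p:[\p]=\sigma}A_\p(w) = \sum_w\bigl(\sum_\p A_\p(w)\bigr) = \sum_w v^{\ell(w)}$ is legitimate coefficientwise. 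Once that interchange is licensed, the rest is the one-line augmentation computation $\c(a_i)+\b(a_i)=v$ together with Proposition~\ref{vis}.
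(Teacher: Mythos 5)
Your Step 3 is essentially the paper's proof: the authors also apply both sides of $\mc{P}=\cpro\,\I$ to $1=e^{0}$ (which is exactly your augmentation $[\sigma]\mapsto 1$), use $\T_{w_i}(1)=\c(a_i)+\b(a_i)=v$ so that $\mc{P}(1)=\sum_{w}v^{\ell(w)}$, and conclude via Proposition \ref{vis}; the well-definedness of this evaluation is, as you say, supplied by Theorem \ref{form:sym}(1). One correction to your closing parenthetical: the analogous computation does \emph{not} give $\cprow=\mf{m}$ directly, since $\cw(a_i)+\b(a_i)=-v\,e^{-\av_i}\neq v$, so $\Tw_{w_i}(1)\neq v$ and the augmentations of $\c(X)+\b(X)$ and $\cw(X)+\b(X)$ differ — this is precisely why the paper obtains $\cprow=\mf{m}$ only indirectly, by combining the present lemma with the separate identity $\cpro=\cprow$ of Corollary \ref{consts}. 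This does not affect the statement at hand, which concerns $\cpro$ only.
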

\begin{proof}
Note that by Proposition \ref{prop:sph} we have $\mc{P}=\cpro \cdot \I$ and $\cpro \in \hqn$ hence it suffices to show that $\mc{P}$ and $\mf{m}\I $ are equal when ``applied'' to the element $1=e^0 \in \hqn.$ Note that $\T_{w_i}(1)=v,$ and hence $\mc{P}(1) = \sum_{w\in W}v^{\ell (w)}.$ By Proposition \ref{vis} this is indeed equal to $\mf{m}\I (1).$ 
\end{proof}

\tpoint{Proof of Theorem \ref{form:sym}(3): the strong Cherednik Lemma} Combining Proposition \ref{prop:sph}, Corollary \ref{consts} and Lemma \ref{lem:cproismfm}, we see that as elements of $\hqn[W]^{\vee}$ we have $\P = \hI$ and $\Pw = \hIw.$ However, both $\mf{m} \, \I$ and $\mf{m} \, \Iw$ clearly lie in $\qn[W]^{\vee}$, therefore $\P$ and $\Pw$ must lie in $\qn[W]^{\vee},$ and the identities \eqref{eq:prop:sph} also hold in this smaller space. 

\subsection{Metaplectic Symmetrizers} \label{sec-met-sym}

\newcommand{\mrd}{\wt{\mf{D}}}
\newcommand{\tQv}{\wt{Q^{\vee}}}
\newcommand{\tLam}{\wt{\Lambda}}
\newcommand{\tqnf}{\mc{\wt{Q}}_v^{\fin}}

\tpoint{Metaplectic Notations} \label{cg-not} Let $(I, \cdot, \mf{D})$ be a root datum where $\mf{D}= (\Lv, \{ \av_i \} , \Lambda, \{ a_i \})$. Let $(\Qs, n)$ be metaplectic structure on this root datum, and construct the metaplectic root datum $\wt{\mf{D}}= (\tLv, \{ \tav_i \}, \tLam, \{ \ta_i \} )$ as in \S \ref{s:met-rts}, with associated gcm $\wt{\As}$. Let $\Qv$ and $\tQv$ be the coroot lattices associated to $\mf{D}$ and $\wt{\mf{D}}$ respectively and note that by definition we have $\tQv \subset \Qv$ as well as $\tLv \subset \Lv$. Usually we just write $n_i$ in place of $\n(\av_i),$ though in places where we use both $\av_i$ and $n_i \av,$ we still write $\n(\av_i).$ Also in this section we fix a family of formal parameters $v$ and $\gf_k$ (with $k \in \zee)$ that satisfy the conditions (cf. \S \ref{p-spe}) \be{g:form} \gf_k = \gf_l \text{ if } n | k-l, \, \gf_0 = -1, \text{ and if } k \neq 0 \mod n, \text{ then } \gf_k \gf_{-k} = v^{-1} . \ee In analogy with (\ref{Cfin}), but keeping in mind the $v^{-1}$ appearing above, we set \be{Cvg} \C^{\fin}_{v, \gf} := \C[v, v^{-1}, (\gf_k)_{k \in \zee}] . \ee Define the rings $\tqnf$, $\thqn$ as before but replacing the coroots $\Pv$ with their metaplectic counterparts $\{ \tav_1, \ldots, \tav_r \}.$

\tpoint{Chinta-Gunnells action} \label{subs:CGaction} Let $\mult$ be the smallest subset of $\C^{\fin}_{v, \gf}[\tLv]$ closed under multiplication containing $1 - e^{-\tav_i}$ and $1 - v e^{-\tav_i}$ for every $i \in I,$ and let $\ring$ (respectively, $\nice$) denote the localization of $\C^{\fin}_{v, \gf}[\Lv]$ (respectively, of $\C^{\fin}_{v, \gf}[\tLv]$) by $\mult .$  For $i \in \Is$ we also define the residue map \be{resid} \resi_{n_i}: \zee \rr \{ 0, 1, \ldots, n_i-1 \} \ee in the obvious manner. Now for $\lv \in \Lv$ and $a \in \Pi$, following Chinta and Gunnells \cite{cg} we set  
{\small \be{cg-act} s_a \star e^{\lv} = \frac{e^{s_a \lv}}{1 - v e^{-\tav}} \left[ (1-v) e^{ \resi_{\n(\av)} \left( \frac{ B(\lv, \av)}{\Q(\av)} \right) \av } - v \gf_{ \Q(\av) + \B(\lv, \av) } e^{ \tav - \av} (1 - e^{-\tav}) \right].\ee} Extend this by $\C^{\fin}_{v, \gf}$-linearity to define $s_a \star f$ for every $f \in \C^{\fin}_{v, \gf}[\Lv],$ and then use the formula \be{cg-act-quot} s_a \star \frac{f}{h} = \frac{s_a \star f}{h^{s_a}} \text{ for } f \in \C^{\fin}_{v, \gf}[\Lv], h\in \mult, \ee to extend this to $s_a \star -: \ring \rr \ring.$ 

\begin{nrem} \label{CG:polv} In fact, the presence of the $v$ in front of the $\gf$ in (\ref{cg-act}) together with (\ref{g:form}) allows us to assert the following: for $\lv \in \Lv$, if we write  $s_a \star e^{\lv}= \sum_{\mv} c_{\mv} e^{\mv}$, then each $c_{\mv}$ can be written as a polynomial in $v$ (so there are no $v^{-1}$ which appear) and the variables $\gf_0, \ldots, \gf_{n-1}.$ \end{nrem} 

 Let $f \in \C^{\fin}_{v, \gf}[\Lv]$ and $a \in \Pi.$ Then we have \be{cgw} s_a \star (h f) = h^{s_a} (s_a \star f) \ee for $h\in \nice$ and $f \in \C^{\fin}_{v, \gf}[\Lv]$ (note: this fails for general $h$). In the sequel we shall need to observe the following result, which follows immediately from the definition (\ref{cg-act}) and (\ref{bq:db}).
  
\begin{nlem} \label{exp:cg:a} Let $\lv \in \Lv$ and $a \in \Pi.$ Then the expansion of $s_a \star e^{\lv}$ in negative powers of the coroots satisfies the following condition: every $\mv$ which lies in the support of $e^{ - \tav} \, s_a \star e^{\lv}$ satisfies the inequality $\mv \leq \lv.$ \end{nlem}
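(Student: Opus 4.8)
The plan is to read off the claim directly from the Chinta–Gunnells formula \eqref{cg-act}. Fix $\lv \in \Lv$ and $a = a_i \in \Pi$, and write $\tav = \tav_i$, $n(\av)$ for the relevant integer. Looking at \eqref{cg-act}, the factor $e^{-\tav}\, s_a \star e^{\lv}$ equals
\[
\frac{e^{s_a \lv - \tav}}{1 - v e^{-\tav}} \left[ (1-v) e^{ \resi_{\n(\av)}\!\left( \frac{ B(\lv, \av)}{\Q(\av)} \right) \av } - v \gf_{ \Q(\av) + \B(\lv, \av) } e^{ \tav - \av} (1 - e^{-\tav}) \right].
\]
I would expand $\frac{1}{1 - v e^{-\tav}} = \sum_{m \geq 0} v^m e^{-m\tav}$ in nonnegative powers of $\tav$ (this is the ``negative powers of the coroots'' expansion, since $\tav$ is a positive coroot), so every monomial appearing has the form $e^{\mu}$ with $\mu \leq s_a\lv - \tav$ in the two summands respectively shifted by the bracketed exponents.

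The key point is then a term-by-term comparison of exponents against $\lv$. For the first summand, the exponent is $s_a\lv - \tav - m\tav + \resi_{n(\av)}\!\left(\frac{B(\lv,\av)}{\Q(\av)}\right)\av$ for $m \geq 0$. Here I use $s_a\lv = \lv - \la \lv, a\ra \av$ and the identity $\B(\lv,\av) = \la \lv, a\ra \Q(\av)$ from \eqref{bq:db}, so that $\frac{B(\lv,\av)}{\Q(\av)} = \la \lv, a\ra$ and the residue term is $\resi_{n(\av)}(\la\lv,a\ra)\,\av$. Since $0 \le \resi_{n(\av)}(\la\lv,a\ra) \le \la\lv,a\ra$ when $\la\lv,a\ra \ge 0$, and in general $\resi_{n(\av)}(k) \le k$ fails for negative $k$ — so I should instead argue that $\la\lv,a\ra\av - \resi_{n(\av)}(\la\lv,a\ra)\av = n(\av)\lfloor \la\lv,a\ra/n(\av)\rfloor \av$ is an integer multiple of $n(\av)\av = \tav$, hence the exponent becomes $\lv - (\text{nonneg. mult. of }\tav) - \tav - m\tav$, which is $\le \lv$. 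For the second summand, the bracketed term $-v\gf_{\cdots}e^{\tav - \av}(1 - e^{-\tav})$ contributes exponents $\tav - \av$ and $-\av$; combined with $s_a\lv - \tav - m\tav$ the exponents are $s_a\lv - \av - m\tav$ and $s_a\lv - \av - \tav - m\tav$. Now $s_a\lv - \av = \lv - (\la\lv,a\ra+1)\av$; since $\av$ is a positive coroot this is $\le \lv$ precisely when $\la\lv,a\ra + 1 \ge 0$, and when $\la\lv,a\ra + 1 < 0$ one must again absorb multiples of $\tav$ — the safe formulation is that in all cases the exponent differs from $\lv$ by a nonpositive element of $\Qv_{\D}$ once the $v e^{-\tav}$ geometric series is taken into account, because the $\resi$ normalization was designed so that the whole bracket, after clearing, is supported in $\le 0$ coroot directions.

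The main obstacle, and the only place requiring care, is exactly this bookkeeping with the residue function $\resi_{n(\av)}$ and making sure the sign of $\la\lv,a\ra$ does not break the inequality. I would handle it by the uniform observation: by construction $n(\av)\av = \tav$, and the $m$-sum from $(1-ve^{-\tav})^{-1}$ supplies arbitrarily large nonnegative multiples of $\tav$; meanwhile $\la\lv,a\ra\av - \resi_{n(\av)}(\la\lv,a\ra)\av \in \zee\,\tav$ (possibly negative, but bounded below independently of $m$), and similarly $\av$ and $\tav - \av$ differ from $0$ by bounded amounts. Since $\tav$ itself is already subtracted in $e^{-\tav}\,s_a\star e^{\lv}$ and the geometric series contributes $e^{-m\tav}$ for all $m \ge 0$, every exponent is of the form $\lv + (\text{bounded element of }\zee\tav) - m\tav$, which lies in $\lv - \Qv_{\D, \ge 0}$ — i.e. $\le \lv$ — for all $m$ large, and a direct check of the finitely many small-$m$ terms (using $0 \le \resi_{n(\av)}(k) \le |k| + n(\av)$) finishes it. This is essentially the content of Remark \ref{CG:polv}'s companion statement, so I expect the write-up to be short, just an explicit inspection of the two summands in \eqref{cg-act} together with \eqref{bq:db}.
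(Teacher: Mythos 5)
Your overall strategy (direct inspection of \eqref{cg-act} combined with \eqref{bq:db}) is the right and essentially only one available --- the paper itself offers nothing beyond ``follows immediately from the definition'' --- but your write-up has a genuine gap at exactly the point you flag and then wave away, namely when $k:=\la\lv,a\ra$ is negative. Carrying your own computation to the end: with $n_a=n(\av)$ and $r=\resi_{n_a}(k)$, the first summand of $e^{-\tav}\,s_a\star e^{\lv}$ contributes exponents $\lv-(\lfloor k/n_a\rfloor+1+m)\tav$ for $m\ge 0$, and the second contributes $s_a\lv-\av-m\tav=\lv-(k+1)\av-m\tav$. The quantity $\lfloor k/n_a\rfloor$ that you describe as a ``nonneg.\ mult.\ of $\tav$'' is negative whenever $k<0$; more seriously, the $m=0$ exponent of the second summand is $\lv-(k+1)\av$, which strictly exceeds $\lv$ as soon as $k\le-2$, and its coefficient $-v\,\gf_{\Qs(\av)(1+k)}$ is nonzero and cannot be cancelled by the first summand (whose exponents lie in $\lv+\zee\tav$ and whose coefficients involve only $v$; even when the congruence class matches, the combined coefficient does not vanish). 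Your proposed rescue --- ``absorbing multiples of $\tav$'' from the geometric series --- goes the wrong way: $(1-ve^{-\tav})^{-1}=\sum_{m\ge0}v^me^{-m\tav}$ only \emph{lowers} exponents, so it can never repair the $m=0$ term. The closing sentence, which establishes the bound ``for all $m$ large'' and defers ``the finitely many small-$m$ terms'' to an unperformed check, postpones precisely the step that fails. (A sanity check: for $n=1$ the action collapses to $s_a\star e^{\lv}=e^{s_a\lv}$, so $e^{-\av}s_a\star e^{\lv}=e^{\lv-(k+1)\av}$, which is not $\le\lv$ when $k\le-2$.)

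What the honest computation yields is: the support of $e^{-\tav}\,s_a\star e^{\lv}$ is always $\le s_a\lv-\av$ (in particular strictly below $s_a\lv$), and is $\le\lv$ if and only if $\la\lv,a\ra\ge-1$. So either the hypothesis $\la\lv,a\ra\ge-1$ must be imposed (automatic for dominant $\lv$, but the lemma is invoked on non-dominant exponents in the induction proving Lemma \ref{well-def-met-Iw}), or the conclusion must be weakened to $\mv\le\max(\lv,s_a\lv)$ and the induction rerun with that bound, using the extra factors $\prod_{\ta\in\tR(w)}e^{-\tav}$ to compensate. Any correct write-up has to confront this case split explicitly rather than appeal to the ``design'' of the $\resi$ normalization; as it stands your argument does not establish the stated inequality for $\la\lv,a\ra\le-2$, and no argument can, since the inequality is then false.
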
 

\tpoint{Metaplectic Demazure-Lusztig operators} \label{s:metDL} We can now introduce the metaplectic version of the Demazure-Lusztig operators as before, but now using the metaplectic root datum. Set  
\be{met:cb} \begin{array}{lcr} \cw(\tav) = \frac{ v - 1}{1 - e^{ \tav}}, &  \c(\tav) = \frac{1 - v e^{  \tav}}{1 - e^{ \tav}}, & \cw(\tav) = \frac{1 - v e^{ - \tav}}{1 - e^{ \tav}}. \end{array} \ee and for each $a \in \Pi,$ we define the elements in $\C^{\fin}_{v, \gf}(\tQv)[W]^{\vee},$  \be{T:c-b} \Tm_a &=& \c(\tav) [s_a]  + \b(\tav) [1] \\ 
\Tmw_a &=& \cw(\tav) [s_a]  + \b(\tav) [1]. \ee Then we consider their action on $\C_{v, \gf, S}[\Lv]$ by the formulas \be{Tma:act} \Tm_a(e^{\lv}) &=& \c(\tav) s_a \star e^{\lv} + \b(\tav)  e^{\lv} \\ \Tmw_a(e^{\lv}) &=& \cw(\tav) s_a \star e^{\lv} + \b(\tav)  e^{\lv} \ee etc. As noted in \cite{pat:pus}, $\Tm_a, \Tmw_a$ both preserve $\C^{\fin}_{v, \gf}[\Lv].$

\begin{nrem} \label{T:polv} Using Remark \ref{CG:polv}, note that if $f \in \C^{\fin}_{v, \gf}[\Lv]$ is actually a polynomial in $v$ and $\gf_0, \ldots, \gf_{n-1}$, then the same is true of its image under $\Tm_a$ and $\Tmw_a$. \end{nrem} 

\tpoint{Braid relations} \label{w-defs} Let $w \in W$ and choose any reduced decomposition $w=s_{b_1} \cdots s_{b_r}$ with $b_i \in \Pi$ for $i=1, \ldots, r.$ Then we may define operators $w \star - : \nice \rr \nice$ and $\Tm_w: \C^{\fin}_{v, \gf}[\Lv] \rr \C^{\fin}_{v, \gf}[\Lv]$ as, \be{w-def} w \star f &:=& s_{b_1} \star \cdots \star s_{b_r} \star f \text{ for } f \in \nice  \\ \label {Tw:def} \Tm_w (f) &:=& \Tm_{b_1} \cdots \Tm_{b_r} (f) \text{ for } f \in \C^{\fin}_{v, \gf}[\Lv] \ee but it remains to be seen that these definitions are well-defined.

\begin{nprop} \cite[Theorem 3.2]{cg}, \cite[Proposition 2.11]{lee:zh}  \begin{enumerate}[(a)] \item The operation $ \star$ defines an action of $W(\As)$ on $\C^{\fin}_{v, \gf, S}[\Lv].$
\item The expressions $\Tm_a$ satisfy the braid relations (\ref{bd:rel}). \footnote{We remark here that the condition that $\gf_k \gf_{-k} =v$ is used to verify the property $s_a^2 =1$}
\end{enumerate}
\end{nprop}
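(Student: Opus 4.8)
The plan is to prove the two parts of the proposition by reducing everything to rank-two computations, since both claims only concern relations among the $s_a$ and among the $\Tm_a$ indexed by pairs of simple roots.

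First I would establish part (a), that $\star$ defines an action of $W(\As)$. Since $s_a \star -$ is given explicitly on $\C^{\fin}_{v,\gf}[\Lv]$ by \eqref{cg-act} and extended to $\ring$ via \eqref{cg-act-quot}, and since $W(\As)$ is the Coxeter group with presentation by the generators $s_a$, relations $s_a^2 = 1$, and the braid relations \eqref{bd:rel}, it suffices to check: (i) $s_a \star (s_a \star f) = f$ for $f$ a monomial $e^{\lv}$; and (ii) the braid relation $\underbrace{s_a \star s_b \star s_a \star \cdots}_{h_{ab}} \, e^{\lv} = \underbrace{s_b \star s_a \star s_b \star \cdots}_{h_{ab}} \, e^{\lv}$ for each pair $a \neq b$ with $h_{ab} < \infty$. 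For (i), one plugs \eqref{cg-act} into itself; the key input is that $\resi_{\n(\av)}$ is idempotent modulo $\n(\av)$ and that $\gf_k \gf_{-k} = v^{-1}$ (this is the footnoted point), using also \eqref{bq:db} to track how $B(\lv,\av)$ transforms. For (ii), the relation only involves the sub-root-datum spanned by $\tav_a, \tav_b$, which is of rank two; since $h_{ab}$ is finite, Lemma \ref{lem:rk2wgprops} (via the trichotomy and Table \ref{h:tab}) shows this rank-two system is of finite type $A_1\times A_1$, $A_2$, $B_2$, or $G_2$. In each case the braid relation is a finite identity in $\ring$ which one verifies directly (or cites from \cite{cg}, \cite{lee:zh}), using the metaplectic Cartan entries $\wt{a_{ij}} = \frac{n(y_i)}{n(y_j)} a_{ij}$ from \eqref{met-car} to keep track of the Chinta--Gunnells parameters.

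For part (b), that the $\Tm_a$ satisfy \eqref{bd:rel}: note that each $\Tm_a = \c(\tav)[s_a] + \b(\tav)[1]$ acts on $\C^{\fin}_{v,\gf}[\Lv]$, and expanding a product $\Tm_a \Tm_b \cdots$ using the heuristic rule \eqref{mult:try} (which is legitimate here because the operators genuinely act on $\C^{\fin}_{v,\gf}[\Lv]$, as recorded after \eqref{Tma:act}) produces an expression of the form $\sum_{\sigma} A_\sigma [\sigma] \star -$ where the $[\sigma]$ range over the rank-two parabolic generated by $s_a, s_b$ and $A_\sigma \in \ring$. Again because $h_{ab} < \infty$, this parabolic is finite, so there are finitely many terms, and the braid identity $\Tm_a \Tm_b \cdots = \Tm_b \Tm_a \cdots$ ($h_{ab}$ factors each side) becomes a finite system of rational-function identities to be matched coefficient-by-coefficient in $[\sigma]$. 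The crucial structural observation — which is what makes this identical to the non-metaplectic verification in \cite[Proposition 6.1]{pat:whit} and \cite{lus-K} — is that the metaplectic root datum $\wt{\mf D}$ is itself an honest root datum (with Weyl group $W(\wt\As) \cong W(\As)$ acting on $\tLv$ via the $\star$-action of part (a)), so the operators $\c(\tav), \cw(\tav), \b(\tav)$ satisfy exactly the same formal relations in the variables $e^{\tav}$ as in the ordinary case; the parameters $\gf_k$ are hidden inside the $\star$-action and do not interfere with the braid check beyond the $s_a^2=1$ point already handled in (a).

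The main obstacle I expect is the bookkeeping in the rank-two braid verifications for part (a): one must correctly propagate the residue symbols $\resi_{\n(\av)}(B(\lv,\av)/\Q(\av))$ and the Gauss-sum parameters $\gf_{\Q(\av)+B(\lv,\av)}$ through up to six factors in the $G_2$ case, and confirm that the two sides agree as elements of the localized ring $\nice$. This is where one leans most heavily on Lemma \ref{new-mt-cd} and equation \eqref{Q:car} (to know $\Q(\tav_a)/\Q(\tav_b)$ and hence how the symbols interact), on \eqref{g:form} for the $\gf_k \gf_{-k} = v^{-1}$ simplifications, and ultimately on the fact that this computation has been done in \cite{cg} in the finite-type case — which, by the rank-two reduction above, is the only case that occurs. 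Thus the proof is essentially: reduce to rank two, invoke finiteness of $h_{ab}$ to land in finite type, and cite/redo the Chinta--Gunnells rank-two computation in the metaplectic normalization supplied by \S\ref{s:met-rts}.
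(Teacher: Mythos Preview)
Your proposal is correct and follows essentially the same approach as the paper: reduce both (a) and (b) to rank-two checks, observe that $h_{ab}<\infty$ forces $a_{ij}a_{ji}<4$ so the rank-two subsystem is of finite type ($A_1\times A_1$, $A_2$, $B_2$, or $G_2$), and then invoke the finite-type computations from \cite{cg}, \cite{lee:zh}, \cite{pat:pus}, \cite{cgp}. Your additional remark for (b)---that via the twisting rule \eqref{cgw} the coefficients $A_\sigma$ arising from expanding $\Tm_a\Tm_b\cdots$ are computed using the \emph{ordinary} $W$-action on $\nice$, so that the braid identity for the $\Tm_a$ reduces to the non-metaplectic braid identity for the Demazure--Lusztig operators built from the metaplectic roots $\tav$ together with part (a)---is a clean conceptual sharpening that the paper does not spell out (it simply cites the metaplectic references), but the underlying strategy is the same.
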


\begin{proof} This amounts to a rank two check, and in fact we may assume our rank two root system is of finite type.  Indeed by \eqref{bd:rel} and Table \ref{h:tab} if $s_i$ and $s_j$ satisfy a braid relation ($h_{ij}<\infty $), then $a_{ij}\cdot a_{ji}<4.$ This implies that (since $\{a_{ij},a_{ji}\}$ may only be $\{0\},$ $\{-1,-1\},$ $\{-1,-2\},$ or $\{-1,-3\}$) the rank $2$ root system determined by $a_i$ and $a_j$ is of finite type ($A_1\times A_1,$ $A_2,$ $B_2=C_2$ or $G_2$). 

Now for (a), we may refer to \cite[Lemma 4.3, Lemma 6.5]{pat:pus} or \cite[Theorem 3.2]{cg} or \cite[Proposition 2.11]{lee:zh}, and for (b) to \cite[Proposition B.1.]{pat:pus} or \cite[Proposition 7.]{cgp}.
\end{proof}

\newcommand{\tm}{\widetilde{m}}

\tpoint{``Simple'' Hecke symmetrizers}We define the analogue of \eqref{Del} for our root system $\wt{R}$ as \be{mD} \mD:= \prod_{\ta \in \tR_+} \frac{ 1- v e^{-\tav}}{1 - e^{-\tav}}.\ee It lies in $\tqnf[W]^{\vee},$ and similarly we define the objects $\wt{\mf{m}}, \, \wt{\cpro}, \,  \wt{\mf{c}}^{\flat}.$ In the case when $R$ (and hence also $\wt{R}$) is affine, we let $\wt{\cc}$ be the minimal imaginary root for $\wt{R}$ and set  (\ref{corr}),  $\wt{\mf{m}}:=  \ct(\wt{\Delta}^{-1}).$ In the case when $R$ is of untwisted of ADE type, we have \be{tm} \wt{\mf{m}}= \prod_{j=1}^{\ell} \prod_{i=1}^{\infty} \frac{1 - v^{\tm_j}e^{-i \tcc } }{1 - v^{\tm_j+1} e^{-i \tcc }}, 
\end{eqnarray} where the $\tm_j$ are the exponents of $\tR_o$ (the underlying finite-dimensional root system to $\tR$).  For general affine root systems $R$, again we refer to \cite[5.8.20]{mac:aff} for the precise formula. Also, we consider the metaplectic analogues of the simple symmetrizers \be{mI} \begin{array}{lcr} \mI = \sum_{ w \in W}  \mD^w [w] & \text{ and } & \mIw := \mD \, \sum_{w \in W}  (-1)^{\ell(w)}  \left( \prod_{\ta \in \tR(w) } e^{ - \tav } \right)  [w] . \end{array} \ee where again $\mI, \, \mIw$ and $\mm \mI, \, \mm\mIw$ are well-defined elements in $\tQ_v[W]^{\vee}.$

\tpoint{Metaplectic symmetrizers of Hecke type} Define the \emph{metaplectic symmetrizers of Hecke type} \be{m:Ts} \begin{array}{lcr} \Pm:= \sum_{ w \in W} \Tm_w & \text{ and } & \Pmw := \sum_{ w \in W} \Tmw_w \end{array}. \ee  
Analogously to \eqref{hecke-sym} let us write 
\be{Tm:exp} \Tm_w  = \sum_{\sigma \in W} \tA_{\sigma}(w) [\sigma] & \text{ and } & \Tmw_w  = \sum_{\sigma \in W} \tAw_{\sigma}(w) [\sigma] \\
\tC_{\sigma} = \sum_{w \in W} \tA_{\sigma}(w) & \text{ and } & \tCw_{\sigma} = \sum_{w \in W} \tAw_{\sigma}(w)  \ee 
so that we formally have 
\be{P:Cmsig} \begin{array}{lcr} \Pm = \sum_{ w \in W} \Tm_w = \sum_{\sigma \in W} \tC_{\sigma} [\sigma] & \text{ and } & \Pmw = \sum_{ w \in W} \Tmw_w = \sum_{\sigma \in W} \tCw_{\sigma} [\sigma]. \end{array}  \ee

\begin{nthm} \label{form:sym_met} The elements $\Pm$ and $\Pmw$ above have well-defined expansions. More precisely, we have that \begin{enumerate} \item (Weak Cherednik Lemma) the elements $\Pm$ and $\Pmw$ have well-defined expansions in $\thqn[W]^{\vee},$ i.e. the elements $\tC_{\sigma}, \tCw_{\sigma}$ for $\sigma \in W$ have expansions which lie in $\thqn.$

\newcommand{\mcpro}{\tilde{\cpro}}
\newcommand{\mcprow}{{\tilde{\cpro}^{\flat}}}

\item There exist $W$-invariant elements $\mcpro$ and $\mcprow$ in $\thqn$ such that as elements in $\thqn[W]^{\vee}$  \be{prop:metsph} \begin{array}{lcr} \Pm =  \mcpro \, \mI & \text{ and } &  \Pmw = \mcprow \,  \mIw \end{array}. \ee Moreover, we have elements $\mcpro = \mcprow = \ct(\mD ^{-1}).$ 

 \item (Strong Cherednik Lemma) the elements $\Pm$ and $\Pmw$ have well-defined expansions in $\tqn[W]^{\vee},$  i.e. the elements $\tC_{\sigma}, \tCw_{\sigma}$ for $\sigma \in W$ have expansions which lie in $\tqn.$

\end{enumerate}
\end{nthm}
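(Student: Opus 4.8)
The plan is to reduce Theorem \ref{form:sym_met} to the non-metaplectic Theorem \ref{form:sym} by the ``metaplectic root datum'' formalism, exactly as the introduction promises. The key observation is that the metaplectic Demazure–Lusztig operators $\Tm_a$ are, after applying the Chinta–Gunnells action, built out of the \emph{same} rational functions $\b(\tav), \c(\tav), \cw(\tav)$ as in the non-metaplectic case, but now attached to the metaplectic root datum $\wt{\mf{D}}$ with its Cartan datum $(\Is, \wc)$ and root system $\tR$. So I would first set up a dictionary: the braid relations (already established in the Proposition of \S \ref{w-defs}) give well-defined $\Tm_w$ for $w \in W = W(\As) = W(\wt{\As})$ (the Weyl groups coincide since $\wt{\As}$ differs from $\As$ only by a diagonal rescaling, cf. Lemma \ref{new-mt-cd}), and the expansion \eqref{Tm:exp} of $\Tm_w$ into $\sum_\sigma \tA_\sigma(w)[\sigma]$ has a precise chain description $\tA_{\p}(w) = \prod_{j \notin \p} \b(\tilde\beta_j) \cdot \prod_{j \in \p} \c(\tilde\beta_j)$ word-for-word as in \eqref{eq:prodform}, with $\tilde\beta_j \in \tR_{re}$ the roots from \eqref{eq:betanotation} computed in the metaplectic root system.

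Once this dictionary is in place, parts (1) and (3) — the weak and strong Cherednik Lemmas — follow by running the \emph{identical} arguments of \S \ref{sec:pf-prop-non-met} verbatim in the metaplectic root system. Specifically: Lemma \ref{large-wid} (bounds in terms of $\wid(\p)$) applies to $(\Is, \wc)$ because Claim \ref{nclaim:boundedlongword} only uses that subdiagrams of $\wt{\As}$ generating finite parabolic subgroups have bounded long-word length, which holds since $W(\wt{\As}) = W(\As)$; Lemma \ref{large-len} (bounds in terms of $\len(\p)$) applies because Lemma \ref{lem:inverseset} and the key cancellation \eqref{c:aa}, $\c(X)\c(X^{-1}) = \cw(X)\cw(X^{-1})$, are purely formal identities in the rational functions, independent of which root datum the $\tav_i$ come from. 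Here one must also use Remark \ref{CG:polv}/Remark \ref{T:polv}: because of the factor $v$ in front of $\gf$ in \eqref{cg-act}, the coefficients $\tC_\sigma$ lie in $\C^{\fin}_{v,\gf}[[\ldots]]$ with coefficients \emph{polynomial} in $v$ and $\gf_0, \ldots, \gf_{n-1}$, so the ``finitely many contributions to each power of $v^k e^{\mv}$'' counting argument goes through with $\thqn$ (resp. $\tqn$) in place of $\hqn$ (resp. $\qn$).

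For part (2) I would argue as in \S \ref{sec:pf-prop-non-met}: the proportionality $\Pm = \mcpro\, \mI$, $\Pmw = \mcprow\, \mIw$ follows from the same algebraic manipulation used in Proposition \ref{prop:sph} (the proofs in \cite{bkp}, \cite{pat:whit} are formal and carry over, using that $\star$ is a genuine $W$-action by the cited Proposition), with $\mcpro, \mcprow$ $W$-invariant elements of $\thqn$. The equality $\mcpro = \mcprow$ is then Corollary \ref{consts} transported to $\tR$: comparing coefficients of $[1]$ gives $\mcpro \mD = \tC_1$ and $\mcprow \mD = \tCw_1$, and $\tC_1 = \tCw_1$ because (Lemma \ref{lem:inverseset}) every $\tilde\beta_j$ with $j \in \p$ and $[\p] = 1$ occurs in $\pm$-pairs, on which $\c^\flat(\ta)\c^\flat(-\ta) = \c(\ta)\c(-\ta)$. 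Finally, to identify $\mcpro$ with $\ct(\mD^{-1})$, I would apply both sides to $1 = e^0$: as in Lemma \ref{lem:cproismfm}, $\Tmw_a(1) = \Tm_a(1) = v$ (one checks $s_a \star e^0 = 1$ from \eqref{cg-act} since $\resi_{n_i}(0) = 0$ and the $\gf$ term carries a factor $1 - e^{-\tav}$ that... wait, more carefully: $s_a \star 1 = \frac{1}{1-ve^{-\tav}}[(1-v) - v\gf_{\Q(\av)}e^{\tav-\av}(1-e^{-\tav})]$, and for this to equal $1$ one needs the metaplectic structure's compatibility — but in any case $\Tm_a(1) = \c(\tav)(s_a\star 1) + \b(\tav)$ evaluates to $v$ by the same computation as in the non-metaplectic case applied in $\tR$), so $\Pm(1) = \sum_w v^{\ell(w)}$, and by Viswanath's Proposition \ref{vis} applied to $\tR$ this equals $\wt{\mf{m}}\,\mI(1)$ with $\wt{\mf{m}} = \ct(\mD^{-1})$ (via Proposition \ref{mac-ct} in the affine case, or \cite[5.8.20]{mac:aff}).

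The main obstacle I anticipate is the bookkeeping around the Chinta–Gunnells action: unlike the non-metaplectic $[s_a]$, the operator $s_a \star -$ is not linear over the full coefficient ring and mixes $e^{\lv}$ with various $e^{\mv}$, $\mv \leq \lv$ (Lemma \ref{exp:cg:a}), so I must be careful that the ``pass rational functions to the left'' heuristic \eqref{mult:try} still produces the clean product formula \eqref{eq:prodform} for $\tA_\p(w)$ — this requires checking that the extra terms introduced by $\star$ only shift the support \emph{downward} in the dominance order and hence do not disrupt the finiteness estimates of Lemmas \ref{large-wid} and \ref{large-len}. This is exactly the content of Lemma \ref{exp:cg:a} and Remark \ref{CG:polv}, so the work is to verify that invoking these at each step of the two estimates suffices; I expect it does, but writing it out cleanly is the one genuinely non-formal piece.
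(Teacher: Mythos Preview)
Your approach is correct and is exactly the paper's: the proof given there is the single sentence ``This is just Theorem \ref{form:sym} for the metaplectic root system.''

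Your anticipated obstacle, however, is a phantom. The theorem is a statement about the formal symmetrizers $\Pm,\Pmw$ as elements of $\thqn[W]^\vee$ (resp.\ $\tqn[W]^\vee$); the Chinta--Gunnells action plays no role in computing the coefficients $\tC_\sigma$. The point is \eqref{cgw}: for $h\in\nice$ one has $s_a\star(hf)=h^{s_a}(s_a\star f)$, so as operators $[s_a]\cdot h = h^{s_a}\cdot [s_a]$ with the \emph{ordinary} Weyl action on $h$. Since $\b(\tav),\c(\tav),\cw(\tav)$ all lie in $\nice$, the ``pass to the left'' procedure that produces $\tA_{\p}(w)=\prod_{j\notin\p}\b(\tilde\beta_j)\prod_{j\in\p}\c(\tilde\beta_j)$ is \emph{identical} to the non-metaplectic computation and introduces no extra terms. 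Thus $\tC_\sigma$ is literally the $C_\sigma$ of \S\ref{sec:pf-prop-non-met} computed in the root system $\tR$, and all three parts follow at once from Theorem \ref{form:sym}. In particular you need not recompute $\Tm_a(1)$ via the CG action: the identification $\mcpro=\wt{\mf m}$ is already contained in Theorem \ref{form:sym}(2) applied to $\tR$, since Lemma \ref{lem:cproismfm} there uses only the ordinary action of $[s_a]$ on $e^0\in\tqn$. The CG action first enters in the subsequent Lemma \ref{well-def-met-Iw}, where the symmetrizer is applied to $e^{\lv}$ with $\lv\in\Lv$ rather than $\tLv$.
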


This is just Theorem \ref{form:sym} for the metaplectic root system.

\spoint Our final task is to show that $\mf{c} \, \mIw$ can be applied to the element $e^{\lv}$ with $\lv \in \Lv_+$ to obtain a well-defined $v$-finite element. For $\lv \in \Lv_+$ we denote by \be{cat:o} \mf{o}(\lv) = \{ \mv \in \Lv \mid \mv \leq \lv \}. \ee Following Looijenga \cite{loo}, we also define the \emph{dual coweight algebra} as 

\begin{de} \label{loo-space} We shall write $\C_{v, \gf \leq}[\Lv]$ (or $\C^{\fin}_{v,\gf \leq}[\Lv]$) for the set of all $f = \sum_{\lv \in \Lv} c_{\lv} e^{\lv}$ with $c_{\lv} \in \C_{v, \gf}$ (resp. $c_{\lv} \in \C^{\fin}_{v, \gf}$) such that there exists $\lambda_1, \ldots, \lambda_r \in \Lv_+$ so that  $\Supp(f):= \{ \mv \in \Lv \mid c_{\mv} \neq 0 \}$ is contained in $\mf{o}(\lambda_1) \cup \cdots \cup \mf{o}(\lambda_r).$ \end{de} 

Note that $\hqn \subset \C_{v, \gf, \leq}[\Lv]$ and $\qnf \subset \C^{\fin}_{v, \gf, \leq}[\Lv]$ since $\mf{o}(0) = \Qv_-$ where $0 \in \Lv_+$ is the zero coweight. 

\begin{nlem} \label{well-def-met-Iw} For each $\lv \in \Lv_+$ the expression \be{met-Iw-lv}  \mIw(e^{\lv}) =  \, \mD \, \sum_{w \in W}  (-1)^{\ell(w)}  \left( \prod_{\ta \in \tR(w) } e^{ - \tav } \right)  (w \star e^{\lv}) \ee defines a well-defined element of $\C^{\fin}_{v, \gf, \leq}[\Lv].$ \end{nlem}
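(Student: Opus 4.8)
The plan is to combine the Weak Cherednik Lemma for the metaplectic root system (Theorem \ref{form:sym_met}(1)) with the ``support control'' already packaged in Lemma \ref{exp:cg:a}. First I would recall that by Theorem \ref{form:sym_met}(1) the element $\Pmw = \sum_{\sigma\in W}\tCw_\sigma[\sigma]$ has a well-defined expansion in $\thqn[W]^\vee$; in particular each $\tCw_\sigma$ lies in $\thqn$, i.e. its support is contained in $\tQv_-$ and each coefficient is a power series in $v$. Applying $\Pmw$ to $e^{\lv}$ gives $\Pmw(e^{\lv}) = \sum_{\sigma\in W}\tCw_\sigma\,(\sigma\star e^{\lv})$. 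So the task reduces to showing that $\tCw_\sigma\,(\sigma\star e^{\lv})$ has support in a finite union of sets $\mf{o}(\lambda_i)$, uniformly enough that the (infinite) sum over $\sigma$ still lands in $\C^{\fin}_{v,\gf,\leq}[\Lv]$.

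The key step is to iterate Lemma \ref{exp:cg:a}. That lemma says the support of $e^{-\tav}\,(s_a\star e^{\lv})$ is bounded above by $\lv$; equivalently $\Supp(s_a\star e^{\lv}) \subset \mf{o}(\lv) + \tav$, and since $\tav\in\tQv_+$, writing $w=s_{b_1}\cdots s_{b_r}$ reduced and iterating gives $\Supp(w\star e^{\lv}) \subset \mf{o}(\lv) + \sum_{\ta\in\tR(w)}\tav = \mf{o}(\lv) + (\rho^\vee_{\wt\D} - w\rho^\vee_{\wt\D})$ after the usual rewriting (cf. \eqref{rho:flip}); in particular $\Supp(\prod_{\ta\in\tR(w)}e^{-\tav}\,(w\star e^{\lv})) \subset \mf{o}(\lv)$. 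Thus \emph{each} summand on the right-hand side of \eqref{met-Iw-lv}, before multiplication by $\mD$, already has support in the single set $\mf{o}(\lv)$. Multiplying by $\mD\in\tqnf$ (whose expansion in negative powers of the $\tav$ has support in $\tQv_-$) only moves support downward, so the support of each full summand stays inside $\mf{o}(\lv)$. Since $\mf{o}(\lv)$ is a fixed set independent of $w$, membership in $\C^{\fin}_{v,\gf,\leq}[\Lv]$ for the sum is a matter of the coefficients being well-defined, which is exactly the content of the Weak Cherednik Lemma: for fixed $\mv\in\mf{o}(\lv)$ and fixed power of $v$, only finitely many $w\in W$ contribute (the same argument as in Lemma \ref{large-wid} and Lemma \ref{large-len}, transported to $\wt R$). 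Finally one must check $v$-finiteness: here one invokes Remark \ref{T:polv} (or equivalently Remark \ref{CG:polv}), which guarantees that the coefficients are polynomials in $v$ and $\gf_0,\dots,\gf_{n-1}$, so that $\mIw(e^{\lv})$ in fact lands in $\C^{\fin}_{v,\gf,\leq}[\Lv]$ rather than merely $\C_{v,\gf,\leq}[\Lv]$.

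The main obstacle I anticipate is the bookkeeping in passing from the ``operator'' statement $\mIw \in \thqn[W]^\vee$ to the ``applied'' statement $\mIw(e^{\lv}) \in \C^{\fin}_{v,\gf,\leq}[\Lv]$: one has to be careful that applying the infinite family of Weyl elements $w\star(-)$ to $e^{\lv}$ does not destroy the finiteness, since each individual $w\star e^{\lv}$ has support spreading out in \emph{positive} coroot directions by $\rho^\vee_{\wt\D}-w\rho^\vee_{\wt\D}$. The point that makes it work — and which must be stated carefully — is that this positive spread is exactly cancelled by the prefactor $\prod_{\ta\in\tR(w)}e^{-\tav}$, so after cancellation every summand sits inside the fixed cone $\mf{o}(\lv)$; once that cancellation is in place the convergence is inherited verbatim from Theorem \ref{form:sym_met}. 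I would therefore structure the write-up as: (i) invoke Lemma \ref{exp:cg:a} and iterate to bound $\Supp(w\star e^{\lv})$; (ii) observe the cancellation with $\prod_{\ta\in\tR(w)}e^{-\tav}$ using \eqref{rho:flip} for $\wt R$; (iii) multiply by $\mD$ and note support stays in $\mf{o}(\lv)$; (iv) quote Theorem \ref{form:sym_met}(1) for well-definedness of the coefficient of each $e^{\mv}$, $\mv\in\mf{o}(\lv)$; (v) quote Remark \ref{T:polv} for $v$-finiteness. $\square$
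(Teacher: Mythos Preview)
Your steps (i)–(iii), establishing that each summand $\prod_{\ta\in\tR(w)}e^{-\tav}\,(w\star e^{\lv})$ has support in $\mf{o}(\lv)$ by iterating Lemma~\ref{exp:cg:a}, coincide exactly with the paper's approach: this is precisely the Claim in the paper's proof (attributed there to Lee--Zhang), and your use of \eqref{rho:flip} and the twisted-linearity \eqref{cgw} to carry out the induction is the right mechanism.

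The gap is in step (iv). Theorem~\ref{form:sym_met}(1) (the Weak Cherednik Lemma) is a statement about the operator $\Pmw=\sum_\sigma \tCw_\sigma[\sigma]$, not about $\mIw$; knowing $\tCw_\sigma\in\thqn$ does not by itself make $\sum_\sigma \tCw_\sigma\,(\sigma\star e^{\lv})$ convergent, nor does it say anything about the sum defining $\mIw(e^{\lv})$. Your parenthetical ``the same argument as in Lemma~\ref{large-wid} and Lemma~\ref{large-len}'' does not transport either: those proofs depend entirely on the factorisation of $\T_w$ (or $\Tw_w$) into products of $\b$ and $\c$ along a reduced word, and there is no such product structure in the summands $(-1)^{\ell(w)}e^{-\beta^\vee(w)}[w]$ of $\mIw$. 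So step (iv) as written does not establish the coefficient-by-coefficient finiteness you need.

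The paper's route is more direct and avoids the Cherednik machinery altogether. After the Claim, it observes that $\prod_{\ta\in\tR(w)}e^{-\tav}=e^{-\beta^\vee(w)}$ with $\langle\rho,\beta^\vee(w)\rangle$ growing unboundedly in $\ell(w)$, and uses this growth together with the support bound to conclude that for each fixed $\mv$ only \emph{finitely many} $w$ contribute to the coefficient of $e^{\mv}$. Once that is in hand, each coefficient is a finite sum of elements of $\C^{\fin}_{v,\gf}$, so both well-definedness and $v$-finiteness follow immediately; your step (v) becomes unnecessary. You should replace your step (iv) by this direct counting argument rather than routing through $\Pmw$.
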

\begin{proof} Fix $\mv \in \Lv.$ We will show that there are only finitely many $w \in W$ such that $e^{\mv}$ appears in corresponding summand of the right hand side of (\ref{met-Iw-lv}). First note that the product $ \left( \prod_{a \in \tR(w) } e^{ - \tav } \right)$ is of the form $e^{ - \beta^{\vee}(w)}$ with $\beta^{\vee} \in \tQv_-$ is such that $\la \rho, \beta^{\vee}(w) \ra$ is large if $\ell(w)$ is large. As there are only finitely many $w \in W$ such that $\mv \geq \lv - \beta^{\vee}(w)$ in the dominance order, the following claim, which is proven by induction using Lemma \ref{exp:cg:a}, concludes the proof

\begin{nclaim} \cite[Lemma 2.15]{lee:zh} If $\mv$ lies in the support of (the expansion of)  $\prod_{\ta \in \tR(w)} e^{ - \ta} \,  \, w \star e^{\lv}$ then $\mv \leq \lv.$ \end{nclaim} \end{proof}

\tpoint{``Operator'' Casselman-Shalika formula} Finally we obtain the following result which will be used in our computation of the metaplectic Whittaker functions in \S \ref{s:CS}. 

\newcommand{\Cf}{\C^{\fin}}

\begin{ncor} For $\lv \in \Lv_+$ we have the following equality in $\Cf_{v, \leq}[\Lv]$ \be{fin-comb} \Pmw(e^{\lv} ) = \mm \,  \mIw(e^{\lv}) =  \mm \, \mD \, \sum_{w \in W}  (-1)^{\ell(w)}  \left( \prod_{a \in \tR(w) } e^{ - \tav } \right)  (w \star e^{\lv}) \ee \end{ncor}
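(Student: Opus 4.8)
The plan is to combine the operator identity \eqref{prop:metsph} with the applicability result of Lemma \ref{well-def-met-Iw}, but to be careful about the fact that the proportionality is a priori an equality in $\thqn[W]^\vee$ while the target of the corollary is the Looijenga algebra $\Cf_{v,\leq}[\Lv]$ (not a group algebra completion). First I would note that by Theorem \ref{form:sym_met}(3) (the strong Cherednik lemma) the element $\Pmw$ actually lies in $\tqn[W]^\vee$, and that the identity $\Pmw = \mcprow \, \mIw = \mm \, \mIw$ holds there, with $\mm = \ct(\mD^{-1}) \in \tqn$ being $W$-invariant. So the only content left is: (i) make sense of applying $\Pmw$ and of applying $\mm\,\mIw$ to $e^{\lv}$ for $\lv \in \Lv_+$, landing in $\Cf_{v,\leq}[\Lv]$; and (ii) check that the two results agree.

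For (i), the right-hand side is handled directly: Lemma \ref{well-def-met-Iw} shows $\mIw(e^{\lv}) \in \Cf_{v,\gf,\leq}[\Lv]$, and since $\mm \in \tqn \subset \Cf_{v,\gf,\leq}[\Lv]$ and the Looijenga algebra is closed under the multiplication compatible with the one on $\tqn$ (multiplying a $\leq$-bounded support by $\Qv_-$-supported series stays $\leq$-bounded), the product $\mm\,\mIw(e^{\lv})$ is a well-defined element of $\Cf_{v,\gf,\leq}[\Lv]$; the $v$-finiteness follows because $\mIw(e^{\lv})$ is $v$-finite by the lemma and $\mm$ is $v$-finite as noted in \S\ref{sec:ctfla}. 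For the left-hand side, $\Pmw(e^{\lv}) = \sum_{w\in W}\Tmw_w(e^{\lv})$; each $\Tmw_w(e^{\lv})$ preserves $\Cf_{v,\gf}[\Lv]$ as recorded in \S\ref{s:metDL}, and the same finiteness argument as in Lemma \ref{well-def-met-Iw} — tracking the $e^{-\tav}$ factors from $\b(\tav), \cw(\tav)$ expansions together with Lemma \ref{exp:cg:a} bounding supports of $w\star e^{\lv}$ by $\lv$ — shows the sum over $w$ converges coefficientwise in $\Cf_{v,\gf,\leq}[\Lv]$. I would phrase this as: applying the operator expansion $\Pmw = \sum_\sigma \tCw_\sigma[\sigma]$ term by term to $e^{\lv}$, each $\tCw_\sigma[\sigma](e^{\lv}) = \tCw_\sigma \cdot (\sigma \star e^{\lv})$ is well-defined and $\leq$-bounded, and only finitely many $\sigma$ contribute a given $e^{\mv}$.

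For (ii), once both sides are interpreted as elements of $\Cf_{v,\leq}[\Lv]$ obtained by applying the respective (strongly convergent) operator expansions to $e^{\lv}$, the equality is a formal consequence of the operator identity $\Pmw = \mm\,\mIw$ in $\tqn[W]^\vee$: one compares coefficients of each $[\sigma]$ and applies $\sigma\star(-)$ to $e^{\lv}$, using $\C^{\fin}_{v,\gf}$-linearity and the fact (from \S\ref{w-defs} and the braid relations) that $\star$ is a genuine $W$-action so that the bookkeeping of $[\sigma](e^{\lv}) = \sigma\star e^{\lv}$ is consistent. The main obstacle, and the point that needs the most care, is precisely this compatibility of ``apply to $e^{\lv}$'' with the formal sums: one must verify that rearranging $\sum_w \Tmw_w(e^{\lv})$ into $\sum_\sigma \tCw_\sigma(\sigma\star e^{\lv})$ is legitimate, i.e. that the double sum is absolutely convergent in the coefficientwise topology of $\Cf_{v,\gf,\leq}[\Lv]$ — this is where one genuinely uses the strong Cherednik lemma (part (3) of Theorem \ref{form:sym_met}) rather than just the weak one, since the weak version only controls the $v$-adic (not the $e^{-\tav}$-adic) convergence and would not by itself guarantee that each $\tCw_\sigma$ lies in $\tqn$ and hence in the Looijenga algebra. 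With that in hand, the stated equalities in \eqref{fin-comb} follow immediately, the last one just being the definition \eqref{mI} of $\mIw$ together with $w\star e^{\lv}$ in place of $[w]$ acting on $e^{\lv}$.
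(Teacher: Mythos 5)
Your proof is correct and matches the paper's (implicit) argument: the corollary is stated without a separate proof precisely because it is the combination of Theorem \ref{form:sym_met} with Lemma \ref{well-def-met-Iw} that you describe, and your care about rearranging $\sum_w \Tmw_w(e^{\lv})$ into $\sum_\sigma \tCw_\sigma(\sigma \star e^{\lv})$ is exactly the right point to dwell on. One small clarification: the weak Cherednik lemma already controls the coroot support (the $\tCw_\sigma$ lie in $\thqn$, hence are supported in $\tQv_-$), so it is not the $e^{-\tav}$-adic convergence that fails without part (3); what the strong version adds, and what you genuinely need to land in $\Cf_{v, \leq}[\Lv]$ rather than merely $\C_{v, \leq}[\Lv]$, is the $v$-finiteness of the coefficients $\tCw_\sigma$.
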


\begin{nrem} \label{fin-comb:v} In light of Remark \ref{CG:polv}, the expression on the right hand side can actually be written in the form $\sum_{\mv \in \Lv} c_{\mv}e^{\mv}$ where $c_{\mv}$ is a polynomial in $v, \gf_0, \ldots, \gf_{n-1}$.  \end{nrem} 

\section{Kac-Moody groups}

\renewcommand{\un}{\mc{U}}
\newcommand{\dpow}[2]{{#1}^{(#2)}}

\newcommand{\bU}{\mathbf{U}}

\subsection{The Tits Kac-Moody group functor } 

Let $(\Is, \cdot, \mf{D})$ be a root datum with associated gcm $\As$ and write $\mf{D}= (Y, \{ \av_i \}, X, \{ a_i \}).$ Our aim in this section is to review some aspects of the construction of the functor $\bG_{\mf{D}}$ due to Tits \cite{tits:km} from rings to groups. In \S \ref{sec:N} we assume that $\mf{D}$ is of simply-connected type.  

Let $\mf{g}_{\D}$ be the Kac-Moody algebra as in \S \ref{s:gD}. As $\mf{D}$ is fixed, we shall sometimes drop it from our notation, e.g. we write $\mf{g}$ in place of $\mf{g}_{\D}$ and $Q, \Qv$ in place of $Q_{\D}, \Qv_{\D}$, etc. The roots (coroots) of this Lie algebra will be written as $R$ (resp. $R^{\vee}$). We shall also write $\Pi$ and $\Pi^{\vee}$ for the roots and coroots (identified with $\{ a_i \}$ and $\{ \av_i \}$). 

Recall our conventions for functors of groups from \S \ref{functor}. Let $F$ be an arbitrary field, and $S$ an arbitrary commutative ring with unit, and write $S^*$ for its units.

\tpoint{The torus $H$} The group which shall play the role of the maximal torus is $\bH(S) := \Hom_{\zee}(X, S^*).$  For $s \in S$ and $\lv \in Y$ we write $s^{\lv} \in \bH(S)$ for the element which sends $\mu \in X$ to  $s^{\langle \lv, \mu \rangle}.$ For a fixed basis\footnote{As in \S \ref{more-sc-not}, we use $I_e$ to denote a set indexing a basis of $Y,$ though of course the basis need not be comprised of coroots.} $\{ \Lambda_i \}_{i \in \Is_e}$ of $Y$, every element in $\bH(S)$ may be written uniquely in the form $\prod_{i \in \Is_e} s_i^{\Lambda_i}$ with $s_i \in S^*.$ We also use the following notation: if $t \in \bH(S)$ we shall write $t(x) \in S^*$ for the application of $t \in \Hom_{\zee}(X, S^*)$ to $x \in X.$  The action of $W(\As)$ on $X$ induces one on $\bH(S)$ and we shall denote the image under $w \in W(\As)$ of $t \in \bH(S)$ as $w(t).$ For $\lv \in Y, s \in S^*$ and $i \in \Is$ with $s_i \in W $ the corresponding simple reflection, we have \be{w:H:act} s_i (r^{\lv}) = r^{\lv} r^{- \la \lv,\ x_i \ra \av } \text{ for } r \in S^*. \ee

\newcommand{\bSt}{\mathbf{St}}
\newcommand{\bB}{\mathbf{B}}
\newcommand{\kmr}[1]{\textbf{KM} {#1}}
\newcommand{\gad}{\mathbb{G}_{+}}

\tpoint{Unipotent subgroups} In this paragraph we introduce certain unipotent subgroups. To do so we recall the notion of prenilpotent pairs from \S \ref{prenilp} as well as the notion of dual root bases from \S \ref{s:la} (5). For each $a \in R_{re}$ we let $\bU_a$ be the group scheme over $\zee$ isomorphic to the one-dimensional additive group scheme $\gad$ with Lie algebra $L_{a, \zee}$ equal to the $\zee$-subalgebra of $\mf{g}(\As)$ generated by the dual base $E_a$. This determines $\bU_a$ uniquely up to unique isomorphism. More concretely, every choice of $e_a \in E_a$ determines an isomorphism $x_a: \gad \rr \bU_a$ and so we write $\bU_a(S)=  \{ x_a(s) \mid s \in S \},$ where of course one has \be{one-rel} x_a(s) x_a(t) = x_a(s+t) \text{ for } s, t \in S. \ee  For any nilpotent set of roots $\Psi \subset R_{re}$ the complex Lie algebra $L_{\Psi}:= \oplus_{a \in \Psi} \mf{g}^a$ is easily seen to be nilpotent, and we may construct the corresponding complex, unipotent group $U_{\Psi}.$ Tits has shown \cite[Proposition 1]{tits:km} that there exists a group scheme $\bU_{\Psi}$ equipped with inclusions of group schemes $\bU_a \hookrightarrow \bU_{\Psi}$ for each $a \in \Psi$ satisfying two conditions: first $\bU_{\Psi}(\C)= U_{\Psi};$ and second,  if we pick any total order on $\Psi,$ the resulting map, \be{Psi:sch} \prod_{a \in \Psi} \bU_a \rr \bU_{\Psi} \ee is an isomorphism of schemes (not groups). Concretely, picking $e_a \in E_a$ for each $a \in R_{re}$ this implies that for any prenilpotent pair $\{a, b \}$ there exist well-defined integers $k(a, b; c),\, c \in ]a, b[$ such that in $\bU_{[a,b]},$ \be{kmg:sg_rel1}  [ x_a(u), x_b(u') ] = \prod_{\stackrel{c \in \, ]a, b[ \, }{c= m a + n b}} \, x_c(k(a, b; c) u^m u'^n) \text{ for } u, u' \in S. \ee   Moreover, one can show that if $\Psi' \subset \Psi$ is an ideal, i.e. if $a \in \Psi',$ $b \in \Psi$ such that if $a + b \in R$ then $a+ b \in \Psi',$ then $\bU_{\Psi'}(S) \subset \bU_{\Psi}(S)$ is a normal subgroup.

\spoint \label{s:stein-functor} The \emph{ Steinberg functor} $\bSt$ is defined to be the quotient of the free product of the groups $\bU_a(S), a \in R_{re}$ by the smallest normal subgroup containing the relations (\ref{kmg:sg_rel1}) and (\ref{one-rel}).  To understand the structure of this group, we define  \be{ws} \begin{array}{lcr} w_{a}(u) := x_{a}(u) x_{-a}(-u^{-1}) x_{a}(u), &    h_{a}(u) := w_{a}(u) w_{a}(-1)  & \text{ for } a \in \Pi, u \in S^*.   \end{array} \ee From the definition (\ref{ws}) we see immediately that $ w_a(s)^{-1} = w_a(-s).$ As a shorthand, we shall henceforth write $\dw_a:= w_a(-1)$ for $a \in \Pi.$  The \emph{Tits functor} $\bG_{\mf{D}}$ (or usually just $\bG$ for short) assigns to a ring $S$ the quotient of the free product of $\bSt(S)$ and $\bH(S)$ by the following relations: \begin{enumerate}[\textbf{KM} 1]
\item $t x_a(s) t^{-1} = x_a(t(a) r)$ where $a \in \Pi, s \in S$ and $t \in \bH(S)$ 
\item For $a \in \Pi$ and $t \in \bH(S)$ we require $\dw_a t \dw_a^{-1} = s_a(t)$ (cf. \eqref{w:H:act}) 
\item For $a \in \Pi$ and $s \in S^*$ we require $h_a(s) = s^{\av}.$ 
\item For $a \in \Pi$, $b \in R_{re},$ and $s \in S$ we have $\dw_a x_b(s) \dw_a^{-1} = x_{w_a(b)}(\eta(a, b) \, s ) $ where $\eta(a, b) \in \{ \pm 1 \}.$ 
\end{enumerate}

\label{signs} Let us now comment on the signs $\eta(a, b)$ for $a, b \in \Pi$. For each $a \in R_{re}$ we have fixed some $e_a \in E_a$. Recall that we have defined for each $a \in \Pi$ an automorphism $s_a^*$ using the formula (\ref{s:aut}). From \eqref{Ea}, we can define $\eta(a, b) \in \pm 1$ so that \be{} s_a^*(e_b) = \eta(a, b) e_b \text{ for } a \in \Pi, b \in R_{re}. \ee This procedure may be extended in a natural way to define the signs $\eta(a, b)$ for each $a, b \in R_{re}.$ and one may verify the following rules \cite[Lemma 5.1(c)]{mat}. Let $a, b \in R_{re}$, \begin{enumerate} \item $\eta(a, b) = \eta(a, -b)$, $\eta(a, a) = \eta(a, -a) =-1$
\item $\eta(a, b)=1$ if $a \pm b \neq 0$ and $a \pm b \notin R_{re}$
\item $\eta(a, b) \, \eta(b, a) = -1$ if $ \la a, \bv \ra = \la b, \av \ra =-1$
\item $\eta(a, b)=-1$ if $\la a, \bv \ra = 0$ and $a \pm b \in R_{re}$
\end{enumerate}  

\tpoint{Subgroups}  \label{tits:sbgp}  Let $\bU$ and $\bU^-$ to be the subfunctors of $\bG$ generated by $\bU_a$ for $a \in R_{re, +}$ and $a \in R_{re, -}$ respectively. We also let $\bB$  (resp. $\bB^-$) be the subfunctor generated by $\bU$ and $\bH$ (resp. $\bU^-$ and $\bH$). For any $a \in R_{re}$ we also let $\bB_a$ be the subfunctor generated by $\bU_a$ and $\bH.$ Finally we let $\bG_a $ be the subfunctor generated by $\bB_a, \bB_{-a}$ for $a \in R_{re}.$ We note the following two properties of Tits' construction: for any field $F$ the natural maps $\bH(F) \rr \bG(F)$ and $\bU_{\Psi}(F) \rr \bG(F)$ (for $\Psi$ a nilpotent set) are injections. 

Recalling again our conventions from \S \ref{functor}, we define the groups $G, B, U, H$, etc. 

\tpoint{Tits axioms} \label{s:axioms} In the finite-dimensional situation, the Tits axioms (or BN-pair axioms) imply the Bruhat decomposition as well as a number of other results for the structure theory of the group (cf. \cite[Ch. IV]{bour}). For general Kac-Moody root systems, there are two opposite classes of parabolic subgroups and one has Bruhat decompositions with respect to each of these as well as mixed or Birkhoff-type decompositions. In \cite[\S 5]{tits:km}, an axiomatic framework for studying the algebraic structure theory of Kac-Moody groups is introduced, which we review here. The notation here is related, but distinct, from the other sections of this paper since we would like to also use these same axioms again for metaplectic covers.

The starting point is gcm $\As$ and a set of simple roots $\Pi$ from which we can construct a Weyl group $W$ and the set $R_{re}$ of real roots as the orbit of $\Pi$ under $W$.  Assume given a group $G$, together with a system of subgroups $(B_a)_{a \in R_{re}}$ such that $G$ is generated by $(B_a)_{a \in R_{re}}$. Let $B$ and $B^-$ denote the subgroups of $G$ generated by $B_a$ for all positive and negative (real) roots respectively. Let us also set \be{H:int} H:= \bigcap_{a \in R_{re}} \, B_a, \ee and write $G_a$ for the group generated by $B_a$ and $B_{-a}$. One then imposes the following axioms on this data.

\newcommand{\rd}[1]{{\textbf{RD {#1}}}}

\begin{enumerate}[\bf{RD} 1]

\item There exist an element  in $G_{a_i}$ which, for each $b \in R,$ conjugates $B_b$ to $B_{w_i b}.$  

\vspace{1.1mm}
\noindent For each $i \in I$, write $\dw_i$ for an element as stipulated in \rd{1}. Let $N$ be the subgroup generated by $\dw_i$ and $H$.
\vspace{1.1mm}

\item Each $B_a\,  (a \in R_{re})$ has a semi-direct product decomposition $B_a = U_a \ltimes H$ such that $U_a$ are permuted under conjugation by the elements of $N.$ Moreover, if $a, b$ form a prenilpotent pair of distinct roots, then the commutator of $U_a$ and $U_b$ is contained in the group generated by $U_{\gamma}$ for $\gamma \in [a, b] \setminus \{ a, b \}.$

\item We have $B_{a_i} \cap B_{-a_i}=H$ for each $i \in \Is$.

\item The set $B_{a_i} \setminus G_{a_i} / B_{a_i} $ consists of two elements.

\item For each $i \in \Is$, the group $B_{a_i}$ is not contained in $B^-$ and the group $B_{-a_i}$ is not contained in $B$. 

\end{enumerate}

From these axioms, a number of results follow (cf. \cite[\S 5]{tits:km}). In particular the pairs $(B, N)$ and $(B^-, N)$ both form Tits systems in the language of \cite[Ch. 4, \S 2]{bour}, and we have the following.

\begin{nthm} \label{bruhat-gen} Assume $(G, (B_a)_{a \in R_{re}})$ satisfy the axioms \rd{1}- \rd{5}.

\begin{enumerate} 
\item \label{phi:N} \cite[Lemma 4, \S 5.4]{tits:km} There exists a unique homomorphism $\phi: N \rr W$ such that for $n \in N$ and $a \in R_{re}$ one has $n B_a n^{-1} = B_{\phi(n)(a)}$. Moreover, the kernel of $\varphi$ is $H$.
\item \cite[Proposition 4, \S 5.8]{tits:km} We have the following Bruhat-type decompositions \be{bru} G = B N B = B^- \, N \, B^- \ee where both decompositions are disjoint, i.e. if $n, n' \in N$, then $B n B \cap B n' B$ is either empty or the two double cosets are equal (and similarly for the $B^-$-double cosets). 
\item \cite[Corollary 2, \S 5.13]{tits:km} We have $G = B N B^- = B^- N B$ where both of these decompositions are also disjoint (in a similar sense to the above).   \end{enumerate} 
\end{nthm}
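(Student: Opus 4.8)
The plan is to deduce the whole theorem from the general theory of Tits systems: I would show that $(B,N)$ and $(B^-,N)$ are each Tits systems in the sense of \cite[Ch.~IV, \S2]{bour}, that they are ``twinned'' in an appropriate sense, and then invoke the standard consequences of these facts; this is essentially Tits's argument in \cite[\S5]{tits:km}, which one could also simply cite. First I would construct the homomorphism $\phi$ of part~(1): by \rd{1}, conjugation by $\dw_i$ permutes the family $(B_a)_{a\in R_{re}}$ by $a\mapsto w_ia$, and since $H=\bigcap_{a} B_a$ preserves every $B_a$ while $N=\langle H,\dw_i\rangle$, conjugation by an arbitrary $n\in N$ induces a permutation of $R_{re}$ lying in the image of the action of $W$; this yields $\phi\colon N\to W$ with $\phi(\dw_i)=w_i$. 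An element of $\ker\phi$ normalizes every $B_a$, and using the semidirect decompositions $B_a=U_a\ltimes H$ from \rd{2} together with $B_{a_i}\cap B_{-a_i}=H$ from \rd{3} one identifies $\ker\phi=H$; this is part~(1).

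Next I would verify the Bourbaki axioms for $(B,N)$. That $G=\langle B,N\rangle$ holds because $B$ contains $U_a$ for $a>0$ and $H$, and conjugating these by words in the $\dw_i$ produces all $U_b$ with $b\in R_{re}$, hence all the generators $B_a$ of $G$. That $H\triangleleft N$ with $N/H\cong W$ generated by the involutions $s_i$ is part~(1). The exchange condition $s_i\,BwB\subseteq BwB\cup Bs_iwB$ is the essential point: it suffices to show $\dw_iB\dw\subseteq B\dw B\cup B\dw_i\dw B$ for a representative $\dw$ of $w$, and using the commutator relations of \rd{2} and the fact that $\dw_i$ exchanges $a_i$ with $-a_i$ while permuting the remaining positive real roots among themselves, one reduces to the rank-one statement that $B_{a_i}\backslash G_{a_i}/B_{a_i}$ has exactly two elements, which is \rd{4}. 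Finally $s_iBs_i^{-1}\neq B$ is precisely \rd{5}. Running the same argument with the roles of positive and negative roots exchanged shows $(B^-,N)$ is also a Tits system. Part~(2) of the theorem — the Bruhat decompositions $G=BNB$ and $G=B^-NB^-$, each disjoint on double cosets — then follows from \cite[Ch.~IV, \S2, Th\'eor\`eme~1]{bour}.

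For part~(3) I would exploit the interaction of the two systems. The additional inputs are $B\cap B^-=H$ (proved from \rd{3} by a short argument using the two Bruhat decompositions) and a ``mixed'' exchange statement such as $U_{a_i}B^-\subseteq B^-\cup B^-\dw_iB^-$, again extracted from \rd{2} and \rd{4}. Granting these, $G=BNB^-$ is proved by induction on $\ell(\phi(n))$ for $g\in BnB$: one rewrites such a $g$ in mixed form by pushing the right-hand factor $B$ across to $B^-$ one simple reflection at a time via the mixed exchange condition, and disjointness follows from $B\cap B^-=H$ together with the disjointness already established within each separate system. The symmetric statement $G=B^-NB$ is obtained in the same way; altogether this is part~(3), \cite[Corollary~2, \S5.13]{tits:km}.

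The main obstacle is part~(3). Unlike the finite-dimensional situation there is no longest element $w_0\in W$ with $\dw_0B\dw_0^{-1}=B^-$, so $B$ and $B^-$ cannot be compared directly; instead one must treat them as an interlocking pair and verify the twin-$BN$-pair (``donn\'ee radicielle jumel\'ee'') axioms that make the induction in part~(3) go through. By contrast, parts~(1) and~(2) amount to a fairly mechanical unwinding of \rd{1}--\rd{5} along the lines of the classical finite-dimensional proof.
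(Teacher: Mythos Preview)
Your proposal is correct and matches the paper's treatment: the paper does not supply its own proof of this theorem but simply cites the relevant results from \cite[\S5]{tits:km} for each part, and your sketch faithfully outlines Tits's argument there (as you yourself note). The only remark worth adding is that your identification of $\ker\phi$ with $H$ in part~(1) is a bit terse; in Tits's treatment this uses that $N$ is by construction generated by $H$ and the $\dw_i$, so the induced permutation of $\{B_a\}$ already lies in $W$ and the kernel computation reduces to showing $H = B \cap N$, which indeed follows from \rd{2}--\rd{4} once the rank-one structure of $G_{a_i}$ is understood.
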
 

Returning to the setting of Tits functors, note that the subgroups as defined in \S \ref{tits:sbgp} satisfy the axioms \rd{1}-\rd{5} so the results of Theorem \ref{bruhat-gen} hold.

\subsection{On the group $N$, its ``integral version'' $N_{\zee},$ and an explicit Bruhat decomposition} \label{sec:N}

\newcommand{\rnn}[1]{{\textbf{N {#1}}}}
\newcommand{\rnnz}[1]{\mathbf{N}_{\zee} {\textbf{#1}}}
In this section we further assume that our root datum $\mf{D}$ is of simply-connected type, and construct the group $G:= \bG(F)$ as in the previous section.

\tpoint{Further notation in the simply connected case} \label{more-sc-not} We adopt here some further notation in the simply connected case. Let $e$ denote the dimension of the root datum (i.e. the rank of $Y$). Recall from (\ref{Y:sc}) that we can identify $Y= \Qv \oplus Y_0.$ We extend the basis $\Pv$ of $\Qv$ to a basis $\Pv_e$ of $Y$, and denote a set indexing this basis by $\Is_e$ (which we assume contains $\Is$), so that we  write $\av_i \,(i \in \Is_e)$ for a basis of $Y$ where $\av_i\, (i \in \Is)$ is the basis of coroots that are given as part of the root datum. For $a \in \Pi$ and $s \in F^*$ we have identified the element $h_a(s)= s^{\av}$ (see \S \ref{s:stein-functor}), and we continue to do this for $\bv \in \Pv_e$, i.e. we shall write $h_b(s)$ for $s^{\bv}$ (with an abuse of notation as we have not really defined the symbol ``$b$''). The reason we work in the simply connected case is so that we can use the following.

\begin{nlem} \label{H:sc} Every element in $H$ can be written uniquely in the form \be{h:prod} h = \prod_{\bv \in \Pv_e } h_{b}(s_b) \text{ where } s_b \in F^*. \ee \end{nlem}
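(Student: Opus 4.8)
The plan is to exploit the direct-sum decomposition $Y = \Qv \oplus Y_0$ from \eqref{Y:sc} together with the structure of the Tits functor. First I would recall that $\bH(F) = \Hom_{\zee}(X, F^*)$, and that choosing the basis $\Pv_e = \{\av_i\}_{i \in \Is_e}$ of $Y$ (extending the coroot basis $\Pv$ of $\Qv$) gives, by the general remarks in the paragraph defining the torus $H$, a unique expression of every $h \in H$ in the form $\prod_{i \in \Is_e} s_i^{\av_i}$ with $s_i \in F^*$. So the only thing that actually needs proof is that the symbols $h_b(s)$ we have introduced genuinely coincide with the ``coordinate'' elements $s^{\bv}$ for $\bv \in \Pv_e$, and that these generate $H$ with no further relations.

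The key steps, in order, would be: (1) For $b = a_i$ an actual simple coroot direction, this is exactly axiom \textbf{KM}3, which asserts $h_a(s) = s^{\av}$; so on the $\Qv$-summand the identification is built in. (2) For the remaining basis vectors $\bv \in \Pv_e \setminus \Pv$ (a basis of $Y_0$), the symbol $h_b(s)$ was \emph{defined} to mean $s^{\bv}$ by the abuse of notation flagged in \S\ref{more-sc-not}, so there is nothing to check there beyond consistency. (3) Combine: since $\{\av_i\}_{i \in \Is_e}$ is a $\zee$-basis of $Y$, every $h \in \bH(F) = \Hom_{\zee}(X, F^*) \cong \Hom_{\zee}(Y, F^*)^{\vee}$ — more precisely, using $Y = \Hom_{\zee}(X,\zee)$ from \eqref{duals} and that $F^*$ is an abelian group — the map $(s_i)_{i \in \Is_e} \mapsto \prod_i s_i^{\av_i}$ is a bijection $\, (F^*)^{\Is_e} \to \bH(F)$. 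This is where existence and uniqueness of the product expression \eqref{h:prod} come from: existence because the $\av_i$ span $Y$, uniqueness because they are linearly independent over $\zee$ and $F^*$ has no constraints forcing relations (a nontrivial relation $\prod s_i^{\av_i} = 1$ would, pairing against a dual basis element of $X$, force each $s_i = 1$).

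I would then finish by noting that inside $G = \bG(F)$ the natural map $\bH(F) \to \bG(F)$ is injective (stated at the end of \S\ref{tits:sbgp}), so the expression \eqref{h:prod} for elements of $H \subset G$ inherits both existence and uniqueness from the torus $\bH(F)$ itself. The main (really the only) subtlety is bookkeeping: making sure the ``abuse of notation'' defining $h_b(s) = s^{\bv}$ for $\bv$ outside the coroot lattice is compatible with \textbf{KM}3 on the coroot directions, i.e. that no clash arises on the overlap — but there is no overlap since $\Pv \subset \Pv_e$ and on $\Pv$ both conventions give $s^{\av}$. So this is essentially a definitional consequence of the simply-connected hypothesis via \eqref{Y:sc}, and the proof is short; the bulk of the ``work'' is simply invoking the duality \eqref{duals}, axiom \textbf{KM}3, and the injectivity of $\bH(F) \hookrightarrow \bG(F)$.
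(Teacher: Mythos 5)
Your proposal is correct and is essentially the argument the paper intends: the lemma is just the uniqueness statement for coordinates of elements of $\bH(F)=\Hom_{\zee}(X,F^*)$ with respect to the basis $\Pv_e$ of $Y$ (via the dual basis of $X$ supplied by the perfect pairing \eqref{duals}), combined with the identification $h_b(s)=s^{\bv}$ (axiom \textbf{KM}3 on $\Pv$, notational convention on $\Pv_e\setminus\Pv$) and the injectivity of $\bH(F)\hookrightarrow\bG(F)$ noted at the end of \S\ref{tits:sbgp}. No gaps.
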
 In particular for $a,\ b \in \Pi,$ $a\neq b$, if we define the subgroups for  $H_a:= \{ h_a(s) \mid s \in F^* \}$ and $H_b:= \{ h_b(s) \mid s \in F^* \}$ then $H_a \cap H_b =1$. In the non simply connected case, this could fail.

\tpoint{A presentation for the group $N$} Recall the morphism $\phi: N \rr W$ from Theorem \ref{bruhat-gen}(1). 

\begin{nprop} (cf. \cite[Corollary 2.1]{kp}) \label{N:pres} The group $N$ has the following presentation: it has generators $\dw_a (a \in \Pi)$ and $h_{a}(s) \, (a \in \Pv_e, s \in F^*)$ satisfying the following relations:  \begin{enumerate}[\bf{N} 1] \label{N:rels}

\item $\dw_a^2 = h_{a}(-1)$ for each $a \in \Pi.$
\item  $\dw_a \, (a \in \Pi) $ satisfy the Braid relations
\item  $\dw_a h_{b}(t) \dw_a^{-1} = h_{b}(t) h_{a}(t^{-\la a, \bv \ra })$ for $a \in \Pi,$ $b \in \Pv_e,$ $t \in F^*$
\item $h_{b}(t) h_{b}(t') = h_{b}(t t')$ for $b \in \Pv_e,$ $t,\ t' \in F^*$
\item $h_{a}(t) h_{b}(t') = h_{b}(t') h_{a}(t)$ for $a,\ b \in \Pv_e,$ $t,\ t' \in F^*$
\end{enumerate}

\end{nprop}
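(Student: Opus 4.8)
The plan is to derive the presentation by starting from the Tits functor relations \textbf{KM}1--\textbf{KM}4 restricted to the generating elements $\dw_a\ (a\in\Pi)$ and $h_b(s)\ (b\in\Pv_e,\ s\in F^*)$, and then checking that these suffice. The relations \textbf{N}1--\textbf{N}5 are all easily seen to hold in $N$: \textbf{N}4 and \textbf{N}5 are immediate from Lemma \ref{H:sc} (the torus $H$ is just $\Hom_\zee(X,F^*)$, so it is abelian and each one-parameter subgroup $H_b$ is multiplicative); \textbf{N}3 is a rewriting of \textbf{KM}2 using \eqref{w:H:act} together with the identification $h_a(s)=s^{\av}$ from \textbf{KM}3; \textbf{N}2 is the braid relation for the $\dw_a$, which holds in the Steinberg functor (this is part of Tits' construction, or one can cite \cite[Lemma 5.1]{mat}); and \textbf{N}1 is the identity $w_a(-1)^2 = h_a(-1)$, a direct $\mathrm{SL}_2$-computation inside $\bG_a$. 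So there is a surjective homomorphism from the abstractly presented group $\widehat N$ onto $N$, and it remains to show it is injective.

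For injectivity, I would argue as follows. Let $\widehat H\subset\widehat N$ be the subgroup generated by the $h_b(s)$. From relations \textbf{N}4 and \textbf{N}5, $\widehat H$ is a quotient of $\bigoplus_{b\in\Pv_e}F^*$, and the composite $\widehat H\to H$ is an isomorphism onto $H$ by Lemma \ref{H:sc} (this is precisely where the simply-connected hypothesis enters: it guarantees $Y=\Qv\oplus Y_0$ has the basis $\Pv_e$ with no relations among the $h_b$). Next, using \textbf{N}1--\textbf{N}3 one shows $\widehat H$ is normal in $\widehat N$ and that $\widehat N/\widehat H$ is generated by the images $\bar\dw_a$ subject to the braid relations and $\bar\dw_a^2=1$ (the latter because $h_a(-1)\in\widehat H$), i.e. there is a surjection $\widehat N/\widehat H\twoheadrightarrow W$. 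Composing with the known $\varphi\colon N\to W$ with kernel $H$ (Theorem \ref{bruhat-gen}(1)), one gets a commutative diagram of short exact sequences
\begin{equation}
\begin{CD}
1 @>>> \widehat H @>>> \widehat N @>>> \widehat N/\widehat H @>>> 1 \\
@. @VVV @VVV @VVV @. \\
1 @>>> H @>>> N @>>> W @>>> 1.
\end{CD}
\end{equation}
The left vertical arrow is an isomorphism and the right vertical arrow is surjective; a diagram chase then reduces injectivity of the middle arrow to injectivity of $\widehat N/\widehat H\to W$, equivalently to showing that no nontrivial word in the $\bar\dw_a$ that maps to $1\in W$ can be nontrivial in $\widehat N/\widehat H$ — but this is exactly the statement that the $\bar\dw_a$ generate a group which is a quotient of, hence equal to, $W$, which holds because the only relations we need among them (squares and braids) are already imposed. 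More carefully, one produces a section of sets $W\to\widehat N$ by choosing reduced words, uses Lemma \ref{Weyl-moves} (reduction via moves $E_1,E_2$) to see that any two reduced words for the same $w\in W$ give the same element of $\widehat N/\widehat H$ (here \textbf{N}2 handles $E_2$ and $\bar\dw_a^2=1$ handles $E_1$), and concludes $\widehat N/\widehat H\cong W$.

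The main obstacle I anticipate is the bookkeeping in the middle step: verifying that $\widehat H$ is normal and that passing to $\widehat N/\widehat H$ genuinely collapses the $\dw_a$-relations to the Coxeter relations of $W$ requires care, because a priori a relation like $\dw_a^2=h_a(-1)$ only tells us $\bar\dw_a^2=1$ in the quotient, and one must make sure no \emph{further} relations among the $\bar\dw_a$ sneak in (which would make $\widehat N/\widehat H$ a proper quotient of $W$ and break the count) — this is ruled out by exhibiting enough elements, e.g. by faithfulness of the $W$-action already available in $N$. A secondary, more routine point is checking relation \textbf{N}1 in the precise normalization used, i.e. that $\dw_a:=w_a(-1)$ satisfies $\dw_a^2=h_a(-1)$ rather than $h_a(1)=1$; this is the familiar sign subtlety in $\mathrm{SL}_2$ (the element $\begin{psmallmatrix}0&-1\\1&0\end{psmallmatrix}$ squares to $-I$), and it propagates consistently because of the way $\eta(a,b)$ is set up in \textbf{KM}4. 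The reference \cite[Corollary 2.1]{kp} presumably carries out exactly this argument, so I would follow its structure.
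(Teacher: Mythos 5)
Your proposal is correct and follows essentially the same route as the paper: one first checks \textbf{N}1--\textbf{N}5 in $N$, then proves injectivity of the map from the abstractly presented group by showing the torus part maps isomorphically (Lemma \ref{H:sc}) and by using Lemma \ref{Weyl-moves} to reduce any word in the $\dw_a$ lying over $1\in W$ to an element of the torus subgroup --- your statement that $\widehat N/\widehat H\to W$ is injective is literally equivalent to the paper's claim $\psi^{-1}(H)\subset\sH$, proved by the same $E_1/E_2$ word-reduction. The one step you treat more lightly than the paper is \textbf{N}2 itself: the braid relations for the $\dw_a$ are not among Tits' defining relations and the paper proves them (Lemma \ref{n:bd}) via Tits' argument using $V_a\cap V_b=H_a\cap H_b=1$, which is a second place where the simply-connected hypothesis is used.
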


\begin{nrem} Note that in the above result we could have actually chosen $\dw_a:= w_{a}(s)$ for any $s \in F^*$.  Also if $w \in W$ has a reduced decomposition $w = w_{b_1} \cdots w_{b_k}$ with each $b_i \in \Pi$ we shall write \be{dw} \dw:= \dw_{b_1} \cdots \dw_{b_k}, \ee which, according to \rnn{2} is independent of the chosen reduced decomposition.  
\end{nrem} 

The proof is standard, but we reproduce it here as we shall need variants of it in the sequel. 
\tpoint{Proof of Proposition \ref{N:pres}: Step 1} By definition, $N$ is generated by the indicated elements, and we need to show that \rnn{1}-\rnn{5} hold in $N$. Condition \rnn{1} and \rnn{3}-\rnn{5} follow easily. It remains to verify condition \rnn{2}, i.e. we need to show

\begin{nlem}\cite[Prop. 3]{tits:cts} \label{n:bd}The elements $\dw_a\, (a \in \Pi)$ satisfy the braid relations \eqref{bd:rel}. \end{nlem}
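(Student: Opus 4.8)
The plan is to prove that the elements $\dw_a = w_a(-1)$ for $a \in \Pi$ satisfy the braid relations, following the classical argument of Tits \cite{tits:cts}. First I would reduce to the rank-two case: given $a \neq b$ in $\Pi$ with $h_{ab} = h < \infty$, the braid relation $\dw_a \dw_b \dw_a \cdots = \dw_b \dw_a \dw_b \cdots$ ($h$ factors on each side) only involves the root subgroups $\bU_{\pm a}, \bU_{\pm b}$ and hence takes place inside the subgroup $\bG_{\{a,b\}}$ generated by $\bG_a$ and $\bG_b$. Since $h_{ab} < \infty$ forces $a_{ij} a_{ji} < 4$ (cf. Table \ref{h:tab}), the rank-two root system spanned by $a, b$ is of finite type ($A_1 \times A_1$, $A_2$, $B_2$, or $G_2$), so we are reduced to a statement about finite-dimensional groups, where it is classical (Steinberg's relations, or Tits \cite[Prop. 3]{tits:cts}).

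The key computational step is the following. Let $w = w_a(-1) w_b(-1) w_a(-1) \cdots$ ($h$ factors). Using axiom \rd{1}/\textbf{KM}4, one checks that $w$ normalizes each $\bU_c$ and that $\phi(w) = s_a s_b s_a \cdots = s_b s_a s_b \cdots \in W$ is the longest element of the rank-two Weyl group (or at least an element fixing the relevant roots appropriately), so that the two products $\dw_a \dw_b \dw_a \cdots$ and $\dw_b \dw_a \dw_b \cdots$ have the same image in $W$, i.e. they differ by an element of $H = \ker \phi$. It then remains to show this element of $H$ is trivial. For this one computes the action of both sides by conjugation on a fixed root subgroup, say $\bU_a$: writing $w \, x_a(s) \, w^{-1} = x_{c}(\varepsilon s)$ for the appropriate $c$ and a sign $\varepsilon$, one tracks the sign through the $h$ successive conjugations using the rules for $\eta(a,b)$ recorded after \textbf{KM}4 (items (1)--(4) in \S\ref{signs}), together with the identity $w_a(u)^{-1} = w_a(-u)$ and the definition $h_a(u) = w_a(u) w_a(-1)$. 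Since $H$ acts faithfully on $\prod_{c} \bU_c$ in the relevant rank-two situation (the adjoint action on the root subgroups distinguishes torus elements in finite type after passing to $\bG_{\{a,b\}}$), matching these conjugation actions forces the discrepancy in $H$ to be $1$.

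I expect the main obstacle to be the careful bookkeeping of signs: one must verify that the accumulated product of the $\eta$-signs arising from the $h$ conjugations in $\dw_a \dw_b \cdots$ exactly cancels the corresponding product from $\dw_b \dw_a \cdots$, case by case for $h = 2, 3, 4, 6$. This is where the precise sign rules (1)--(4) of \S\ref{signs} and the choice of dual bases $E_a$ from \eqref{Ea} are essential — the statement that $E_a$ depends only on $a$ (via \cite[(3.3.2)]{tits:km}) is what makes the signs well-defined and consistent across the two reduced words. An alternative, cleaner route — which I would mention as a remark — is to invoke directly that in the finite-type rank-two group $\bG_{\{a,b\}}(F)$ the relations \textbf{N}1--\textbf{N}5 are already known to hold (this is Matsumoto's and Steinberg's analysis), so once the reduction to finite type is made, the braid relation is simply quoted rather than re-derived; Tits \cite[Prop. 3]{tits:cts} does precisely this, and the proof can be compressed to: reduce to rank two, observe the rank-two system is finite, cite the finite-type result.
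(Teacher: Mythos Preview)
Your proposal is correct in outline but takes a genuinely different route from the paper's proof, and your main line of argument (sign-tracking) is considerably more laborious than what is actually done.

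The paper follows Tits's original argument almost verbatim, and the key point is that it is \emph{uniform}: there is no case analysis over $h=2,3,4,6$ and no tracking of the $\eta$-signs at all. Instead, for each $a\in\Pi$ one introduces the subgroup $V_a\subset G_a$ generated by $U_a$, $U_{-a}$, and $H_a=\{h_a(s)\mid s\in F^*\}$, and one first shows $V_a\cap V_b=1$ for $a\neq b$ in $\Pi$ (this uses \rd{3}, disjointness of Bruhat, and the simply-connected hypothesis via Lemma~\ref{H:sc}). Writing $q$ and $q'$ for the two sides of the braid word, one has $q'=\dw_j\, q\, \dw_d^{-1}$ for the appropriate terminal index $d$, and since $\varphi(q)=\varphi(q')$ in $W$ one gets $\varphi(q)(a_d)=\pm a_j$. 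The computation
\[
q'q^{-1}=\dw_j\, q\, \dw_d^{-1} q^{-1}\in \dw_j\, q\, V_{a_d}\, q^{-1}=\dw_j\, V_{\varphi(q)(a_d)}=\dw_j V_{a_j}=V_{a_j}
\]
places $q'q^{-1}$ in $V_{a_j}$; swapping the roles of $i$ and $j$ gives $qq'^{-1}\in V_{a_i}$, hence $q'q^{-1}\in V_{a_i}\cap V_{a_j}=1$.

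Compared with your approach: your reduction to rank two and the observation that the relevant rank-two system is of finite type are both present in spirit (the argument never leaves the subgroup generated by $V_a,V_b$), but the paper never invokes the finite-type classification, never computes a single $\eta(a,b)$, and never appeals to faithfulness of the $H$-action on root groups. That last point is where your main argument is weakest: over a general field the torus of the rank-two simply-connected group need not act faithfully on the root subgroups (the roots need not span the character lattice $X$), so matching conjugation actions on all $U_c$ does not immediately force the discrepancy in $H$ to vanish. You would need an extra argument there, whereas the $V_a\cap V_b=1$ trick sidesteps the issue entirely. Your ``alternative, cleaner route'' of simply citing the finite-type result is closer to what the paper does, except that the paper actually reproduces Tits's short proof rather than black-boxing it.
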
 

\begin{proof}[Proof of Lemma \ref{n:bd}] For each $a \in R_{re}$, let $V_a \subset G_a$ be the subgroup generated by $U_a,$ $U_{-a},$ and the subgroup $H_a:= \{ h_a(s) \mid s \in F^* \}.$ By definition \eqref{ws} the elements $\dw_a:= w_a(-1)$ lie in $V_a.$ Moreover we find that if $a, b \in \Pi$ we have $V_a \cap V_b =1$. Indeed, From \rd{3} and the disjointness of the Bruhat decomposition, we have $V_a \cap V_b = H_a \cap H_b,$ but the latter intersection is trivial from the remarks after Lemma \ref{H:sc}.  Further, the $\varphi: N \rr W$ introduced above clearly also satisfies the condition that $V_{\varphi(n)(c)} = n V_c n^{-1}$ for $n \in N$ and $c \in \Pi.$ 

After these preliminaries, the proof of \cite[Prop. 3]{tits:cts} carries over. Let $q, q'$ be respectively the left and right hand side of \eqref{bd:rel}, where instead of $s_i$ we have $\dw_{a_i}.$ We wish to show that $q=q'$. Let $d$ be equal to either $j$ or $i$ depending on whether $h_{ij}$ is even or odd, so that one has $q' = \dw_j q \dw_{d}^{-1}.$ Since the Braid relations hold in $W(\As)$, we have $\varphi(q') = \varphi(q).$ Thus  $\varphi(q) s_{d} \varphi(q) = s_j,$ and so $\varphi(q) (a_d) = \pm a_j.$ Further,  \be{q':q-2} q' q^{-1} = \dw_j \, q \, \dw_d^{-1} q^{-1} \in \dw_j q \, V_{a_d} q^{-1} = \dw_j V_{\varphi(q)(a_d)} = \dw_j V_{a_j} = V_{a_j}. \ee Reversing the roles of $i$ and $j$ we obtain similarly that $q q'^{-1} \in V_{a_i}$ and so $q q'^{-1}=1$. \end{proof}

\newcommand{\sH}{\mathsf{H}}
\newcommand{\sN}{\mathsf{N}}
\newcommand{\sn}{\mathsf{n}}

\tpoint{Proof of Proposition \ref{N:pres}, Step 2}  \label{N-pres-2} Now, let $\sN$ be the group generated by elements $\w_a (a \in \Pi)$ and $\sh_{a}(s)$ for $a \in \Pv_e, s \in F^*$ satisfying \rnn{1}-\rnn{5}. Then there exists a natural surjective map $\psi: \sN \rr N$ sending $\sw_a \mapsto \dw_a (a \in \Pi)$ and $\sh_{a}(s) \mapsto h_{a}(s) (a \in \Pv_e, s \in F^*)$, which we need to verify is injective. Let $\sH$ be the subgroup of $\sN$ generated by  $\sh_{a}(s) (a \in \Pv_e, s \in F^*).$   By Lemma \ref{H:sc} have that $\psi|_\sH: \sH \rr H$ is an isomorphism.  Condition \rnn{3} shows that $\sH \subset \sN$ is normal, and so using \rnn{1} and the indicated generators for $\sN$, we may write every element of $\sN$ as $\sn= \sh \sw$ where $\sh \in \sH$ and $\sw$ is a product of the $\sw_{a} \, (a \in \Pi).$ If $\psi(\sn) = \psi(\sh \, \sw)=1$, then $\psi(\sw) \in H$. If we can show that $\psi^{-1}(H) \subset \sH$ then the result will follow: indeed, then $\sn = \sh \sw \in \sH,$ but $\psi|_{\sH}$ was injective.  Let us now argue that $\psi^{-1}(H) \subset \sH$. Consider the composition $f: \sN \stackrel{\psi}{\rr} N \stackrel{\phi}{\rr} W$ which sends $\sw_a \mapsto s_a$ for $a \in \Pi.$ Let $\sw= \sw_{i_1} \cdots \sw_{i_k} \in \sN$ (here we write $\sw_i:= \sw_{a_i}, s_i:= s_{a_i}$ as usual) be such that $\psi(\sw) \in H$ and so \be{f:w} f(\sw)= s_{i_1} \cdots s_{i_k} = 1. \ee  From Lemma \ref{Weyl-moves}, we can transform $w= s_{i_1} \cdots s_{i_k}$ to $1$ using the moves $E_1$ and $E_2$: diagrammatically \be{w->1} w \stackrel{E_{k_1}}{\rr} \, w' \,  \stackrel{E_{k_1}}{\rr} \, w''  \, \cdots \stackrel{E_{k_r}}{\rr} \, 1, \ee where each $k_i \in \{ 1, 2 \}$ and each of the $w, w', w'', \ldots $ is some product of the $s_i$. Now consider this same sequence of moves in the group $\sN$:  the move $E_2$ (braid relations for the $\sw_i$) stays valid in $\sN$ but the move $E_1$ should be changed to  \be{e:1p}  E_{1'}: \,  \text{ replace an occurence of }  \sw_i \sw_i  \text{ with } \sh_{a_i}(-1). \ee Apply the following procedure:  start from $\sw$ and apply the same sequence of moves as in (\ref{w->1}) to $\sw,$ but replacing $E_1$ with $E_{1'},$ and after each occurrence of $E_{1'}$ move the $\sh_{a}(-1)$ which is produced to the extreme left of the word (this can be done as $\sH$ is normal in $\sN$). The end result of this procedure will now be an element in $\sH,$ and in fact will actually lie in the normal subgroup generated by $\sh_a(-1), (a \in \Pi).$ Thus $\sw$ is equal in $\sN$ to an element in $\sH$ which is indeed what we wanted to show.

\tpoint{The integral Weyl group $N_{\zee}$} \label{s:nz} Let us define $N_{\zee} \subset N$ as the subgroup generated by $\dw_a, (a \in \Pi).$\footnote{In the usual theory of algebraic groups, $N_{\zee}$ can in fact be identified with the $\zee$-points of the group scheme defining $N.$ Here we are just adopting the suggestive notation.}  Set \be{Hto} H_{\zee}:= N_{\zee} \cap H, \ee and note that it is normal in $N_{\zee}$ by \rnn{4}. 
Using the same argument as at the end of \S \ref{N-pres-2}, we may show

\begin{nlem} \label{Hto:lem} The group $H_{\zee}$ is a finite abelian $2$-group generated by elements $h_{a}(-1) = \dw_a^2 \,\ (a \in \Pi).$ \end{nlem}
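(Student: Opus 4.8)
The plan is to argue exactly as in the last paragraph of \S \ref{N-pres-2}. Recall that by Lemma \ref{Hto:lem}'s own hypotheses $H_{\zee} = N_{\zee} \cap H$, where $N_{\zee}$ is generated by the $\dw_a\ (a \in \Pi)$; so every element of $N_{\zee}$ is a product $\dw_{i_1} \cdots \dw_{i_k}$ of these generators. Using relation \rnn{1}, i.e. $\dw_a^2 = h_a(-1)$, and relation \rnn{3}, which shows the subgroup $H \cap N_{\zee}$ intertwines with the $\dw_a$ so that $H_{\zee}$ is normal in $N_{\zee}$, we may move all ``square'' contributions to the left: whenever two equal generators $\dw_i \dw_i$ become adjacent we replace them by $h_{a_i}(-1)$ and push the resulting torus element to the extreme left. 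Applying the braid relations \rnn{2} to bring equal letters together as needed, this rewriting mirrors the reduction of a word in $W$ to the identity via the moves $E_1, E_2$ of Lemma \ref{Weyl-moves}.

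Concretely, first I would fix $n = \dw_{i_1} \cdots \dw_{i_k} \in H_{\zee}$. Its image under $\phi: N \rr W$ (Theorem \ref{bruhat-gen}(1)) is $s_{i_1} \cdots s_{i_k}$, and since $n \in H = \ker \phi$ this product equals $1$ in $W$. By Lemma \ref{Weyl-moves} the word $s_{i_1} \cdots s_{i_k}$ can be transformed to the empty word by a sequence of moves $E_1$ (delete a consecutive $s\,s$) and $E_2$ (apply a braid relation). Now run the \emph{same} sequence of moves on $n$ inside $N_{\zee}$, with $E_2$ realized by \rnn{2} and $E_1$ replaced by the move $E_{1'}$ of \eqref{e:1p}: replace an occurrence of $\dw_i \dw_i$ by $h_{a_i}(-1)$ and immediately transport that $h_{a_i}(-1)$ to the far left of the word using normality of $H_{\zee}$ (which follows from \rnn{3}: conjugating $h_{a_i}(-1)$ by any $\dw_a$ yields $h_{a_i}(-1)h_a((-1)^{-\la a, \av_i\ra}) \in H_{\zee}$, another product of elements $h_b(-1)$). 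At the end of the procedure the ``Weyl part'' has been reduced to the empty word, so $n$ equals a product of elements $h_{a}(-1)\ (a \in \Pi)$ up to the order in which they were accumulated; this exhibits $H_{\zee}$ as generated by $\{h_a(-1) = \dw_a^2 : a \in \Pi\}$.

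It then remains to check that $H_{\zee}$ is a finite abelian $2$-group. Each generator $h_a(-1)$ satisfies $h_a(-1)^2 = h_a(1) = 1$ by \rnn{4} (taking $t = t' = -1$), so every generator has order dividing $2$; and by \rnn{5} the elements $h_a(-1), h_b(-1)$ for $a, b \in \Pv_e$ commute, so $H_{\zee}$ is abelian. A finitely generated abelian group all of whose generators are $2$-torsion is a finite elementary abelian $2$-group, of order dividing $2^{|\Pi|}$. This completes the argument.

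The main obstacle — such as it is — is purely bookkeeping: one must be careful that pushing the accumulated $h_{a_i}(-1)$ factors to the left is legitimate at every stage, which is exactly what normality of $H_{\zee}$ in $N_{\zee}$ guarantees, and that the braid moves $E_2 = \rnn{2}$ really do let one bring the two copies of a repeated generator adjacent before applying $E_{1'}$. Both points are identical to what was already done for the surjectivity/injectivity argument in \S \ref{N-pres-2}, so no genuinely new idea is needed; I would simply cite that argument and record the two finiteness observations above.
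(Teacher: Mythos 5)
Your proposal is correct and follows exactly the route the paper intends: the paper's proof of Lemma \ref{Hto:lem} is precisely "the same argument as at the end of \S \ref{N-pres-2}," i.e.\ the rewriting of a word in the $\dw_a$ whose image in $W$ is trivial via the moves $E_{1'}$, $E_2$, pushing the accumulated $h_{a_i}(-1)$ factors to the left by normality, exactly as you describe. Your concluding observations that each $h_a(-1)$ is $2$-torsion (by \rnn{4}) and that these elements commute (by \rnn{5}) correctly supply the "finite abelian $2$-group" part of the statement.
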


Using this Lemma and the same argument as in the proof of Proposition \ref{N:pres}, we obtain.

\newcommand{\rnz}[1]{\mathbf{N_{\zee}} \, {#1}}

\begin{ncor} \label{Nzee:pres}  The group $N_{\zee}$ is isomorphic to the abstract group which has generators $r_a\ (a \in \Pi)$ subject to the following relations. Set $t_a:= r_a^2 \,\ (a \in \Pi).$ 
\begin{enumerate}[$\mathbf{N_{\zee}} \, 1$.] 
\item The elements $(r_a \mid a \in \Pi)$ satisfy the braid relations (\ref{bd:rel}). 
\item For $a, b \in \Pi$ we have \be{t:r} t_a \, r_b \,   t_a^{-1} = \begin{cases} r_b & \text{ if } \la b, \av \ra \text{ even; } \\  r_b^{-1} & \text{ if } \la b, \av \ra \text{ odd. } \end{cases}  \ee 
\item $t_a^2=1.$ 
\end{enumerate}
\end{ncor}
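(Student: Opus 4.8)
The plan is to mimic the proof of Proposition \ref{N:pres} exactly, replacing the presentation of $N$ by the presentation just obtained and the role of $H$ by that of $H_{\zee}$. First I would establish that the stated relations $\mathbf{N_{\zee}}\,1$--$\mathbf{N_{\zee}}\,3$ actually hold among the $\dw_a\ (a\in\Pi)$ inside $N_{\zee}$: the braid relations are Lemma \ref{n:bd}; the relation $t_a r_b t_a^{-1}$ is a direct computation from $\mathbf{N}\,3$ with $t=-1$, using that $h_b((-1)^{-\la a,\bv\ra})=h_b(-1)$ or $1$ according to the parity of $\la a,\bv\ra$ (here one uses the hypothesis that $\D$ is simply connected so that $\bv$ ranges over the basis $\Pv_e$ and $h_b(-1)=\dw_b^2$ makes sense for the simple coroots); and $t_a^2=\dw_a^4=h_a(-1)^2=h_a(1)=1$. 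So there is a surjective homomorphism $\psi$ from the abstract group $\sN_{\zee}$ presented by $\mathbf{N_{\zee}}\,1$--$3$ onto $N_{\zee}$, and the content is its injectivity.

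Next I would run the ``Step 2'' argument from \S\ref{N-pres-2}. Let $\sH_{\zee}\subset\sN_{\zee}$ be the subgroup generated by the $t_a\ (a\in\Pi)$. From $\mathbf{N_{\zee}}\,2$ and $3$ this is a normal, abelian subgroup of exponent $2$, and using $\mathbf{N_{\zee}}\,1$ together with $r_a^2=t_a$ every element of $\sN_{\zee}$ can be written as $\sn=\st\,\sw$ with $\st\in\sH_{\zee}$ and $\sw$ a product of the $r_a$'s (no squares). By Lemma \ref{Hto:lem}, $H_{\zee}$ is generated by the $h_a(-1)=\dw_a^2$, so $\psi$ maps $\sH_{\zee}$ onto $H_{\zee}$; I would check $\psi|_{\sH_{\zee}}$ is an isomorphism — surjectivity is clear, and injectivity follows because $H_{\zee}\subset H$ and $\psi|_{\sH}$ was already shown injective in the proof of Proposition \ref{N:pres}, so the restriction to the smaller subgroup $\sH_{\zee}$ (which maps into $\sH$) is injective as well. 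Then, exactly as before, it suffices to prove $\psi^{-1}(H_{\zee})\subset\sH_{\zee}$: given $\sw$ a product of $r_a$'s with $\psi(\sw)\in H_{\zee}\subset H$, the composite $f:\sN_{\zee}\to N_{\zee}\to W$ sends $\sw$ to $1$, so by Lemma \ref{Weyl-moves} the corresponding word in the $s_i$ can be reduced to the identity by moves $E_1,E_2$; transporting this sequence to $\sN_{\zee}$, $E_2$ stays valid (braid relations $\mathbf{N_{\zee}}\,1$) and $E_1$ becomes ``replace $r_i r_i$ by $t_{a_i}$'', and after each such replacement we push the $t_{a_i}$ to the far left using normality of $\sH_{\zee}$. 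The outcome is that $\sw$ equals an element of $\sH_{\zee}$, hence $\sn=\st\sw\in\sH_{\zee}$; since $\psi(\sn)=1$ and $\psi|_{\sH_{\zee}}$ is injective, $\sn=1$.

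The one genuinely new check compared to Proposition \ref{N:pres} is that $\psi|_{\sH_{\zee}}$ is injective, i.e. that there are no ``hidden'' relations among the $t_a$ in $N_{\zee}$ beyond commutativity and exponent $2$ — but this is handed to us by Lemma \ref{Hto:lem}, which already identifies $H_{\zee}$ as the finite abelian $2$-group on the generators $h_a(-1)$ with precisely the relations $\mathbf{N_{\zee}}\,2$, $3$ impose on the subgroup they generate inside $\sN_{\zee}$. Concretely: the abstract subgroup $\sH_{\zee}$ is a quotient of the $2$-group with generators $\{t_a\}$ subject to commutativity and $t_a^2=1$, it surjects onto $H_{\zee}$ which Lemma \ref{Hto:lem} describes, and since $\sH_{\zee}$ cannot be larger than that free object while $H_{\zee}$ realizes it, the surjection is an isomorphism. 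The main obstacle, such as it is, is therefore bookkeeping: making sure the rewriting procedure in $\sN_{\zee}$ is legitimate at each step (normality of $\sH_{\zee}$ is used repeatedly, and one must note that the moves $E_1,E_2$ only ever involve the letters $r_a$ and the $t_a$ accumulated on the left, never interfering with each other), and verifying that the parity bookkeeping in $\mathbf{N_{\zee}}\,2$ matches the specialization $t=-1$ of $\mathbf{N}\,3$. None of this requires an idea not already present in \S\ref{N-pres-2}, so I would keep the write-up brief, citing Proposition \ref{N:pres} and Lemma \ref{Hto:lem} and indicating only the changes.
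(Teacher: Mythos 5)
Your proposal is correct and is essentially the paper's own proof, which consists precisely of citing Lemma \ref{Hto:lem} and rerunning the two-step argument of Proposition \ref{N:pres} with $H$ replaced by $H_{\zee}$. The only wobble is your first justification that $\psi|_{\sH_{\zee}}$ is injective (``restricting'' the injectivity of $\psi|_{\sH}$ makes no literal sense, since $\sH_{\zee}$ sits inside the abstract group $\sN_{\zee}$, not inside $\sH$), but the counting argument in your last paragraph is the right one: $\sH_{\zee}$ is a quotient of $(\zee/2\zee)^{|\Pi|}$ by $\mathbf{N_{\zee}}\,2$--$3$, while Lemma \ref{H:sc} shows the $h_a(-1)$ are independent in $H$, so $H_{\zee}\cong(\zee/2\zee)^{|\Pi|}$ and the surjection must be an isomorphism.
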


\begin{nrem} Using $\rnz{2},\ \rnz{3}$, we find $N_{\zee}[2]:= \la t_a \mid a \in \Pi \ra \subset N_{\zee}$ is an abelian and isomorphic to $H_{\zee}.$ \end{nrem}

\tpoint{Refined Bruhat decomposition}  Using the presentation of $N$ and $N_{\zee}$ given, we can refine the Bruhat decompositions from Theorem \ref{bruhat-gen} as follows. For each $w \in W$ let $\dw \in N_{\zee}$ as in (\ref{dw}).  Then we have the following (disjoint) decompositions \be{G:w} G = \bigsqcup_{w \in W} B \, \dw \, B = \bigsqcup_{w \in W} B^- \, \dw \, B^- = \bigsqcup_{w \in W} B \, \dw \, B^-  = \bigsqcup_{w \in W} B^- \, \dw \, B. \ee For each $w \in W$ we have subsets (the first was already defined in \eqref{inv-set}) \be{R:w} \begin{array}{lcr} R(w) := \{ a \in R_{re,+} \mid w^{-1} a < 0 \}  & \text{ and } & R^w := \{ a \in R_{re, +} \mid w^{-1} a > 0 \} \end{array}. \ee  Let  us define correspondingly the groups $U_w$ and $U^w$ generated by the root groups $U_a$ with $a \in R(w^{-1})$ and $R^{w^{-1}}$ respectively. If $w=s_a, a \in \Pi$, then we just write $U_a:= U_{w_a}$ and $U^a:= U^{w_a}$ and note further that we have a semi-direct product decomposition \be{Ua:dec} U = U_a \ltimes U^a. \ee In general for $w \in W$ we still have \be{U:dec} U = U_w U^w = U^w U_w, \ee though $U^w$ is not normal. For a proof of (\ref{Ua:dec}) we refer to \cite[Lemma 6.3]{car:gar} which technically works in a homomorphic image of the Tits group (constructed from representation theory), but the result can be lifted back to $G$. For (\ref{U:dec}), we can refer for example to \cite[Corollary 6.5]{gar:alg}, again in a slightly different context, but a similar argument applies.  Note further that the set $R(w^{-1})$ is nilpotent (cf. \S \ref{prenilp}) so we can also identify $U_w:= \bU_{R(w^{-1})}(F)$ and use (\ref{Psi:sch}) to obtain an explicit set of coordinates on $U_w.$ Using the decomposition $B = U \rtimes H$, the fact that $N_{\zee}$ normalizes $H,$ and (\ref{U:dec}) we deduce from (\ref{G:w}) \be{G:w:ref} G = \bigsqcup_{w \in W} B \dw U_w \ee where we also note here that $\dw U_w \dw^{-1} \subset U^-.$

\tpoint{The map $\nu$ and some explicit results} \label{sec:bru} For any $n \in N$ we note that \be{U:n} U \, n \, U \cap N = \{ n \}. \ee  Indeed, as $H$ normalizes $U$ and since (\ref{U:dec}) implies that $U \, \dw \, U = U \, \dw \, U_w$ for $\dw \in N_{\zee}$ we conclude that $U \, n \, U \, n^{-1}  \subset  U \,  U^-.$ But from the disjointness of the Birkhoff decomposition and the fact that $N= HN_{\zee}$ we have $U \, U^- \cap N = \{ 1 \}.$ The claim (\ref{U:n}) follows and hence we have a well-defined map \be{nu} \nu: G \rr N \ee which to $g \in G$ having Bruhat decomposition $g = u n u'$ assigns the element $n \in N.$ For $g \in G,$  \be{nu:HU} \nu(u g u') = \nu(g) \text{ and } \nu(h g h') = h \nu(g) h' \text{ where } u,\ u' \in U,\ h,\ h' \in H. \ee Moreover, for $g, g' \in G,$ using basic properties of the Bruhat decomposition (\cite[Ch. IV, \S 2.5, Prop. 2]{bour}) we can show \be{nu:bru} \nu(g \, g') = \nu(g) \nu(g') \text{ if and only if } \ell\left( \phi(\nu(g))  \, \phi(\nu(g')) \right) =  \ell( \phi(\nu(g)) ) + \ell( \phi(\nu(g') )), \ee where we have used the map $\phi$ of Theorem \label{bruhat-gen}(1). Finally we shall need the following explicit rank one result.

\begin{nprop} \label{bru:two} Let $g \in G$, $a \in \Pi$. 
\begin{enumerate} 
\item We have $\nu(\dw_a \, g)$ is either equal to $\dw_a \, \nu(g)$ or there exists $s \in F^*$ such that $\nu(\dw_a \, g)= h_a(s^{-1}) \nu(g).$ 
\item We have  $\nu( g \, \dw_a^{-1} )$ is either equal to $\nu(g) \, \dw_a^{-1}$ or there exists $s \in F^*$ such that $\nu(g \, \dw_a^{-1})= \nu(g) h_a(s)$ 
\end{enumerate} \end{nprop}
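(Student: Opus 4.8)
The plan is to reduce this to a rank-one computation inside the subgroup $G_a$, using the explicit Bruhat decomposition for $SL_2$-type groups together with the functorial properties of $\nu$ recorded in \eqref{nu:HU} and \eqref{nu:bru}. I will prove (1); statement (2) follows by the symmetric argument (or by applying the anti-automorphism $g \mapsto g^{-1}$ and using $\nu(g^{-1}) = \nu(g)^{-1}$, which follows from uniqueness in the Bruhat decomposition). First I would write the Bruhat decomposition of $g$ as $g = u\,n\,u'$ with $u \in U$, $n = \nu(g) \in N$, $u' \in U_w$ where $w = \phi(n)$, using \eqref{G:w:ref}. Then $\dw_a g = \dw_a u\,n\,u'$. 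I would conjugate $u$ past $\dw_a$: write $u = u_a v$ with $u_a \in U_a$ and $v \in U^a$ according to \eqref{Ua:dec}, so $\dw_a v \dw_a^{-1} \in U$ (since $\dw_a$ permutes the $U_b$ and $w_a$ sends $R_{re,+}\setminus\{a\}$ into $R_{re,+}$), giving $\dw_a u = (\dw_a u_a \dw_a^{-1})(\dw_a v \dw_a^{-1})\dw_a$, and by \eqref{nu:HU} the $U$-factor on the left is irrelevant to $\nu$. Thus $\nu(\dw_a g) = \nu\big((\dw_a u_a \dw_a^{-1})\,\dw_a\, n\, u'\big)$, and since $\dw_a u_a \dw_a^{-1} \in U_{-a} H$ we are reduced to understanding $\nu(u_{-a}\,\dw_a\,n\,u')$ for $u_{-a}\in U_{-a}$, where again by \eqref{nu:HU} we may absorb any $H$-part on the left into $n$ on the right via the normalization rule.

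The heart of the matter is therefore the following rank-one statement: for $u_{-a} \in U_{-a}$ and any $n' \in N$, $\nu(u_{-a}\,\dw_a\, n')$ equals $\dw_a n'$ or equals $h_a(s^{-1})\, n'$ for some $s \in F^*$. Here there are two cases according to whether $\ell(w_a\,\phi(n')) = \ell(\phi(n')) + 1$ or $\ell(w_a\,\phi(n')) = \ell(\phi(n')) - 1$, equivalently whether $w_a$ raises or lowers the length of $w' := \phi(n')$. In the length-raising case, \eqref{nu:bru} applies after noting $\nu(u_{-a}\dw_a) = \dw_a$ (which is the $SL_2$ identity $\nu(x_{-a}(t)\dw_a) = \dw_a$, immediate since $x_{-a}(t)\dw_a$ already lies in $U^- \dw_a \subset U^- N$ and $\dw_a$ has the right image; more carefully one checks $x_{-a}(t)\dw_a \in U\dw_a U$ directly from the commutation relations in $G_a$), so $\nu(u_{-a}\dw_a n') = \dw_a n'$. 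In the length-lowering case, I would use the classical $SL_2$ identity: writing $n' = \dw_a^{-1} n''$ with $\ell(\phi(n'')) = \ell(\phi(n'))-1$ longer... more precisely, factor out so that the offending $w_a$ cancels, and invoke the rank-one Bruhat relation $\dw_a x_{-a}(t)\dw_a = x_a(-t^{-1}) h_a(-t^{-1}) \dw_a^{-1}\cdot(\text{something in }U)$ — concretely the identity $w_a(1) x_{-a}(t) = x_a(t) h_a(t)\, w_a(1)^{-1}\cdot x_{\pm a}(\ast)$ coming from unwinding \eqref{ws} — which produces exactly an $h_a(s^{-1})$ factor. This is where one uses that $B_{a_i}\backslash G_{a_i}/B_{a_i}$ has two elements (\textbf{RD} 4), i.e. all the $SL_2$-combinatorics.

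The main obstacle I anticipate is bookkeeping the signs and the precise power of $s$ in the rank-one cancellation identity — getting $h_a(s^{-1})$ rather than $h_a(s)$ or $h_a(-s)$ — and making sure the reduction from general $n'$ to the two length cases is clean, i.e. that conjugating $u'$ (the $U_w$-part on the right) past $n$ does not interfere. For the latter I would note that $n u' n^{-1} \in U^-$ may have a component that, when combined with the left $U^-$-factors, still lands in $U^- N$ transversally, so by disjointness of the Birkhoff decomposition the $N$-component is unchanged; alternatively, one observes that $\nu$ only depends on the $(B,B)$-double coset structure locally and the right multiplication by $u' \in U_w \subset B$ is absorbed by \eqref{nu:HU}. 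Everything else is routine manipulation with the relations \textbf{KM}1--\textbf{KM}4 and \textbf{N}1--\textbf{N}5 inside the rank-one subgroup $G_a$, together with \eqref{nu:HU} and \eqref{nu:bru}.
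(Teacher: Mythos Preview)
Your approach is essentially the same as the paper's: reduce to a rank-one computation inside $G_a$ using the decomposition $U = U^a \cdot U_a$ from \eqref{Ua:dec}, then invoke the explicit $SL_2$ identity \eqref{rk1} (which is exactly what the paper records as the core of the argument). The length-raising/lowering dichotomy you describe matches the paper's case split on whether $w^{-1}a > 0$ or $w^{-1}a < 0$ in \eqref{rk1:l}.

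There is one slip in your reduction step. You write $u = u_a v$ with $u_a \in U_a$, $v \in U^a$, and then claim that in
\[
\dw_a u = (\dw_a u_a \dw_a^{-1})(\dw_a v \dw_a^{-1})\,\dw_a
\]
the ``$U$-factor on the left'' can be stripped via \eqref{nu:HU}. But the $U$-factor $(\dw_a v \dw_a^{-1})$ is in the \emph{middle}, sandwiched between an element of $U_{-a}$ and $\dw_a$, so \eqref{nu:HU} does not apply directly. The fix is to use the other ordering: since $U^a$ is normal in $U$, write $u = v' u_a'$ with $v' \in U^a$, $u_a' \in U_a$. Then $\dw_a u = (\dw_a v' \dw_a^{-1})\,\dw_a u_a'$ with $(\dw_a v' \dw_a^{-1}) \in U$ genuinely on the left, and \eqref{nu:HU} gives $\nu(\dw_a g) = \nu(\dw_a\, u_a'\, n)$. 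This is the form the paper actually computes in \eqref{rk1:l}. After this correction your argument goes through.

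One further remark: your anticipated obstacle about ``getting $h_a(s^{-1})$ rather than $h_a(s)$'' is not an obstacle at all. The statement only asserts existence of \emph{some} $s \in F^*$, and since $s \mapsto s^{-1}$ is a bijection on $F^*$, the two formulations are equivalent. The $s^{-1}$ in the statement is just the natural parametrization that falls out of \eqref{rk1}. Similarly, the case $u_a' = 1$ (even when $w^{-1}a < 0$) lands you in the first alternative $\nu(\dw_a g) = \dw_a \nu(g)$, which is consistent with the ``either/or'' phrasing.
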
 

We suppress the full proof (cf. \cite{mat}) but record here the main rank one computations involved \be{rk1} x_{-a}(s) &=& x_a (s^{-1}) \, h_a(-s^{-1}) \, \dw_a \, x_a(s^{-1}) \text{ for } a \in \Pi, s \in F^* \\ x_a(s) &=& x_{-a}(s^{-1}) h_a(s) w_a x_{-a}(s^{-1}) .\ee Indeed, using this we conclude that if $a \in \Pi$, $s \in F^*,$ and $w^{-1} a <0,$ then  
\be{rk1:l} \nu(\dw_a \, x_a(s) \dw) = h_a(-s^{-1} ) \dw_a \, \dw_a \dw = h_a(-s^{-1} ); \, \dw  \ee and if $w a < 0$ then \be{rk1:r} \nu(\dw \, x_a(s) \dw_a^{-1}) = \dw \, h_a( s). \ee In both instances we have also used the identity (cf. \ref{s:stein-functor}) that $\dw_a x_a(s) \dw_a^{-1} = x_a(-s)$.


\section{Covers of Kac-Moody Groups} \label{sec-mat}

Let $A$ be an abelian group, $F$ an arbitrary field, and $(\cdot, \cdot): F^* \times F^* \rr A$ be a bilinear Steinberg symbol (cf. \S \ref{stein-symb}). Fix a root datum $(I, \cdot, \mf{D}),$ written as $\mf{D}=(\Lv, \{ \av_i \}, \Lambda, \{ a_i \})$, and assumed of \emph{simply connected type}. Construct the group $G= \bG(F)$ as in the previous section, and keep the notation on simply connected root datums from \S \ref{s:rd} and \S \ref{more-sc-not}. Furthermore, fix a $W$-invariant quadratic form $\Qs: Y \rr \zee$ (cf. \S \ref{s:qform}).

The arguments in this section follow very closely those of Matsumoto \cite[\S 6 - \S 7]{mat}. We could perhaps have referred to this source systematically throughout and thereby, but we choose to provide here a more or less complete version of Matsumoto's argument as our setup is slightly different (e.g. there is no $Q$ in Matsumoto). 

\subsection{The cover of the torus $\tH$}

\spoint Let $\tH$ be the group generated by $A$ and the symbols $\h_{a}(s)$ with $s \in F^*, \, a \in \Pv_e$ subject to the following conditions: $A$ is an abelian subgroup and \begin{enumerate}[\bf{H}1]
\item $\h_{a}(s) \h_{a}(t) \h_{a}(st)^{-1} = (s, t)^{\Qs(\av)}$ for $a \in \Pv_e$ and $s, t \in F^*$
\item $[ \h_{a}(s), \h_{b}(t) ] = (s, t)^{\B(\av, \bv)}$ for $a,b \in \Pv_e$
\end{enumerate}

\begin{nrem}  The relations $\rh{1}$ and $\rh{2}$ are compatible in the sense that if $a \in \Pv_e$ we may compute using $\rh{1}:$  \be{compat:h} [ \h_{a}(s), \h_{a}(t) ] &=& \h_a(s) \h_a(t) \left( \h_a(t) \h_a(s) \right)^{-1} = \h_a(st)(s, t)^{\Qs(\av)} \h_a(ts)^{-1} (t, s)^{ - \Qs(\av)} \\ &=& (s, t)^{\Qs(\av)} (t, s)^{ - \Qs(\av)} = (s, t)^{ 2 \Qs(\av)} ,\ee where we have used the skew-symmetry of the symbol in the last equality. On the other hand, from $\rh{2}$, the above must equal  $(s, t)^{\B(\av, \av)}$. By (\ref{bq:db}), we have $\B(\av, \av) = \Qs(2\av)- 2 \Qs(\av) = 2 \Qs(\av)$. We also note here that for any $b \in \Pv_e$ and $s \in F^*$, we obtain from $\rh{1}$, \be{ha:inv} \h_b(s)^{-1} = \h_b(s^{-1}) (s, s)^{-Q(\bv)} = \h_b(s^{-1}) (s, s)^{Q(\bv)} \ee
\end{nrem}

\spoint The map $\varphi: \tH \rr H$ which sends every element of $A$ to the identity and $\h_a(s) \mapsto h_a(s)$ for each $a \in \Pi,\ s \in F^*$ is clearly a homomorphism. The simply connected assumption gives the following.
\begin{nlem} \label{H:straight} Fix an order on the set $\Pv_e.$ Then every element in $\tH$ can be written uniquely in the form \be{h:prod} h = \zeta \, \prod_{\bv \in \Pv_e } \h_{s_b}(s_b) \text{ where } \zeta \in A,\ s_b \in F^*. \ee \end{nlem}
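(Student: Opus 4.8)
The plan is to establish the two halves — that every element of $\tH$ \emph{can} be written in the stated form, and that this form is \emph{unique} — by the familiar strategy of producing a "straightening" algorithm and then constructing an explicit set-theoretic section that detects the coefficients.

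First I would prove existence (spanning). Since $\tH$ is generated by $A$ together with the symbols $\h_a(s)$ ($a \in \Pv_e$, $s \in F^*$), a general element is a word in these generators and their inverses. Using $\rh{1}$ in the form \eqref{ha:inv} we may replace every inverse $\h_b(s)^{-1}$ by $\h_b(s^{-1})$ times an element of $A$; since $A$ is central, all such $A$-factors can be collected at the front. Then, repeatedly applying $\rh{2}$ to transpose adjacent generators $\h_a(s)$, $\h_b(t)$ with $a,b \in \Pv_e$ (again at the cost of a central symbol $(s,t)^{\B(\av,\bv)} \in A$), we can sort the remaining generators so that all those attached to a fixed $b \in \Pv_e$ are adjacent and the blocks appear in the chosen order on $\Pv_e$; finally $\rh{1}$ collapses each block $\h_b(s_1)\cdots \h_b(s_k)$ into a single $\h_b(s_1\cdots s_k)$ modulo a central factor. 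This yields the normal form \eqref{h:prod}.

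Next, uniqueness. Here the natural move is to build a section $H \to \tH$ directly on the level of the explicit coordinates. By Lemma \ref{H:sc}, in the simply connected case every $h \in H$ has a \emph{unique} expression $h = \prod_{\bv \in \Pv_e} h_b(s_b)$ with $s_b \in F^*$; define $\mathfrak{s}\colon H \to \tH$ by $\mathfrak{s}(h) := \prod_{\bv \in \Pv_e} \h_b(s_b)$, using that same ordered product. I would then check that the map $A \times H \to \tH$, $(\zeta, h) \mapsto \zeta\,\mathfrak{s}(h)$, is a bijection. Surjectivity is exactly the existence part just proved. For injectivity, suppose $\zeta\,\mathfrak{s}(h) = \zeta'\,\mathfrak{s}(h')$. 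Applying the homomorphism $\varphi\colon \tH \to H$ of the previous paragraph kills $A$ and sends $\mathfrak{s}(h) \mapsto h$, so $h = h'$, hence $\mathfrak{s}(h) = \mathfrak{s}(h')$ and therefore $\zeta = \zeta'$. This simultaneously gives uniqueness of the decomposition \eqref{h:prod}.

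The one point that needs genuine care — and which I expect to be the main obstacle — is verifying that the straightening procedure is \emph{consistent}, i.e. that the central $A$-factor accumulated in the existence argument does not depend on the order in which the moves are performed; equivalently, that the putative normal form really is well-defined as an element of $A \times H$. The clean way around this is precisely the section argument above: rather than tracking the symbols through an arbitrary sequence of rewrites, one shows abstractly that $\tH$ is (as a set) the image of $A \times H$ under $(\zeta,h)\mapsto \zeta\,\mathfrak{s}(h)$ and that this map is injective via $\varphi$; consistency of the rewriting is then automatic. (Alternatively, one can observe that $\rh{1}$–$\rh{2}$ exhibit $\tH$ as a central extension of $H$ by $A$ classified by an explicit $2$-cocycle built from $\Qs$ and $\B$ via the symbol, using the bimultiplicativity and skew-symmetry from Lemma \ref{stein} to check the cocycle identity; but the section argument is shorter and suffices here.) The remaining verifications — that $\varphi$ is well-defined, that \eqref{ha:inv} holds, that $\mathfrak{s}$ is a legitimate map once Lemma \ref{H:sc} is invoked — are routine.
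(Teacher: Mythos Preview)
Your proposal is correct and follows essentially the same approach as the paper: existence via straightening with $\rh{1}$--$\rh{2}$, and uniqueness by applying $\varphi$ and invoking Lemma~\ref{H:sc}. The paper's proof is simply much terser (it calls existence ``obvious from the definitions'' and dispatches uniqueness in two lines), whereas you spell out the rewriting algorithm and package the uniqueness argument via an explicit section $\mathfrak{s}$; your discussion of ``consistency'' is more cautious than strictly necessary, since the rewriting takes place inside the group $\tH$ and uniqueness of the parameters is exactly what the $\varphi$-argument delivers.
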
 
\begin{proof} The existence of such a factorization for any $h \in \tH$ is obvious from the definitions. If we have an equality $1= \zeta \prod_{b \in \Pv_e} \h_{b}(s_b)$ then in $H$ we must have that $1 = \prod_{b \in \Pv_e} h_{b}(s_b)$. However, from Lemma \ref{H:sc}, this implies each $s_b=1$ and thus $\zeta=1$ as well. The uniqueness follows. \end{proof}

\spoint From Lemma \ref{H:straight} it follows that $\ker(\varphi)= A$, and so we have a central extension of groups \be{H:ext} 0 \rr A \rr \wt{H} \stackrel{\varphi}{\rr} H \rr 1. \ee We record here the following generalizations of the formulas $\rh{1},\ \rh{2}$. To state them, we first note that there exist integers $\Qs_{ij}$ and $\B_{ij}$ for $i, j \in \Is_e$ defined by the formulas: for $c_i, d_i \in \zee$  \be{Q:fla} \Qs\left( \sum_{i \in \Is_e} {c_i} \av_i\right) &=& \sum_{i, j} c_i c_j \Qs_{ij} \\ 
\Bs\left(\sum_{i \in \Is_e} {c_i} \av_i, \sum_{i \in \Is_e} {d_i} \av_i\right)
&=& \sum_{i, j} c_i d_j \B_{ij}. \ee

\begin{nlem} Fix an order on $\Is_e$ and let $s_i, t_i \in F^* (i \in \Is_e).$ Define (with respect to this order) $\h:= \prod_{i \in \Is_e} \h_{a_i}(s_i)$ , $\h':= \prod_{i \in \Is_e} \h_{a_i}(t_i)$, and $\h'':= \prod_{i \in \Is_e} \h_{a_i}(s_it_i).$ Then \be{cocy:comm} \begin{array}{lcr} \h \, \h' \, (\h'')^{-1} = \prod_{i, j} (s_i, t_j)^{\Qs_{ij}} & \text{ and } &  [ \h, \h'] = \prod_{i, j} (s_i, t_j)^{\Bs_{ij}} \end{array} \ee \end{nlem}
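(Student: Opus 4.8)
The plan is to prove the two identities in \eqref{cocy:comm} by induction on the number of factors, reducing everything to the two defining relations $\rh{1}$ and $\rh{2}$ together with the bimultiplicativity and skew-symmetry of the symbol (Lemma \ref{stein}). First I would observe that, since $A$ is central, both sides of each identity are insensitive to the placement of symbol-factors, so it suffices to keep careful track of how the $\h_{a_i}$ factors interact. The essential point is that $\Qs_{ij}$ and $\Bs_{ij}$ are \emph{defined} by the quadratic and bilinear expansions \eqref{Q:fla}, and that $\Bs_{ij} + \Bs_{ji} = \Bs(\av_i, \av_j)$ for $i \neq j$ while $\Bs_{ii} = \Qs(\av_i) = \Qs_{ii}$ (this last because $\Bs(\av_i,\av_i) = 2\Qs(\av_i)$, as noted in the remark after $\rh{2}$, and the diagonal coefficient of the quadratic form is $\Qs_{ii}$). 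So the combinatorial identities relating the $\Qs_{ij}, \Bs_{ij}$ to $\Qs$ and $\Bs$ on the lattice are purely formal and can be recorded at the outset.

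For the commutator identity $[\h, \h'] = \prod_{i,j}(s_i, t_j)^{\Bs_{ij}}$, I would argue as follows. Writing $\h = \prod_i \h_{a_i}(s_i)$ and $\h' = \prod_j \h_{a_j}(t_j)$, one moves each $\h_{a_j}(t_j)$ leftwards past the factors of $\h$, picking up a commutator $[\h_{a_i}(s_i), \h_{a_j}(t_j)] = (s_i, t_j)^{\Bs(\av_i,\av_j)}$ for each crossing with $i \neq j$ (from $\rh{2}$), and for $i = j$ the crossing of $\h_{a_i}(s_i)$ with $\h_{a_i}(t_i)$ produces $(s_i,t_i)^{2\Qs(\av_i)}$ by the computation \eqref{compat:h}. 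Collecting, $[\h,\h'] = \prod_{i<j}(s_i,t_j)^{\Bs(\av_i,\av_j)}\cdot \prod_{i>j}(s_i,t_j)^{\Bs(\av_i,\av_j)}\cdot\prod_i (s_i,t_i)^{2\Qs(\av_i)}$; using skew-symmetry $(s_i,t_j)^{\Bs(\av_i,\av_j)} = (s_i,t_j)^{\Bs_{ij}+\Bs_{ji}}$ is not quite right since the two crossings involve $(s_i,t_j)$ in one order and $(s_j,t_i)$ in the other, so one must instead group the $(i,j)$ and $(j,i)$ contributions and use bimultiplicativity to recognize $\prod_{i,j}(s_i,t_j)^{\Bs_{ij}}$. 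I would do this bookkeeping carefully; it is routine once $\Bs_{ij}+\Bs_{ji} = \Bs(\av_i,\av_j)$ is in hand.

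For the cocycle identity $\h\h'(\h'')^{-1} = \prod_{i,j}(s_i,t_j)^{\Qs_{ij}}$, I would proceed by induction on $|\Is_e|$. With a single generator this is exactly $\rh{1}$ (with $\Qs_{ii} = \Qs(\av_i)$). For the inductive step, split off the last factor: write $\h = \h_0\,\h_{a_n}(s_n)$ where $\h_0 = \prod_{i<n}\h_{a_i}(s_i)$, and similarly $\h'$, $\h''$. Then $\h\h'(\h'')^{-1} = \h_0\,\h_{a_n}(s_n)\,\h_0'\,\h_{a_n}(t_n)\,\h_{a_n}(s_nt_n)^{-1}(\h_0'')^{-1}$. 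One moves $\h_{a_n}(s_n)$ past $\h_0'$ using the commutator relation (picking up $\prod_{i<n}(s_n,t_i)^{\Bs(\av_n,\av_i)}$, central), applies $\rh{1}$ to $\h_{a_n}(s_n)\h_{a_n}(t_n)\h_{a_n}(s_nt_n)^{-1} = (s_n,t_n)^{\Qs(\av_n)}$, applies the inductive hypothesis to $\h_0\h_0'(\h_0'')^{-1}$, and collects. The cross-terms $\prod_{i<n}(s_n,t_i)^{\Bs(\av_n,\av_i)}$ must combine with the remaining $(s_i,t_n)$ and $(s_n,t_i)$ contributions to reproduce the off-diagonal part $\prod_{i\neq n}\big[(s_i,t_n)^{\Qs_{in}}(s_n,t_i)^{\Qs_{ni}}\big]$ of $\prod_{i,j}(s_i,t_j)^{\Qs_{ij}}$. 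The bridge is the identity $\Qs_{in}+\Qs_{ni} = \Bs_{in}+\Bs_{ni}$ (both equal the mixed coefficient of $\Bs(\av_i,\av_n)$ in the bilinearization), which lets me convert a symmetric combination $(s_i,t_n)(s_n,t_i)$ raised to $\Bs(\av_i,\av_n)$ into the required product, after a skew-symmetry manipulation to reconcile the orders of arguments. The main obstacle — and the only genuinely delicate point — is this bookkeeping of argument-orders in the symbol: because the symbol is only skew-symmetric, not symmetric, one must be scrupulous about which variable sits in which slot when reassembling $\prod_{i,j}(s_i,t_j)^{\Qs_{ij}}$ from the crossing contributions, and the identity $\Qs_{ij}+\Qs_{ji} = \Bs(\av_i,\av_j) = \Bs_{ij}+\Bs_{ji}$ is what makes the reconciliation work.
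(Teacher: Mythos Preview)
The paper records this lemma without proof, so there is no argument to compare against; still, your outline has a genuine gap in the cocycle half.

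For the commutator identity you are overcomplicating things and have the bookkeeping slightly wrong. Since $\Bs$ is bilinear, substituting basis vectors into \eqref{Q:fla} gives $\Bs_{ij} = \Bs(\av_i,\av_j)$ directly; your claim $\Bs_{ij}+\Bs_{ji} = \Bs(\av_i,\av_j)$ is off by a factor of two, and likewise $\Bs_{ii} = 2\Qs(\av_i)$, not $\Qs(\av_i)$. Once $\Bs_{ij}=\Bs(\av_i,\av_j)$ is recognized, the identity is immediate: all pairwise commutators lie in the central subgroup $A$ by $\rh{2}$, so $[\prod_i x_i, \prod_j y_j] = \prod_{i,j}[x_i,y_j]$, which gives $\prod_{i,j}(s_i,t_j)^{\Bs(\av_i,\av_j)} = \prod_{i,j}(s_i,t_j)^{\Bs_{ij}}$ with no skew--symmetry needed.

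The real problem is the cocycle identity. Run your inductive step honestly: commuting $\h_{a_n}(s_n)$ past $\h_0'$ produces exactly $\prod_{i<n}(s_n,t_i)^{\Bs(\av_n,\av_i)}$, and after applying $\rh{1}$ and the inductive hypothesis you obtain
\[
\h\,\h'\,(\h'')^{-1} \;=\; \prod_{i,j<n}(s_i,t_j)^{\Qs_{ij}} \;\cdot\; \prod_{i<n}(s_n,t_i)^{\Bs(\av_n,\av_i)} \;\cdot\; (s_n,t_n)^{\Qs(\av_n)}.
\]
No $(s_i,t_n)$ factor with $i<n$ appears anywhere: there is no crossing that could produce one, and skew--symmetry of the symbol cannot convert $(s_n,t_i)$ into $(s_i,t_n)$, since these involve four independent elements of $F^*$. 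Your ``reconciliation via $\Qs_{in}+\Qs_{ni}=\Bs(\av_i,\av_n)$'' therefore cannot go through for an arbitrary choice of $\Qs_{ij}$ satisfying \eqref{Q:fla}. What the computation actually shows is that the identity pins down $\Qs_{ij}$ in terms of the fixed order on $\Is_e$: one is forced to take $\Qs_{ii}=\Qs(\av_i)$, $\Qs_{ij}=\Bs(\av_i,\av_j)$ for $i>j$, and $\Qs_{ij}=0$ for $i<j$. The paper's ``there exist integers $\Qs_{ij}$'' should be read as this specific order--dependent choice; the lemma is not claiming the formula for every $\Qs_{ij}$ compatible with \eqref{Q:fla}, and your attempt to establish that stronger statement cannot succeed.
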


\spoint \label{s:tHz}  Let $\tH_{\zee} \subset \tH$ be the subgroup generated by the elements $\h_{\a}(-1)$ for $a \in \Pi$ (not $\Pv_e$).

\begin{nlem} \label{H2:rels} For $a, b \in \Pi$, the following relations hold in $\tH_{\zee} \subset \tH$: , \be{th:2}    \h_b(-1)^{-1} \h_a(-1) \h_b(-1)   =  \begin{cases} \h_a(-1) & \text{ if } \la b, \av \ra \text{ is even }  \\  \h_a(-1) \, \h_b(-1)^2 & \text{ if } \la b, \av \ra \text{ is odd } \end{cases} \ee \end{nlem}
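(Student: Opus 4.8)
The plan is to compute $\h_b(-1)^{-1}\h_a(-1)\h_b(-1)$ directly using only the defining relations $\rh{1}$, $\rh{2}$ and the properties of the Steinberg symbol from Lemma \ref{stein}. The key structural input is relation $\rh{2}$, which says $[\h_a(-1),\h_b(-1)] = (-1,-1)^{\B(\av,\bv)}$, together with the identity $\B(\av,\bv) = \la \bv,\av\ra \Qs(\av)$ obtained by specializing (\ref{bq:db}) to $y = \bv$ (note that here $\B$ is symmetric, so also $\B(\av,\bv)=\la\av,\bv\ra\Qs(\bv)$; I will use whichever form is convenient). So the whole computation reduces to understanding $(-1,-1)^{\B(\av,\bv)}$ in $A$ and expressing it back in terms of the generators $\h_b(-1)$.

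First I would unwind the commutator: $\h_b(-1)^{-1}\h_a(-1)\h_b(-1) = \h_a(-1)\cdot \big(\h_a(-1)^{-1}\h_b(-1)^{-1}\h_a(-1)\h_b(-1)\big) = \h_a(-1)\,[\h_a(-1),\h_b(-1)]^{-1}$ — or more simply, since $[\h_a(-1),\h_b(-1)]$ is central (it lies in $A$), we get $\h_b(-1)^{-1}\h_a(-1)\h_b(-1) = \h_a(-1)\,[\h_b(-1),\h_a(-1)] = \h_a(-1)\,(-1,-1)^{\B(\bv,\av)}$ using $\rh{2}$. Then I split into the two cases according to the parity of $\la b,\av\ra$. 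When $\la b,\av\ra = \la\bv,\av\ra$ is even, $\B(\av,\bv) = \la\bv,\av\ra\Qs(\av)$ is even, and by Lemma \ref{stein}(v) $(-1,-1)^2 = ( (-1,-1)^2 )$... more precisely $(x,x)^2=1$ gives $(-1,-1)^2=1$, so $(-1,-1)^{\B(\bv,\av)}=1$ and the expression equals $\h_a(-1)$, as claimed. When $\la b,\av\ra$ is odd, $\B(\bv,\av) = \la\bv,\av\ra\Qs(\av)$; I need to extract a factor of $(-1,-1)$ and identify it with $\h_b(-1)^2$. Indeed, applying $\rh{1}$ with $a$ replaced by $b$, $s=t=-1$: $\h_b(-1)^2 = \h_b(1)\,(-1,-1)^{\Qs(\bv)} = (-1,-1)^{\Qs(\bv)}$ since $\h_b(1)=1$ (this follows from $\rh{1}$ with $s=t=1$ and Lemma \ref{stein}(i)). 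So $\h_b(-1)^2 = (-1,-1)^{\Qs(\bv)} \in A$, and hence $(-1,-1)^{\Qs(\bv)\cdot m} = \h_b(-1)^{2m}$ for any integer $m$.

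The remaining arithmetic is to match exponents: I need $(-1,-1)^{\B(\bv,\av)} = \h_b(-1)^2$ when $\la b,\av\ra$ is odd. Using the symmetric form $\B(\bv,\av) = \la\av,\bv\ra\Qs(\bv) = a_{ji}\Qs(\bv)$ (from (\ref{bq:db}) applied with $y=\av$, $y_i \leftrightarrow \bv$), and using $\h_b(-1)^2 = (-1,-1)^{\Qs(\bv)}$, it suffices to show $(-1,-1)^{a_{ji}\Qs(\bv)} = (-1,-1)^{\Qs(\bv)}$, i.e. $(-1,-1)^{(a_{ji}-1)\Qs(\bv)}=1$. Here I would use that $\la b,\av\ra = a_{ij}$ odd forces — via the gcm axioms and $\Qs$-compatibility (Lemma for $\Qs(y_i)/\Qs(y_j)=a_{ij}/a_{ji}$, equation (\ref{Q:car})) — that $a_{ji}\Qs(\bv) = a_{ij}\Qs(\av)$ is also odd (since $\B(\av,\bv)=\la b,\av\ra\Qs(\av)$ with $\la b,\av\ra$ odd and... actually I must be careful: I want $\B(\av,\bv)$ odd, which needs $\Qs(\av)$ odd as well). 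The cleanest route: $\B(\bv,\av)$ and $\la b,\av\ra$ need not have the same parity in general, so the honest statement is that the case division is really on the parity of $\B(\av,\bv)$, and one checks that "$\la b,\av\ra$ odd" combined with the structure of $\Qs$ (values $1$, $2$, or $3$ on simple coroots in the relevant cases, cf. Remark \ref{rmk:Qsets}, and more generally the constraints from (\ref{Q:car})) makes $\B(\av,\bv)$ odd precisely when $\la b,\av\ra$ is, modulo the subtlety that $\Qs(\av)$ could be even. I expect the main obstacle to be exactly this parity bookkeeping — reconciling the parity of $\B(\av,\bv)$ with the stated parity of $\la b,\av\ra$ — and I would resolve it by invoking $\B(\av,\bv)=\la b,\av\ra\Qs(\av)$ together with $\B(\bv,\av)=\la a,\bv\ra\Qs(\bv)$ and the relation $\la b,\av\ra \Qs(\av) = \la a,\bv\ra\Qs(\bv)$ (both equal $\B(\av,\bv)$), so that $(-1,-1)^{\B(\av,\bv)} = (-1,-1)^{\la a,\bv\ra\Qs(\bv)} = \h_b(-1)^{2\la a,\bv\ra}$, and then $\h_b(-1)^{2\la a,\bv\ra} = \h_b(-1)^2$ follows from $\h_b(-1)^4 = (-1,-1)^{2\Qs(\bv)} = 1$ (Lemma \ref{stein}(v)) once $\la a,\bv\ra$ is odd, which in turn holds because $a_{ij}a_{ji}$ and the $\Qs$-constraints force $a_{ij}$ and $a_{ji}$ to have matching parity in all cases where one of them is odd with $\Qs$ nontrivial. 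I would write this out carefully, treating the degenerate subcase $\Qs(\bv)$ even (where then $\h_b(-1)^2=1$ and one must separately check $(-1,-1)^{\B(\av,\bv)}=1$, which holds since $\B(\av,\bv)$ is then even as well).
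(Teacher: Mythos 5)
Your overall strategy is the paper's: pull the commutator $[\h_a(-1),\h_b(-1)]=(-1,-1)^{\B(\av,\bv)}$ out as a central element of $A$, identify $\h_b(-1)^2=(-1,-1)^{\Qs(\bv)}$ via $\rh{1}$, and match parities of exponents using $(-1,-1)^2=1$. But the parity bookkeeping you yourself flag as ``the main obstacle'' is where the write-up breaks, and the break comes from a misquotation of (\ref{bq:db}). That identity reads $\B(y,y_i)=\la y,x_i\ra\,\Qs(y_i)$: the quadratic form is evaluated at the coroot in the \emph{second} slot. Taking $y=\av$, $y_i=\bv$ gives $\B(\av,\bv)=\la b,\av\ra\,\Qs(\bv)$ --- the very same pairing $\la b,\av\ra$ that governs the case split, times $\Qs(\bv)$. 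You instead write $\B(\av,\bv)=\la b,\av\ra\Qs(\av)$ (and dually $\B(\bv,\av)=\la a,\bv\ra\Qs(\bv)$), swapping the $\Qs$-factors. With the correct formula the odd case is immediate: $\la b,\av\ra$ odd gives $\B(\av,\bv)\equiv\Qs(\bv)\pmod 2$, hence $(-1,-1)^{\B(\av,\bv)}=(-1,-1)^{\Qs(\bv)}=\h_b(-1)^2$, with no subcases and no appeal to the structure of $\Qs$.

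With the swapped formula you are forced to compare the parity of $\la a,\bv\ra$ (your $a_{ji}$) with that of $\la b,\av\ra$ (your $a_{ij}$), and the claim you reach for --- that $a_{ij}$ and $a_{ji}$ have matching parity whenever one is odd --- is false (type $B_2$ has off-diagonal entries $-1$ and $-2$), nor do the constraints \eqref{Q:car} or Remark \ref{rmk:Qsets} repair it: the lemma is stated for an arbitrary integral $W$-invariant $\Qs$ on an arbitrary simply connected root datum, not the normalized affine one. So as written the odd case does not close. The repair is one line: delete the detour through $a_{ji}$ and use $\B(\av,\bv)=\la b,\av\ra\Qs(\bv)$ directly, which is exactly the paper's computation ($\B(\av,\bv)+\Qs(\bv)=\Qs(\bv)(\la b,\av\ra+1)$ is even). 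Your even case survives only for the accidental reason that an even pairing times anything is even, so the misplaced $\Qs$-factor does not matter there.
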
 
\begin{proof} For the first relation, we note that \be{h2:1} \left[ \h_a(-1), \h_b(-1) \right] = (-1, -1)^{\B(\av, \bv)} \ee but if $\la b, \av \ra$ is even then so is $\B(\av, \bv) = \Qs(\bv) \la b, \av \ra$ and hence the claim since $(-1, -1)= \pm1$ from Lemma \ref{stein-symb}. On the other hand, if $\la b, \av \ra$ is odd, then $\B(\av, \bv) + \Qs(\bv) = \Qs(\bv)(\la b, \av \ra +1)$ which is therefore even. By $\rh{1}$, we have $\h_b(-1) \h_b(-1) = (-1, -1)^{\Qs(\bv)}$ and by $\rh{2}$ we obtain  \be{h2:2}   \h_b(-1) \h_a(-1) \h_b(-1)^2 =  \h_a(-1) \h_b(-1) (-1, -1)^{\B(\av, \bv)} (-1, -1)^{\Qs(\bv)} = \h_a(-1) \h_b(-1), \ee proving the second case. 
 \end{proof}

\subsection{A family of automorphisms of $\tH$} \label{sec:H-aut}

\renewcommand{\a}{\gamma}
\newcommand{\si}{\s^{-1}}

\spoint For each $a \in \Pi$, we can define a map (the inverse will be justified in the Lemma below) $\si_{a}: \wt{H} \rr \wt{H}$ \be{th-a} \si_a(x) &=& x \text{ for } x \in A \\ \si_a(\h_{b}(s)) &=& \h_b(s) \h_{a}(s^{ -\la a, \bv \ra}) \text{ for } b \in \Pv_e,\,\  t \in F^*. \ee This map descends, under $\varphi: \tH \rr H,$ to the simple reflection $s_a^{-1} \in W$ acting on $H$ (cf. \kmr{2} in \S \ref{s:stein-functor}), and our goal in this part is to generalize certain properties of $s_a$ to the elements $\s_a$. 

\begin{nlem} \cite[Lemma 6.5]{mat} \label{s-hom} Let $a \in \Pi$ and $ s \in F^*$. Then,
\begin{enumerate} \item  We have $\si_a (\h_a(s)) = \h_a(s^{-1})$ 
\item The map $\si_a: \tH \rr \tH$ is an isomorphism of groups. Explicitly, one has for $b \in \Pv_e,$  \be{thi-inv} \s_a(\h_b(s)) = \h_a(s^{- \la a, \bv \ra}) \h_b(s) = \h_b(s) \h_a(s^{\la a, \bv \ra})^{-1}\ee \end{enumerate} 
\end{nlem}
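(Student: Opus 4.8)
\textbf{Proof plan for Lemma \ref{s-hom}.}

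For part (1), the plan is to compute directly from the defining formula \eqref{th-a}. Taking $b$ to be the coroot $\av$ itself (so $b = a$ in the indexing), we have $\la a, \av \ra = 2$, so $\si_a(\h_a(s)) = \h_a(s) \, \h_a(s^{-2})$. Then one applies $\rh{1}$ to combine these: $\h_a(s) \h_a(s^{-2}) = \h_a(s \cdot s^{-2}) \, (s, s^{-2})^{\Qs(\av)} = \h_a(s^{-1}) (s, s^{-2})^{\Qs(\av)}$. It remains to check the symbol factor is trivial: by Lemma \ref{stein}\eqref{pow} we have $(s, s^{-2}) = (s, s)^{-2}$, and by Lemma \ref{stein}\eqref{x:x} we know $(s,s)^2 = 1$, so $(s,s)^{-2} = 1$. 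Hence $\si_a(\h_a(s)) = \h_a(s^{-1})$, as claimed. (One should double-check that the convention on which power of $s$ appears in \eqref{th-a} is consistent with $\la a, \av \ra = 2$; this is the only place a sign subtlety could enter, and it is resolved purely by the Steinberg identities just cited.)

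For part (2), the strategy is to exhibit an explicit two-sided inverse and thereby conclude $\si_a$ is bijective; that $\si_a$ is a group homomorphism should be checked separately by verifying it respects $\rh{1}$ and $\rh{2}$. For the homomorphism property: on the generators $\h_b(s)$, the images $\h_b(s)\h_a(s^{-\la a,\bv\ra})$ live in $\tH$, and one verifies that the relations $\rh{1}$, $\rh{2}$ are preserved — this is a bilinearity computation using the $W$-invariance of $\Bs$ and the identity \eqref{bq:db}, together with $\rh{1}$ and $\rh{2}$ themselves; the key point is that the reflection $s_a$ preserves the quadratic form $\Qs$, so the cocycle is respected up to the $A$-valued symbols, which are central. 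For the inverse: one defines $\s_a$ on generators by $\s_a(\h_b(s)) := \h_a(s^{-\la a, \bv\ra}) \h_b(s)$ and checks $\s_a \si_a = \si_a \s_a = \mathrm{id}$. Composing, $\s_a(\si_a(\h_b(s))) = \s_a(\h_b(s)\h_a(s^{-\la a,\bv\ra})) = \s_a(\h_b(s)) \, \s_a(\h_a(s^{-\la a,\bv\ra}))$, and using part (1) applied to $\s_a$ (symmetrically) together with $\rh{1}$, $\rh{2}$, the $\h_a$ factors cancel up to central symbol terms which one checks are trivial by the same Lemma \ref{stein}\eqref{x:x} argument. Finally, the two formulas in \eqref{thi-inv} should be reconciled with each other: passing $\h_b(s)$ past $\h_a(s^{-\la a,\bv\ra})$ using $\rh{2}$ introduces the factor $(s, s)^{-\la a,\bv\ra \Bs(\av,\bv)/\,\cdots}$-type symbol — more precisely a power of $(s^{-\la a,\bv\ra}, s) = (s,s)^{-\la a,\bv\ra}$ — which is a square and hence trivial, and $\h_a(s^{-\la a,\bv\ra}) = \h_a(s^{\la a,\bv\ra})^{-1}$ up to the same kind of square symbol by \eqref{ha:inv}.

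The main obstacle I anticipate is bookkeeping of the $A$-valued symbol factors: at each step one picks up terms like $(s,t)^{\Qs(\cdot)}$ or $(s,s)^{k}$, and the whole point is that they cancel or vanish. The cleanest way to organize this is to prove the homomorphism property and the inverse identity simultaneously ``up to an element of $A$'' first, and then pin down that the $A$-factor is trivial using only two facts: $(x,x)^2 = 1$ for all $x$ (Lemma \ref{stein}\eqref{x:x}), and bimultiplicativity. Since Matsumoto's \cite[Lemma 6.5]{mat} is cited, one may also simply invoke that the argument there transcribes verbatim, with $\Qs_o$ replaced by our $\Qs$ and the short-coroot normalization replaced by the general one; the ``$\Qs$-formalism'' was introduced precisely so that this transcription is mechanical, so the honest content of the proof is the part (1) computation above plus the observation that every symbol obstruction is a square.
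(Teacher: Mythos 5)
Your plan coincides with the paper's proof: part (1) is the same direct computation via $\rh{1}$ and Lemma \ref{stein}(ii),(v), and part (2) is handled exactly as in the paper by checking that $\si_a$ preserves the relations $\rh{1}$, $\rh{2}$ (the symbol bookkeeping collapsing via \eqref{bq:db}) and then verifying the explicit inverse formula, with the leftover $(s,s)$-factors cancelling because $(s,s)^2=1$. One small caution for the write-up: in reconciling the two expressions in \eqref{thi-inv}, neither the commutator factor $(s,s)^{-\la a,\bv\ra^2\Qs(\av)}$ nor the inversion factor from \eqref{ha:inv} is individually trivial in general — it is their product, a perfect square, that vanishes.
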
 

\begin{proof} 
To prove (1), we may compute from the definitions and the fact that $\la a, \av \ra =2$ that  \be{ref:thi} \si_a(\h_a(s)) &=& \h_a(s) \h_a(s^{ - \la a, \av \ra}) = \h_a(s) \h_a(s^{-2}) \\ &=& \h_a(s^{-1}) (s, s^{-2})^{\Q(\av)} =  \h_a(s^{-1}) (s, s)^{-2 \Q(\av)}= h_a(s^{-1}), \ee where we have used Lemma \ref{stein} (ii) and (v) for the second-to-last and last equality respectively. 

Turning to (2), we first show $\si_{\a}, \a \in \Pi$ is a homomorphism of groups: i.e., for $ a, b \in \Pv_e$ and $s, t \in F^*,$ 

\begin{enumerate}[(i)]
\item $\si_{\a} (\h_{b}(s)) \si_{\a}(\h_{b}(t) ) \si_{\a}( \h_{b}(st) )^{-1} = (s, t)^{\Qs(\bv)}$ 
\item $\left[ \si_{\a}(\h_{a}(s)), \si_{\a}(\h_{b}(t)) \right] = (s, t)^{\B(\av, \bv)} $ \end{enumerate} 

As for (i), note that from the definitions we obtain \be{thi:1} \si_{\a} (\h_{b}(s)) \si_{\a}(\h_{b}(t) ) =  \h_b(s) \, \underbrace{\h_{\a}(s^{ -\la \a, \bv \ra}) \h_b(t) } \, \h_{\a}(t^{ -\la \a, \bv \ra}). \ee Using the commutator relation $\rh{2}$ on the underbraced terms, we obtain \be{thi:1b} \underbrace{\h_b(s)  \h_b(t) } \, \underbrace{ \h_{\a}(s^{ -\la \a, \bv \ra})  \h_{\a}(t^{ -\la \a, \bv \ra}) } (s^{ -\la \a, \bv \ra}, t)^{\B(\gamv, \bv)}. \ee Now using relation $\rh{1}$ on each pair of underbraced terms, we obtain \be{thi:1c} 
\underbrace{\h_b(st) (s, t)^{\Qs(\bv)} \h_\a( (st)^{ - \la \a, \bv \ra}) }(s^{ -\la \a, \bv \ra}, t^{ -\la \a, \bv \ra})^{\Qs(\gamv)} (s^{ -\la \a, \bv \ra}, t)^{\B(\gamv, \bv)}.\ee By definition, the underbraced term is $(s, t)^{\Qs(\bv)} \si_{\a}(\h_b(st)),$ so using (\ref{bq:db}) we find that (\ref{thi:1c}) is equal to \be{thi:1c-a} \si_{\a}(\h_b(st))  (s, t)^{\Qs(\bv)}  (s^{ -\la \a, \bv \ra}, t)^{-\Bs(\gamv, \bv) } (s^{ -\la \a, \bv \ra}, t)^{\B(\gamv, \bv)} =  \si_{\a}(\h_b(st))  (s, t)^{\Qs(\bv)}. \ee 
Now we turn to (ii). We begin by writing \be{thi:2} \si_{\a}(\h_{a}(s))\si_{\a}(\h_{b}(t)) = \h_a(s) \h_{\a}(s^{ -\la \a, \av \ra}) \h_b(t) \h_{\a}(t^{ -\la \a, \bv \ra}). \ee Applying $\rh{2}$ twice to the right hand side we can move $\h_b(t)$ to the front of the expression and obtain \be{thi:2b}
\h_b(t) \h_a(s) \, \underbrace{ \h_{\a}(s^{ -\la \a, \av \ra})   \h_{\a}(t^{ -\la \a, \bv \ra}) } \, (s^{ -\la \a, \av \ra}, t)^{\B(\gamv, \bv)} (s, t) ^{ \B(\av, \bv)}. \ee Using $\rh{2}$ on the underbraced terms and noting that $\B(\gamv, \gamv) = 2 \Qs(\gamv)$, we obtain  \be{thi:2c} \h_b(t) \, \underbrace{ \h_a(s)  \h_{\a}(t^{ -\la \a, \bv \ra})  } \, \h_{\a}(s^{ -\la \a, \av \ra})  (s^{ -\la \a, \av \ra}, t)^{\B(\gamv, \bv)} (s, t) ^{ \B(\av, \bv)} ( s^{ -\la \a, \av \ra}, t^{ -\la \a, \bv \ra} )^{2 \Qs(\gamv)}. \ee Applying $\rh{2}$ to the underbraced term transforms the previous line to \be{thi:2c} \si_{\a}(\h_b(t)) \si_{\a}(\h_a(s))  (s,  t^{ - \la \a, \bv \ra})^{\B(\av, \gamv)} (s^{ -\la \a, \av \ra}, t)^{\B(\gamv, \bv)} (s, t) ^{ \B(\av, \bv)} ( s^{ -\la \a, \av \ra}, t^{ -\la \a, \bv \ra} )^{2 \Qs(\gamv)}. \ee Noting again  $\B(\gamv, \bv) = \Q(\gamv) \la \gamma, \bv \ra$ we find that \be{thi:2d} 
 ( s^{ -\la \a, \av \ra}, t^{ -\la \a, \bv \ra} )^{2 \Qs(\gamv)} &=&  ( s^{ -\la \a, \av \ra}, t^{ -\la \a, \bv \ra} )^{ \Qs(\gamv)}  ( s^{ -\la \a, \av \ra}, t^{ -\la \a, \bv \ra} )^{\Qs(\gamv)} \\
 &=&  ( s, t^{ -\la \a, \bv \ra} )^{-\B(\av, \gamv)}  ( s^{ -\la \a, \av \ra}, t )^{-\B(\gamv, \bv)}, \ee which, together with (\ref{thi:2c}) proves (ii).
 
Let us now prove \eqref{thi-inv} by checking $\si_a \left(  \s_a (h_b(s) \right) = \h_b(s).$ We compute as follows, \be{sa:inv-1} \si_a \left( \h_b(s) \h_a(s^{\la a, \bv \ra})^{-1} \right) &=& \h_b(s) \h_a(s^{ -\la a, \bv \ra}) \si_a ( \h_a(s^{ \la a, \bv \ra})^{-1} ) \\
&=&  \h_b(s) \h_a(s^{ -\la a, \bv \ra}) \si_a ( \h_a(s^{ \la a, \bv \ra}))^{-1} \\
&=&  \h_b(s) \h_a(s^{ -\la a, \bv \ra})  \h_a(s^{ -\la a, \bv \ra})^{-1} \\
&=& \h_b(s) \h_a(s^{ -\la a, \bv \ra})  \h_a(s^{ \la a, \bv \ra}) \left( s^{ \la a, \bv \ra}, s^{ \la a, \bv \ra} \right)^{- \Q(\av)} \\
&=&  \h_b(s) \left( s^{ -\la a, \bv \ra},   s^{ \la a, \bv \ra} \right)^{\Q(\av)} \left( s^{ \la a, \bv \ra}, s^{ \la a, \bv \ra} \right)^{- \Q(\av)} \\
&=& \h_b(s) \ee   where we have used that fact that $(s, s)^{2n}=1$ for any integer $n$ in the last line. \end{proof}

\spoint  Writing $\Ad_g$ for the map which sends $h \mapsto g h g^{-1}$, we have the following:

\begin{nlem} \cite[Lemma 6.5]{mat} \label{th:sq} For each $a \in \Pi$, we have $(\s_{a}^{-1})^{2} = \s_{a}^2 = \Ad_{\h_{a}(-1)}$  as automorphisms of $\tH.$ \end{nlem}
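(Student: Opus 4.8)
The statement to prove is that $\s_a^2 = (\s_a^{-1})^2 = \Ad_{\h_a(-1)}$ as automorphisms of $\tH$. By Lemma \ref{H:straight}, every element of $\tH$ is a product of $\zeta \in A$ and the generators $\h_b(s)$ for $b \in \Pv_e$, $s \in F^*$; since all three maps $\s_a^2$, $(\s_a^{-1})^2$, $\Ad_{\h_a(-1)}$ are group homomorphisms fixing $A$ pointwise (for $\Ad_{\h_a(-1)}$ this is because $A$ is central), it suffices to check that they agree on each generator $\h_b(s)$. So the plan is to compute $\s_a^2(\h_b(s))$ and $\Ad_{\h_a(-1)}(\h_b(s))$ directly and compare, and then observe that $\s_a^2 = (\s_a^{-1})^2$ follows formally since the automorphism $\Ad_{\h_a(-1)}$ has order dividing $2$ (indeed $\h_a(-1)^2 = (-1,-1)^{\Qs(\av)}$ is a symbol, hence central by $\rh{2}$-compatibility, so $\Ad_{\h_a(-1)}^2 = \mathrm{id}$, which forces $\s_a^4 = \mathrm{id}$ and hence $\s_a^2 = \s_a^{-2}$).

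First I would compute the right-hand side. Using $\rh{2}$, $\Ad_{\h_a(-1)}(\h_b(s)) = \h_a(-1)\,\h_b(s)\,\h_a(-1)^{-1} = \h_b(s)\,(-1, s)^{\B(\av,\bv)}$, and by $\rh{1}$ the bilinearity-type identity $(-1,s)^{\B(\av,\bv)}$ can be rewritten: since $\B(\av,\bv) = \la b, \av\ra \Qs(\bv)$ by \eqref{bq:db}, and using skew-symmetry and Lemma \ref{stein}(ii), $(-1,s)^{\B(\av,\bv)} = (s,-1)^{-\B(\av,\bv)} = (s^{-\la b,\av\ra}, -1)^{\Qs(\bv)} \cdot (\text{sign corrections})$. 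The cleanest route is probably to keep it as $(-1,s)^{\la b,\av\ra \Qs(\bv)}$ and massage the left-hand side into the same form. For the left-hand side: by Lemma \ref{s-hom}(2), $\s_a(\h_b(s)) = \h_b(s)\,\h_a(s^{\la a,\bv\ra})^{-1}$ (using the convention $\la a, \bv\ra = \la b, \av\ra$ up to the symmetrization — here I must be careful which pairing appears; the formula in \eqref{thi-inv} has $\h_a(s^{-\la a,\bv\ra})\h_b(s)$). Then apply $\s_a$ again: $\s_a^2(\h_b(s)) = \s_a\big(\h_a(s^{-\la a,\bv\ra})\big)\,\s_a(\h_b(s))$; by Lemma \ref{s-hom}(1), $\s_a(\h_a(t)) = \h_a(t^{-1})$, so the first factor becomes $\h_a(s^{\la a,\bv\ra})$, and expanding the second factor by \eqref{thi-inv} again produces $\h_a(s^{-\la a,\bv\ra})\h_b(s)$. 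Multiplying out and collecting the $\h_a$-terms using $\rh{1}$ (which introduces a symbol $(s^{\la a,\bv\ra}, s^{-\la a,\bv\ra})^{\Qs(\av)}$, equal to $1$ since $(x,x^{-1})=1$ by Lemma \ref{stein}), the $\h_a$ factors telescope and one is left with $\h_b(s)$ times a symbol coming from moving $\h_a(s^{\la a,\bv\ra})$ past $\h_b(s)$ via $\rh{2}$, namely $(s^{\la a,\bv\ra}, s)^{\B(\av,\bv)}$ or its inverse — and this is exactly what must match $(-1,s)^{\la b,\av\ra\Qs(\bv)}$ up to the computations above.

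The main obstacle will be bookkeeping the symbols correctly: tracking which power of $s$ appears, which direction the commutator $\rh{2}$ moves terms, and repeatedly invoking Lemma \ref{stein}(ii)–(v) (especially $(x,x)^2 = 1$, $(x,x^{-1})=1$, $(x,-x)=1$) to reduce everything to a power of $(-1,s)$ or $(s,s)$. There is also a subtlety about the convention for the pairing in $\la a, \bv\ra$ versus $\la b, \av\ra$, which are equal only after the symmetrization is taken into account via \eqref{bq:db}; I would state at the outset that by \eqref{bq:db} one has $\B(\av,\bv) = \la b,\av\ra\,\Qs(\bv) = \la a,\bv\ra\,\Qs(\av)$ and use whichever form keeps the exponents on $\h_a$ clean. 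Once the generator-by-generator identity $\s_a^2(\h_b(s)) = \h_a(-1)\h_b(s)\h_a(-1)^{-1}$ is verified, the equality of automorphisms follows since both sides are homomorphisms agreeing on generators, and the equality $\s_a^2 = \s_a^{-2}$ is then immediate from $\Ad_{\h_a(-1)}^2 = \mathrm{id}$.
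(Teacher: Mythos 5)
Your overall strategy --- check the three homomorphisms on the generators $\h_b(s)$ and deduce $(\s_a^{-1})^2=\s_a^{2}$ from $\Ad_{\h_a(-1)}^2 = \mathrm{id}$ --- is exactly the paper's, and the reduction to generators and the final order-two argument are fine. But there is a genuine error at the central step. After the two applications of $\s_a$ you arrive at $\h_a(s^{\la a,\bv\ra})\,\h_a(s^{-\la a,\bv\ra})\,\h_b(s)$, and by $\rh{1}$ the two $\h_a$-factors combine to the symbol $(s^{\la a,\bv\ra}, s^{-\la a,\bv\ra})^{\Qs(\av)}$. You assert this is $1$ ``since $(x,x^{-1})=1$ by Lemma \ref{stein}'' --- but that identity is not in Lemma \ref{stein} and is false for a general Steinberg symbol. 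What the lemma gives is $(x,-x)=1$, whence $(x,x)=(x,-1)^{-1}$ and $(x,x^{-1})=(x,x)^{-1}=(x,-1)=(-1,x)$, an element of order dividing $2$ that is nontrivial in general (e.g.\ $(-1,-1)=-1$ for the quadratic Hilbert symbol over $\mathbb{Q}_2$ or over $\R$). This symbol is precisely the content of the lemma: $(s^{\la a,\bv\ra},s^{-\la a,\bv\ra})^{\Qs(\av)} = (-1,s)^{\la a,\bv\ra\Qs(\av)} = (-1,s)^{\Bs(\av,\bv)} = [\h_a(-1),\h_b(s)]$ by $\rh{2}$, and it is what turns $\s_a^2(\h_b(s))$ into $\h_a(-1)\h_b(s)\h_a(-1)^{-1}$. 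If the symbol really were $1$, your computation would show $\s_a^2=\mathrm{id}$ on all generators, contradicting the statement. The paper handles this exact point via Lemma \ref{stein}(vi), rewriting $(s^{-\la a,\bv\ra},s^{\la a,\bv\ra})=(s^{-\la a,\bv\ra},-1)$ and then invoking $\rh{2}$.

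A second, smaller gap: your alternative bookkeeping, in which the residual symbol is $(s^{\la a,\bv\ra},s)^{\Bs(\av,\bv)}$ ``or its inverse,'' is left unmatched against $(-1,s)^{\Bs(\av,\bv)}$. That matching is not automatic: one needs $(s,s)=(-1,s)$, the fact that $(-1,s)$ has order dividing $2$, and the parity congruence $\la a,\bv\ra^2\equiv \la a,\bv\ra \pmod 2$ to see that $(s,s)^{\la a,\bv\ra\Bs(\av,\bv)}=(-1,s)^{\Bs(\av,\bv)}$. Either route can be made to work, but the proof is incomplete until one of them is carried through; as written, the step that actually produces the conjugation factor is the one you have discarded.
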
 
\begin{proof} Using Lemma \ref{s-hom}, (1) we compute that \be{th:sq-2} \s_{a}^{-2}(\h_b(s)) &=& \si_{a} (\h_b(s) \, \h_{a}(s^{ - \la a, \bv \ra}) ) \\
&=& \h_b(s) \,  \h_{a}(s^{ -\la a, \bv \ra}) \,   \h_{a}(s^{  \la a, \bv \ra}) \\ 
&=& \h_b(s) (s^{ -\la a, \bv \ra}, s^{ \la a, \bv \ra})^{\Qs(\av)}. \ee From Lemma \ref{stein} (vi), this last line is equal to \be{th:sq-3}  \h_b(s) (s^{ -\la a, \bv \ra}, -1)^{\Qs(\av)} &=&   \h_b(s) (s, -1)^{-\Bs(\av, \bv)}. \ee Now using $\rh{2}$ we have $(s, -1)^{-\Bs(\av, \bv)} = (-1, s)^{\Bs(\av, \bv)}=[\h_{a}(-1), \h_b(s)]$ and so we finally obtain \be{s:sq-1} (s, -1)^{-\Bs(\av, \bv)} \h_b(s) = \h_{a}(-1) \h_b(s) \h_{a}(-1)^{-1}. \ee 

To contend with the case of $(\s_{a}^{-1})^2= (\Ad_{\h_{a}(-1)})^{-1},$ we just note that $\Ad^2_{\h_{a}(-1)}$ acts as the identity. Indeed  \be{ad^2} \Ad_{\h_{a}(-1)} \Ad_{\h_{a}(-1)} ( \h_b(s) )  = \h_{a}(-1) \h_{a}(-1) \h_b(s) (\h_{a}(-1) \h_{a}(-1))^{-1} = \h_b(s), \ee  since $\h_{a}(-1) \h_{a}(-1) = (-1, -1)^{\Q(\av)}$ and again $(-1, -1)^{2 n}=1$ for any integer $n$.

\end{proof}

\begin{nrem} \label{s:pres:hz} Recall the subgroup $\tH_{\zee} \subset \tH$ introduced in \S \ref{s:tHz}. Then for each $a \in \Pi$, $\s_a$ preserves $\tH_{\zee}:$ \be{s:h2} \si_a (\h_b(-1) ) = \begin{cases} \h_b(-1) & \text{ if } \la a, \bv \ra \text{ even, } \\ \h_b(-1) \h_a(-1) & \text{ if } \la a, \bv \ra \text{ odd; } \end{cases} \ee  for $b \in \Pv_e$. Moreover, an easy computation gives \be{sa:2:hz} \s_a^{-2}( \h_b(-1)) = \h_b(-1) \left(-1, -1\right)^{\Bs(\av, \bv) \la a,  \bv \ra}, \ee from which it follows that $\s_a^{-4} = \s_a^4=1$ on $H_{\zee}$.

\end{nrem} 

\tpoint{Braid relations} The proof of the following result is a simple verification which we suppress here.

\begin{nlem} Let $k = - \la a, \bv \ra,$ $\ell = -\la b, \av \ra$ and write $m = - \la a, \gv \ra$ and $n = - \la b, \gv. \ra$ Then \be{B:bd-s} \Bs(\av, \bv) = - k \Qs(\av) = - \ell \Qs(\bv) \ee and for any $\gamma \in \Pv_e$ if $k \ell \neq 0$ we have 
 \be{bd:s-1} \si_a \left(\h_{\gamma}(t) \h_a(t^p) \h_b(t^q)\right) &=& \h_{\gamma}(t) \, \h_a(t^{m-p+ qk}) \, \h_b(t^q) \,  (t, t)^{ (mp/k + k q^2 + (p-m)q) \Bs(\av, \bv) } \\ \si_b \left(\h_{\gamma}(t) \h_a(t^p) \h_b(t^q)\right) &=& \h_{\gamma}(t) \, \h_a(t^p) \, \h_b(t^{p \ell + n -q}) \,  (t, t)^{ (n/\ell + p) q  \Bs(\av, \bv) }. \ee \end{nlem}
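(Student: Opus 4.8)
The plan is to verify these identities by direct computation in $\tH$, working entirely from the defining relations $\rh{1}$ and $\rh{2}$ together with the properties of the Steinberg symbol collected in Lemma \ref{stein}. The first identity $\Bs(\av,\bv) = -k\,\Qs(\av) = -\ell\,\Qs(\bv)$ is immediate from \eqref{bq:db}: applying that formula with $y = \bv$ and the coroot $\av_i = \av$ gives $\Bs(\bv,\av) = \la b, \av\ra\,\Qs(\av) = -k\,\Qs(\av)$, and symmetrically $\Bs(\av,\bv) = \la a, \bv\ra\,\Qs(\bv) = -\ell\,\Qs(\bv)$; by symmetry of $\Bs$ these agree. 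This part is essentially bookkeeping with the sign conventions $k = -\la a,\bv\ra$, $\ell = -\la b, \av\ra$.

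For the two braid-type formulas, the key step is to expand $\si_a$ across the product using the fact (established in Lemma \ref{s-hom}) that $\si_a$ is a group homomorphism, so $\si_a(\h_\gamma(t)\h_a(t^p)\h_b(t^q)) = \si_a(\h_\gamma(t))\,\si_a(\h_a(t^p))\,\si_a(\h_b(t^q))$. By definition \eqref{th-a} (and part (1) of Lemma \ref{s-hom} for the middle factor), each of these is $\h_\gamma(t)\h_a(t^{m})$, $\h_a(t^{-p})$, $\h_b(t^q)\h_a(t^{qk})$ respectively, where I use $-\la a,\gv\ra = m$ and $-\la a,\av\ra = -2$ so $\si_a(\h_a(t^p)) = \h_a(t^{-p})$ only after accounting for the symbol correction $(t^p,t^p)^{\pm\Qs(\av)}$ — which vanishes since $(t,t)^{2\Qs(\av)} = 1$ by Lemma \ref{stein}(v). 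The remaining work is to push all the $\h_a$-factors together and all the $\h_b$-factors together using $\rh{2}$ to commute $\h_a$ past $\h_b$ (each swap produces a symbol $(\cdot,\cdot)^{\Bs(\av,\bv)}$) and then $\rh{1}$ to merge same-index factors (producing symbols $(\cdot,\cdot)^{\Qs(\av)}$ or $(\cdot,\cdot)^{\Qs(\bv)}$), finally converting everything to powers of $(t,t)$ via bimultiplicativity (Lemma \ref{stein}(ii)) and rewriting $\Qs(\av)$, $\Qs(\bv)$ in terms of $\Bs(\av,\bv)$ through the first identity. The exponent of $(t,t)$ then collects to $(mp/k + kq^2 + (p-m)q)\Bs(\av,\bv)$; note the division by $k$ is legitimate because $\Qs(\av) = -\Bs(\av,\bv)/k$ when $k\neq 0$, which is exactly where the hypothesis $k\ell \neq 0$ enters. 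The second formula follows by the same computation with the roles of $a$ and $b$ interchanged (so $k \leftrightarrow \ell$, $m \leftrightarrow n$, $p \leftrightarrow q$), using $\si_b$ in place of $\si_a$.

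The main obstacle is purely computational: keeping careful track of the symbol exponents as factors are commuted and merged, since every application of $\rh{1}$ and $\rh{2}$ introduces a correction term in $A$, and one must also repeatedly invoke Lemma \ref{stein}(v) (that $(t,t)^{2\Qs(\av)} = 1$) and skew-symmetry to simplify. There is no conceptual difficulty — the homomorphism property of $\si_a$ does all the structural work — but the arithmetic of reorganizing a length-three word into the normal form $\h_\gamma(t)\h_a(\cdot)\h_b(\cdot)$ with the exact stated $(t,t)$-exponent is error-prone, which is presumably why the authors suppress it. I would carry out the $\si_a$ case in full once, double-check the exponent by evaluating at small values of $p,q$ (e.g. the case $\gamma = b$, $p = 0$, $q = 1$ should reduce to \eqref{s:h2}-type behavior), and then assert the $\si_b$ case by symmetry.
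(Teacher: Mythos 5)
Your proof is correct and is exactly the ``simple verification'' the paper suppresses: expand $\si_a$ factor-by-factor using \eqref{th-a} and Lemma \ref{s-hom} (homomorphism property plus $\si_a(\h_a(s))=\h_a(s^{-1})$), then normalize the resulting word with $\rh{1}$ and $\rh{2}$, converting $\Qs(\av),\Qs(\bv)$ to $\Bs(\av,\bv)$ via the first identity, which is where $k\ell\neq 0$ is used. Two small points to watch when you carry it out: your intermediate pairings in the first identity are swapped (the formula \eqref{bq:db} gives $\Bs(\bv,\av)=\la a,\bv\ra\Qs(\av)=-k\Qs(\av)$ and $\Bs(\av,\bv)=\la b,\av\ra\Qs(\bv)=-\ell\Qs(\bv)$, though your final conclusions are right), and the raw exponent the normalization produces (e.g.\ $mp/k-kq^2+(p-m)q$ for $\si_a$, depending on the direction in which you apply $\rh{2}$) differs from the stated one by an even integer, which is harmless since $(t,t)=\pm 1$ --- as the remark following the lemma notes, the exponent need only be checked modulo~$2$.
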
 

 \begin{nrem} As $(t, t)= \pm1$ the exponent of the last term in the above expressions need only be regarded modulo 2, and we interpret the fractional powers in the natural way. \end{nrem} 

Using this Lemma, we can now prove

\begin{nprop} \label{bd:sa} The elements $\si_a \, (a \in \Pi)$ satisfy the braid relations. \end{nprop}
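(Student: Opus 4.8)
The plan is to reduce the braid relation for the operators $\si_a$ to a rank-two verification, exactly as in the analogous statements for $\dw_a$ (Lemma \ref{n:bd}) and for the Chinta-Gunnells operators (see \S \ref{subs:CGaction}). First I would fix $a, b \in \Pi$ with $h_{ij} < \infty$, so that $a_{ij} a_{ji} = k\ell < 4$ in the notation of the preceding Lemma (here $k = -\la a, \bv\ra$, $\ell = -\la b, \av\ra$). By Table \ref{h:tab} this means $\{k, \ell\}$ is one of $\{0,0\}$, $\{1,1\}$, $\{1,2\}$, or $\{1,3\}$, i.e. the rank-two root subsystem spanned by $a$ and $b$ is of finite type $A_1 \times A_1$, $A_2$, $B_2$, or $G_2$. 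Both sides of the putative braid relation $\underbrace{\si_a \si_b \si_a \cdots}_{h_{ij}} = \underbrace{\si_b \si_a \si_b \cdots}_{h_{ij}}$ are automorphisms of $\tH$, so it suffices to check that they agree on the generators $A$ (on which both sides act trivially, since each $\si_c$ fixes $A$ pointwise) and on $\h_\gamma(t)$ for $\gamma \in \Pv_e$ and $t \in F^*$.

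The key computational input is the pair of formulas in the preceding Lemma, which describe how $\si_a$ (resp. $\si_b$) transforms an element of the normal form $\h_\gamma(t)\,\h_a(t^p)\,\h_b(t^q)$, tracking both the exponents of $t$ in the $\h_a, \h_b$ factors and the accumulated central correction $(t,t)^{(\cdots)\Bs(\av,\bv)}$. Iterating these formulas $h_{ij}$ times starting from $\h_\gamma(t) = \h_\gamma(t)\,\h_a(t^0)\,\h_b(t^0)$ on each side, the evolution of the exponent pair $(p,q)$ is governed precisely by the linear action of $s_a, s_b$ on the rank-two root lattice — which is why the final exponent pairs on the two sides coincide (the braid relation already holds in $W$, hence in the exponent bookkeeping). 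So the only content to verify is that the accumulated central factors $(t,t)^{\star}$ agree on both sides. Since $(t,t)^2 = 1$ by Lemma \ref{stein}(\ref{x:x}), these exponents need only be compared modulo $2$, and $\Bs(\av,\bv) = -k\Qs(\av) = -\ell\Qs(\bv)$; in each of the four finite types one simply runs the recursion for the $h_{ij}$ steps and checks the parities match. (Alternatively, one can invoke Lemma \ref{th:sq}, which computes $\si_a^2 = \Ad_{\h_a(-1)}$, together with the relations \rh{1}, \rh{2} and Lemma \ref{H2:rels} on $\tH_\zee$, to organize this bookkeeping more conceptually.)

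I expect the main obstacle to be purely the parity computation of the central factors in the $B_2$ and $G_2$ cases, where $h_{ij} = 4$ and $6$ respectively and the iterated exponents $p, q$ grow through the Chebyshev-like recursion of Lemma \ref{gf:fla}; one must be careful that fractional exponents such as $mp/k$ appearing in the formulas of the preceding Lemma are interpreted correctly (as flagged in the Remark following that Lemma) and that the $(t,t)^{2\Qs(\gamv)}$-type terms, which vanish, are consistently discarded. A clean way to avoid case-by-case drudgery is to observe that the correction cocycle $(s,t)\mapsto (s,t)^{\Bs(\av,\bv)}$ factors through $F^*/(F^*)^2 \times F^*/(F^*)^2$ (again by $(t,t)^2=1$ for the diagonal and bimultiplicativity), reducing everything to a finite check over $\mathbb{F}_2$-vector spaces; this is the step I would write out carefully, and it is routine but not entirely trivial. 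Since the statement and its proof mirror \cite[\S 6]{mat} essentially verbatim once the $\Qs$-formalism is in place, I would keep the exposition brief and refer to Matsumoto for the details of the rank-two check, as the paper does for several analogous points.
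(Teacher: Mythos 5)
Your proposal is correct and follows essentially the same route as the paper: both reduce to the four finite rank-two cases $h_{ij}=2,3,4,6$, iterate the transformation formulas of the preceding Lemma on $\h_\gamma(t)$, note that the exponent bookkeeping matches automatically (the braid relation in $W$), and settle the central factors $(t,t)^{\star}$ by a parity argument using $(t,t)^2=1$. The only difference is presentational: the paper writes out the accumulated exponents explicitly in each case rather than deferring to Matsumoto or to the $F^*/(F^*)^2$ repackaging you suggest.
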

\begin{proof}  If $i, j \in \Is$ are such that $a=a_i, b=a_j $ we shall now write $h:= h_{ij}.$ Then we need to verify the following four cases (see Table \ref{h:tab} and (\ref{bd:rel})). Below we again assume $\gamma \in \Pv_e$ and  write $B:= \Bs(\av, \bv)$ 

 \begin{enumerate} \item If $h=2,$ then  $\la b, \av \ra = \la a, \bv \ra =0$ and we need to show that $\si_a \si_b = \si_b \si_a.$  We compute directly \be{h=2} \si_a \si_b h_{\gamma}(t) =  \si_b \si_a \h_{\gamma}(t) = \h_{\gamma}(t) \h_a(t^m) \h_b(t^n). \ee

\item If $h=3$, then  without loss of generality we may take $k=-\la a, \bv \ra = 1$ and $\ell= - \la b, \av \ra =1.$ We need to show $\si_a \si_b \si_a = \si_b \si_a \si_a.$ Applying the previous Lemma,  we have  \be{h=3}  \si_a \si_b \si_a  h_{\gamma}(t) &=& \h_{\gamma}(t) \h_a(t^{m+n}) \h_b(t^{m+n}) (t, t^B)^{m^2 + (m+n)^2}  \\  \si_b \si_a \si_b \h_{\gamma}(t) &=&\h_\gamma(t) \h_a(t^{m+n}) \h_b(t^{m+n}) (t, t^B)^{3n^2} .  \ee As $3n^2 \equiv m^2 + (m+n)^2 \mod 2$ this braid relation follows.

\item If $h=4$, then without loss of generality we may assume $k = - \la a, \bv \ra =1$ and $\ell= - \la b, \av \ra =2.$ We need to show $(\si_a \si_b)^2 = (\si_b \si_a)^2.$ Again, we apply the previous Lemma,   \be{h=3}  \si_a \si_b \si_a \si_b h_{\gamma}(t) &=& \h_{\gamma}(t) \h_a(t^{2m+n}) \h_b(t^{2(m+n)}) (t, t^B)^{5m^2 + 11mn +(17/2) n^2}  \\  \si_b \si_a \si_b \si_a \h_{\gamma}(t) &=&\h_\gamma(t) \h_a(t^{2m+n}) \h_b(t^{2(m+n)}) (t, t^B)^{9m^2+ 9 mn + (5/2) n^2}.  \ee Again a parity argument establishes the desired braid relation.

\item If $h=6$, then without loss of generality we may assume $k = - \la a, \bv \ra= 1$, and $\ell = - \la b, \av \ra =3$. In this case, we need to verify $(\si_a \si_b)^3 = (\si_b \si_a)^3.$  Using the Lemma, we compute \be{h=6}  \si_a \si_b \si_a \si_b \si_a \si_b h_{\gamma}(t) &=& \h_\gamma(t) \h_a(t^{4m+2n}) \h_b(t^{6m+4n}) (t, t^B)^{70m^2+ 108 mn + (136/3) n^2} \\
 \si_b \si_a \si_b \si_a \si_b \si_a\h_{\gamma}(t) &=& \h_{\gamma}(t) \h_a(t^{4m+2n}) \h_b(t^{6m+4n}) (t, t^B)^{94m^2 + 88mn +(64/3) n^2}, \ee and again achieve the desired conclusion using a parity argument. \end{enumerate} \end{proof}
 
\begin{nrem} As the braid relations have been satisfied for $\s_a$ and $\s_a^{-1}$ we now adopt the following unambiguous notation: if $w \in W$ has a reduced decomposition $w= s_{b_1} \cdots s_{b_d}$ with $b_i \in \Pi$ then we set \be{sw:inv}  \s_w := \s_{b_1} \cdots \s_{b_d}. \ee Note of course $\s_w^{-1} \neq \s_{w^{-1}}$. \end{nrem}

\tpoint{Another rank two result} In this paragraph, we recall the notation from \S \ref{rank2}. 

\begin{nlem} \label{w:h} Let $\Is = \{ a, b \}$ and $(I, \cdot)$ any Cartan datum. Let $w \in W(I, \cdot).$ 
\begin{enumerate} 
\item If $w \, a = a$ then $\s_w \, \h_a(s) = \h_a(s)$
\item If $w\, a = -a$ then $\s_w \, \h_a(s) = \h_a(s^{-1})$
\item If $w\, a = b$ then $ \s_w \, \h_a(s) = \h_b(s) $  \end{enumerate} \end{nlem}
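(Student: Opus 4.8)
The plan is to reduce Lemma \ref{w:h} to the rank two classification results of \S \ref{rank2} (Lemmas \ref{lem:eltsfixingroot} and \ref{lem:weylatob}) together with the braid relations for the $\s_a$ established in Proposition \ref{bd:sa}. Since Proposition \ref{bd:sa} lets us write $\s_w$ unambiguously for any $w$ given a reduced decomposition, and since all three hypotheses $wa = \pm a$ or $wa = b$ force us into a finite (indeed finite-type) rank two situation, the strategy is: first identify exactly which $w$ can occur in each case using the rank two lemmas, then for each such $w$ pick a convenient reduced expression and compute $\s_w \h_a(s)$ directly using the explicit formula \eqref{th-a} for $\si_a$ and the identity $\si_a(\h_a(s)) = \h_a(s^{-1})$ from Lemma \ref{s-hom}(1), keeping track of the symbol factors $(s,s)^{\bullet}$ via Lemma \ref{stein} (in particular $(s,s)^{2n}=1$).

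For part (1): by Lemma \ref{lem:eltsfixingroot}, if $w \neq 1$ fixes $a$ then we are in type $A_1 \times A_1$ with $w = s_b$, type $B_2$ with $w = s_bs_as_b$, or type $G_2$ with $w = s_b(s_as_b)^2$ (or its inverse). The case $w=1$ is trivial. In the $A_1\times A_1$ case $\la a,\bv\ra = 0$ so $\si_b(\h_a(s)) = \h_a(s)\h_b(s^{-\la b,\av\ra}) = \h_a(s)$ directly from \eqref{th-a}. For the $B_2$ and $G_2$ cases I would compute $\s_w\h_a(s)$ by applying the generators one at a time; since $\s_w$ descends to $w$ under $\varphi$ and $wa = a$, the result must be $\h_a(s)$ up to a central factor in $A$, and that central factor is a product of symbols of the form $(s^{\pm m}, s^{\pm m'})^{\bullet}$, each of which can be rewritten via Lemma \ref{stein}(vi) as a power of $(s,s)$; a parity check — exactly as in the proof of Proposition \ref{bd:sa} — shows the total exponent is even, so the factor is trivial. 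Part (2) is handled the same way: by the rank two analysis $w a = -a$ forces $w$ into one of the finite-type cases (e.g. $w = s_a$ in $A_1\times A_1$, or the long element in $B_2$, $G_2$), and one computes $\s_w\h_a(s) = \h_a(s^{-1})$ directly, again absorbing symbol factors by parity. Part (3): by Lemma \ref{lem:weylatob}, $wa = b$ forces type $A_2$ with $\la b,\av\ra = \la a,\bv\ra = -1$ and $w = s_as_b$ (or $(s_bs_a)^2$); then $\s_{s_as_b}\h_a(s) = \si_a\si_b(\h_a(s)) = \si_a(\h_a(s)\h_b(s)) = \si_a(\h_a(s))\,\si_a(\h_b(s)) = \h_a(s^{-1})\,\h_b(s)\h_a(s) = \h_b(s)\,(s^{-1},s)^{\Bs(\av,\bv)}\cdots$, and a short computation with Lemma \ref{stein} collapses the symbol factors to $1$, giving $\h_b(s)$.

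The main obstacle is the bookkeeping of the central ($A$-valued) symbol factors: each application of $\si_a$ introduces terms like $(t,t)^{\star \Bs(\av,\bv)}$ (cf. the formulas preceding Proposition \ref{bd:sa}), and in the $B_2$ and $G_2$ cases these accumulate over a word of length up to $6$. The key simplification is that the base is always $s$ (or a power of it): all symbols that arise are of the form $(s^{p}, s^{q})$, and by Lemma \ref{stein}(ii) $(s^p,s^q) = (s,s)^{pq}$, while $(s,s)^2 = 1$. Hence only the parity of a single integer exponent matters in each case, and the identity $wa = \pm a$ (resp. $wa = b$), which is visible on the Weyl group side, pins down the answer up to that $\pm1$-valued factor; the parity computation then finishes it, in exact analogy with the parity arguments already used in Proposition \ref{bd:sa}. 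A cleaner alternative, which I would note, is that it suffices to check (1)–(3) on the single element $\h_a(-1)$ after reducing modulo the subtlety, but since we need the statement for all $s$, the direct computation above is the safest route. One should also remark that cases (1)–(3) are exhaustive for the relevant applications precisely because the rank two lemmas guarantee no indefinite-type $w$ can satisfy any of these three conditions.
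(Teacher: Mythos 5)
Your proposal is correct and follows essentially the same route as the paper: reduce to the finite-type rank-two cases via Lemmas \ref{lem:eltsfixingroot} and \ref{lem:weylatob}, then compute $\s_w$ on $\h_a(s)$ generator by generator using the explicit formulas for $\si_a,\si_b$ on monomials $\h_a(s^k)\h_b(s^\ell)$, and kill the accumulated central factors by a parity argument since $(s^p,s^q)=(s,s)^{pq}$ and $(s,s)^2=1$. The only (immaterial) differences are that the paper deduces part (2) from part (1) via Lemma \ref{s-hom}(1) rather than by a separate direct computation, and that in your part (3) you should be slightly careful that $\si_a\si_b=(\s_b\s_a)^{-1}$ is not literally $\s_{s_as_b}$ — though, as the paper itself notes, verifying the identity for $\s_w^{-1}$ (equivalently, with the roles of $a$ and $b$ swapped) suffices.
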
 
\begin{proof} In the notation of \S \ref{rank2} we write $\la a, \bv \ra = m $ and $\la b, \av \ra = n,$ so that $\Bs(\av, \bv) = m \Qs(\av) = n \Qs(\bv).$ It is then easy to see that 
\be{wa:fla1} \si_a \,\left( \h_a(s^k) \h_b(s^\ell)\right) &=& \h_a(s^{-\ell m - k} ) \h_b(s^{\ell}) (s, s)^{ (-\ell^2 m + k \ell) \Bs(\av, \bv) } \\ \label{wa:fla2} \si_b \, \left( \h_a(s^k) \h_b(s^\ell)\right) &=& \h_a(s^k) \h_b(s^{-kn-\ell}) (s, s)^{k \ell \Bs(\av, \bv)}. \ee Using these formulas and the classification given in \S \ref{s:swap}, \S \ref{s:fix}, the Lemma follows from a direct computation. For example, suppose we want to show (1): then note that (a) there are three cases for $w$ to consider; and (b) the result can equally be verified for $\s_w$ replaced by $\s_w^{-1}$: \begin{enumerate} \item In the $A_1 \times A_1$ case, we have $m=n=0$ and $w=s_b.$ Then $\si_b \h_a(s) = \h_a(s)$ immediately from the definition.
\item In the $B_2$ case, we may assume without loss of generality that $m=-1, n=-2$ and  $w = s_b s_a s_b$. Then \be{b2:1} \s_w^{-1} \h_a(s) = \h_a(s) (s, s)^{6 \B(\av, \bv) } \ee using (\ref{wa:fla1}-\ref{wa:fla2}). But as $(s, s)= \pm1$, the last term in this expression is equal to $1.$
\item In the $G_2$ case, we assume without loss of generality $m=-1, n=-3$ and $w= s_b(s_a s_b)^2.$ Then \be{g2:1} \s^{-1}_w \h_a(s) = \h_a(s) (s, s)^{18 \B(\av, \bv) } = \h_a(s) \ee using (\ref{wa:fla1}-\ref{wa:fla2}). 
\end{enumerate} Part (2) of the Lemma now follows from Lemma \ref{s-hom}, pt. (1), since if $w a = -a$ then $s_a w = a.$ We suppress the proof of (3) as it follows along the same lines as (1) using the classification from Lemma \ref{s:fix}.
\end{proof}

\subsection{The cover $\widetilde{N}$ of the group $N$} \label{s:ncover} 
\newcommand{\hnz}{\tN_{\zee}}
\newcommand{\hhz}{\wt{\mf{H}}}
\renewcommand{\r}{\mf{r}}
\renewcommand{\t}{\mf{t}}

\newcommand{\rtnz}[1]{\mathbf{\tN}_{\zee} \, {#1}}

\spoint Recall the group $N_{\zee}$ introduced in \S \ref{s:nz} and its presentation given in Proposition \ref{Nzee:pres}. If we drop the constraint $\rnz{3}$ we obtain the following group, $\tN_{\zee}.$ 

\begin{de} Let $\tN_{\zee}$ be the group with generators $\r_a, (a \in \Pi)$ and relations as below. Let $\t_a:= \r_a^2.$ \begin{enumerate}[$\mathbf{\tN_{\zee}} \,1$] \item The elements  $\r_a \ (a \in \Pi)$ satisfy the braid relations (\ref{bd:rel}). 
\item For $a,\ b \in \Pi$ we have \be{t:r} \t_a^{-1} \,  \, \r_b \,   \t_a = \begin{cases} \r_b & \text{ if } \la b, \av \ra \text{ even; } \\  \r_b^{-1} & \text{ if } \la b, \av \ra \text{ odd. } \end{cases} \ee 
 
 \end{enumerate} 
 
\end{de} 

\begin{nlem} \label{Nz:ker} The kernel of the natural map $\tN_{\zee} \rr N_{\zee}$ is the abelian group  $\la \t_a^2 \mid a \in \Pi \ra.$ \end{nlem}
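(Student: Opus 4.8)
The plan is to exhibit $\tN_{\zee}$ as a central extension of $N_{\zee}$ with the stated kernel, by comparing the two presentations directly. Write $K$ for the subgroup of $\tN_{\zee}$ generated by the elements $\t_a^2 = \r_a^4$ for $a \in \Pi$, and let $\pi : \tN_{\zee} \to N_{\zee}$ be the surjection sending $\r_a \mapsto r_a$; this is well-defined since the defining relations $\rtnz{1}$ and $\rtnz{2}$ of $\tN_{\zee}$ are among the defining relations $\rnz{1}$--$\rnz{3}$ of $N_{\zee}$. Since $r_a^4 = t_a^2 = 1$ in $N_{\zee}$ by $\rnz{3}$, we have $K \subseteq \ker \pi$, and I want to show that this inclusion is an equality and moreover that $K$ is central (in particular abelian).

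First I would establish that $K$ is central in $\tN_{\zee}$, which simultaneously gives that $K$ is abelian. It suffices to check that each generator $\t_a^2$ commutes with each generator $\r_b$. Using $\rtnz{2}$, conjugation by $\t_a$ sends $\r_b$ to $\r_b^{\pm 1}$ depending on the parity of $\la b, \av \ra$; applying this twice shows $\t_a^{-2} \r_b \t_a^2 = \r_b$ in either case, so $\t_a^2$ is central. (This is the analogue of the fact, noted after Lemma \ref{H2:rels}, that $\B(\av, \bv)\la a, \bv\ra$ is always even, which is what makes the square of the ``twist'' trivial.) Hence $K$ is a central, hence normal, subgroup, and the quotient $\tN_{\zee}/K$ makes sense.

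Next I would identify $\tN_{\zee}/K$ with $N_{\zee}$. The quotient $\tN_{\zee}/K$ is generated by the images $\bar\r_a$ of $\r_a$, which satisfy $\rtnz{1}$, $\rtnz{2}$, and additionally $\bar\t_a^2 = 1$ where $\bar\t_a = \bar\r_a^2$ — that is, exactly the relations $\rnz{1}$--$\rnz{3}$ of Corollary \ref{Nzee:pres}. Therefore there is a surjection $N_{\zee} \twoheadrightarrow \tN_{\zee}/K$ sending $r_a \mapsto \bar\r_a$, which is a two-sided inverse (at the level of generators) to the map $\tN_{\zee}/K \to N_{\zee}$ induced by $\pi$. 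Thus $\pi$ induces an isomorphism $\tN_{\zee}/K \xrightarrow{\sim} N_{\zee}$, and consequently $\ker \pi = K$, which is the asserted description of the kernel.

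The main obstacle is not any single hard computation but making sure the bookkeeping with the presentations is airtight: one must verify that no additional relations are forced on $K$ beyond what $\rtnz{1}$--$\rtnz{2}$ impose — i.e. that $K$ really is the \emph{full} kernel and not something smaller. This is exactly what the von Dyck argument in the previous paragraph handles: a group defined by generators and relations surjects onto any group in which those relations hold, and here the relations defining $N_{\zee}$ are precisely those of $\tN_{\zee}/K$, so the surjection $N_{\zee} \to \tN_{\zee}/K$ exists and inverts $\pi$. One minor point to be careful about: the relation $\rnz{2}$ of $N_{\zee}$ is literally $\rtnz{2}$ of $\tN_{\zee}$ (since $t_a, \t_a$ are defined as the squares of the respective generators), so no translation is needed there; only $\rnz{3}$ is genuinely new, and it is exactly the relation $K \mapsto 1$.
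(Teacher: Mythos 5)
Your proof is correct and is essentially the paper's own argument, just written out in full: the paper likewise observes that $\rtnz{2}$ applied twice makes $\t_a^2$ central and then invokes the presentation of $N_{\zee}$ from Corollary \ref{Nzee:pres}, which is exactly your von Dyck comparison of the two presentations. Your centrality computation $\t_a^{-2}\r_b\t_a^{2}=(\r_b^{\epsilon})^{\epsilon}=\r_b$ and the identification of $\tN_{\zee}/K$ with $N_{\zee}$ are both sound.
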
 
\begin{proof} It is immediate from $\rtnz{2}$ that $\t_a^2$ is central in $\tN_{\zee}.$ The result now follows from the explicit presentation for $N_{\zee}$ which we gave in Corollary \ref{Nzee:pres}. \end{proof} 

\spoint  Using the results from the previous section, we can now verify the following.

\begin{nprop} \label{i-hom} The map which sends $\r_a \mapsto \s_a$ for each $a \in \Pi$ induces a homomorphism  $i: \hnz \rightarrow \Aut(\tH).$     \end{nprop}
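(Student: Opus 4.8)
The plan is to check that the defining relations of $\hnz$ are respected by the assignment $\r_a \mapsto \s_a$. Since $\hnz$ is given by the presentation in its definition, with generators $\r_a\, (a \in \Pi)$ and relations $\rtnz{1}$ and $\rtnz{2}$, it suffices to verify that the elements $\s_a \in \Aut(\tH)$ satisfy the images of these relations. First I would recall that Proposition \ref{bd:sa} already establishes that the $\s_a$ satisfy the braid relations \eqref{bd:rel}, which is precisely relation $\rtnz{1}$. So the only new content is relation $\rtnz{2}$: writing $\mf{t}_a := \r_a^2$, which maps to $\s_a^2$, I must show that as automorphisms of $\tH$ one has
\be{plan:tr} \s_a^{-2} \, \s_b \, \s_a^2 = \begin{cases} \s_b & \text{ if } \la b, \av \ra \text{ is even,} \\ \s_b^{-1} & \text{ if } \la b, \av \ra \text{ is odd.} \end{cases} \ee

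The key tool here is Lemma \ref{th:sq}, which identifies $\s_a^2 = \Ad_{\h_a(-1)}$ as automorphisms of $\tH$. Thus the left-hand side of \eqref{plan:tr} is $\Ad_{\h_a(-1)}^{-1} \circ \s_b \circ \Ad_{\h_a(-1)}$, i.e. the automorphism $x \mapsto \h_a(-1)^{-1} \s_b\!\big(\h_a(-1)\, x\, \h_a(-1)^{-1}\big) \h_a(-1)$. To compare this with $\s_b^{\pm 1}$, I would evaluate both sides on the generators $\h_\gamma(s)$ ($\gamma \in \Pv_e$, $s \in F^*$) and on $A$ (on which everything acts trivially). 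The point is that $\s_b$ is a group automorphism, so $\s_b\!\big(\h_a(-1) x \h_a(-1)^{-1}\big) = \s_b(\h_a(-1))\, \s_b(x)\, \s_b(\h_a(-1))^{-1}$, and then I need to track the element $\s_b(\h_a(-1))$. By Remark \ref{s:pres:hz} (equation \eqref{s:h2}), $\s_b(\h_a(-1))$ equals $\h_a(-1)$ if $\la b, \av \ra$ is even and $\h_a(-1)\h_b(-1)$ if $\la b, \av \ra$ is odd — and here I should note the roles: it is $\la b, \av\rangle$ that governs the parity, matching the statement of $\rtnz{2}$. In the even case one gets $\Ad_{\h_a(-1)}^{-1} \circ \s_b \circ \Ad_{\h_a(-1)} = \s_b$ directly once one observes that conjugation-by-$\h_a(-1)$ commutes past $\s_b$ cleanly. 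In the odd case the discrepancy between $\s_b(\h_a(-1)) = \h_a(-1)\h_b(-1)$ and $\h_a(-1)$ produces precisely an extra factor of $\Ad_{\h_b(-1)}^{-1} = \s_b^{-2}$ (again using Lemma \ref{th:sq}), turning $\s_b$ into $\s_b^{-1}$ after accounting for the central symbol terms $(-1,-1)^{\bullet}$ which are killed by Lemma \ref{stein}(v).

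The main obstacle I anticipate is bookkeeping the central $A$-valued cocycle terms: every application of $\rh{1}$ and $\rh{2}$ spawns factors of the form $(s,s)^{\bullet}$ or $(-1,-1)^{\bullet}$, and one must check these cancel or are trivial (using $(s,s)^2 = 1$, $(-1,-1)^2 = 1$, and the identities of Lemma \ref{stein}, together with the parity of the relevant exponents $\Bs(\av,\bv) = \Qs(\bv)\la b,\av\ra$). This is exactly the same flavor of parity argument used in the proof of Proposition \ref{bd:sa} and in Lemma \ref{H2:rels}, so I expect it to go through, but it requires care. Once \eqref{plan:tr} is verified on generators, the universal property of the presentation of $\hnz$ yields the desired homomorphism $i: \hnz \rightarrow \Aut(\tH)$, $\r_a \mapsto \s_a$, completing the proof.
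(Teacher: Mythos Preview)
Your proposal is correct and follows essentially the same route as the paper: verify $\rtnz{1}$ via Proposition \ref{bd:sa}, then verify $\rtnz{2}$ on generators $\h_\gamma(s)$ using Lemma \ref{th:sq} to identify $\s_a^2$ with $\Ad_{\h_a(-1)}$ and the formula for the action on $\h_a(-1)$. The only cosmetic difference is that the paper first replaces $\r_b$ by $\r_b^{-1}$ in \eqref{t:r} (an equivalent relation) so as to work directly with $\si_b$, for which \eqref{s:h2} is stated; this makes the computation land on the nose without the order-of-factors central terms you anticipate having to cancel.
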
 
\begin{proof} From Proposition \ref{bd:sa}, the $\s_a$ satisfy the braid relations, and so $\rtnz{1}$ is satisfied. Note that we can replace $\r_b$ with $\r_b^{-1}$ everywhere in \eqref{t:r} to obtain an equivalent relation-- this will be the actual relation we verify. As $i(\t_a) = \Ad_{\h_{a}(-1)}$ we need to verify the following two facts: \begin{enumerate} \item Assume $\la b , \av \ra$ is even, so that $\si_b(\h_a(-1)) = \h_a(-1)$. Now let $\gamma \in \Pv_e,$ and $s \in F^*$ and compute \be{i:1} \si_b i(\t_a) (\h_{\gamma}(s)) &=& \si_b \left( \h_a(-1) \h_{\gamma}(s) \h_a(-1)^{-1} \right) \\
&=& \h_a(-1) \h_{\gamma}(s) \h_b(s^{ - \la b, \gamv \ra}) \h_a(-1)^{-1}. \ee This is seen to be equal to $i(\t_a) \si_b (\h_{\gamma}(s)) = \Ad_{\h_a(-1)} ( \si_b(\h_{\gamma}(s))),$ verifying the first case of $\rtnz{2}$.

\item Assume $\la b , \av \ra$ is odd, so that $\si_b(\h_a(-1)) = \h_a(-1) \h_b(-1).$ The corresponding relation $\rtnz{2}$ can equivalently be written as $\r_b^{-1} \t_a = \t_a \t_b \r_b^{-1},$ and so let us verify this holds under the map $i$. The left hand side applied to $\h_{\gamma}(s)$ with $\gamma \in \Pv_e, s \in F^*$ is just \be{i:2} \si_b \left(  \h_a(-1) \h_{\gamma}(s) \h_a(-1)^{-1} \right) &=& \h_a(-1) \h_b(-1) \h_{\gamma}(s) \h_{b}(s^{ - \la b, \gamv \ra}) \si_b (\h_a(-1))^{-1} \\
&=& \h_a(-1) \h_b(-1) \h_{\gamma}(s) \h_{b}(s^{ - \la b, \gamv \ra}) \h_b(-1)^{-1} \h_a(-1)^{-1}. \ee This is immediately seen to equal $i(\t_a) i(\t_b) \si_b (\h_{\gamma}(s)).$
\end{enumerate} 
\end{proof}

\spoint Using Proposition \ref{i-hom} we have a morphism $\hnz \rr \Aut(\tH)$, and we then form the semi-direct product $ \hnz \rtimes \tH.$ 
Define the subgroup $\hnz[2] \subset \hnz$ generated by $\t_a:=\r_a^2,\ (a \in \Pi).$ Recall the subgroup $\tH_{\zee} \subset \tH$ in Remark \ref{s:pres:hz}. From Remark \ref{s:pres:hz}, $i(\hnz[2])$ preserves $\tH_{\zee}$, and in fact the corresponding map  $i: \Aut(\hnz[2]) \rr \Aut(\tH_{\zee})$ has image in the subgroup of internal automorphisms $\tH_{\zee} \subset \Aut(\tH_{\zee})$ . In this way, we obtain a homomorphism \be{i-hom-2} j: \hnz[2] \rr \tH_{\zee}, \, \t_a \mapsto \h_a(-1) \, (a \in \Pi), \ee and using it we define the set \be{J} J:= \{ (\t^{-1}, j(\t)), \t \in \hnz[2] \} \subset \hnz \rtimes \tH, \ee encapsulating the ``redundancy'' between $\hnz$ and $\tH.$ 

\begin{nlem} \label{J:normal} The set $J \subset \hnz \rtimes \tH$ is a normal subgroup.  \end{nlem}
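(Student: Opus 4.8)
The claim is that $J = \{(\t^{-1}, j(\t)) \mid \t \in \hnz[2]\}$ is a normal subgroup of $\hnz \rtimes \tH$. First I would check that $J$ is a subgroup: since $j: \hnz[2] \rr \tH_{\zee}$ is a homomorphism and $\hnz[2]$ is central in $\hnz$ (because the $\t_a = \r_a^2$ and their products act on $\tH$ via $\Ad_{\h_a(-1)}$, and $\tH_{\zee}$ is generated by commuting elements $\h_a(-1)$; more precisely $\hnz[2]$ is abelian by the analogue of the remark after Corollary \ref{Nzee:pres}), one computes directly in the semidirect product that $(\t_1^{-1}, j(\t_1))(\t_2^{-1}, j(\t_2)) = (\t_1^{-1}\t_2^{-1},\ i(\t_2)^{-1}(j(\t_1))\cdot j(\t_2))$. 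Here $i(\t_2)^{-1}$ acts on $j(\t_1) \in \tH_{\zee}$, and since $i(\t_2)$ restricted to $\tH_{\zee}$ is the inner automorphism $\Ad_{j(\t_2)}$ (by the construction of $j$ in \eqref{i-hom-2}), this becomes $j(\t_2)^{-1} j(\t_1) j(\t_2) \cdot j(\t_2) = j(\t_2)^{-1} j(\t_1) j(\t_2)$; using that $\tH_{\zee}$ is abelian (Lemma \ref{Hto:lem}) this equals $j(\t_1) = j(\t_1\t_2)$ up to reordering, so the product is $((\t_1\t_2)^{-1}, j(\t_1\t_2)) \in J$, and inverses are handled the same way.

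Next I would verify normality. A general element of $\hnz \rtimes \tH$ is $(\n, h)$ with $\n \in \hnz$, $h \in \tH$. I conjugate $(\t^{-1}, j(\t))$ by $(\n, h)$ and must land back in $J$. The key structural inputs are: (i) $\hnz[2]$ is central in $\hnz$, so $\n \t^{-1} \n^{-1} = \t^{-1}$; (ii) the automorphism $i(\t)$ of $\tH$ fixes $\tH_{\zee}$ pointwise-up-to-inner, but more usefully, $i$ intertwines the $\hnz$-conjugation action with the $\Aut(\tH)$-action, i.e. $i(\n \t^{-1} \n^{-1}) = i(\n) i(\t^{-1}) i(\n)^{-1}$; (iii) the compatibility $j(\n\t\n^{-1}) $ versus $i(\n)(j(\t))$ — this is the crucial point: I need that for $\n \in \hnz$ and $\t \in \hnz[2]$, the element $i(\n)(j(\t)) \in \tH$ differs from $j(\t)$ (which lies in $\tH_{\zee}$) in a controlled way, and in fact since $\t$ is central we need $i(\n)(j(\t)) = j(\t)$ modulo something that the $h$-conjugation absorbs. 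Carrying out the semidirect-product conjugation formula $(\n,h)(\t^{-1}, j(\t))(\n,h)^{-1}$ and expanding gives a pair whose first coordinate is $\n\t^{-1}\n^{-1} = \t^{-1}$ (by centrality) and whose second coordinate is $h \cdot i(\n)(j(\t)) \cdot (\text{conjugate of } h) \cdot (\ldots)$; I then need this second coordinate to equal exactly $j(\t^{-1 \cdot (-1)}) = j(\t)$, which will follow from the defining property of $j$ that makes $J$ the graph of the "redundancy," combined with Remark \ref{s:pres:hz} ($i(\hnz[2])$ acts on $\tH_{\zee}$ as inner automorphisms by $\tH_{\zee}$) and the fact that $\tH_{\zee}$ is abelian so inner automorphisms by it are trivial on it.

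\textbf{Main obstacle.} The delicate step is the interaction in the second coordinate between the $h$-conjugation (an arbitrary element of $\tH$) and the term $i(\n)(j(\t))$: one must show that $h^{-1}\big(i(\n)(j(\t))\big) h$ — or rather the precise combination dictated by the semidirect product — still lies in $\tH_{\zee}$ and in fact equals $j(\t)$. This requires knowing that $j(\t) \in \tH_{\zee}$ is central in $\tH$ modulo the ambiguity built into $J$, which is exactly why $\tH_{\zee}$ is a finite abelian $2$-group of a special form (Lemma \ref{Hto:lem}) and why the $\Qs$-formalism makes all the offending Steinberg symbols of the shape $(\cdot,\cdot)^{2n}=1$. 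I would handle this by reducing to generators $\t = \h_a(-1)$ and $\n = \r_b$, $h = \h_\gamma(s)$, and invoking Lemma \ref{H2:rels}, Remark \ref{s:pres:hz} (the formula $\si_a(\h_b(-1))$ and $\s_a^{-4}=1$ on $H_{\zee}$), and $\rh 1$, $\rh 2$ together with Lemma \ref{stein}(v) to see that all correction terms are squares of symbols, hence trivial. Once the generator-level computation checks out, normality for arbitrary elements follows since $J$ is a subgroup and conjugation is a homomorphism, so it suffices to conjugate by the generators $(\r_a, 1)$ and $(1, \h_\gamma(s))$ of $\hnz \rtimes \tH$.
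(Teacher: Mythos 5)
Your overall architecture (check $J$ is a subgroup, then check closure under conjugation by the generators $(\r_b,1)$ and $(1,\h_\gamma(s))$ of $\hnz\rtimes\tH$) is the same as the paper's, but two of the structural claims you lean on are false, and they are exactly the points where the real work of the lemma lives. First, $\tH_{\zee}$ is \emph{not} abelian in general: Lemma \ref{Hto:lem} concerns $H_{\zee}\subset H$ downstairs, whereas in the cover $\rh{2}$ gives $[\h_a(-1),\h_b(-1)]=(-1,-1)^{\Bs(\av,\bv)}$, which is a nontrivial element of $A$ whenever $\Bs(\av,\bv)$ is odd and $(-1,-1)\neq 1$ (e.g.\ type $A_2$ with $\Qs=1$ over $\mathbb{Q}_2$ with the quadratic Hilbert symbol). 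So in your subgroup computation the term $j(\t_2)^{-1}j(\t_1)j(\t_2)\cdot j(\t_2)$ does not collapse to $j(\t_1\t_2)$ for free; the correction is a power of $(-1,-1)$, and the paper kills it by the parity observation that $\Bs(\av,\bv)\,(\la b,\av\ra+1)$ is always even (combining \eqref{sa:2:hz} with $\rh{2}$ and \eqref{bq:db}). You gesture at "all correction terms are squares of symbols," which is the right spirit, but the justification you give (abelianness) is wrong and the actual cancellation is a nontrivial parity check.

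Second, and more seriously, $\hnz[2]$ is \emph{not} central in $\hnz$: relation $\rtnz{2}$ gives $\t_a^{-1}\r_b\t_a=\r_b^{-1}$ when $\la b,\av\ra$ is odd (only $\t_a^2$ is central, by Lemma \ref{Nz:ker}). Consequently your assertion that the first coordinate of the conjugate is $\n\t^{-1}\n^{-1}=\t^{-1}$ fails: conjugating $(\t_a^{-1},j(\t_a))$ by $\r_\gamma$ with $\la\gamma,\av\ra$ odd lands on a \emph{different} element of $J$, namely $\bigl((\t_a\t_\gamma)^{-1},\,\h_a(-1)\h_\gamma(-1)\bigr)$ in the paper's computation \eqref{Jnor-4}. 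The whole content of normality is that the twist of the $\hnz$-coordinate forced by $\rtnz{2}$ is matched, case by case in the parity of $\la\gamma,\av\ra$, by the twist of the $\tH$-coordinate coming from $\si_\gamma(\h_a(-1))=\h_a(-1)\h_\gamma(-1)$ in \eqref{s:h2} — this matching is why $J$ is a normal subgroup at all, and it cannot be bypassed by a centrality argument. (Your conjugation-by-$\h_\gamma(t)$ case, by contrast, really does fix the element of $J$, via Lemma \ref{th:sq}: $i(\t_a)=\Ad_{\h_a(-1)}$, so the two conjugations cancel; your worry there about $h^{-1}(i(\n)(j(\t)))h$ leaving $\tH_{\zee}$ is a non-issue once this is used.) As written, the proposal would not compile into a proof without replacing both false claims by the explicit generator computations.
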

\begin{proof} First we verify that $J$ is a subgroup. As $J$ contains the identity this amounts to showing that it is closed under multiplication. For $a, b \in \Pi$  we compute  \be{J:sbgp} \t_a ^{-1} j(\t_a) \,  \t_b^{-1}  j(\t_b)  = \t_a^{-1} \t_b^{-1}  \t_b( \h_a(-1) ) \h_b(-1). \ee By (\ref{sa:2:hz}) and $\rh{2}$ this becomes  $$ \t_a^{-1} \, \t_b^{-1} \, \h_a(-1) \h_b(-1) (-1, -1)^{ \Bs(\av, \bv) \la b, \av \ra} = \t_a^{-1} \t_b^{-1} \, (-1, -1)^{\B(\av, \bv)} \h_b(-1) \h_a(-1) (-1, -1)^{ \Bs(\av, \bv) \la b, \av \ra}.$$ Now $\Bs(\av, \bv) + \Bs(\av, \bv) \la b, \av \ra = \Bs(\av, \bv) ( \la b, \av \ra + 1) $ is always even: either $\la b, \av \ra$ is odd, and so $1 + \la b, \av \ra$ is even, or $\la b, \av \ra$ is even and then so is $\Bs(\av, \bv)$ by (\ref{bq:db}). Thus the previous displayed equation is just $( (\t_b \t_a)^{-1}, j(\t_b \t_a))$.

Next, we verify that $J$ is closed under conjugation by $\h_{\gam}(t)$ for $\gamma \in \Pv_e,$ $t \in F^*$ using Lemma \ref{th:sq}: \be{J:nor-1} \h_{\gamma}(t) \t_a^{-1} \h_a(-1)  \h_{\gamma}(t)^{-1} &=& \t_a^{-1} \t_a( h_{\gamma}(t) )  \h_a(-1) h_{\gamma}(t)^{-1} \\
&=& \t_a^{-1} \, \h_a(-1) \h_{\gamma}(t)  \h_a(-1)^{-1} \h_a(-1) \h_{\gamma}(t)^{-1} \\
&=&  \t_a^{-1} \h_a(-1). \ee   

Finally, we show $J$ is closed under conjugation by $\si_{\gamma}$ for $\gamma \in \Pi$: First note that \be{J:nor-2} \si_{\gamma} \t_a^{-1}  \h_a(-1) \s_{\gamma} &=& \si_{\gamma} \t_a^{-1} \s_{\gamma} \si_{\gamma}(\h_a(-1)) \s_{\gamma} \\ \label{J:nor-3}
&=& \si_{\gamma} \, (\si_{\gamma} \, \t_a)^{-1} \h_a(-1) \h_{\gamma}(-1^{\la \gamma, \av \ra}). \ee We have two cases. If $\la \gamma, \av \ra$ is even, this is computed using $\rtnz{2}$ as $\si_{\gamma} (\t_a \si_{\gamma})^{-1} \h_a(-1) = \t_a^{-1} \h_a(-1).$ On the other hand, if $\la \gamma, \av \ra$ is odd, then (\ref{J:nor-3}) becomes, using again $\rtnz{2},$  \be{Jnor-4} \si_{\gamma} (\t_a \s_{\gamma})^{-1} \, \h_a(-1) \h_{\gamma}(-1) = \si_{\gamma} \si_{\gamma} \t_a^{-1} \h_a(-1)  \h_{\gamma}(-1) = \t^{-1}_{\gamma} \t^{-1}_a  \h_a(-1) \h_{\gamma}(-1), \ee concluding the proof. \end{proof}

\spoint \label{Ntilde:const} The map $\varphi': \hnz \ltimes \tH \rr N$ which sends $\h_{\gamma}(s) \mapsto h_{\gamma}(s)$ for $\gamma \in \Pv_e, s \in F^*$ and $\r_a \mapsto \dw_a, $ for $ a \in \Pi$ is a homomorphism, and moreover $J$ lies in the kernel of this map, so we have a diagram \be{tN:cover} \xymatrix{ \hnz \rtimes \tH \ar[r]^{\varphi'} \ar[d]^{\pi}  & N \\ \tN \ar[ur]_{\varphi} & } \ee where $\pi$ is the natural quotient map with kernel $J$.
\begin{nprop} The kernel of $\varphi$ is $A$, i.e. there is an exact sequence $1 \rr A \rr \tN \stackrel{\varphi}{\rr} N \rr 1.$ \end{nprop}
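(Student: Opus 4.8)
The plan is to show exactness of $1 \rr A \rr \tN \stackrel{\varphi}{\rr} N \rr 1$ by analyzing the composite $\varphi' : \hnz \rtimes \tH \rr N$ and its kernel, then descending through the quotient $\pi$ by $J$. Surjectivity of $\varphi$ is immediate: $N$ is generated by the $\dw_a$ $(a \in \Pi)$ and the $h_\gamma(s)$ $(\gamma \in \Pv_e, s \in F^*)$ by Proposition \ref{N:pres}, and these are hit by $\r_a$ and $\h_\gamma(s)$ respectively under $\varphi'$, hence by their images under $\varphi \circ \pi$. The content is in the kernel computation, and the natural target of the argument is the assertion $\ker \varphi = A$, equivalently $\ker \varphi' = J \cdot A$ (with $A \subset \tH$ central and mapping to $1$ in $N$).

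First I would record that $A$ maps to $1$ under $\varphi$, since $A \subset \tH$ lies in the kernel of $\varphi: \tH \rr H \subset N$ by \eqref{H:ext}, and $A$ survives in $\tN$ (it is disjoint from $J$: an element of $J$ has the form $(\t^{-1}, j(\t))$ with $\t \in \hnz[2]$, and if this equals $(\mathrm{id}, \zeta)$ with $\zeta \in A$ then $\t = \mathrm{id}$ in $\hnz[2]$, forcing $\zeta = j(\mathrm{id}) = 1$; here one uses that $\hnz[2] \to \hnz$ and the braid/commutation relations give $\t = \mathrm{id}$ only when the corresponding word is trivial — this is exactly Lemma \ref{Nz:ker}). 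So $A \subseteq \ker \varphi$. For the reverse inclusion, take $x \in \hnz \rtimes \tH$ with $\varphi'(x) = 1$; I want to show $\pi(x) \in A$. Write $x = \r \cdot h$ where $\r$ is a word in the $\r_a$ and $h \in \tH$ (possible since $\tH$ is normal in the semidirect product). Then $\varphi'(\r) \in N_{\zee}$, $\varphi'(h) \in H$, and $\varphi'(\r)\varphi'(h) = 1$ forces $\varphi'(\r) \in H \cap N_{\zee} = H_{\zee}$. Now apply the word-reduction procedure from the proof of Proposition \ref{N:pres} (Step 2, \S \ref{N-pres-2}): the word $\varphi'(\r)$ reduces to $1$ in $W$ via moves $E_1, E_2$, and carrying out the lifted moves $E_{1'}$ (replace $\dw_a \dw_a$ by $h_a(-1)$) and braid moves in $N_{\zee}$ rewrites $\varphi'(\r)$ as an explicit product of $h_a(-1)$'s, i.e.\ as $j(\t)$ for a suitable $\t \in \hnz[2]$ — more precisely $\r = \t \cdot (\text{something mapping to }1)$. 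The braid relations $\rtnz{1}$ hold in $\hnz$ and the replacement $\r_a\r_a = \t_a$ is built into $\hnz$, so the \emph{same} sequence of moves carried out in $\hnz$ shows $\r$ equals, in $\hnz$, an element $\t \in \hnz[2]$ times possibly further $\t$-factors accumulated from commuting past — all lying in $\hnz[2]$. Thus $\r \in \hnz[2]$, say $\r = \t$, with $j(\t) = \varphi'(\r) = \varphi'(h)^{-1} \in \tH$. Then in $\hnz \rtimes \tH$ we have $x = \t \cdot h = (\t, j(\t)) \cdot (\mathrm{id}, j(\t)^{-1} h)$, and $(\t, j(\t))^{-1} = (\t^{-1}, \t^{-1}(j(\t))^{-1}) \in J$ up to the twist by $i(\t^{-1})$; since $j(\t)^{-1} h \in \ker(\tH \rr H) = A$, we conclude $\pi(x) = \pi((\mathrm{id}, j(\t)^{-1}h)) \in \pi(A) = A$ (here one uses $J \subset \ker \pi$ and that $i(\t^{-1})$ acts trivially on $A$, which is central and pointwise fixed by every $\s_a$). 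Hence $\ker \varphi = A$.

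The main obstacle I expect is bookkeeping the twist in the semidirect product: when one writes $x = \r h$ and then replaces $\r$ by its reduced form via the lifted moves, the moves $E_{1'}$ produce factors $h_a(-1) \in \tH$ which must be transported (using $i$) across the remaining $\r$-letters, and one must check that the accumulated $\tH$-factors together with $h$ land in $A = \ker(\tH \to H)$ rather than merely in $\tH_{\zee}$. This is handled by comparing with the \emph{same} reduction run inside $N$ (where it yields $1$ by the injectivity half of Proposition \ref{N:pres}'s proof) and inside $\hnz$ simultaneously, and invoking Lemma \ref{Nz:ker} to identify the discrepancy. The relations $\rh{1}, \rh{2}$, equation \eqref{sa:2:hz}, and the parity facts already used in Lemma \ref{J:normal} (that $\Bs(\av,\bv)(\la b,\av\ra + 1)$ is even) are exactly what guarantee the transported factors combine consistently, so no new computation beyond what appears in \S \ref{sec:H-aut} and Lemma \ref{J:normal} is needed; it is purely a matter of assembling these pieces in the right order.
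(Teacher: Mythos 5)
Your proposal is correct and follows essentially the same route as the paper: reduce to showing that the word part $\r$ of a lift of a kernel element lies in $\hnz[2]$, then match $j(\r^{\pm 1})$ against $\h$ modulo $A$ using $H_{\zee}=N_{\zee}\cap H$ and the fact that $A=\ker(\tH\rr H)$. The only difference is that you re-run the $E_1/E_2$ word-reduction inside $\hnz$ where the paper simply invokes Lemma \ref{Nz:ker} (whose proof is that same reduction), and your closing bookkeeping in the semidirect product, though slightly looser than the paper's direct verification that $(\r,a\h)\in J$, is repairable exactly as you indicate.
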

\begin{proof} If $x \in \ker(\varphi)$, choose $(\r, \h)  \in \hnz \ltimes \tH$ which lies over $x$ so that we also have $\varphi'(\r \, \h)=1.$ We would like to argue that there exists $a \in A$ such that $(\r, a \h) \in J,$ i.e. $j(\r^{-1}) \in A \, \h.$ As $\varphi'(\h, \r)=1$ we have $\varphi'(\h) = \varphi'(\r)^{-1}.$ The left hand side of this expression lies in $H$ and the right hand side in $N_{\zee},$ so both sides lie in $N_{\zee} \cap H = H_{\zee}.$ In particular, as $\varphi'(\h) \in H_{\zee}$ we conclude by the definition of $\varphi$ and by Lemma \ref{Hto:lem} that there exists $\zeta \in A$ such that we can write (with respect to some fixed ordering on $\Pi$) $\zeta \h= \prod_{a \in \Pi} \h_{a}(s_a)$ for some $\zeta \in A$ with each $s_a \in \{\pm 1 \}.$ Letting $x= \varphi'(\r^{-1}) \in N_{\zee}$, we thus have $x = \prod_{a \in \Pi} \, t_a^{\epsilon_a}$ where $\epsilon_a$ is $0$ or $1$ depending on whether $s_a$ is $1$ or $-1.$ From Lemma \ref{Nz:ker}, it follows that $\r^{-1} = \mf{z} \, \prod_{a \in \Pi} \, \t_a^{\epsilon_a}$ where $\mf{z}$ is some product of elements from $\{ \t_a^2 \mid a \in \Pi \}.$ As $j(\mf{z}) \in A$, \be{ker:N}  j(\r^{-1}) = j(\mf{z}) \prod_{a \in \Pi} \h_a(s_a) \in A \, \h. \ee \end{proof}

\label{tN:not}  We shall adopt the following notation for the remainder of the paper. For each $a \in \Pi$, the image in $\tN$ of $\r_a \in \hnz$ will be denoted by $\w_a.$ Also, for each $a \in \Pv_e$ and $s \in F^*,$ the image in $N$ of $\h_a(s)\in \tH$ under the map $\varphi$ will continue to be denoted by $\h_a(s)$. 


\subsection{Constructing the cover} \label{sec:cover1}

\newcommand{\nt}{\widetilde{n}}
\newcommand{\hh}{\widetilde{h}}
\newcommand{\nti}{\widetilde{\nu}}
\newcommand{\os}{\mathbf{1}_S}

In this section, we construct a group $E$ of operators acting on the fiber product $S:= G \times_N \tN$ following \cite[p.40-47]{mat} We study in this section various relations among the elements of $E$, and then explain in Corollary \ref{p:ker} why $E$ can be regarded as a central extension of $G$ by $A.$

\spoint In the previous section, we have constructed a cover $\varphi:\tN \rr N,$ and in \S \ref{sec:bru} a map $\nu: G \rr N$. Using these two ingredients, we can define  \be{S:def} S:= G \times_N \tN = \{ (g, \nt) \in G \times \wt{N} \mid  \nu(g) = \varphi(\nt) \} \ee where $p$ and $\nti$ are the natural projections making the following diagram of sets Cartesian  \be{S:car} \xymatrix{ S \ar[d]_p \ar[r]^{\nti} & \wt{N} \ar[d]^{\varphi} \\ G \ar[r]^{\nu} & N } \ee Let us denote by $\os:= (1, 1) \in S$ from now on. Keeping the notation from the end of \S \ref{tN:not}, we consider the following operators on $S$ (in each of the below $(g, \nt) \in G \times \tN$ is assumed to lie in $S$)

\begin{enumerate}
\item Let $\h \in \tH$ and define $\lam(\hh) (g, \nt) = (\varphi(\hh) g, \hh \nt).$ 
\item Let $u \in U$ and define $\lam(u) (g, \nt) = ( u g, \nt).$
\item Let $a \in \Pi.$ Recall the dichotomy of Lemma \ref{bru:two}, and define \be{lam:a} \lam_a (g, \nt) = \begin{cases} (\dw_a g, \w_a \nt) & \text { if } \nu(\dw_a g) = \dw_a \nu(g); \\ (\dw_a g, \h_a(s) \nt) & \text{ if } \nu( \dw_a g) = h_a(s) \nu(g),\ s \in F^*.  \end{cases}  \ee 
\end{enumerate} 

\begin{nrem} If we introduce the notation $\left[ \nu(w_a g) \nu(g)^{-1} \right]^{\sim}$ for either $ \w_a$ or $\h_a(s)$ depending on whether $\nu(w_a g) \nu(g)^{-1}$ is either $w_a$ or $h_a(s)$ for some $s \in F^*$, then $\lam_a$ can also be written more compactly as \be{lam:a:new} \lam_a(g, \tn) = (w_a g, \left[ \nu(w_a g) \nu(g)^{-1} \right]^{\sim} \nt). \ee \end{nrem}

It is clear that $\lambda(\hh^{-1})$ and $\lambda(u^{-1})$ are inverses to $\lambda(\hh)$ and $\lambda(u)$ respectively. It is also easy to see that $\lambda_a$ must be a bijection for each $a \in \Pi$ (see also Proposition \ref{lam-prop}, (5) which shows that it is equal to $\lam_a \lam(\h_a(-1)^{-1})$). One can also check the following formula works \be{lam-inv} \lam_a^{-1} (g, \nt) = \begin{cases} ( \dw_a^{-1} g, \w_a^{-1} \nt)  & \text{ if } \nu(\dw_a^{-1} g) = \dw_a^{-1} \nu(g); \\ \left( \dw_a^{-1} g, \h_a(s^{-1})^{-1} \nt \right) & \text{ if } \nu(\dw_a^{-1} g) = h_a(s) \nu(g). \end{cases} \ee 

\begin{de} Let $E \subset \Aut(S)$ denote the subgroup generated by \be{E:def} \begin{array}{lcr} \lambda_a (a \in \Pi), &  \lambda(\hh)\ (\hh \in \tH), & \text{and } \lambda(u)\ (u \in U). \end{array} \ee \end{de}

\begin{nlem} \label{E:trans} The group $E$ acts transitively on $S$. \end{nlem}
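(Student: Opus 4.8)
The plan is to show that every element of $S = G \times_N \tN$ lies in the $E$-orbit of the base point $\os = (1,1)$. First I would observe that since $E$ is a group, it suffices to produce, for an arbitrary $(g, \tn) \in S$, some $\gamma \in E$ with $\gamma \cdot \os = (g, \tn)$; transitivity then follows because any two points can be connected through $\os$.

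The key input is the refined Bruhat decomposition \eqref{G:w:ref}, which writes $g = u \, \dw \, u_w$ with $u \in U$, $w \in W$, $u_w \in U_w$, and $\dw = \dw_{b_1} \cdots \dw_{b_k}$ a chosen reduced expression. First I would reduce to the case $g \in N_\zee$ (in fact $g = \dw$): applying $\lambda(u^{-1})$ on the left kills the leading $u$, and applying $\lambda(\dw u_w \dw^{-1})^{-1}$ — note $\dw u_w \dw^{-1} \subset U^-$, but one actually wants to strip $u_w$ on the right, so instead I would argue that the operators $\lambda(u)$ together with the $\lambda_a$ already suffice to move $(g,\tn)$ to a point of the form $(\dw, \tn')$ for some lift $\tn'$ of $\dw$, using that $\nu(g) = \dw$ and that right multiplication by $U_w$ does not change $\nu$. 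Then I would peel off the reflections: since $\ell(w_{b_1} w) = \ell(w) - 1$ for the first letter of the reduced word read appropriately, the composite $\lambda_{b_1}^{-1} \cdots$ applied repeatedly brings $(\dw, \tn')$ down to $(1, \tn'')$ where $\tn'' \in \ker\varphi = A \subset \tH$, by the compatibility of $\lambda_a$ with $\nu$ encoded in \eqref{lam:a:new} and Proposition \ref{bru:two}. Finally, any element of $\tH$ (in particular any $\tn'' \in A$) is hit by $\lambda(\tn'')$ applied to $\os$, since $\lambda(\tn'')(1,1) = (\varphi(\tn''), \tn'') = (1, \tn'')$ when $\tn'' \in A$, and more generally $\lambda(\h)(1,1) = (\varphi(\h), \h)$ realizes the $\tH$-part.

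Combining these steps: given $(g,\tn) \in S$ with $g = u\,\dw\,u_w$, we have $\varphi(\tn) = \nu(g) = \dw$, so $\tn = \tilde{\h}\,\w_{b_1}\cdots\w_{b_k}$ for a unique $\tilde{\h} \in \tH$ (using the structure of $\tN$ from \S\ref{s:ncover}), and one checks that
\[
\lambda(u)\,\lambda(\tilde{\h})\,\lambda_{b_1}\cdots\lambda_{b_k}\,\lambda(u_w')\cdot \os = (g,\tn)
\]
for an appropriate $u_w' \in U$ obtained by conjugating $u_w$ across the $\dw$; the precise bookkeeping of which $U$-element to insert and on which side comes from the identity $\dw U_w \dw^{-1} \subset U^-$ and the defining relations of $E$. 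I expect the main obstacle to be exactly this bookkeeping — verifying that when we apply $\lambda_{b_i}$ in sequence the "correction terms" $\h_a(s)$ that appear in the two cases of \eqref{lam:a} assemble correctly into the $\tH$-component of $\tn$, i.e. that the ambiguity in Proposition \ref{bru:two} is resolved consistently by the cocycle relations $\mathbf{H}1$, $\mathbf{H}2$ and the braid relations for the $\s_a$ from Proposition \ref{bd:sa}. This is where one must lean on the fact that $\tN$ was built precisely so that $\varphi: \tN \to N$ has kernel $A$ and so that the $\w_a$ satisfy the same braid relations as the $\dw_a$; once that is in hand, the surjectivity onto $S$ is a routine induction on $\ell(w)$.
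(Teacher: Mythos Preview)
Your approach is essentially the paper's: write $g = u\, h\, \dw\, u'$ via Bruhat, write $\tn = \hh\, \w_{b_1}\cdots\w_{b_k}$ with the $b_i$ giving a reduced word, and check that $\lambda(u)\lambda(\hh)\lambda_{b_1}\cdots\lambda_{b_k}\lambda(u')\cdot\os = (g,\tn)$. However, you are making the bookkeeping much harder than it needs to be, in two places.

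First, there is no need to conjugate $u_w$ across $\dw$. In the refined decomposition \eqref{G:w:ref} the element $u_w$ already lies in $U_w \subset U$, so $\lambda(u_w)$ is defined and $\lambda(u_w)\cdot\os = (u_w,1)$ directly. The remark that $\dw U_w \dw^{-1} \subset U^-$ is irrelevant here.

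Second, and more importantly, your worry about the ``correction terms'' $\h_a(s)$ from the second branch of \eqref{lam:a} is unfounded: that branch never occurs in this computation. Indeed, applying $\lambda_{b_k},\lambda_{b_{k-1}},\ldots$ to $(u_w,1)$ and using \eqref{nu:bru}, at the $j$-th step one needs $\nu(\dw_{b_j}\,\dw_{b_{j+1}}\cdots\dw_{b_k}\,u_w) = \dw_{b_j}\,\nu(\dw_{b_{j+1}}\cdots\dw_{b_k}\,u_w)$, which holds precisely because $s_{b_1}\cdots s_{b_k}$ is reduced and hence $\ell(s_{b_j}\cdots s_{b_k}) = 1 + \ell(s_{b_{j+1}}\cdots s_{b_k})$. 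So $\lambda_{b_1}\cdots\lambda_{b_k}\lambda(u_w)\cdot\os = (\dw\,u_w,\,\w)$ by a clean induction, with no $\h_a(s)$ terms appearing and no appeal to $\mathbf{H}1$, $\mathbf{H}2$, or Proposition~\ref{bd:sa} required. Applying $\lambda(u)\lambda(\hh)$ then gives $(u\,\varphi(\hh)\,\dw\,u_w,\ \hh\,\w) = (g,\tn)$, and you are done.
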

\begin{proof} If $(g, \nt) \in S,$ by the Bruhat decomposition we write $g=u n u'$ for $u, u' \in U$ and where $n \in N$ is equal to $\varphi(\tn)$. By construction, we can write $\nt=\hh \widetilde{w}$ where $\hh \in \tH$ and $\widetilde{w}$ may be written as $\w = \w_{b_1} \cdots \w_{b_n}$ with each $b_i \in \Pi.$ Writing $\dw$ to be the same product with each $\w_{b_i}$ replaced by $\dw_{b_i}$ we see by definition of the operators described above that $\lam_{b_1} \cdots \lam_{b_n} \lam(u') \, \os = (\dw \, u' , \w).$ Applying $\lam(u) \lam(\hh)$ to this expression, we obtain $(g, \nt)$.  \end{proof}

\spoint The following is a summary of the main properties of the operators in $E$ that we shall need (cf. \cite[Lemma 6.7]{mat} for (1)-(6), and \S7 in \emph{op. cit} for (7)).

\begin{nprop} \label{lam-prop} The following relations hold in the group $E.$

\begin{enumerate}
\item The map $\hh \mapsto \lambda(\hh)$ is an injective homomorphism $\lambda: \wt{H} \rr E$ and $\lam(A) \subset E$ is central. 
\item The map $u \mapsto \lambda(u)$  is an injective homomorphism $\lambda: U \rr E.$ 
\item For $\hh \in \tH$ and $u \in U$ we have $\lam(\hh) \lambda(u) \lambda(\hh)^{-1} = \lambda (\varphi(\hh) u \varphi(\hh)^{-1})$.
\item For $u \in U^a$ (cf. \eqref{Ua:dec}) we have $\lam_a  \lam(u) \lam_a^{-1} = \lam( \dw_a u \dw_a^{-1})$ and also $\lam_a^{-1} \lam(u) \lam_a = \lam (\dw_a^{-1} u \dw_a ).$
\item We have $\lambda_a^2 = \lambda(\h_a(-1)).$ 

\item We have $\lam_a^{-1} \lam(\hh) \lam_a = \lam( \w_a^{-1} \, \hh \, \w_a).$ 
\item The elements $\lambda_a (a \in \Pi)$ satisfy the braid relations. \end{enumerate}

\end{nprop}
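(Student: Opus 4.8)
\textbf{Plan of proof for Proposition \ref{lam-prop}.} Most of the relations (1)--(6) are proved by a direct computation at the level of the fiber product $S = G \times_N \tN$, tracking the action of each operator on an arbitrary point $(g,\nt) \in S$; I will sketch these and then concentrate on (7), the braid relations for the $\lambda_a$, which is the genuinely delicate point. For (1), injectivity of $\lambda: \tH \rr E$ follows by applying $\lambda(\h)$ to $\os = (1,1)$, which gives $(\varphi(\h), \h)$, so $\lambda(\h) = \lambda(\h')$ forces $\h = \h'$; the homomorphism property is immediate from the definition $\lambda(\h)(g,\nt) = (\varphi(\h)g, \h\nt)$, and centrality of $\lambda(A)$ follows since $A$ is central in $\tH$ and maps to $1$ in $G$. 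Part (2) is analogous and even easier, using $x_a(s)x_a(t) = x_a(s+t)$ and the group structure of $U$. Part (3) is a one-line conjugation computation from the definitions. For (4) one uses the semidirect decomposition $U = U_a \ltimes U^a$ from \eqref{Ua:dec} together with the fact that $\dw_a U^a \dw_a^{-1} \subset U$ (so that $\nu$ is unaffected, by \eqref{nu:HU}), checking the two branches of \eqref{lam:a} separately. Part (5) is the rank-one identity $\dw_a^2 = h_a(-1)$ from \rnn{1} combined with the bookkeeping in \eqref{lam:a}: applying $\lambda_a$ twice, one branch contributes $\w_a^2 = \t_a$ which maps to $\h_a(-1)$ in $\tN$, and the other branch similarly collapses using $\dw_a x_a(s)\dw_a^{-1} = x_a(-s)$. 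Part (6) follows from (5) and the conjugation rule $\w_a^{-1} \h \w_a = \s_a^{-1}(\h)$ (this is how $\tN$ acts on $\tH$ by Proposition \ref{i-hom}), matched against the definition of $\lambda_a$.

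The main work is (7). The strategy, following Matsumoto, is to \emph{not} try to verify the braid relation operator-by-operator on all of $S$, but rather to exploit transitivity (Lemma \ref{E:trans}) and reduce to checking equality of two elements of $E$ on the single base point $\os$, then propagating. More precisely, let $a = a_i$, $b = a_j$ with $h := h_{ij} < \infty$, and let $q = \lambda_a \lambda_b \lambda_a \cdots$ ($h$ factors), $q' = \lambda_b \lambda_a \lambda_b \cdots$ ($h$ factors). Both $q$ and $q'$, applied to $\os = (1,1)$, produce an element of $S$ of the form $(\dw, \nt)$ where $\dw$ is the common lift of the braid word $w_0 = s_i s_j s_i \cdots = s_j s_i s_j \cdots \in W$ (here one uses the braid relation \rnn{2} for the $\dw_a$ in $N$, Lemma \ref{n:bd}) — so the $G$-components agree. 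The $\tN$-components are products of $\w_a$'s, $\w_b$'s and various $\h_c(s)$'s coming from the branch choices in \eqref{lam:a}; the claim reduces to showing these coincide. For this one uses Proposition \ref{bd:sa} (the $\s_a$ satisfy the braid relations, hence so do the $\w_a$ in $\tN$) together with a careful analysis of the signs $\eta(a,b)$ from \S\ref{signs} and the rank-two computations of \S\ref{rank2} --- in particular Lemma \ref{lem:eltsfixingroot} and Lemma \ref{lem:weylatob}, which classify when Weyl elements fix or swap simple roots, and Lemma \ref{w:h}, which tells us precisely how $\s_w$ acts on $\h_a(s)$ in those cases. Because the braid word $w_0$ lies in a \emph{finite} (rank-two) parabolic --- $A_1 \times A_1$, $A_2$, $B_2$, or $G_2$ by Table \ref{h:tab} --- this is a finite check in each case, exactly as in the classical Matsumoto argument.

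Once equality $q\,\os = q'\,\os$ is established, one upgrades it to $q = q'$ as operators on all of $S$: given any $(g,\nt) \in S$, write $g = u\,\varphi(\nt)\,u'$ via Bruhat (as in the proof of Lemma \ref{E:trans}), express $\nt = \h\,\w$, and use relations (3), (4), (6) already proved to commute $q$ and $q'$ past $\lambda(u)$, $\lambda(u')$, $\lambda(\h)$ --- reducing to the evaluation on a point of the form $(\dw,\w)$, which in turn reduces to $\os$ by the same manipulation. The only subtlety here is that relation (4) requires the $U$-element being conjugated to lie in the appropriate $U^a$ or $U_a$ piece; this is handled by decomposing $U$ stepwise as in \eqref{U:dec} along the reduced word. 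I expect \textbf{the main obstacle} to be the sign/branch bookkeeping in the rank-two cases of (7): each application of $\lambda_a$ in the braid word may or may not produce an $\h_a(s)$ factor depending on whether $\nu(\dw_a g) = \dw_a \nu(g)$ or $= h_a(s)\nu(g)$, and one must show that the two sides accumulate the \emph{same} such factors. This is precisely where Lemma \ref{w:h} (and behind it the observation $(t,t)^{2n} = 1$ from Lemma \ref{stein}(v), which kills all the quadratic-symbol discrepancies appearing in \eqref{b2:1}, \eqref{g2:1}) does the essential work, and where the restriction to rank-two finite types $A_1\times A_1, A_2, B_2, G_2$ (noted already in \S\ref{rank2}) is indispensable.
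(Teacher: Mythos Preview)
Your sketches for (1)--(6) match the paper's direct computations.  The gap is in your step~2 for (7).

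You propose to upgrade ``$q \cdot \os = q' \cdot \os$'' to ``$q = q'$'' by commuting $q, q'$ past the generators $\lambda(u), \lambda(\h)$ appearing in the decomposition of an arbitrary point of $S$.  But relation (4) only gives $\lambda_a \lambda(u) \lambda_a^{-1} = \lambda(\dw_a u \dw_a^{-1})$ for $u \in U^a$; it says nothing about $u \in U_a$, since $\dw_a U_a \dw_a^{-1} \subset U^-$ and no $\lambda$-operator on $U^-$ is available at this stage (the $\xi_{-a}$ are defined only \emph{after} Proposition \ref{lam-prop}, in Lemma \ref{well-def}).  So when you decompose $u = u^a u_a$ via \eqref{Ua:dec}, the $u_a$ piece cannot be commuted away, and it is precisely this piece that triggers the second branch of \eqref{lam:a} and introduces the point-dependent factors $\h_a(s)$.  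Showing that the two braid words accumulate the \emph{same} such factors on an arbitrary point is the entire content of (7); your sketch asserts it rather than proving it.  (Your remark that ``this is handled by decomposing $U$ stepwise'' does not resolve the issue: the decomposition isolates the bad $U_a$-piece but does not eliminate it.)

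The paper's route is genuinely different.  It first reduces to rank two via a parabolic-injection lemma (Lemma \ref{J:red}: an element of $E_J$ trivial on $S_J$ is trivial on $S$).  Then, rather than commuting $q$ past left-generators of $E$, it constructs a \emph{right} action $E_J^*$ on $S_J$ via the involution $\iota(g,\nt) = (g^{-1}, \nt^{-1})$ and proves that $E_J$ and $E_J^*$ commute.  Two commuting transitive actions are automatically simply transitive (Proposition \ref{stran:2}), so now ``$q \cdot \os = q' \cdot \os$'' in $S_J$ gives ``$q = q'$'' in $E_J$, hence in $E$ by Lemma \ref{J:red}.  The commutativity $\lambda_a \rho_b = \rho_b \lambda_a$ is itself reduced (Lemmas \ref{comm:simp}, \ref{rk2:red}) to points of the form $(u_a \dw v_b, \w)$ with $u_a \in U_a$, $v_b \in U_b$ --- exactly the residual pieces your commuting argument cannot dispose of --- and is then checked in three cases according to whether $wb = a$, $wb = -a$, or neither, using the rank-one computations \eqref{rk1:l}--\eqref{rk1:r}, Lemma \ref{w:h}, and a Steinberg-symbol identity.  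This case analysis, not a computation at $\os$, is where Lemma \ref{w:h} actually enters.
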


The remainder of this section is devoted to the proof of this result. Parts (1)-(6) follow by direct calculations whereas (7) first requires a rank 2 reduction (to finite dimensional root systems) and then an analysis as in Matsumoto \cite[\S 7]{mat}.  Fix some element $(g, \nt) \in S$ for the remainder of the argument.

\tpoint{Proof of Proposition \ref{lam-prop}, parts (1)-(2)} Clearly the maps $\lambda: \tH \rr E$ and $\lambda: U \rr E$ are homomorphisms. To verify the injectivity of the first claim, suppose that $\hh \in \wt{H}$ acts as the identity on $S$, so that $\lambda(\hh).\os = (\varphi(\hh), \hh)= \os$, which implies that $\hh=1.$ Similarly one verifies the injectivity of $\lambda: U \rr E.$  

As $\lambda$ is a homomorphism and since $A \subset \tH$ is central, it follows that $\lambda(A)$ commutes with $\lam(\tH).$ Since  \be{A:cen.3} \begin{array}{lcr} \lam(u) \lam(x) (g, \nt) = \lam(u) ( g, x \nt) = (ug, x \nt) & \text{ and } &  \lam(x) \lam(u) (g, \nt) = \lam(x) ( ug, \nt) = (ug, x \nt) \end{array} \ee we see that the elements $\lam(A)$ commute with $\lam(U)$. Finally, we compute 
$$ \lam(x) \lam_a (g, \nt) = (\dw_a g, x \, [ \nu(\dw_a g) \nu(g)^{-1}]^{\sim} \, \nt )  \text{ and  }  \lam_a \lam(x) (g, \nt) = \lam_a (g, x \nt) = (\dw_a g, [\nu(\dw_a g) \nu( g)^{-1}]^\sim x \nt) $$
As $x \in A$ is central in $\wt{N},$ the commutativity of $\lam_a$ with $\lam(A)$ follows. 

\tpoint{Proof of Proposition \ref{lam-prop}, pt (3)} Let $\hh \in \wt{H}$ and $u \in U.$ As we noted above, $\lam(\hh)^{-1} = \lam(\hh^{-1})$, so \be{l:u:com} \lam(\hh) \lam(u) \lam(\hh)^{-1} (g, \nt) &=& \lam(\hh) \lam(u) (\varphi(\hh^{-1})g, \hh^{-1} \nt) = \lam(\hh) ( u \varphi(\hh^{-1})g, \hh^{-1} \nt) \\ &=& (\varphi(\hh) u \varphi(\hh^{-1})g, \hh \, \hh^{-1} \nt ) = (\varphi(\hh) u \varphi(\hh^{-1})g, \nt ) \\  &=& \lambda( \varphi(\hh) u \varphi(\hh^{-1}) (g, \nt). \ee  

\tpoint{Proof of Proposition \ref{lam-prop}, pt (4)} Again, this is a direct computation-- let us just verify the first statement, the second being similar. By definition, we have   \be{lamp:1}  \lam_a \lam(u) (g, \nt) &= &(\dw_a u g, [\nu(\dw_a u g) \nu(u g)^{-1} ]^{\sim} \, \nt)  \\    \lam( \dw_a \, u \, \dw_a^{-1}) \lam_a  (g, \nt) &=&  ( \dw_a \, u \, \dw_a^{-1} \dw_a g, [\nu(\dw_a g) \nu(g)^{-1}]^{\sim} \nt)   .\ee As $u \in U^a$ we have $\dw_a u \dw_a^{-1} \in U$ and so $\nu( \dw_a u g) \nu(u g)^{-1}  = \nu(\dw_a g) \nu(g)^{-1},$ and the desired result follows.

\tpoint{Proof of Proposition \ref{lam-prop}, pt (5)} By definition we can easily compute the following, \be{lama:sq} \lam_a^2(g, \nt) = \left( \dw_a^2 g, [ \nu(\dw_a^2 g) \nu(\dw_a g)^{-1} ]^{\sim} \, [\nu(\dw_a g) \nu(g)^{-1}  ]^{\sim} \nt \right).\ee If $\nu(\dw_a g) \nu(g)^{-1}  = \dw_a$ then as $\dw_a^2 = h_a(-1)$ we must also have $\nu(\dw_a^2 g) \nu(\dw_a g)^{-1}  = \dw_a$ and so the above expression is just $\lam(\h_a(-1))(g, \nt)$ as desired. On the other hand, if $\nu(\dw_a g) = h_a(s) \nu(g)\, s \in F^*$ then \be{} \nu(\dw_a^2 g) = h_a(-1) \nu(g) = h_a(-1) h_a(s)^{-1} \nu(\dw_a g) = h_a(- s^{-1}) \nu(w_a g).\ee Thus the right hand side of (\ref{lama:sq}) becomes $(h_a(-1)g, \h_a(-s^{-1}) \h_a(s) \nt)$. Using $\rh{1}$ and the fact that $(-s^{-1}, s)=(-s, s)^{-1}=1$ by Lemma \ref{stein} (iii), it follows that $\h_a(-s^{-1}) \h_a(s) = \h_a(-1)$.

\tpoint{Proof of Proposition \ref{lam-prop}, pt (6)}  Let us now assume without loss of generality that $\hh = \hh_b(s)$ for some $b \in \Pv_e, s \in F^*$. The general case will follow from the fact that $\lambda: \tH \rr E$ is a homomorphism.  We have two cases to consider, either $\nu(\dw_a g) = h_a(t) \nu(g), \, t \in F^*$ or $\nu(\dw_a g) = \dw_a \nu(g)$. We leave the latter, simpler case to the reader and focus on the former. Under this assumption, a direct calculation shows that \be{} \nu(\dw_a^{-1} h_b(s) \dw_a g) = h_a(t s^{\la a, \bv \ra})^{-1} \nu( h_b(s) \dw_a g). \ee 
Using the definition of \eqref{lam-inv}, we then have  \be{} \lam_a^{-1} \lam(\h_b(s)) \lam_a (g, \nt) = ( \dw_a^{-1} \, h_b(s) \,  \dw_a g,  \h_a(t s^{ \la a, \bv \ra})^{-1} \h_b(s) \h_a(t)  \nt  ), \ee and so we need only verify that \be{}   \h_a(t s^{ \la a, \bv \ra})^{-1} \h_b(s) \h_a(t) = \w_a^{-1} \h_b(s) \w_a = \h_b(s) \h_a(s^{- \la a, \bv \ra}). \ee Observe that $\h_a(x)^{-1} = \h_a(x^{-1}) (x, x)^{\Qs(\av)}$ for any $x \in F^*.$ So if we write $x= s^{ \la a, \bv \ra}$ then the left hand side of the above expression is equal to \be{} \h_a(t^{-1} x^{-1} ) (tx, tx)^{\Q(\av)} \h_b(s) \h_a(t) &=& (tx, tx)^{\Q(\av)} (t^{-1} x^{-1}, s)^{\Bs(\av, \bv)} \h_b(s) \h_a(t^{-1} x^{-1}) \h_a(t) \\
&=&  (t x , t x)^{\Q(\av)} (t^{-1}x^{-1}, s)^{\Bs(\av, \bv)} (t^{-1} x^{-1}, t)^{\Qs(\av)} \h_b(s) \h_a(x^{-1}).  \ee So we are reduced to showing that the central part of the above expression is equal to 1: \be{} (t x, t x)^{\Q(\av)} (t^{-1} x^{-1}, t)^{\Qs(\av)}  (tx, s)^{\Bs(\av, \bv)} = (tx, x)^{\Q(\av)} (tx, s)^{\Bs(\av, \bv)} = (tx, s)^{2 \Bs(\av, \bv)} =1.  \ee

\tpoint{Preliminaries for a rank two reduction}  Let $J \subset I$ be any subset, and let us introduce the following parabolic versions of our constructions. Set $\Pi_J:= \{ a_i \mid i \in J \}$ and define $G_J \subset G$ to be the subgroup generated by $x_a(s)$ and $x_{-a}(s)$ with $a \in \Pi_J, s \in F.$ We also set $W_J:= \la s_a \mid a \in \Pi_J \ra$ and choose a set of representatives $W^J$ for the cosets $W_J \setminus W$ which satisfy the condition \be{kost-rep} \ell(w_J w_1) = \ell(w_J) + \ell(w_1) \text{ for } w_J \in W_J, \, w_1 \in W^J. \ee  Writing $P_J:= B W_J B,$ defining  $A_J = \{ h \in H \mid h^{a_i} =1, i \in J \},$ and setting $U^J \subset U$ to be the subgroup spanned by root groups $U_a$ with positive (real) roots $a = \sum_i n_i a_i$ such that $n_i > 0$ for $i \in J$, we have decompositions \be{par:bru} G = \bigsqcup_{w \in W^J} P_J \dw B \text{ and } P_J = G_J A_J U^J. \ee Next we define $N_J = G_J \cap N$ and $\tN_J:= \varphi^{-1}(N_J)$ so that $S_J:= p^{-1}(G_J)$ fits into the following  \be{S:car} \xymatrix{ S_J \ar[d]_{p_J} \ar[r]^{\nti_J} & \wt{N}_J \ar[d]^{\varphi_J} \\ G_J \ar[r]^{\nu_J} & N_J } \ee where the maps in this diagram have the natural meaning. Define the subgroup $E_J \subset \Aut(S_J)$ generated by \be{E_J} \lambda(\hh) \  \hh \in \tH_J, \ \   \lambda(u) \  u \in U_J \ \ \text{ and } \lambda_a \  a \in \Pi_J.  \ee These elements also clearly define operators in $E$, and we note the following.

\begin{nlem} \label{J:red} The natural map $E_J \rr E$ is injective, i.e. if $e_J \in E_J$ acts trivially on $S_J$, then it acts trivially on $E$. \end{nlem}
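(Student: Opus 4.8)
The statement asserts that the restriction map $E_J \to E$ is injective. Since every element of $E_J$ and $E$ is by construction an automorphism of $S_J$ and $S$ respectively, and since the generators of $E_J$ (namely $\lambda(\widetilde{h})$ for $\widetilde h \in \widetilde H_J$, $\lambda(u)$ for $u \in U_J$, and $\lambda_a$ for $a \in \Pi_J$) restrict to operators on $S_J$ that are the ``same formula'' as the corresponding operators in $E$, the natural map $E_J \to E$ is well-defined. What needs to be shown is that an element $e_J \in E_J$ acting trivially on $S_J$ already acts trivially on all of $S$. The plan is to reduce this to checking on $\mathbf{1}_S$ together with an identification of the $E$-orbit structure, exploiting the parabolic decompositions $G = \bigsqcup_{w \in W^J} P_J \dot w B$ and $P_J = G_J A_J U^J$ recorded in \eqref{par:bru}.

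First I would observe that $E_J$ acts transitively on $S_J$: this is exactly the argument of Lemma \ref{E:trans} applied inside $G_J$, using that $G_J$ itself has a Bruhat decomposition with Weyl group $W_J$ and that $\widetilde N_J = \varphi^{-1}(N_J)$. Next, since $S_J = p^{-1}(G_J)$ and $S$ fibers over $G$, I would set up the ``induction'' of the $S_J$-action to $S$: using \eqref{par:bru}, every $g \in G$ can be written $g = g_J a \, u \, \dot w \, b$ with $g_J \in G_J$, $a \in A_J$, $u \in U^J$, $w \in W^J$, $b \in B$, and correspondingly every point of $S$ lies in the $E$-translate of a point of $S_J$ by operators of the form $\lambda(a)$, $\lambda(u')$ (for $u' \in U$) and $\lambda_{b_i}$ for simple reflections $b_i$ occurring in a reduced word for $w$ — none of whose generators involve the indices in $J$ in an essential way, in the sense that these ``transverse'' operators commute appropriately with $E_J$ (by Proposition \ref{lam-prop}, parts (3)--(6)) up to elements that act faithfully. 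More precisely, the key point is: if $e_J \in E_J$ fixes $\mathbf{1}_{S_J} = (1,1)$, then since $E_J$ acts transitively and faithfully on $S_J$ (the faithfulness on $S_J$ is built into $E_J \subset \Aut(S_J)$), $e_J$ is already the identity \emph{in} $E_J$; hence it is the identity operator on $S$.

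So the real content is the claim that $e_J$ acting trivially on $S_J$ forces $e_J = 1$ in $E_J$ — i.e. that $E_J$, which a priori is defined as a subgroup of $\Aut(S_J)$ but could conceivably be being considered abstractly, acts faithfully on $S_J$. But this is immediate from the definition $E_J \subset \Aut(S_J)$: the elements of $E_J$ \emph{are} automorphisms of $S_J$, so one acting trivially on $S_J$ is the identity of $\Aut(S_J)$, hence the identity of $E_J$, hence acts trivially on $S$. The one subtlety to address carefully — and this is the step I expect to be the main obstacle — is verifying that the map $E_J \to E$ is well-defined as a \emph{group homomorphism} at all: one must check that the defining relations among the generators $\lambda_a, \lambda(\widetilde h), \lambda(u)$ ($a \in \Pi_J$, etc.) that hold in $\operatorname{Aut}(S_J)$ continue to hold as operators in $\operatorname{Aut}(S)$, so that there is no ambiguity in ``the natural map''. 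This follows because the operators $\lambda_a$, $\lambda(\widetilde h)$, $\lambda(u)$ on $S$ restrict to the operators of the same name on $S_J$ (since $S_J = p^{-1}(G_J)$ is $E_J$-stable, which one checks using $\dot w_a G_J \subset G_J$ for $a \in \Pi_J$ and $U_J \subset G_J$), so any relation verified on $S$ — in particular all of Proposition \ref{lam-prop} — specializes to the corresponding relation on $S_J$ and conversely. Then injectivity is the faithfulness remark above. I would write this up by first stating the $E_J$-stability of $S_J \subset S$, then noting that restriction $\operatorname{res}\colon \{e \in E : e(S_J) = S_J\} \to \operatorname{Aut}(S_J)$ carries $E_J$ (viewed inside $E$) isomorphically onto $E_J$ (viewed inside $\operatorname{Aut}(S_J)$), because it is surjective by construction and injective since an element of $E$ supported in $E_J$ that is trivial on $S_J$ is, by transitivity of $E_J$ on $S_J$ and the explicit orbit description from \eqref{par:bru}, trivial on all of $S$.
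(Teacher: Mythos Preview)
Your argument has a genuine gap, and it stems from a circularity in the middle of the proposal. You write that if $e_J$ acts trivially on $S_J$ then, since $E_J \subset \Aut(S_J)$ acts faithfully, $e_J$ is the identity in $E_J$, ``hence it is the identity operator on $S$.'' But an element of $E_J$ is by definition an automorphism of $S_J$; it does not act on $S$ at all. What acts on $S$ is a \emph{word} in the generators $\lambda_a$, $\lambda(\widetilde h)$, $\lambda(u)$, and the content of the lemma is precisely that two such words which agree on $S_J$ also agree on $S$. Your sentence ``so any relation verified on $S$ \ldots\ specializes to the corresponding relation on $S_J$ and conversely'' hides the issue in the word \emph{conversely}: the forward direction is trivial (restriction), but the converse is the lemma itself. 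Likewise, your appeal to Proposition~\ref{lam-prop}(3)--(6) to say the ``transverse'' operators commute with $E_J$ does not work as stated: those relations are conjugation formulas, not commutativity, and in particular $\lambda_a$ for $a \in \Pi_J$ certainly does not commute with $\lambda_b$ for $b \notin \Pi_J$ when $a$ and $b$ are joined in the Dynkin diagram.

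What the paper does, and what is missing from your sketch, is the following explicit mechanism. Given $(g,\widetilde n) \in S$, write $g = m_J g_1$ with $m_J \in G_J$ and $g_1 \in A_J U^J \dot w B$, $w \in W^J$, via \eqref{par:bru}, and correspondingly factor $\widetilde n = \widetilde n_J \widetilde n_1$. Define $\tau_J \colon S_J \to S$ by $(g_J,\widetilde n_J) \mapsto (g_J g_1, \widetilde n_J \widetilde n_1)$. The crucial computation is that $e_J \circ \tau_J = \tau_J \circ e_J$ for every generator $e_J$ of $E_J$; for $e_J = \lambda_a$ with $a \in \Pi_J$ this uses the length-additivity \eqref{kost-rep} of the Kostant representatives $W^J$, which via \eqref{nu:bru} gives $\nu(g_J g_1) = \nu(g_J)\nu(g_1)$ and hence $[\nu(\dot w_a g_J g_1)\nu(g_J g_1)^{-1}]^{\sim} = [\nu(\dot w_a g_J)\nu(g_J)^{-1}]^{\sim}$. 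With this commutativity in hand, $e_J(g,\widetilde n) = e_J \tau_J(m_J,\widetilde n_J) = \tau_J(e_J(m_J,\widetilde n_J)) = \tau_J(m_J,\widetilde n_J) = (g,\widetilde n)$. You gestured at \eqref{par:bru} in your last sentence, but the intertwining identity $e_J \tau_J = \tau_J e_J$ and its justification via \eqref{kost-rep} and \eqref{nu:bru} are the actual content, and they are absent from your argument.
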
 

\begin{proof} Let $(g, \nt) \in S$ and use (\ref{par:bru}) to write $g= m_J g_1$ where $m_J \in G_J$ and $g_1 \in U^J \dw B$ with $w \in W^J$. Let us also choose a decomposition\footnote{This is not unique, but this poses no problem for the argument below.} $\nt = \nt_J \nt_1$ where $\nt_J \in \tN_J$ satisfies $\varphi(\nt_J) = \nu_J(m_J)$ and $\varphi(\nt_1) = \nu(g_1).$ Using these choices, we define the map \be{tauJ} \tau_J: S_J \rr S, \, (g_J, \tn_J) \mapsto (g_Jg_1, \tn_J \tn_1). \ee Now one can verify that $e_J \tau_J  = \tau_J e_J$ for $e_J \in E_J.$ In fact, checking this easily reduces to checking it for $e_J=\lam_a, \, a \in \Pi_J$. However, using (\ref{nu:bru}) and (\ref{kost-rep}) we have \be{nu:factor}  \nu( g_J g_1) = \nu(g_J) \nu(g_1) \text{ for any } g_J \in G_J, \ee and from this we easily see that $\lam_a \tau_J = \tau_J \lam_a$ for $a \in \Pi_J$.  Finally, we now compute \be{estab} e_J (g, \tn) = e_J (m_J g_1, \tn_J \tn_1) = \tau_J (e_J(m_J, \tn_J) ) = \tau_J ( m_J, \tn_J) = (m_J g_1, \tn_J \tn_1 ) = (g, \tn), \ee where the assumption on $e_J$ is used in the third equality.  \end{proof}

\tpoint{Simple transivity of $E_J$ and the Braid relations }  For any $J \subset I$, the action of $E_J$ on $S_J$ is transitive by an argument analogous to Lemma \ref{E:trans}. The remainder of this section will be devoted to proving the following stronger statement. 

\begin{nprop} \label{stran:2} The action of $E_J$ on $S_J$ is simply transitive when $J = \{ a , b \} \subset I$. \end{nprop}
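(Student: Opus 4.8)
The action of $E_J$ on $S_J$ is already known to be transitive, so it suffices to show that the stabilizer of the base point $\mathbf{1}_{S_J} = (1,1) \in S_J$ in $E_J$ is trivial. By Lemma \ref{J:red} we may work entirely inside $E_J$, and since $J = \{a,b\}$ the relevant Weyl group $W_J$ and root datum are of rank two. The first step is to reduce to a \emph{finite-dimensional} rank two root system: arguing exactly as in Proposition \ref{bd:sa} and in the braid-relation proofs of \S\ref{sec-met-sym}, if $s_a$ and $s_b$ satisfy a braid relation then $a_{ij} a_{ji} < 4$ (cf.\ Table \ref{h:tab}), so the rank two system spanned by $a$ and $b$ is of type $A_1 \times A_1$, $A_2$, $B_2 = C_2$, or $G_2$; if $h_{ab} = \infty$ then $W_J$ is the infinite dihedral group and the argument is in fact simpler since there are no braid relations to worry about and one can track lengths directly. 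In each finite case one has the explicit classification of the Weyl group elements (every $w \in W_J$ is of one of the forms in \eqref{eq:w_form}), together with the explicit action on simple roots from Lemma \ref{lem:rk2wgprops} and, crucially, the control over $\s_w \h_a(s)$ provided by Lemma \ref{w:h}.

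\textbf{Key steps.} Following Matsumoto \cite[\S 7]{mat}: first, establish a normal form for elements of $E_J$. Using Proposition \ref{lam-prop}(1)--(6) one shows every element of $E_J$ can be written as $\lambda(u)\,\lambda_{b_1}\cdots\lambda_{b_k}\,\lambda(\h)$ with $u \in U_J$, $\h \in \tH_J$, and $\w_{b_1}\cdots\w_{b_k}$ mapping to a reduced word in $W_J$ — here one uses part (7) (the braid relations for the $\lambda_a$, which for rank two is a single relation that must itself be verified by a direct computation, again reducing to the finite-dimensional cases and using the braid relations for $\s_a$ from Proposition \ref{bd:sa}). Second, suppose such an element $e$ fixes $\mathbf{1}_{S_J}$. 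Computing $e\cdot(1,1)$ explicitly, the first coordinate is $u\,\dw_{b_1}\cdots\dw_{b_k}\,\varphi(\h)$, and by the disjointness of the refined Bruhat decomposition \eqref{G:w:ref} this forces $k = 0$ (the Weyl word is trivial) and then $u = 1$ and $\varphi(\h) = 1$, i.e.\ $\h \in A$. Third, the second coordinate of $e\cdot(1,1)$ is then just $\h \in A \subset \tH_J$, and for this to equal $1$ we need $\h = 1$. This last implication uses that $A \hookrightarrow E$ via $\lambda$ is injective, which is Proposition \ref{lam-prop}(1) — or more directly, $\lambda(\h).\mathbf{1}_S = (\varphi(\h),\h) = (1,\h)$, which is $\mathbf{1}_S$ only if $\h = 1$.

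\textbf{Main obstacle.} The genuinely nontrivial point — and the reason the rank two reduction is needed — is verifying the braid relation for the operators $\lambda_a$ (part (7) of Proposition \ref{lam-prop}, which Proposition \ref{stran:2} is really being used to prove, so one must be careful about the logical order: in Matsumoto's scheme one proves simple transitivity of $E_J$ \emph{first} for the rank two finite cases by a hands-on computation, and then \emph{deduces} the braid relation from simple transitivity). So the actual content of the proof is: in each of the finitely many rank two finite-dimensional root systems, carry out the explicit computation showing that the stabilizer of $\mathbf{1}_{S_J}$ is trivial, using the rank one formulas \eqref{rk1}, \eqref{rk1:l}, \eqref{rk1:r} for how $\nu$ transforms under left multiplication by $\dw_a$, the sign rules for $\eta(a,b)$ from \S\ref{signs}, and the rank two control from Lemmas \ref{lem:rk2wgprops} and \ref{w:h}. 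The $G_2$ case is the most laborious because of the longest element having length $6$ and the coefficient $\la b, \av\ra \cdot \la a,\bv\ra = 3$, but the Steinberg symbol identities in Lemma \ref{stein} (especially $(x,x)^2 = 1$) and Lemma \ref{w:h}(1)--(3) make all the central $(s,s)$-type corrections that appear collapse to $1$, exactly as in the parity arguments of Proposition \ref{bd:sa}. I would organize the write-up by first doing the normal-form reduction in general, then disposing of $A_1 \times A_1$ and $A_2$ quickly, and finally treating $B_2$ and $G_2$ with the explicit Weyl-orbit bookkeeping.
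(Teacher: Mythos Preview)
Your proposal has a genuine gap stemming from the circularity you yourself flag but do not resolve. Your normal-form argument requires writing every element of $E_J$ as $\lambda(u)\lambda_{b_1}\cdots\lambda_{b_k}\lambda(\h)$ with the Weyl word \emph{reduced}; obtaining a reduced word from an arbitrary alternating word in $\lambda_a,\lambda_b$ requires exactly the braid relation (Proposition~\ref{lam-prop}(7)), which in the paper's logical order is a \emph{consequence} of Proposition~\ref{stran:2}, not an input to it. Your suggested workaround is to verify the braid relation for the $\lambda_a$ directly by hand, but you do not actually carry this out, and it is not at all clear how ``the braid relations for $\s_a$ from Proposition~\ref{bd:sa}'' would suffice: the $\s_a$ act only on $\tH$, whereas $\lambda_a$ is defined by a case split on $\nu(\dw_a g)$ that depends on the Bruhat cell of $g$ in $G_J$. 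A direct verification of, say, $(\lambda_a\lambda_b)^3=(\lambda_b\lambda_a)^3$ in type $G_2$ on a general $(g,\nt)$ would involve tracking six successive case splits of the form \eqref{lam:a}, and nothing in Lemmas~\ref{lem:rk2wgprops} or~\ref{w:h} organizes that calculation. So the ``actual content'' you point to is not supplied.

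The paper's (and Matsumoto's) proof takes a completely different route that sidesteps the braid relation entirely. One introduces a second group $E_J^*:=\iota E_J\iota$ (where $\iota(g,\nt)=(g^{-1},\nt^{-1})$) acting ``on the right'' via operators $\rho(\h),\rho(u),\rho_b$; this action is transitive for the same reason $E_J$ is. The whole proof then reduces to showing that $E_J$ and $E_J^*$ commute, since two commuting transitive actions on a set are automatically simply transitive. After easy reductions (Lemma~\ref{comm:simp}, Lemma~\ref{rk2:red}) the only thing to check is the single relation $\lambda_a\rho_b=\rho_b\lambda_a$ on elements of the form $(u_a\dw v_b,\tw)$, and this is done by a three-case analysis according to whether $wb\notin\{\pm a\}$, $wb=a$, or $wb=-a$. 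The last two cases invoke Lemma~\ref{w:h} and a Steinberg-symbol identity, and are where the genuine rank-two content lives. The payoff is that one verifies a \emph{length-one} commutation rather than a braid relation of length $2h_{ab}$, so the argument is uniform in the Coxeter number and handles the $h_{ab}=\infty$ case on the same footing.
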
  The proof will be as follows: in \S \ref{s:E-r} we construct another family of operators on $S_J$ denoted by $E_J^*$ and acting ``on the right.''  This action is again easily seen to be transitive and moreover we show that the action of $E_J$ and $E_J^{\ast}$ commute. But if a set carries two transitive, commutative actions, then both of these actions must be simply transitive.  Before starting the proof, let us show how the braid relations for $\lam_a$ follow.

\begin{proof}[Proof of Proposition \ref{lam-prop}, pt. 7] Let $a= a_i, b= a_j$ for some $i, j \in I$. Let $h:= h_{ij}$ and suppose we wish to deduce $(\lam_a \lam_b)^h = (\lam_b \lam_a)^h.$ Apply both sides of this purported equality to $(1, 1) \in S_J$ to obtain $( (\dw_a \dw_b)^h, (\tw_a \tw_b)^h)$ and $( (\dw_b \dw_a)^h, (\tw_b \tw_a)^h)$ respectively. As the elements $\{ w_a, w_b \}$ and $\{ \tw_a, \tw_b \}$ both satisfy the braid relations in $G_J$ and $\tN_J$ (or $G, \tN$), we have an equality $(\lam_a \lam_b)^h = (\lam_b \lam_a)^h$ in $E_J$ by the simple transitivity of the $E_J$-action on $S_J$. But then using Lemma \ref{J:red}, we can deduce a similar equality as elements of $E.$ \end{proof}

\tpoint{Construction of $E_J^*$} \label{s:E-r}   Let $\iota: S_J \rr S_J$ be the involution which sends $(g, \nt) \mapsto (g^{-1}, \nt^{-1}),$ and let $E_J^*:= \iota E_J \iota$. Then $E_J^*$ is generated by elements  \be{Er:gen} \begin{array}{lcr} \rho(\hh) := \iota \lam(\hh) \iota \text{ for } \h \in H_J, & \rho(u) := \iota \lam(u) \iota \text{ for } u \in U_J, & \rho_a := \iota \lam_a \iota \text{ for } a \in \Pi_J. \end{array} \ee A version of Proposition \ref{lam-prop} can then be proven for $E_J^*$ and we shall use these results from now on (and just cite the corresponding result for the $\lambda$-operators). It will also be useful to note the following more explicit description of the operators making up $E_J^*$, \be{rho-exp} \rho(\hh) (g, \nt) &=& (g \hh^{-1}, \nt \, \hh^{-1}) \text{ for } \hh \in H_J \\
\rho(u)(g, \nt) &=& (g u^{-1}, \nt) \text{ for } u \in U_J \\  \label{rhoa:def}
\rho_a(g, \nt) &=&  \begin{cases} (g \dw_a^{-1},  \nt \, \w_a^{-1}) & \text { if } \nu( \dw_a \, g^{-1}) = \dw_a \nu(g^{-1}); \\ ( g \dw_a^{-1},  \nt \, \h_a(t)^{-1} ) & \text{ if } \nu( \dw_a  g^{-1}) =  h_a(t) \nu(g^{-1}).   \end{cases} \ee 

\begin{nrem} Alternatively, we can write the definition of $\rho_a$ as follows. First we define \be{rhoa:alt} \left[ \nu(g)^{-1} \nu( g \dw_a^{-1}) \right]^{\sim}= \begin{cases}  \w_a^{-1} & \text{ if } \nu(g)^{-1} \nu( g \dw_a^{-1}) = \dw_a^{-1};  \\ \h_a(t)^{-1} & \text{ if } \nu(g)^{-1} \nu( g \dw_a^{-1}) = h_a(t^{-1}).  \end{cases} \ee Then using the fact that $\nu(g)^{-1} = \nu(g^{-1})$ we can write, \be{rhoa:alt} \rho_a(g, \tn) = \left( g \dw_a^{-1}, \tn \,\left[ \nu( g)^{-1} \nu(g \dw_a^{-1}) \right]^{\sim} \right). \ee  \end{nrem}

\tpoint{Commutativity of $E_J$ and $E_J^{\ast}$, preliminary reductions }   In the remainder of this section we will show that the action of $E_J$ and $E_J^*$ commute on $S_J$ where $J= \{ a, b \}.$ As $J$ is fixed, we shall omit it from our notations from now on. Note that we allow the case where $a=b.$ We begin with the following simple result which shows that the commutativity of $E$ and $E^*$ can be reduced to checking \be{left:1} \lam_a \rho_b = \rho_b \lam_a \text{ for } a, b \in \Pi .\ee 

\begin{nlem} \label{comm:simp}  Let $\h \in \tH, u \in U$ and $s \in S$. \begin{enumerate} 
\item The operators $\lambda(\h)$ and $\lambda(u)$ commute with the elements in $E^*$. Similarly the operators $\rho(\h)$ and $\rho(u)$ commute with the elements of $E$. 
\item  If $\lam_a \rho_b s  = \rho_b \lam_a s$ then $\lam_a \rho_b (\rho_b s) = \rho_b \lam_a (\rho_b s)$ and also $\lam_a \rho_b (\lam_a s) = \rho_b \lam_a (\lam_a s).$ \end{enumerate}
\end{nlem}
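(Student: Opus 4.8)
The plan is to prove Lemma~\ref{comm:simp} by a sequence of essentially mechanical verifications, using only the explicit formulas for the generators of $E$ and $E^*$ given in \S\ref{sec:cover1} and \S\ref{s:E-r}, together with Proposition~\ref{lam-prop} (and its ``right-handed'' analogue for $E^*$). There are no deep ideas here; the content is entirely in organizing the case analysis so that the number of conjugation computations is minimized.

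First I would prove part (1). Since $E^*$ is generated by $\rho(\hh')$, $\rho(u')$, $\rho_c$ for $\hh' \in \tH$, $u' \in U$, $c \in \Pi$, it suffices to check that each of $\lambda(\hh)$ and $\lambda(u)$ commutes with each of these three types of generators. Using the explicit formulas $\lambda(\hh)(g,\tn) = (\varphi(\hh)g, \hh\tn)$, $\lambda(u)(g,\tn)=(ug,\tn)$ and $\rho(\hh')(g,\tn)=(g\hh'^{-1}, \tn\hh'^{-1})$, $\rho(u')(g,\tn)=(gu'^{-1},\tn)$, the commutation with the $\rho(\hh')$ and $\rho(u')$ generators is immediate because left multiplication and right multiplication on $G$ (and on $\tN$) commute, and $\hh$ acts on the left of the $\tN$-component while $\hh'^{-1}$ acts on the right. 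For the commutation of $\lambda(\hh)$ and $\lambda(u)$ with $\rho_c$ I would use the compact form \eqref{rhoa:alt}: $\rho_c(g,\tn) = (g\dw_c^{-1}, \tn\,[\nu(g)^{-1}\nu(g\dw_c^{-1})]^{\sim})$. The key point is that $\nu(ug h) = h^{-1}\nu(g h)$ and more generally $\nu(u g) = \nu(g)$ and $\nu(\varphi(\hh)g) = \varphi(\hh)\nu(g)$ by \eqref{nu:HU}, so the ``correction element'' $[\nu(g)^{-1}\nu(g\dw_c^{-1})]^{\sim}$ is unchanged when $g$ is replaced by $ug$ or by $\varphi(\hh)g$; the two Weyl-cover components then multiply on opposite sides and commute past each other, while the centrality is not even needed here. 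The ``Similarly'' clause — that $\rho(\hh)$ and $\rho(u)$ commute with all of $E$ — follows from the first half applied to $\iota E \iota = E^*$, or equivalently by running the identical computation with left and right interchanged.

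Next I would prove part (2). Assume $\lambda_a\rho_b s = \rho_b\lambda_a s$ for the given $s \in S$. For the first assertion, I want $\lambda_a\rho_b(\rho_b s) = \rho_b\lambda_a(\rho_b s)$. By Proposition~\ref{lam-prop}(5) applied on the right (i.e. its $E^*$-analogue), $\rho_b^2 = \rho(\h_b(-1))$ for $b \in \Pi$; since by part (1) $\lambda_a$ commutes with $\rho(\h_b(-1))$, I get $\lambda_a\rho_b\rho_b s = \lambda_a\rho(\h_b(-1))s = \rho(\h_b(-1))\lambda_a s$. On the other hand $\rho_b\lambda_a\rho_b s = \rho_b\rho_b\lambda_a s = \rho(\h_b(-1))\lambda_a s$ using the hypothesis once to move $\lambda_a$ past the outer $\rho_b$ — wait, more carefully: $\rho_b\lambda_a(\rho_b s) = \rho_b\bigl(\lambda_a\rho_b s\bigr) = \rho_b\bigl(\rho_b\lambda_a s\bigr) = \rho_b^2\lambda_a s = \rho(\h_b(-1))\lambda_a s$, where the middle equality is the hypothesis. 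Comparing the two gives the claim. The second assertion, $\lambda_a\rho_b(\lambda_a s) = \rho_b\lambda_a(\lambda_a s)$, is the mirror image: $\lambda_a\rho_b\lambda_a s = \lambda_a\lambda_a\rho_b s = \lambda_a^2\rho_b s = \lambda(\h_a(-1))\rho_b s$ by \ref{lam-prop}(5), and $\rho_b\lambda_a\lambda_a s = \rho_b\lambda(\h_a(-1))s = \lambda(\h_a(-1))\rho_b s$ by part (1); hence equality.

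The main (and really the only) obstacle I anticipate is getting the bookkeeping of the ``tilde'' correction elements right in part (1), i.e. verifying carefully that $[\nu(g)^{-1}\nu(g\dw_b^{-1})]^{\sim}$ is genuinely insensitive to replacing $g$ by $ug$ or $\varphi(\hh)g$ — one must check this in both the ``$\dw_b$-case'' and the ``$h_b(t)$-case'' of the dichotomy, and track that the element produced is the \emph{same} element of $\tN$ and not merely a congruent one modulo $A$. This uses only \eqref{nu:HU} and the defining relations of $\tN$, so it is routine, but it is the place where an error would most easily creep in. Everything else is formal manipulation with the already-established relations in $E$ and $E^*$.
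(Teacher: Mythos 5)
Your proof is correct and follows essentially the same route as the paper: part (1) by direct computation from the explicit formulas, with the key observation that $\nu(g)^{-1}\nu(g\dw_c^{-1})$ is unchanged under $g \mapsto ug$ or $g \mapsto \varphi(\h)g$ by \eqref{nu:HU}, and part (2) by combining Proposition \ref{lam-prop}(5) (and its $E^*$-analogue) with part (1). The only blemish is the garbled identity ``$\nu(ugh)=h^{-1}\nu(gh)$'', which is false as written but also unused, since the correct relations $\nu(ug)=\nu(g)$ and $\nu(\varphi(\h)g)=\varphi(\h)\nu(g)$ that you invoke immediately afterward are all the argument needs.
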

\begin{proof} Let us just verify the first claim of (1), the proof of the second being similar. By their definition, the elements $\lambda(\wt{H})$ clearly commute with $\rho(U)$ and $\rho(\wt{H}).$  Now, let $a \in \Pi$, $\hh \in \wt{H}$, let us examine the relation between $\lam(\hh) \rho_a$ and $\rho_a \lam(\hh)$ by applying both operators to an element $(g, \nt) \in S$. Unwinding the definitions, we find that both these operators lead to the same result since (write $h:= \varphi(\h)$ below)
\be{comm:easy:h} \nu(g)^{-1} \nu(g w_a^{-1})  =  \nu(h g)^{-1} \nu(h g w_a^{-1}). \ee Similarly we can show that  $\rho_a \lambda(u) = \lambda(u) \rho_a$ since $\nu(g)^{-1} \nu(g w_a^{-1})  = \nu(u g)^{-1} \nu(u g w_a^{-1}) .$  

For the second statement, we just apply Proposition \ref{lam-prop}, (5) and the first part.
 \end{proof}

\tpoint{A further reduction} It suffices to check (\ref{left:1}) on a certain restricted class of elements in $S$ as the next result shows. 

\begin{nlem} \label{rk2:red} Let $a, b \in \Pi.$ If $ \lambda_a \rho_b s = \rho_b \lambda_a s $ for all elements in $S$ of the form \be{rk2:form} s = \lam(u_a) \rho(v_b) (\dw, \w ) = (u_a \,  \dw \, v_b, \w) \ee where $w \in W$ $u_a \in U_a$ and $v_b \in U_b,$ then $\lambda_a \rho_b = \rho_b \lambda_a.$ \end{nlem}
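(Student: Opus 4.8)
The plan is to reduce the general commutation identity $\lam_a\rho_b = \rho_b\lam_a$ to its restriction on the orbit of elements of the special form \eqref{rk2:form}, using the transitivity results and Lemma \ref{comm:simp} already established. First I would observe that by Lemma \ref{comm:simp}(1), the operators $\lam_a$ and $\rho_b$ each commute with $\lam(u)$ for $u\in U$, with $\lam(\hh)$ for $\hh\in\tH$, and with $\rho(u')$, $\rho(\hh')$ for $u'\in U$, $\hh'\in\tH$; moreover by Lemma \ref{comm:simp}(2), if $\lam_a\rho_b$ and $\rho_b\lam_a$ agree at a point $s\in S$, then they also agree at $\rho_b s$ and at $\lam_a s$. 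Combining these two facts, the set $Z:=\{s\in S \mid \lam_a\rho_b s = \rho_b\lam_a s\}$ is stable under $\lam(U)$, $\lam(\tH)$, $\rho(U)$, $\rho(\tH)$, and also — using Lemma \ref{comm:simp}(2) together with the fact that $\lam_a^2=\lam(\h_a(-1))$ and $\rho_b^2=\rho(\h_b(-1))$ already land in the stabilized operators — under $\lam_a$ and $\rho_b$ themselves (applying the identity $\lam_a s$ requires $s\in Z$, which is exactly the hypothesis of \ref{comm:simp}(2)).

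Next I would show that every point of $S$ can be moved, by a product of operators from $\{\lam(u),\lam(\hh),\rho(v),\rho(\hh)\}$, onto one of the special elements $s=(u_a\,\dw\,v_b,\w)$ with $u_a\in U_a$, $v_b\in U_b$. Given an arbitrary $(g,\tn)\in S$, write $\tn=\hh\,\w$ with $\hh\in\tH$ and $\w$ a product of the $\w_{b_i}$ (as in the proof of Lemma \ref{E:trans}), and use the Bruhat decomposition $g = u\,\varphi(\tn)\,u'$ with $u,u'\in U$. Applying $\lam(\hh)^{-1}$ and $\lam(u)^{-1}$ on the left and $\rho(u')^{-1}$ on the right (note $\rho(u')(g,\tn)=(gu'^{-1},\tn)$) brings $(g,\tn)$ to $(\dw,\w)$. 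Then, using the decompositions $U=U_aU^a=U^aU_a$ from \eqref{Ua:dec} and their right-handed analogues, I would peel off the $U^a$ and $U^b$ parts: the point is that $\dw\,U^a$-type factors, after conjugation through $\dw$, become honest unipotent elements that can be absorbed by further $\lam(\cdot)$ or $\rho(\cdot)$ operators without disturbing the $\U_a$ and $\U_b$ pieces. This realizes $(\dw,\w) = \lam(u_a)\rho(v_b)(\dw,\w)$ modulo the stabilized operators, i.e. shows $(g,\tn)$ lies in the orbit of a special element under the subgroup of $E$ (and $E^*$) generated by the operators that preserve $Z$.

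Finally, since $Z$ is stable under all the operators used in these reductions and contains every special element by the hypothesis of the Lemma, we conclude $Z = S$, which is precisely $\lam_a\rho_b = \rho_b\lam_a$ as operators. The main obstacle I anticipate is the bookkeeping in the reduction to special form: one must check carefully that conjugating the $U^a$-part of $u$ (respectively the $U^b$-part of $u'$) past $\dw$ produces a unipotent element lying in a group on which the relevant $\lam$ or $\rho$ operator is defined and acts as expected, and that this manipulation does not interfere with the invariance of $Z$ under $\lam_a$ and $\rho_b$ themselves — which is where the quadratic relations $\lam_a^2=\lam(\h_a(-1))$, $\rho_b^2=\rho(\h_b(-1))$ from Proposition \ref{lam-prop}(5) are needed to close the induction in Lemma \ref{comm:simp}(2). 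Everything else is routine given the structure theory of $S$ and the already-proven parts of Proposition \ref{lam-prop}.
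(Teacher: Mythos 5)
Your overall strategy --- reduce the identity $\lam_a\rho_b=\rho_b\lam_a$ to the special elements by exhibiting every point of $S$ in the orbit of a special element under operators that preserve the commutation relation --- is the same as the paper's, but your key stability claim is wrong as stated, and the error is not cosmetic. You assert, citing Lemma \ref{comm:simp}(1), that ``$\lam_a$ and $\rho_b$ each commute with $\lam(u)$ for $u\in U$'' and with $\lam(\hh)$. That lemma says no such thing: it says that $\lam(u)$ and $\lam(\hh)$ commute with the elements of $E^{*}$ (hence with $\rho_b$), and that $\rho(u)$, $\rho(\hh)$ commute with the elements of $E$ (hence with $\lam_a$); it gives no relation between $\lam_a$ and $\lam(u)$, which both lie in $E$. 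The set $Z=\{s\in S \mid \lam_a\rho_b s=\rho_b\lam_a s\}$ \emph{is} stable under $\lam(u^a)$ for $u^a\in U^a$, because Proposition \ref{lam-prop}(4) lets you write $\lam_a\lam(u^a)=\lam(\dw_a u^a\dw_a^{-1})\lam_a$ and Lemma \ref{comm:simp}(1) then moves the $\lam(\cdot)$ factor past $\rho_b$; likewise it is stable under $\lam(\tH)$ via Proposition \ref{lam-prop}(6), and under $\rho(U^b)$, $\rho(\tH)$ symmetrically. But it is \emph{not} stable, by anything available at this point, under $\lam(U_a)$ or $\rho(U_b)$: for $u_a\in U_a$ the conjugate $\dw_a u_a\dw_a^{-1}$ lies in $U^-$, on which $\lam$ is not even defined yet, so there is no identity letting you slide $\lam_a$ past $\lam(u_a)$.

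This is why your reduction overshoots. If $Z$ were really stable under all of $\lam(U)$ and $\rho(U)$, you could strip an arbitrary element all the way down to $(\dw,\w)$, and the hypothesis of the lemma would only need to be checked at $u_a=v_b=1$; but then Cases 2 and 3 of the subsequent verification (\S\ref{lp:case2}, \S\ref{lp:case3}), which hinge on nontrivial $x_a(s)$, $x_b(t)$ and ultimately on the Steinberg relation $(x,1-x)=1$, would be superfluous --- a sure sign the claim is too strong. The correct reduction, which is what the paper carries out, writes a general element as $\lam(u^a)\lam(u_a)\lam(\hh)\rho(v^b)\rho(v_b)[\dw,\w]$ using $U=U^a\rtimes U_a$ and its $b$-analogue, pulls out only the $\lam(u^a)$, $\lam(\hh)$ and $\rho(v^b)$ factors by the conjugation identities above, and stops at the special element $\lam(u_a)\rho(v_b)[\dw,\w]$, where the hypothesis is finally applied. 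Your observation via Lemma \ref{comm:simp}(2) that $Z$ is stable under $\lam_a$ and $\rho_b$ themselves is correct but is not what this reduction needs.
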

\begin{proof} From the Bruhat decomposition for $G,$ every element of $S$ can be written in the form $\lambda(u) \lambda(\hh) \rho(v) [\dw, \w]$ for some $u, v \in U$, $\hh \in \tH$, $w \in W.$ Now write $u = u^a u_a$ and $v = v^b v_b$ according to decompositions $U= U^a \rtimes U_a$ and $V = V^b \rtimes V_b$ (cf. \eqref{U:dec}), one has $\lambda(u) = \lambda(u^a) \lambda(u_a)$ and $\lambda(v) = \lambda(v^b) \lambda(v_b)$, and so \be{red:1} \lam_a \rho_b \lambda(u) \lambda(\hh) \rho(v) [\dw, \w] = \lam_a \rho_b \lambda(u^a) \lambda(u_a)  \lambda(\hh) \rho(v^b) \rho(v_b) [\dw, \w]. \ee Setting $\tu^a:= w_a u^a w_a^{-1}$, $\ttv_b:= w_b u^b w_b^{-1},$ and $\hh':= \w_a \hh \w_a^{-1}$ we can use Lemmas \ref{lam-prop} and \ref{comm:simp} to deduce the last expression is equal to \be{red:2} \lam(\tu^a) \lam_a \rho_b \lam(u_a)  \lam(\hh) \rho(v^b) \rho(v_b) [ \dw, \nt] &=& \lam(\tu_a) \lam_a \rho_b \lam(\hh) \lam(u'_a)  \rho(v^b) \rho(v_b) \,  [ \dw, \w] \\
&=& \lam(\tu_a) \lam(\hh') \lam_a \rho_b \lam(u'_a) \rho(v^b) \rho(v_b) [ \dw, \w] \\
&=& \label{red:4}\lam(\tu_a) \lam(\hh') \rho(\ttv^b) \underbrace{ \lam_a \rho_b \lam(u'_a)  \rho(v_b) [ \dw, \w] }.
\ee Applying the hypothesis of the Proposition to the underbraced terms transforms this last expression to 
\be{red:3} \lam(\tu_a) \lam(\hh') \rho(\ttv^b) \,  \lam_b \rho_a \lam(u'_a)  \rho(v_b) [ \dw, \w]. \ee Running the steps in (\ref{red:2} - \ref{red:4}) in reverse now, we obtain the desired result. \end{proof}

Let $(g, \w)$ be an element as in Lemma \ref{rk2:form}, so that we have $g = u_a \, \dw \, v_b.$  Writing (cf. \eqref{lam:a:new}, \eqref{rhoa:alt}) \be{la:rb} \lam_a \rho_b (g, \tw) &=& \left( \dw_a g \dw_b^{-1}, \left[ \nu(\dw_a \, g \dw_b^{-1}) \nu(g \dw_b^{-1})^{-1} \right]^{\sim} \, \tw \,  \left[ \nu(g)^{-1} \nu(g \dw_b^{-1}) \right]^{\sim} \right)\\ \label{rb:la} \rho_b \lam_a (g, \tw) &=& \left( \dw_a g \dw_b^{-1}, \left[ \nu(\dw_a \, g ) \nu(g )^{-1} \right]^{\sim} \, \tw \,  \left[ \nu(\dw_a \, g)^{-1} \nu(\dw_a g \dw_b^{-1}) \right]^{\sim} \right), \ee we are reduced to verifying the equality of the above two expressions. There are now three cases to consider.

\tpoint{Case 1: when $wb \neq \pm a$}  \label{case1:wb,a} In this case, the equality of (\ref{la:rb}) and (\ref{rb:la}) follows immediately from,

\begin{nclaim} If $w b \neq \pm a,$ then  
 \be{c1:2} \nu(g)^{-1} \nu(g \dw_b^{-1} ) &=& \nu (\dw_a g)^{-1} \nu(\dw_a g \dw_b^{-1} ) \\ 
 \nu(\dw_a g \dw_b^{-1}) \nu(\dw_a g  )^{-1} &=& \nu(\dw_a g ) \nu(g)^{-1} .
 \ee \end{nclaim}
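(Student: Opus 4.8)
The goal is to establish the Claim in Case 1, namely that when $wb\neq\pm a$ the quantities $\nu(g)^{-1}\nu(g\dw_b^{-1})$ and $\nu(\dw_a g)^{-1}\nu(\dw_a g\dw_b^{-1})$ agree, and similarly for the left-multiplication versions. The strategy is a rank-one computation using Proposition \ref{bru:two} together with the fact that multiplication by $\dw_a$ on the left commutes with whatever happens on the right, provided the lengths behave additively. First I would record that $g=u_a\,\dw\,v_b$ with $u_a\in U_a$ and $v_b\in U_b$, and set $w'=\phi(\nu(g))=w$ (since $u_a,v_b\in U$ do not change the Bruhat cell, by \eqref{nu:HU}). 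The key combinatorial input is that $wb\neq\pm a$ governs the length behavior: I would use \eqref{nu:bru} to see that $\ell(w s_b)=\ell(w)\pm 1$ and that left-multiplication by $s_a$ either raises or lowers length, and the hypothesis $wb\neq\pm a$ is precisely what guarantees that the "interaction" between the $\dw_a$ on the left and the $\dw_b^{-1}$ on the right is trivial — i.e. the four elements $w$, $s_a w$, $w s_b$, $s_a w s_b$ sit in a "rectangle" in the Bruhat order with no degeneration.

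The main steps, in order: (1) Apply Proposition \ref{bru:two}(2) to $g$ and to $\dw_a g$ to compute $\nu(g\dw_b^{-1})$ and $\nu(\dw_a g\dw_b^{-1})$ in terms of $\nu(g)$ and $\nu(\dw_a g)$ respectively; the dichotomy in that proposition says each of these is either obtained by right-multiplying by $\dw_b^{-1}$ or by some $h_b(s)$. (2) Show that, because $wb\neq\pm a$, the \emph{same branch} of the dichotomy occurs for $g$ as for $\dw_a g$ — this is where one uses that $\nu$ respects the product when lengths add \eqref{nu:bru}, so the sign/torus-element $s$ that appears is insensitive to the left $\dw_a$ factor. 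Concretely, if $\nu(g\dw_b^{-1})=\nu(g)\dw_b^{-1}$ (the length-additive case) then also $\nu(\dw_a g\dw_b^{-1})=\nu(\dw_a g)\dw_b^{-1}$ because prepending $\dw_a$ cannot create a new $U$-$U^-$ collision on the right when $wb\neq\pm a$; and in the length-dropping case, the element $h_b(s)$ extracted depends only on the unipotent coordinate along the root $b$, which is unchanged by left multiplication by $\dw_a$ when $s_a w b\neq -b$, equivalently $wb\neq -s_a(b)$, which follows from $wb\neq\pm a$ after a short case check (one must also rule out $wb$ being a root whose $s_a$-image coincides with $\pm b$, but $wb\neq\pm a$ together with rank-two considerations from \S\ref{rank2} handles this). (3) Once the branches match, both displayed identities in the Claim are immediate: in the $\dw_b^{-1}$-branch both sides equal $\dw_b^{-1}$, and in the $h_b(s)$-branch both sides equal the same $h_b(s)^{\pm1}$; the second identity \eqref{c1:2} (the left-multiplication version) is proved symmetrically by applying Proposition \ref{bru:two}(1) to $g$ and to $g\dw_b^{-1}$ and using $wb\neq\pm a$ again (now in the form $s_b w^{-1}a\neq\pm a$-type statement after inverting).

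The hard part will be Step (2): making precise the assertion that "the same branch occurs" and that the extracted torus element $s$ is the same. This requires unwinding the rank-one identities \eqref{rk1}--\eqref{rk1:r} carefully and tracking exactly which unipotent coordinate of $g$ (as an element of $U\dw U_w$, via the coordinates from \eqref{Psi:sch}) is responsible for the branch. The cleanest route is probably to reduce, via Lemma \ref{J:red} and Lemma \ref{rk2:red}, to the rank-two subgroup $G_{\{a,b\}}$ and there argue directly: $wb\neq\pm a$ means $w$ does not send the simple root $b$ to $\pm a$, so in the rank-two Weyl group the products $w$, $s_a w$, $w s_b$, $s_a w s_b$ are genuinely "generic" and one can just quote the rank-one Bruhat computation twice without any cancellation. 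I would organize the write-up around this reduction, handle the finitely many shapes of rank-two root systems using Lemma \ref{lem:rk2wgprops} and Lemma \ref{lem:weylatob} to verify $wb\neq\pm a$ is the only obstruction, and then let the rank-one identities close the argument.
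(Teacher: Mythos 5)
Your proposal is correct and follows essentially the same route as the paper: write $g=u_a\,\dw\,v_b$, apply the rank-one dichotomy of Proposition \ref{bru:two} (via \eqref{rk1:l}, \eqref{rk1:r}) to both sides, and observe that the hypothesis $wb\neq\pm a$ forces the same branch to occur — with the same extracted torus element, since that element depends only on the $U_b$- (resp. $U_a$-) coordinate — before and after multiplying by $\dw_a$ (resp. $\dw_b^{-1}$). The only point to repair in your step (2) is the combinatorial condition: what you need is not ``$s_awb\neq -b$'' but simply that $s_a$ preserves the sign of every real root other than $\pm a$, so $wb\neq\pm a$ gives $w^{-1}a<0\iff (ws_b)^{-1}a<0$, which is exactly how the paper matches the branches (and note that Case 1 requires no reduction to finite-type rank two, unlike Cases 2 and 3).
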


 \begin{proof}[Proof of Claim] Let us verify the first statement (\ref{c1:2}) as the proof of the second is similar. Turning to the left hand side (\ref{c1:2}), there are two cases to consider:
 
 \begin{enumerate}[a.] \item If $w b > 0$, then we claim that the left hand side of (\ref{c1:2}) is always equal to $w_b^{-1}$. Indeed, since $w b \neq a$ using (\ref{Ua:dec})  there exists $u^a \in U^a$ such that we may write \be{} g= u_a \dw v_b = u^a u_a \dw \ee and thus  $g \dw_b^{-1} = u^a u_a \dw \dw_b^{-1}.$ Hence, $\nu( g \w_b^{-1}) = \nu(g) \dw_b^{-1}.$  Turning to the right hand side of (\ref{c1:2}), since $\dw_a$ normalizes $U^a$ we can write $\dw_a g = \dw_a u^a_1 \, \dw_a u_a \dw$ and $\dw_a g \dw_b^{-1} = u_1^a \, \dw_a u_a \dw \dw_b^{-1}$ for some $u^a_1 \in U^a.$  Thus we obtain \be{ab:na-1} \nu( \dw_a g ) = \nu( \dw_a u_a \dw ) \text{ and } \nu(\dw_a g \dw_b^{-1}) = \nu(\dw_a u_a \dw \dw_b^{-1}). \ee Next we note that since $w b \neq a$ we have \be{ab:na-2} w^{-1} a < 0 \text{ if and only if } (w w_b^{-1})^{-1}(a) < 0, \ee from which it follows that $ \nu( \dw_a g )  \nu( \dw_a g \dw_b^{-1}) = \dw_b^{-1}.$
 
 \item If $w b < 0$ and $u_b=1$ then $g = u_a \dw$ and the argument as in the previous case applies. So we assume $w b < 0$ and $u_b \neq 1.$ Then it follows that the left hand side of (\ref{c1:2}) is equal to $h_b(s)$ for some $s \neq 0$. Similar reasoning as in the previous case allows us to conclude that the right hand side of (\ref{c1:2}) is also equal to $h_b(s).$  \end{enumerate} \end{proof}
 
\newcommand{\nut}{\tilde{\nu}} 
 
\tpoint{Case 2: when $w b = a$} \label{lp:case2} As $\dw u_b \dw^{-1} \in U_a,$ we may as well assume $g= u_a \dw,$ and so $\nu(g)^{-1} \nu(g \dw_b^{-1}) = \dw_b^{-1}.$ Now, if $u_a=1$ the desired equality of (\ref{la:rb}) and (\ref{rb:la}) is obvious, so let us assume $u_a=x_a(t),$ $t \in F^*.$ We may then compute using (\ref{rk1:l}) that $\nu(\dw_a g \dw_b^{-1}) \nu(g \dw_b^{-1})^{-1}= h_a(-t^{-1})$, and so the $\tN$ component from (\ref{la:rb}) is equal to $\h_a(-t^{-1})\,  \tw \, \w_b^{-1}.$ 

To compare with (\ref{rb:la}) we need to compute $\nu(\dw_a g) \nu(g)^{-1}$ and $\nu(\dw_a g )^{-1} \nu(\dw_a g \dw_b^{-1})$. The former is $\dw_a$ since $w^{-1}(a) >0$. As for the latter, noting $\nu(\dw_a g) = \dw_a \dw$ and using $w^{-1} x_a(s) w = x_b(s)$ \footnote{This follows from a case-by-case using the possibilities enumerated in Lemmas \ref{lem:eltsfixingroot} and \ref{lem:weylatob} and sign rules listed in Remark \ref{signs}. Note that we are always invoking these rules for finite-type root systems.} with (\ref{rk1:r}),  \be{wb:a-1} \nu(\dw_a g \dw_b^{-1}) = \nu (\dw_a x_a(t) \dw \dw_b^{-1}) = \nu(\dw_a  \dw x_b(t) \dw_b^{-1} ) = w_a \dw h_b(t). \ee  
So $\nu(\dw_a g )^{-1} \nu(\dw_a g \dw_b^{-1}) = h_b(t)$ and the $\tN$ component of (\ref{rb:la}) is equal (cf \ref{rhoa:alt}) to $\w_a \tw \h_b(t^{-1})^{-1},$ and we are left to show that \be{} \w_a \, \tw \, \h_b(t^{-1})^{-1} = \h_a(-t^{-1})\,  \tw \, \w_b^{-1}. \ee Using Lemma \ref{w:h}, the right hand side is transformed to $\tw \, \h_b(-t^{-1}) \w_b^{-1}$ and so it suffices to check $\w_a \, \tw \, \h_b(t^{-1})^{-1} \tw_b = \tw \, \h_b(-t^{-1}) .$ We compute using Lemma \ref{s-hom}(1), the fact that $\w_a \tw = \tw \w_b$ \footnote{Again, this can be verified again on a case-by-case basis using the possibilities enumerated in Lemmas \ref{lem:eltsfixingroot} and \ref{lem:weylatob}}, and (\ref{ha:inv}): \be{} \tw_a \, \tw \, \h_b(t^{-1})^{-1} \tw_b &=& \tw_a \, \tw \, \tw_b \, \h_b(t)^{-1} = \tw \, \tw_b \tw_b \h_b(t)^{-1} \\
&=& \tw \, \h_b(-1) \, \h_b(t^{-1}) (t, t)^{\Qs(\bv)} = \tw \h_b(-t^{-1}) (-1, t)^{\Q(\bv)} (t, t)^{\Qs(\bv)}. \ee Using Lemma \ref{stein}, we find $(-1, t)^{\Q(\bv)} (t, t)^{\Qs(\bv)}=1$ and so the proof is concluded.
 
 \tpoint{Case 3: $wb=-a$} \label{lp:case3} Let us assume $g = u_a \dw v_b.$ Using Lemma \ref{comm:simp}, pt 2 we may assume both $u_a, v_b \neq 1$ for otherwise we can reduce easily to the previous case of $wb =a.$ 

 \begin{nclaim} Assume $u_a= x_a(s)$ and $v_b= x_b(t)$ with $st(1-st) \neq 0.$ Then \be{wb:-a} \nu ( \dw_a g \dw_b^{-1} ) = \nu(\dw_a g ) h_b(t - s^{-1})  =  h_a(- (s - t^{-1})^{-1}) \nu(g \dw_b^{-1} ). \ee \end{nclaim}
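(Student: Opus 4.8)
The statement is a pure $SL_2$-computation carried out in $G_J = G_{\{a,b\}}$, which by the classification in \S\ref{s:swap}--\S\ref{s:fix} is a finite-dimensional rank two group (here of type $A_2$, since $wb=-a$ forces $a$ and $b$ to lie in the same $W$-orbit, so $\la b,\av\ra=\la a,\bv\ra=-1$ by Lemma \ref{lem:weylatob}). The plan is to reduce both displayed equalities to the single rank one identity \eqref{rk1} and then do the bookkeeping.

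First I would record the data of Case 3: $wb=-a$ with $w\in W_J$, $u_a=x_a(s)$, $v_b=x_b(t)$, $st(1-st)\neq 0$, so that $g = x_a(s)\,\dw\,x_b(t)$. The key move is to push the $x_b(t)$ factor to the left past $\dw$: since $w^{-1}$ carries $a_b$ (hmm, rather) — precisely, because $\dw\, x_b(t)\,\dw^{-1} = x_{wb}(\eta t) = x_{-a}(\eta t)$ for a sign $\eta\in\{\pm 1\}$ determined by the rules in Remark \ref{signs} (invoked for the finite-type system $A_2$), we may rewrite $g = x_a(s)\,x_{-a}(\eta t)\,\dw$. Now the whole computation of $\nu(\dw_a g \dw_b^{-1})$, $\nu(\dw_a g)$ and $\nu(g\dw_b^{-1})$ takes place, up to right-multiplication by $\dw$, inside the rank one group $G_a$ generated by $x_a(F)$ and $x_{-a}(F)$. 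The element $x_a(s)x_{-a}(\eta t)$ has a well-known $SL_2$-style factorization: using \eqref{rk1} (i.e. $x_{-a}(r) = x_a(r^{-1})h_a(-r^{-1})\dw_a x_a(r^{-1})$) one finds $\nu\big(x_a(s)x_{-a}(\eta t)\big)$ and its multiplicative perturbations explicitly. The hypothesis $st(1-st)\neq 0$ is exactly the condition that this product lies in the ``big cell'' contributing $\dw_a$ (not in $U$ or in $UHU^-$ with a different Weyl component), which is why it appears in the statement.

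Then I would compute the two sides in turn. For $\nu(\dw_a g \dw_b^{-1})$: write $\dw_a g\dw_b^{-1} = \dw_a x_a(s)x_{-a}(\eta t)\,\dw\,\dw_b^{-1}$; since $w b=-a$ means $ww_b^{-1}$ sends $a$ to $a$ (length considerations, as in Case 2), the factor $\dw\dw_b^{-1}$ contributes on the right without changing the $N$-component's Weyl part, and $\dw_a x_a(s)x_{-a}(\eta t) = x_{-a}(-s^{-1}\cdots)\cdots$ — concretely one uses $\dw_a x_a(s)\dw_a^{-1}=x_a(-s)$ together with \eqref{rk1} applied to $x_{-a}(\eta t)$ to extract the diagonal part. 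Carrying this through yields $\nu(\dw_a g\dw_b^{-1}) = \nu(\dw_a g)\cdot h_b(t-s^{-1})$ after translating the $h_a$-coefficient that emerges into an $h_b$-coefficient via $\dw$ (using $\dw^{-1}h_a(r)\dw = h_b(r)$ since $wb=-a$, up to the relevant sign, compare \eqref{wb:a-1}). For the second equality I would instead push $x_a(s)$ to the right: $g\dw_b^{-1} = x_a(s)\dw x_b(t)\dw_b^{-1}$, and $\dw x_b(t)\dw_b^{-1}$ lies in $U^-$-type position, so $\nu(g\dw_b^{-1})$ depends only on $x_a(s)\dw\cdot(\text{stuff})$; applying $\dw_a$ and using \eqref{rk1} once more gives $\nu(\dw_a g\dw_b^{-1}) = h_a(-(s-t^{-1})^{-1})\,\nu(g\dw_b^{-1})$. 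Both computations are parallel to the rank one identities \eqref{rk1:l} and \eqref{rk1:r} already used in the paper.

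\textbf{Main obstacle.} The conceptual content is light, but the bookkeeping is delicate in two places. First, one must keep careful track of the signs $\eta(a,b)$, $\eta(a,-a)$ coming from \eqref{kmg:sg_rel1} and Remark \ref{signs} when conjugating $x_{\pm a}$ by $\dw$, $\dw_a$, $\dw_b$; these signs must be shown to cancel or to be absorbable into the claimed $h_b(t-s^{-1})$ / $h_a(-(s-t^{-1})^{-1})$, and this is where the specific structure of the $A_2$ root system (and the fact that we are in finite type, so that $[a,b]$ is small) is used. Second, one must verify that the Weyl-group components genuinely match — i.e. that in all the Bruhat decompositions involved the length-additivity condition \eqref{nu:bru} holds, so that $\nu$ is multiplicative in the required factorizations; this is where $st(1-st)\neq 0$ and $wb=-a$ (rather than $wb=a$, treated in \S\ref{lp:case2}) enter. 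I expect the sign analysis to be the genuinely fiddly part, but it is entirely finite and mechanical given Lemmas \ref{lem:eltsfixingroot}, \ref{lem:weylatob} and the rules after \S\ref{signs}.
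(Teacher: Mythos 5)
Your outline is correct and rests on exactly the same ingredients as the paper's proof: the rank-one identities \eqref{rk1}, \eqref{rk1:l}, \eqref{rk1:r}, the reduction to the finite-type rank-two system (here $A_2$, via Lemma \ref{lem:weylatob} applied to $s_a w$, which sends $b$ to $a$), and the sign rules of \S\ref{signs}. The one genuine difference is the order of the symbol-pushing. You first move $x_b(t)$ leftwards across $\dw$ into $U_{-a}$ and then work inside $G_a\cdot\dw$; the paper instead keeps $x_b(t)$ in place, applies \eqref{rk1} to $x_a(s)$ to produce a factor $x_{-a}(s^{-1})$, absorbs the conjugates $\dw_a x_{-a}(s^{-1})\dw_a^{-1}$ into $U$, pushes the remaining $x_{-a}(s^{-1})$ rightwards across $\dw$ (note $w^{-1}(-a)=b$, so it is $x_{-a}$, not $x_a$, that lands in $U_b$), merges it with $x_b(t)$ into $x_b(t-s^{-1})$, and then reads off $\nu(\dw\,x_b(t-s^{-1})\,\dw_b^{-1})=\dw\,h_b(t-s^{-1})$ directly from \eqref{rk1:r}; the second equality is the mirror image starting from $x_b(t)=x_{-b}(t^{-1})h_b(t)\dw_b x_{-b}(t^{-1})$. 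The paper's ordering is slightly cleaner: the torus element is created already in $H_b$ on the correct side of $\dw$, whereas in your version the diagonal part emerges in $H_a$ and must be transported across $\dw$, where $w^{-1}(a)=-b$ forces an inversion ($h_{-b}(r)=h_b(r^{-1})$) that you do not mention and would have to track. Two small corrections: the condition $st\ne 1$ is not what puts $x_a(s)x_{-a}(\eta t)$ in the big cell of $G_a$ (that only needs $t\ne0$); it is what makes the merged parameter $t-s^{-1}=(st-1)/s$ (resp. $s-t^{-1}$) nonzero so that \eqref{rk1:r} yields a torus factor rather than a degenerate case, the case $st=1$ having been reduced beforehand to Case 2. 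These are bookkeeping points rather than gaps, and your plan goes through.
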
 
\begin{proof}  From (\ref{rk1:l}) we have $\nu(w_a g) = h_a(-s^{-1}) \nu(g)$, and using the rank one equality (\ref{rk1})
\be{wb:-a,1} x_{a}(s) = x_{-a}(s^{-1}) h_a(s) \dw_a x_{-a}(s^{-1}), \ee we find that there exists $u \in U$ such that  \be{wb:-a,2} \dw_a g \dw_b^{-1}  = \dw_a x_{-a}(s^{-1}) h_a(s) \dw_a x_a(s^{-1})  \, \dw \, x_b(t) \, \dw_b^{-1}  =  u \,  h_a(-s^{-1}) x_a(s^{-1}) \dw x_b(t) \, \dw_b^{-1} . \ee Using the fact that $\dw^{-1} x_a(y) \dw = x_b(-y)$ for $y \in F$ (same reasoning as before (\ref{wb:a-1}) ) the above  becomes  \be{wb:-a,3} u h_a(-s^{-1}) x_a(s^{-1}) \dw x_b(t-s^{-1}) w_b \in U \, h_a(-s^{-1}) \dw h_b(t- s^{-1}) U, \ee where we have now also used (\ref{rk1:r}). This proves the first equality of the claim. The second is proven similarly. 
\end{proof}

Assume that $st \neq 0$ but $1=st$. Then $t - s^{-1}=0$ and so from (\ref{wb:-a,3}) we find that $w_a g w_b \in U H u_a w w_b.$ Using the previous remark as well as Lemma \ref{comm:simp}, we can again reduce to the previous case of $wb=a$. So, without loss of generality we may assume $st(1-st) \neq 0$ and hence apply the claim. 

Using the claim, to compare (\ref{la:rb}) and (\ref{rb:la}) we are reduced to showing the equality, 
\be{b=-a:1}  \h_a( - (s- t^{-1})^{-1}) \,  \tw \, \h_b(t^{-1})^{-1} = \h_a(-s^{-1}) \, \tw \, \h_b( ( t- s^{-1})^{-1})^{-1}.  \ee Using Lemma \ref{w:h}, this amounts to showing 
\be{b=-a:2}   \h_a( - (s-t^{-1})^{-1}) \h_a(t)^{-1} = \h_a(-s^{-1}) \h_a(t - s^{-1})^{-1}. \ee This follows from the following result on Steinberg symbols: for $s, t \in F^*$ such that $st(1-st) \neq 0$ then \be{1:x} (- (s-t^{-1})^{-1}, t^{-1}) (t, t) = (-s^{-1}, (t - s^{-1})^{-1}) ( t - s^{-1}, t - s^{-1}). \ee By using the bimultiplicativity (and also Lemma \ref{stein} (ii)), this amounts to showing \be{1:x-2} (1- st, t) = ( 1-st, t - s^{-1}). \ee In other words, we need to show that \be{1:x-3} (1 - st, t^{-1}) (1 - st, t - s^{-1} ) = (1 - st, 1 - (st)^{-1}) = 1. \ee The last statement follows since if $x \neq 1$ we have $(1-x,x) =1$ and if also $x \neq 0$ we obtain  \be{1:x-4} (1-x , 1 - x^{-1})= (1 - x, x)(1-x, 1- x^{-1}) =(1-x, x-1) =1, \ee where we have used Lemma \ref{stein} (iii) in the last step.

\subsection{Some further properties of the cover $\tG$}

To sum up, we have now constructed a group $E$ which satisfies the properties listed in Proposition \ref{lam-prop}. Moreover, there exists a map $\varphi:E \rr G$ which sends $e \in E \mapsto p (e.\os)$ in the notation of \eqref{S:car}. Our aim in this section is to show that this map makes $E$ into a central extension of $G$ with kernel $A$ and satisfying the Tits axioms \rd{1}-\rd{5} described in \S \ref{s:axioms}.

\spoint As noted in Proposition \ref{lam-prop}, the map $\lam:\tH \rr E, \hh \mapsto \lam(\hh)$ is injective. From Proposition \ref{lam-prop} and the presentation of $\tN_{\zee}$ given in \S \ref{s:ncover} we find that the map $\lam: \tN_{\zee} \rr E$ sending $\r_a \mapsto \lam_a$ for $a \in \Pi$ is a homomorphism, and moreover Proposition \ref{lam-prop}, (6) shows that we obtain an map $\tN_{\zee} \rtimes \tH \rr E$ which we continue to denote by $\lambda$. Now,  Proposition \ref{lam-prop}, (5) together with the definition of $J$ in \eqref{J} shows that this descends to a map $\tN \rr E$ which we continue to denote by $\lambda.$ The map $\lam: \tN \rr E$ is injective. Indeed  $\lam(\tn) \os= (\varphi(\tn), \tn)$ and we have already argued that $\varphi(\tn)=1$ implies $\tn \in A$, but $\tn=1$ from Proposition \ref{lam-prop} (1). Denoting by $\lam(\tH)$ and $\lam(\tN)$ the image in $E$ of the group $\tH$ or $\tN$, we have the following

\begin{nlem} \label{lam:W} The map $\lam: \tN \rr E$ induces an isomorphism $\tN / \tH \rr \lam(\tN) / \lam(\tH).$ Moreover, both groups are isomorphic to $W$, the Weyl group of $G$.
 \end{nlem}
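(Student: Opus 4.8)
The plan is to unwind the maps that have already been constructed. Recall that we have the cover $\varphi:\tN\rr N$ from \S\ref{s:ncover} with kernel $A$, and by Theorem \ref{bruhat-gen}(\ref{phi:N}) the homomorphism $\phi:N\rr W$ with kernel $H$. Composing, we obtain a surjection $\tN\rr W$ whose kernel contains $\tH$ (indeed $\varphi(\tH)=H=\ker\phi$, and conversely any $\tn$ mapping into $H$ lies in $\tH$ since $\varphi^{-1}(H)=\tH$ by the construction of $\tN$, the kernel of $\varphi$ being $A\subset\tH$). Hence we get an isomorphism $\tN/\tH\stackrel{\sim}{\rr}W$. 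This is really the content of the ``moreover'' clause, so I would record it first.

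Next I would show that $\lam:\tN\rr E$ carries $\tH$ onto $\lam(\tH)$ and $\tN$ onto $\lam(\tN)$ with the containment $\lam(\tH)\subseteq\lam(\tN)$, and that $\lam$ is injective (already noted in the paragraph preceding the Lemma: if $\lam(\tn)=1$ then $(\varphi(\tn),\tn)=\os$, so $\tn=1$). Since $\lam$ is an injective group homomorphism, it restricts to an isomorphism $\tN\stackrel{\sim}{\rr}\lam(\tN)$ taking the subgroup $\tH$ isomorphically onto $\lam(\tH)$; therefore it induces an isomorphism on quotients $\tN/\tH\stackrel{\sim}{\rr}\lam(\tN)/\lam(\tH)$. (One should note $\tH$ is normal in $\tN$: this follows from relations $\mathbf{\tN_{\zee}}2$ together with $\rh2$, exactly as in \S\ref{Ntilde:const} where $\tH$ was shown normal in $\hnz\rtimes\tH$; alternatively $\tH=\varphi^{-1}(H)$ and $H\trianglelefteq N$.) Combining with the previous paragraph gives $\lam(\tN)/\lam(\tH)\cong\tN/\tH\cong W$, which is exactly the statement.

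The steps are therefore: (1) identify $\ker(\phi\circ\varphi:\tN\rr W)$ with $\tH$, giving $\tN/\tH\cong W$; (2) recall $\lam:\tN\rr E$ is an injective homomorphism with $\lam(\tH)\subseteq\lam(\tN)$, hence induces $\tN/\tH\cong\lam(\tN)/\lam(\tH)$; (3) concatenate the two isomorphisms. None of this requires new computation — every ingredient ($\varphi$ is a central extension with kernel $A$; $\phi$ has kernel $H$; $\lam$ is injective; $\tH\trianglelefteq\tN$) is already established in the excerpt. The only mildly delicate point, and the one I would be most careful to state cleanly, is the identity $\varphi^{-1}(H)=\tH$ in $\tN$: this holds because $\ker\varphi=A\subset\tH$ and $\varphi$ maps $\tH$ onto $H$, so $\varphi^{-1}(H)=\tH\cdot\ker\varphi=\tH$. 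With that in hand the proof is a two-line diagram chase, so I do not anticipate a genuine obstacle; the ``hard part'' is merely bookkeeping to make sure the normality of $\tH$ in $\tN$ and the injectivity of $\lam$ are cited rather than re-proved.

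\begin{proof}
Recall from \S\ref{s:ncover} that $\varphi:\tN\rr N$ is surjective with kernel $A\subset\tH$, and that $\varphi(\tH)=H$; hence $\varphi^{-1}(H)=\tH\cdot\ker\varphi=\tH$. By Theorem \ref{bruhat-gen}(\ref{phi:N}) the map $\phi:N\rr W$ is surjective with kernel $H$. Thus $\phi\circ\varphi:\tN\rr W$ is surjective, and its kernel is $\varphi^{-1}(\ker\phi)=\varphi^{-1}(H)=\tH$. In particular $\tH$ is normal in $\tN$ and $\phi\circ\varphi$ induces an isomorphism $\tN/\tH\stackrel{\sim}{\longrightarrow}W$.

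By the discussion preceding the statement, $\lam:\tN\rr E$ is an injective group homomorphism, and $\lam(\tH)\subseteq\lam(\tN)$ with $\lam(\tH)$ normal in $\lam(\tN)$ (image of the normal subgroup $\tH$). Since $\lam$ is injective it restricts to an isomorphism $\tN\stackrel{\sim}{\rr}\lam(\tN)$ carrying $\tH$ onto $\lam(\tH)$, and therefore induces an isomorphism on quotients
\begin{equation}
\tN/\tH\;\stackrel{\sim}{\longrightarrow}\;\lam(\tN)/\lam(\tH).
\end{equation}
Composing with the inverse of the isomorphism $\tN/\tH\stackrel{\sim}{\rr}W$ from the first paragraph yields an isomorphism $W\stackrel{\sim}{\rr}\lam(\tN)/\lam(\tH)$, which proves both assertions of the Lemma.
\end{proof}
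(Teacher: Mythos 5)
Your proposal is correct and matches the paper's argument in substance: the induced isomorphism on quotients is, as you say, immediate from the injectivity of $\lam$ on $\tN$, and the identification $\tN/\tH\cong W$ is the same kernel computation the paper performs (the paper cites Lemma \ref{Nz:ker} and the construction of $\tN$ directly, whereas you reach it by composing $\phi\circ\varphi$ and using $\ker\varphi=A\subset\tH$ — but that fact was itself derived from Lemma \ref{Nz:ker}, so the two routes coincide one step earlier). No gaps.
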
\begin{proof} The first statement is immediate from the injectivity of $\lambda$ on $\tN$ (and $\tH$). The fact that $\tN / \tH$ is isomorphic to $W$ follows from  Lemma \ref{Nz:ker} and the explicit construction of $\tN$.  \end{proof}
 
For each $w \in W$ with reduced decomposition $w = s_{b_1} \cdots s_{b_n}$ with $b_i \in \Pi$, we set \be{lam:w} \lam_w:= \lam_{b_1} \cdots \lam_{b_n}, \ee which is well defined by Proposition \ref{lam-prop} (7) . Writing $\lam(U):= \{ \lam(u) \mid u \in U \}$ we next note that \be{E:bru} E = \bigcup_{w \in W} \lambda(U) \lambda(\tH) \, \lambda_w \,  \lambda(U). \ee The proof of this is standard: one verifies the right hand side of (\ref{E:bru}) is closed under multiplication by the generators of $E$ using the relations verified in Proposition \ref{lam-prop}. We refer to \cite{mat} for the details (there are no further Kac-Moody complications). Using (\ref{E:bru}) we have the following two results.

\begin{ncor} \label{p:ker} The kernel of the map $p:E \rr G$ is $\lam(A)$ (which is isomorphic to $A$). \end{ncor}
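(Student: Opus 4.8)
The plan is to identify the kernel of $p : E \to G$ by using the Bruhat-type decomposition \eqref{E:bru} of $E$ together with the analogous decomposition $G = \bigsqcup_{w \in W} U H \dw U$ of $G$ and the compatibility of $p$ with both. First I would observe that $\lambda(A) \subset \ker(p)$: by Proposition \ref{lam-prop}(1) the elements $\lambda(\zeta)$ for $\zeta \in A$ act on $S$ by $\lambda(\zeta)(g, \tn) = (g, \zeta \tn)$ (since $\varphi(\zeta) = 1$), so $p(\lambda(\zeta).\os) = p(1, \zeta) = 1$, i.e. $\lambda(\zeta) \in \ker(p)$. The injectivity of $\lambda$ on $\tH$ (hence on $A$) from Proposition \ref{lam-prop}(1) shows $\lambda(A) \cong A$.

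Next I would show the reverse inclusion $\ker(p) \subseteq \lambda(A)$. Take $e \in \ker(p)$. Using \eqref{E:bru}, write $e = \lambda(u)\lambda(\hh)\lambda_w \lambda(u')$ for some $u, u' \in U$, $\hh \in \tH$, $w \in W$. Applying $p$ and chasing through the diagram \eqref{S:car}: by construction $p(e.\os)$ is the $G$-component of $e.\os$. Since the operators $\lambda(u)$, $\lambda(\hh)$, $\lambda_w$, $\lambda(u')$ act on the $G$-component respectively by left translation by $u$, by $\varphi(\hh) \in H$, by $\dw$, and by $u'$, one computes $e.\os = (u\,\varphi(\hh)\,\dw\,u',\, \text{something})$, so $p(e) = u\,\varphi(\hh)\,\dw\,u'$. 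The hypothesis $p(e) = 1$ forces, by the disjointness of the Bruhat decomposition $G = \bigsqcup_w U H \dw U$ (Theorem \ref{bruhat-gen}, refined as in \eqref{G:w:ref}) and its uniqueness properties, that $w = 1$, that $u = u' = 1$ (using the semidirect decomposition $B = U \rtimes H$), and that $\varphi(\hh) = 1$. By Lemma \ref{H:straight}, $\varphi(\hh) = 1$ implies $\hh \in A$. Hence $e = \lambda(\hh) \in \lambda(A)$.

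I should be slightly careful about the intermediate twisting terms in \eqref{E:bru}: the factor $\lambda_w$ is built from $\lambda_{b_i}$'s whose action on the $\tN$-component involves the dichotomy of Lemma \ref{bru:two}, but none of this affects the $G$-component, which is all that matters for computing $p(e)$. The one genuine point requiring care is matching the decomposition \eqref{E:bru} of $E$ with the Bruhat decomposition of $G$ so as to extract $p(e) = u\,\varphi(\hh)\,\dw\,u'$ cleanly and then invoke uniqueness; this is the main (though routine) obstacle, and it is exactly the step for which the reference to \cite{mat} covers the finite-dimensional model verbatim, with no new Kac-Moody phenomena since all the needed structural facts (disjoint Bruhat decomposition, $B = U \rtimes H$, $N = H N_{\zee}$) are already recorded in \S\ref{s:axioms} and \S\ref{sec:N}. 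Combining the two inclusions gives $\ker(p) = \lambda(A) \cong A$, and in particular $p : E \to G$ is a central extension of $G$ by $A$ (centrality of $\lambda(A)$ being Proposition \ref{lam-prop}(1)).
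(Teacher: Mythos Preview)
Your proposal is essentially correct and follows the same route as the paper's proof: write $e$ via the decomposition \eqref{E:bru}, compute the $G$-component of $e.\os$, and use disjointness of Bruhat to force $w=1$ and $\varphi(\hh)=1$, whence $\hh\in A$ by Lemma~\ref{H:straight}.

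One small slip: from $u\,\varphi(\hh)\,u'=1$ in $G$ with $u,u'\in U$ and $\varphi(\hh)\in H$, the semidirect product $B=U\rtimes H$ only gives $\varphi(\hh)=1$ and $uu'=1$, not $u=u'=1$ separately. This does not affect the conclusion: once $\hh\in A$, centrality of $\lambda(A)$ (Proposition~\ref{lam-prop}(1)) yields $e=\lambda(u)\lambda(\hh)\lambda(u')=\lambda(\hh)\lambda(uu')=\lambda(\hh)\in\lambda(A)$. The paper handles this step by simply writing ``it follows easily that in fact $e\in\lambda(A)$,'' which amounts to the same observation.
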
 
\begin{proof} Let $e \in E$, which we may write using (\ref{E:bru}) as $e = \lam(u_1) \lam(\hh) \lam_w \lam(u_2)$ for $u_1, u_2 \in U,$ $w \in W,$ and $\hh \in \tH.$ One easily checks that $e.\os = (u_1 \varphi(\hh) w u_2, \hh \tw)$. So if $e \in \ker(p)$ we must have $w=1$, i.e. $e \in \lam(U) \lam(\hh)$. From here, it follows easily that in fact $e \in \lam(A)$. \end{proof}

\begin{ncor} \label{E:st} The group $E$ acts simply transitively on $S.$ \end{ncor}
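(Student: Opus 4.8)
The plan is to combine the transitivity of the $E$-action on $S$, already established in Lemma \ref{E:trans}, with the description of the kernel of $p \colon E \to G$ from Corollary \ref{p:ker}. Since a group acting transitively on a set acts simply transitively precisely when the stabilizer of one (equivalently every) point is trivial, it suffices to compute the stabilizer of the base point $\os = (1,1) \in S$.

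First I would observe that if $e \in E$ fixes $\os$, then in particular $p(e.\os) = 1$, i.e.\ $e \in \ker(p)$. By Corollary \ref{p:ker}, $\ker(p) = \lam(A)$, so $e = \lam(\zeta)$ for some $\zeta \in A$. But then $e.\os = \lam(\zeta).\os = (\varphi(\zeta), \zeta) = (1, \zeta)$ using that $\varphi$ kills $A$; the hypothesis $e.\os = \os$ forces $\zeta = 1$, hence $e = \lam(1) = \mathrm{id}_S$. Therefore the stabilizer of $\os$ in $E$ is trivial. Combined with Lemma \ref{E:trans} (transitivity), this gives that $E$ acts simply transitively on $S$: given $s, s' \in S$, transitivity produces $e$ with $e.s = s'$, and if $e, e'$ both send $s$ to $s'$ then $e^{-1}e'$ fixes $s$, hence fixes $\os$ after translating by any element carrying $\os$ to $s$ — more directly, $e^{-1}e'$ is in the stabilizer of $s$, and the stabilizers of all points are conjugate, so all are trivial.

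There is essentially no obstacle here; the genuine work was already done in establishing Lemma \ref{E:trans} and Corollary \ref{p:ker} (the latter resting on the Bruhat-type decomposition \eqref{E:bru} for $E$ and the relations of Proposition \ref{lam-prop}). The only minor point to be careful about is the bookkeeping that conjugacy of point-stabilizers under a transitive action lets one deduce triviality of every stabilizer from triviality of a single one; I would state this as a one-line standard fact rather than belabor it. So the proof is short: it is a formal consequence of the two cited results.

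\begin{proof}
Since $E$ acts transitively on $S$ by Lemma \ref{E:trans}, it suffices to show that the stabilizer of the base point $\os = (1,1) \in S$ in $E$ is trivial; the stabilizers of all points of $S$ are then conjugate to this one, hence also trivial, which is exactly simple transitivity. Suppose $e \in E$ satisfies $e.\os = \os$. Applying $p$ and using $p(e.\os) = p(\os) = 1$, we get $e \in \ker(p)$. By Corollary \ref{p:ker}, $\ker(p) = \lam(A)$, so $e = \lam(\zeta)$ for some $\zeta \in A$. Then
\[
\os = e.\os = \lam(\zeta).\os = (\varphi(\zeta)\cdot 1,\ \zeta \cdot 1) = (1, \zeta),
\]
using that $\varphi$ is trivial on $A \subset \tH$. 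Hence $\zeta = 1$ and $e = \lam(1) = \mathrm{id}_S$, so the stabilizer of $\os$ is trivial. Therefore the transitive action of $E$ on $S$ is simply transitive.
\end{proof}
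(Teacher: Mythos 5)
Your proof is correct and follows essentially the same route as the paper: both reduce simple transitivity to showing the stabilizer of $\os$ is trivial, and both ultimately rest on the decomposition \eqref{E:bru} — the paper applies it directly to write $e = \lam(u_1)\lam(\hh)\lam_w\lam(u_2)$ and reduce to operators $\lam(\h)\lam(u)$, while you route the same computation through Corollary \ref{p:ker} (which is proved from \eqref{E:bru} and precedes this corollary, so there is no circularity) and then finish by noting $\lam(\zeta).\os = (1,\zeta)$. This is a harmless repackaging of the paper's argument.
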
 
\begin{proof} Recall that $E$ acts transitively on $S$ by Lemma \ref{E:trans}, so it suffices to show that if $e \in E$ and $e. \os = \os$, then $e=1$. This however follows from the decomposition \eqref{E:bru}, which again reduces us to checking the claim for operators of the form $\lam(\h) \lam(u)$ with $\h \in \tH, u \in U$. \end{proof} 

\tpoint{On the unipotent elements} From the remarks at the end of \S \ref{s:stein-functor}, there exists for each $w \in W$ and $a \in R_{re}$ a sign $\eta_{w, a}= \pm 1$ such that \be{eta:w} \dw x_a (\eta_{w, a} \, s) \dw^{-1} = x_{w a}(s). \ee  We can use this (together with the fact that the orbit of the simple roots is the set of real roots) to define unipotent elements in $E$ corresponding to all real roots (i.e. to negative real roots as well).

\begin{nlem} \label{well-def} Let $a \in R_{re}$ and $w \in W, i \in I$ such that $w(a_i) = a$. Then the operator \be{xi:a} \xi_a(s):= \lam_w \lam(x_{a_i}(\eta_{w, a_i} s)) \lam_w^{-1} \ee depends only on $a$ and $s$ (and not on the choice of $w$ and $i$). \end{nlem}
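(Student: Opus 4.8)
The plan is to show that the operator $\xi_a(s)$ in \eqref{xi:a} is independent of the choice of $(w,i)$ with $w(a_i)=a$ by reducing to the analogous statement for the Tits group $G$ itself, where the corresponding independence statement is exactly what makes the definition of $\bU_a \hookrightarrow \bG$ in \S\ref{s:stein-functor} and \S\ref{signs} consistent. First I would record the base case: since $p\bigl(\xi_a(s).\os\bigr) = x_a(s)$ (using $p(\lam_w.\os)=\dw$ and the definition \eqref{eta:w} of the signs $\eta_{w,a}$, together with Proposition~\ref{lam-prop}(4) applied in the form $\lam_w\lam(u)\lam_w^{-1}=\lam(\dw u \dw^{-1})$ whenever $u\in U$ is such that $w$ conjugates it into $U$), the image of $\xi_a(s)$ under $p$ is the genuine root-group element $x_a(s)\in G$, which does not depend on $(w,i)$. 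So it remains to promote this to an equality of operators in $E$, i.e. to pin down the $\lam(A)$-ambiguity coming from $\ker(p)=\lam(A)$ (Corollary~\ref{p:ker}).

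The main step is therefore to check that two choices $w(a_i)=a$ and $w'(a_j)=a$ give the \emph{same} operator, not merely operators differing by a central element. I would proceed in two stages. Stage one: reduce to the case $i=j$ and $w' = w w_i^{k}$, i.e. to adjacent choices. More precisely, if $w(a_i)=w'(a_j)=a$ then $v:=w^{-1}w'$ satisfies $v(a_j)=a_i$, so $a_i$ and $a_j$ lie in the same $W$-orbit; choosing a reduced word realizing $v$ and inserting it, one reduces the general comparison to a finite chain of elementary moves, each of which either (a) changes the reduced word for $w$ by a braid relation — harmless, since the $\lam_{b}$ satisfy the braid relations by Proposition~\ref{lam-prop}(7), so $\lam_w$ is genuinely well-defined — or (b) replaces $(w,i)$ by $(w w_i, i')$ where $w_i(a_i)=-a_i$ maps some other simple root... in fact the cleanest formulation is: it suffices to treat the case where $w$ and $w'$ differ by right multiplication by an element of the stabilizer $\mathrm{Stab}_W(a)$, together with the braid moves. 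Stage two: analyze $\mathrm{Stab}_W(a)$ directly. Writing $a=w(a_i)$, conjugating by $\dw$ reduces us to the stabilizer of the simple root $a_i$ and the question of whether $\lam_u\lam(x_{a_i}(\varepsilon s))\lam_u^{-1} = \lam(x_{a_i}(s))$ for $u\in\mathrm{Stab}_W(a_i)$ and the appropriate sign $\varepsilon$. By Lemma~\ref{lem:eltsfixingroot}, a nontrivial stabilizing element only occurs in the rank-two situations $A_1\times A_1$, $B_2$, $G_2$ — all finite-dimensional — so by Lemma~\ref{J:red} we may work inside $E_J$ for $J=\{a_i,a_j\}$, and the required identity is a finite check: $\dw_u x_{a_i}(\varepsilon s)\dw_u^{-1}=x_{a_i}(s)$ in $G$ by the sign rules of \S\ref{signs}, and the lift to $E$ is forced because both sides act identically on $\os$ after we track the $\tN$-component using \eqref{lam:a:new} (the point being that $u(a_i)=a_i$ with $u$ a product of reflections means $\nu$ picks up no $h_a(\cdot)$ factors, by the same inversion-set bookkeeping as in \eqref{rk1:l}).

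The hard part, and the place I would spend the most care, is stage two: controlling the central $\lam(A)$-discrepancy. The naive argument "$p(\xi_a(s))$ is independent of choices, hence $\xi_a(s)$ is independent up to $\lam(A)$" is not enough; one genuinely needs that the discrepancy is trivial. I expect this to follow from computing $\xi_a(s).\os \in S$ on the nose — it equals $\bigl(x_a(s), \tilde n\bigr)$ for a specific $\tilde n \in \tN$ that one reads off from \eqref{lam:a:new}, namely $\tilde n = \lam_w\text{-conjugation applied to the trivial }\tN\text{-part} = 1$ because $x_{a_i}(\cdot)\in U$ contributes nothing to $\nu$ and conjugation by $\dw_w$ keeps it in a unipotent subgroup — so $\xi_a(s).\os=(x_a(s),1)$ regardless of $(w,i)$, and then Corollary~\ref{E:st} (simple transitivity of $E$ on $S$) forces $\xi_a(s)$ itself to be independent of the choice. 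In other words, the real content is: verify that the $\tN$-component of $\xi_a(s).\os$ is always the identity, which amounts to checking that $\dw_w$ conjugates $x_{a_i}(\eta_{w,a_i}s)$ into the unipotent radical $U^-$ or $U$ (never hitting $N$), and this is where the sign conventions $\eta_{w,a}$ and the structure \eqref{Psi:sch} of $\bU_\Psi$ enter. Once that computation is in hand, simple transitivity closes the argument cleanly, with no residual central ambiguity.
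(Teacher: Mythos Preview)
Your closing instinct is right: the whole lemma comes down to simple transitivity (Corollary~\ref{E:st}) plus a computation of how the candidate operators act on $\os$. The two-stage reduction through stabilizers of simple roots and Lemma~\ref{lem:eltsfixingroot} is unnecessary and can be discarded.

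The genuine gap is your claim that $\xi_a(s).\os = (x_a(s),1)$. This is false when $a$ is a negative real root: for $s\neq 0$ one has $x_a(s)\in U^-$ and $\nu(x_a(s))\neq 1$ (e.g.\ by \eqref{rk1}), so the $\tN$-component of $\xi_a(s).\os$ cannot be $1$. Your justification ``conjugation by $\dw$ keeps it in a unipotent subgroup'' only shows it lands in $U$ or $U^-$, and in the latter case $\nu$ is nontrivial. Even for $a>0$, iterating Proposition~\ref{lam-prop}(4) to reach $\lam(x_a(s))$ is not automatic: that proposition requires $u\in U^{b}$ at each step, and the intermediate roots $s_{b_{j+1}}\cdots s_{b_k}(a_i)$ need not stay positive or avoid the simple root $a_{b_j}$.

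The paper sidesteps this by not computing $\xi_a(s).\os$ at all. Given two choices $(w,i)$ and $(v,j)$, it conjugates both candidate operators by $\lam_v^{-1}$ and compares their action on $\os$. For the $(v,j)$-choice this conjugate is simply $\lam(x_{a_j}(\eta_{v,a_j}s))$, which sends $\os$ to $(x_{a_j}(\eta_{v,a_j}s),1)$ on the nose since $a_j$ is a simple positive root. For the $(w,i)$-choice one computes
\[
\lam_v^{-1}\lam_w\,\lam(x_{a_i}(\eta_{w,a_i}s))\,\lam_w^{-1}\lam_v.\os
= \bigl(\dot v^{-1}\dw\, x_{a_i}(\eta_{w,a_i}s)\,\dw^{-1}\dot v,\;1\bigr),
\]
the key point being that $v^{-1}w(a_i)=v^{-1}(a)=a_j>0$, so after the full conjugation one lands back in $U$ and the $\tN$-component collapses. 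The first component is then $x_{a_j}(\eta_{v,a_j}s)$ by the very consistency of the dual bases $E_a$ in \eqref{Ea}. Thus the trick is to compare the two operators \emph{against each other} rather than against a putative normal form $(x_a(s),1)$; this keeps every root that appears simple and positive, where your reasoning is actually valid.
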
 
\begin{proof} Suppose $v \in W, j \in \Is$ are such that $v (a_j) = a$. We would like to show that \be{un-well} \lam_w \lam(x_{a_i}(\eta_{w, a_i} s)) \lam_w^{-1} = \lam_v \lam(x_{a_j}(\eta_{v, a_j} s)) \lam_v^{-1}. \ee Using Corollary \ref{E:st}, it suffices to verify that \be{un-well:2} \lam_v^{-1} \lam_w \lam(x_{a_i}(\eta_{w, a_i} s)) \lam_w^{-1} \lam_v \os = (x_{a_j}(\eta_{v, a_j}(s), 1). \ee As  $(w^{-1} v)^{-1}(a_i) > 0$,  the left hand side is computed the from definitions to be \be{un-well:3} \lam_v^{-1} \lam_w ( x_{a_i}(\eta_{w, a_i} s) \dw^{-1} \dot{v}, \tw^{-1} \wt{v}) = ( \dot{v}^{-1} \dw x_{a_i}(\eta_{w, a_i} s)  \dw^{-1} \dot{v}, \wt{v}^{-1} \tw \tw^{-1} \wt{v}), \ee which is easily seen to be $(x_{a_j} (\eta_{v, a_j} s), 1).$  \end{proof}  

\begin{nprop} \label{Um:split} The map $\xi: U \rr E$ , $x_{-a}(s) \mapsto \xi_{-a}(s)$ for $a \in R_{re, +}, s \in F$ is a homomorphism (i.e. the cover $E$ splits over $U^-$). \end{nprop}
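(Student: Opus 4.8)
The plan is to show that the map $\xi$ is well-defined on all of $U^-$ and is a homomorphism, by reducing everything to rank-one and rank-two computations inside subgroups of the form $E_J$ with $|J| \leq 2$, where the already-established relations of Proposition \ref{lam-prop} apply. First I would check that for each negative real root $-a$ (with $a \in R_{re,+}$), the assignment $s \mapsto \xi_{-a}(s)$ is a homomorphism of the additive group $F$ into $E$: this is immediate from Lemma \ref{well-def}, since $\xi_{-a}(s) = \lam_w \lam(x_{a_i}(\eta_{w,a_i}s))\lam_w^{-1}$ for a suitable $w$ with $w(a_i)=-a$, and conjugation by the fixed element $\lam_w$ turns the relation \eqref{one-rel} in $\lam(U)$ into the corresponding relation among the $\xi_{-a}(s)$. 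The content of the proposition is therefore the commutation relations between $\xi_{-a}(s)$ and $\xi_{-b}(t)$ for distinct positive real roots $a, b$; I must show these match the defining relations \eqref{kmg:sg_rel1} of the Steinberg functor $\bSt$ (with roots replaced by their negatives), so that $\xi$ factors through the presentation of $U^-$.

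The key reduction is as follows. Given two negative real roots $-a, -b$ forming a prenilpotent pair, the set $[-a,-b]$ is finite and there is $w \in W$ with $w^{-1}([-a,-b]) \subset R_-$, and in fact one can choose $w$ and simple roots $a_i, a_j$ so that $\{-a,-b\}$ and the whole bracket interval lie in the $w$-orbit of the negative roots spanned by $a_i$ and $a_j$; the relevant rank-two root system is then one of $A_1\times A_1$, $A_2$, $B_2$, $G_2$ (finite type, exactly as in the braid-relation reduction in \S\ref{rank2} and in the proof of Proposition \ref{lam-prop}(7)). Conjugating by $\lam_w$, I reduce the desired commutator identity to an identity among the operators $\lam(x_{a_i}(s))$, $\lam(x_{a_j}(t))$ and their images under $\lam_w$-conjugation inside $E_J$ with $J=\{i,j\}$. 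Inside $\bG$ (hence inside $\lam(\bSt(F)) \subset E$ via the honest subgroup $\lam(U)$ and the relations \kmr{4}, \eqref{eta:w}), the corresponding identity among the $x$'s is exactly \eqref{kmg:sg_rel1}; applying $\lam$ (which is a homomorphism on $U$ by Proposition \ref{lam-prop}(2)) and then conjugating back by $\lam_w^{-1}$ gives the required relation among the $\xi_{-c}(\cdot)$, $c \in [-a,-b]$. The signs $\eta_{w,c}$ are handled by the sign rules recorded after \kmr{4} in \S\ref{s:stein-functor}, exactly as in Matsumoto; because the rank-two systems are finite-dimensional, Matsumoto's sign bookkeeping \cite[\S5]{mat} applies verbatim.

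With well-definedness in hand, the homomorphism property is then formal: $U^-$ is the quotient of the free product of the $U_{-a}$, $a \in R_{re,+}$, by the relations \eqref{one-rel} and \eqref{kmg:sg_rel1} (this is the defining presentation of the Steinberg functor applied to the opposite system, which $\bG$ satisfies), and $\xi$ respects all of these relations, so it descends to a homomorphism $U^- \to E$. Finally I would note that $p \circ \xi = \mathrm{id}_{U^-}$: applying $p$ to $\xi_{-a}(s) = \lam_w\lam(x_{a_i}(\eta_{w,a_i}s))\lam_w^{-1}$ and using $p\circ\lam = \varphi$ (with $\varphi(\lam_w) = \dw$, $\varphi(\lam(u))=u$) gives $\dw\, x_{a_i}(\eta_{w,a_i}s)\,\dw^{-1} = x_{-a}(s)$ by \eqref{eta:w}; hence $\xi$ is an honest splitting of $E \to G$ over $U^-$.

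The main obstacle is the sign analysis in the rank-two reduction: one must check that the signs $\eta_{w,c}$ appearing when transporting the commutator relations through $\lam_w$-conjugation are consistent, i.e. that the structure constants $k(-a,-b;-c)$ of the negative root interval transform correctly. This is where Matsumoto's computations \cite[\S5--\S7]{mat} are invoked, and although everything takes place in finite-dimensional root systems (so nothing genuinely new is needed beyond his work), keeping track of the signs across the conjugations is the delicate point; I would organize this by first fixing, once and for all, compatible choices of $e_a \in E_a$ for all $a \in R_{re}$ via the $W^*$-action as in \S\ref{signs}, so that the sign rules (1)--(4) there can be quoted directly.
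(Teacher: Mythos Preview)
Your core idea---choose $w\in W$ so that conjugation by $\lam_w^{-1}$ carries each $\xi_{-c}(s)$ (for $-c$ in the interval $[-a,-b]$) into $\lam(U)$, and then invoke that $\lam|_U$ is an injective homomorphism---is correct and is exactly the mechanism the paper uses. But you wrap it in an unnecessary and incorrect refinement: you assert that one can choose $w$ and \emph{simple} roots $a_i,a_j$ so that $w^{-1}\{-a,-b\}$ lands in the rank-two parabolic generated by $a_i,a_j$, and that this rank-two system is automatically of finite type. Neither claim holds for an arbitrary prenilpotent pair in a general Kac-Moody root system. The finite-type conclusion in the braid-relation argument (Proposition~\ref{lam-prop}(7)) is forced by the hypothesis $h_{ij}<\infty$; a prenilpotent pair carries no such hypothesis. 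What prenilpotence \emph{does} give (see \S\ref{prenilp}) is a single $w$ with $[-a,-b]\subset wR(w^{-1})$, i.e.\ $w^{-1}[-a,-b]\subset R_+$; that suffices, and no rank-two reduction enters.

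The paper's version is also cleaner than yours because it avoids all sign bookkeeping. Instead of verifying \eqref{kmg:sg_rel1} explicitly after conjugation, it argues by injectivity: set $\tU_{-,w}:=\langle \xi_\beta(s):\beta\in wR(w^{-1}),\,s\in F\rangle\subset E$; then $\lam_w^{-1}\tU_{-,w}\lam_w\subset\lam(U)$, and since $p$ is injective on $\lam(U)$ (Proposition~\ref{lam-prop}(2)), it is injective on $\tU_{-,w}$. The relation \eqref{kmg:sg_rel1} already holds in $p(\tU_{-,w})=U_{-,w}\subset G$, hence it lifts uniquely to $\tU_{-,w}$. No appeal to Matsumoto's rank-two sign computations is needed here---those were used to establish the braid relations for the $\lam_a$, not to split $U^-$. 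Once you drop the rank-two claim and replace your explicit sign-tracking by this injectivity step, your argument becomes the paper's.
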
 
\begin{proof} The only relations in $U^-$ are the ones (\ref{kmg:sg_rel1}) with $a, b \in R_{re, -}$ forming a prenilpotent pair (cf. \S \ref{prenilp}). But form the comments in \S \ref{prenilp}, the relations (\ref{kmg:sg_rel1}) holds in the group $U_{-, w}:= \dw U_w \dw^{-1}$ for some $w \in W$. Defining $\tU_{-, w}$ to be the subgroup generated by $\{ \xi_{\beta}(s) \mid \, s \in F \}$ with $\beta \in w R(w^{-1})$ one has $p( \tU_{-, w}) = U_{-, w}$ and it suffices to show that $p$ is injective when restricted to $\tU_{-, w}$. However, this follows easily as $p$ is injective when restricted to $\lam_w^{-1} \tU_{-, w} \lam_w \subset \lam(U).$ \end{proof} 

For each $a \in R_{re}$, let $\tU_a \subset E$ consist of the element $\{ \xi_a(s) \mid s \in F \}.$ Set now: \be{B:def}\begin{array}{lcccr} \tB:= \lam(U) \rtimes \lam(\tH), &  \tB^-:= \xi(U^-) \rtimes \lam(\tH), &  \tB_a:= \tU_a \rtimes \lam(\tH), &  \text{ and } \tG_a:= \la \tB_a, \tB_{-a} \ra \text{ for } a \in R_{re}  \end{array} \ee  

\begin{ncor} \label{titsax:met} The family $(E, (\tB_a)_{a \in R_{re}})$ satisfies the axioms \rd{1} - \rd{5}. \end{ncor}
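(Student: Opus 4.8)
\textbf{Proof plan for Corollary \ref{titsax:met}.}
The strategy is to verify each axiom \rd{1}--\rd{5} for the data $(E,(\tB_a)_{a\in R_{re}})$ by transporting the corresponding known statement for $(G,(B_a)_{a\in R_{re}})$ (Theorem \ref{bruhat-gen}, which applies by the remark at the end of \S\ref{s:axioms}) through the projection $p\colon E\rr G$, using that $p$ is a central extension with kernel $\lam(A)$ (Corollary \ref{p:ker}) and that $p$ is injective on each unipotent piece (Proposition \ref{Um:split} and the injectivity of $\lam$ on $\tH$ and on $\tN$). First I would record the structural facts that the proof rests on: $p$ carries $\tB_a$ onto $B_a$, $\tB$ onto $B$, $\tB^-$ onto $B^-$, $\tU_a$ onto $U_a$, and $\tN:=\lam(\tN)$ onto $N$; moreover $p$ restricted to $\tU_a$, to $\lam(\tH)$, and to $\tB_a=\tU_a\rtimes\lam(\tH)$ is injective, since $A\cap\tB_a$ is trivial (an element of $\lam(A)$ lying in $\tU_a\rtimes\lam(\tH)$ must be trivial because $\lam$ is injective on $\tH\supset A$ and $\xi$ is injective on $U^-$, while $A\cap\lam(U)=1$ by injectivity of $\lam$ on $U$). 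Note also that $E$ is generated by the $\tB_a$ because it is generated by $\lam(U),\lam(\tH)$ and the $\lam_a=\xi_{a}(\cdot)$-type operators, all of which lie in $\la\tB_a\mid a\in R_{re}\ra$ once one observes $\lam_a\in\tG_{a}$ via $\lam_a=\xi_{a}(\text{something})\,\lam(\h)\,\xi_{-a}(\cdots)$ coming from the rank one identities \eqref{rk1} lifted through $\lam$.

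\rd{1}: for each $i$, the element $\lam_{a_i}\in\tG_{a_i}$ conjugates $\tB_b$ to $\tB_{w_ib}$ for all $b\in R_{re}$. This follows from Lemma \ref{well-def} and the definition \eqref{xi:a}: conjugation by $\lam_{a_i}$ sends $\xi_b(s)$ to $\xi_{w_ib}(\pm s)$ (the sign from \eqref{eta:w}), and it normalizes $\lam(\tH)$ by Proposition \ref{lam-prop}(6), hence sends $\tB_b=\tU_b\rtimes\lam(\tH)$ to $\tB_{w_ib}$. Thus $\dw_i$ in \rd{1} may be taken to be $\lam_{a_i}$, and the associated group $\tN$ generated by these together with $\lam(\tH)$ is exactly $\lam(\tN)$. \rd{2}: the decomposition $\tB_a=\tU_a\ltimes\lam(\tH)$ is built into \eqref{B:def}; that $\lam(\tN)$ permutes the $\tU_a$ under conjugation is again Lemma \ref{well-def}/\rd{1}; and the commutator relation for a prenilpotent pair $\{a,b\}$ is inherited from $G$ because $p$ is injective on the relevant nilpotent unipotent subgroup $\tU_{[a,b]}$ (exactly as in the proof of Proposition \ref{Um:split}, conjugating into $\lam(U)$ by a suitable $\lam_w$ where $p$ is injective). \rd{3}: $\tB_{a_i}\cap\tB_{-a_i}=\lam(\tH)$: apply $p$, use $B_{a_i}\cap B_{-a_i}=H$ (which holds for $G$ by \rd{3} there), and lift; an element of the intersection maps into $H$, hence (modulo $\lam(A)$, which lies in $\lam(\tH)$) lies in $\lam(\tH)$, and conversely $\lam(\tH)\subset\tB_{\pm a_i}$. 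Here one uses that $\lam(\tH)$ is the full preimage $p^{-1}(H)\cap(\tB_{a_i}\cdot\lam(A))$, which holds since $p$ is injective on $\tU_{a_i}\rtimes\lam(\tH)$.

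\rd{4}: $\tB_{a_i}\backslash\tG_{a_i}/\tB_{a_i}$ has two elements. Since $p$ maps $\tG_{a_i}$ onto $G_{a_i}$ and $\tB_{a_i}$ onto $B_{a_i}$ with $\ker p=\lam(A)$ central and contained in $\lam(\tH)\subset\tB_{a_i}$, the double coset spaces are in bijection: $\tB_{a_i}\backslash\tG_{a_i}/\tB_{a_i}\xrightarrow{\sim}B_{a_i}\backslash G_{a_i}/B_{a_i}$, and the target has two elements by \rd{4} for $G$. \rd{5}: $\tB_{a_i}\not\subset\tB^-$ and $\tB_{-a_i}\not\subset\tB$: apply $p$ and use $B_{a_i}\not\subset B^-$, $B_{-a_i}\not\subset B$ for $G$; since $p(\tB_{a_i})=B_{a_i}$ and $p(\tB^-)=B^-$, a containment upstairs would push down to one downstairs. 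The main obstacle I anticipate is not any single axiom but the bookkeeping needed to be sure the various preimages behave correctly --- specifically, checking in \rd{3} and \rd{4} that $\lam(A)\subset\lam(\tH)$ sits inside all the relevant subgroups so that the central quotient by $\lam(A)$ identifies the intersections/double cosets cleanly, and in \rd{2} that the nilpotent unipotent subgroup over which one needs injectivity of $p$ is indeed conjugate into $\lam(U)$ by an element of $\lam(\tN)$. All of these are routine given Proposition \ref{lam-prop}, Corollary \ref{p:ker}, Corollary \ref{E:st}, Lemma \ref{well-def}, and Proposition \ref{Um:split}, so the corollary reduces to assembling these pieces.
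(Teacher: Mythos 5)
Your proposal is correct and follows exactly the route the paper intends (the paper suppresses the verification, noting only that it is "straightforward using the simple transitivity and the corresponding properties of $G$"): each axiom is transported from $(G,(B_a))$ through the central extension $p\colon E\rr G$, using Corollary \ref{p:ker}, the injectivity of $p$ on the unipotent and torus pieces, Lemma \ref{well-def}, and simple transitivity (which, via \eqref{twa:K}, identifies $\lam_a$ with $\xi_a(-1)\xi_{-a}(1)\xi_a(-1)\in\tG_a$). No substantive gap; the details you flag as bookkeeping are indeed routine.
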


We suppress the proofs, as they are straightforward using the simple transitivity and the corresponding properties of $G$. 

\section{Unramified Whittaker functions on Metaplectic Covering groups}

In this section, we specialize the construction of the previous one to define $n$-fold metaplectic covers of $\bG(\mc{K}),$ where $\mc{K}$ is a non-archimedean local field. The notation in \S \ref{p-adic} for local fields will be fixed throughout here.  We establish some basic structural properties of metaplectic covers in \S \ref{s:struc}, develop the notions of unramified Whittaker and Iwahori-Whittaker functions in \S \ref{s:whit}, and finally present our generalization of the Casselman-Shalika formula for unramified Whittaker functions in \S \ref{s:CS}. 

Throughout this section we impose the condition $q \equiv 1 \mod 2n$ where $q$ is the size of the residue field. See \S \ref{s:HS} and \S \ref{p-spe} for comments on the implications of this assumption. We use it to ensure the splitting of the integral subgroup of the torus. 

\subsection{Structure of covers over a local field} \label{s:struc}

\tpoint{Construction of the group $\tG$} Let $(I, \cdot, \D)$ be a simply-connected root datum and let $\bG$ be the associated Tits functor. We refer to $G:= \bG(\mc{K})$ as the ``$p$-adic'' Kac-Moody group (instead of the perhaps more correct ``Kac-Moody group over a non-archimedean local field'').  Fix a metaplectic structure $(\Qs, n)$ on $\mf{D}$ and then construct the metaplectic root datum as in \S \ref{s:met-rts}.  We write $\mf{D} = (\Lv, \{ \av_i \}_{i \in \Is} , \Lambda, \{ a_i \}_{i \in \Is} )$ and $\wt{\D}= (\tLv, \{ \tav_i \}, \tLam, \{ \ta_i \}).$ Let $(\cdot, \cdot)_n: \mc{K}^* \times \mc{K}^* \rr \bmu_n$ be the $n$-th order Hilbert symbol (cf. \S \ref{s:HS}), which is also a bilinear Steinberg symbol, and use it to construct a central extension as in \S \ref{sec:cover1}. We denote the extension by $\tG$ (as opposed to $E$) from now on. Recall also  that the structure theory of $\tG$ (e.g. the Bruhat/Birkhoff decompositions) are made possible via Proposition \ref{titsax:met}. Furthermore, our extension $\tG$ splits over $U$ and $U^-$ with a given splitting (cf. \ref{Um:split}). We shall just write $U,\ U^-$ for the corresponding subgroup of $\tG$ here. The Weyl group for $\tG$, i.e. $\tN / \tH$ will be denoted by $W$ (in agreement with Lemma \ref{lam:W}), and for each $w \in W$ we let $\tw$ be  element in $\tG$ constructed as a product of the elements $\lam_{a} \, (a \in \Pi)$ as in (\ref{lam:w}). In place of $\lam(\h)$ with $\h \in \tH$ we shall just write $\h$ here. We keep the notation for unipotent elements from (\ref{xi:a}), i.e. they are denoted as $\xi_a(s)$ with $a \in R_{re}$ and $s \in F$.

\tpoint{On $\tH$ and its abelian subgroups} We maintain the notation from \S \ref{more-sc-not} on simply connected root datum and fix both a basis $\av_i \, (i \in \Is_e)$ for $\Pv_e$ as well as an ordering on $\Is_e$ from now on. From Lemma \ref{H:straight}, every $t \in H$ may be written uniquely as $t= \zeta \, \prod_{i \in \Is_e} \h_{a_i}(s_i)$ with $s_i \in K^*$ and $\zeta \in \bmu_n$ . For $\lv \in \Lv,$ written in terms of the basis $\av_i (i \in \Is_e)$ as $\lv = \sum_{i \in \Is_e} c_i \av_i$, we define (always with respect to a fixed order on $\Is_e$) the elements in $\tH$ \be{p:lv} \pi^{\lv}:= \prod_{i \in \Is_e} \h_{a_i}(\pi^{c_i}). \ee We are interested now in certain abelian subgroups of $\tH$, and first define $\tH_{\O}$ to be the subgroup generated by the elements $h_{a_i}(s)$ with $s \in \O^*,$ $i \in \Is_e.$ Under the assumption that $q \equiv 1 \mod 2n$, it follows from the relations $\rh{1}$, $\rh{2},$ and \eqref{q:2} that $\tH_{\O}$ is in fact an abelian group.  We can also define the group $\mathtt{T}:= C_{\tH}(\tH_{\O})$ which is the centralizer of $\tH_{\O}$ in $\tH$. The same proof as in \cite[Lemma 5.3]{macn:ps} shows that $\mathtt{T}$ is a maximal abelian subgroup of $\tH$. Also note that we have a natural isomorphism \be{tH:Lv} \tH_{\O} \setminus \tH \stackrel{\sim}{\rr} \bmu \times \Lv  .\ee 

\tpoint{Iwahori and ``maximal compact'' subgroups} Let $K \subset G$ be the subgroup generated by elements $x_a(s)$ with $a \in R_{re},$ $s \in \O.$ We extend $\varpi: \O \rr \kappa$ to a map also denoted as $\varpi:K \rr \bG(\kappa)$ where $\kappa$ is the residue field of $\mc{K}$. The preimage in $G$ of $B(\kappa)$ and $B^-(\kappa)$ under this map will be denoted by $I$ and $I^-$ respectively. These are the \emph{Iwahori} subgroups of $G$. Note also that for each $w \in W$, the elements $\dw$ lie in $K$  (cf. \eqref{dw}). One then has the following (disjoint) decompositions of Iwahori-Matsumoto type \be{iwa-mat:K} K = \bigsqcup_{w \in W} I \, \dw \, I = \bigsqcup_{w \in W}  I^- \, \dw \, I^- = \bigsqcup_{w \in W} I^- \, \dw \, I^-, \ee  which follow from the arguments as in \cite[Proposition 2.4]{iwa:mat} combined with (\ref{G:w}). 

Let now $\tI \subset \tG$ be the subgroup generated by the following elements: \be{Ip:gen}\begin{array}{lcr}  \xi_a(s), s \in \O, a \in R_{+, re}, & \xi_{-a}(t), \, a \in R_{re, +}, \, t \in \pi \O & \text{ and } \tH_{\O}. \end{array} \ee  Writing\footnote{Note that $\tI_-$ is not the group $\tI^-$ introduced below and which denotes the Iwahori defined with respect to the opposite Borel. As we shall not use $\tI_-$ in the sequel, we hope this notation does not cause any confusion.} $\tI_+:= \tI \cap U$ and $\tI_-:= \tI \cap U^-$ one can show as in \cite[Theorem 2.5]{iwa:mat} that  \be{iwa:dec} \tI = \tI_+ \, \tH_{\O} \, \tI_-.  \ee We shall suppress the argument, but just note that it involves two parts: a reduction to rank $1$ (this can be accomplished using Proposition \ref{lam-prop}), and a direct rank 1 computation argument.

\renewcommand{\ti}{\widetilde{i}}

\begin{nlem} \label{iwa-split} The cover $p$ splits over $I$, i.e. $p|_{\tI}: \tI \rr I$ is an isomorphism.  \end{nlem}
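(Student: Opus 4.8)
The claim is that the covering map $p \colon \tI \to I$ is an isomorphism; since $p$ is surjective by construction and restricts to a homomorphism, it suffices to prove injectivity, i.e.\ that $\ker(p) \cap \tI = \{1\}$ (equivalently that the central $\bmu_n = \lam(A)$ does not meet $\tI$). The strategy is to use the triangular decompositions on both sides to reduce everything to the torus part. On the group side we have $I = I_+ \, H_\O \, I_-$ (the Iwahori factorization, from the cited analogue of \cite[Theorem 2.5]{iwa:mat} applied to $G$), and on the cover side we have just established $\tI = \tI_+ \, \tH_\O \, \tI_-$ in \eqref{iwa:dec}. Because $p$ is already known to be injective on $U$ and $U^-$ — indeed $\tG$ splits over these subgroups by Proposition \ref{Um:split}, and $\tI_\pm$ are subgroups of $U$, $U^-$ respectively via those splittings — the only place injectivity could fail is on the torus factor $\tH_\O$.

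\textbf{Key steps, in order.} First I would record that $p(\tI_+) = I_+$, $p(\tI_-) = I_-$, and $p(\tH_\O) = H_\O$, so that $p$ carries the factorization \eqref{iwa:dec} onto the factorization $I = I_+ H_\O I_-$; this is immediate from the definitions of the generators in \eqref{Ip:gen} together with the definition of $I$. Second, I would show that $p$ restricted to $\tH_\O$ is injective. This is exactly where the standing hypothesis $q \equiv 1 \bmod 2n$ enters: from \eqref{q:2} we have $(\pi,\pi)_n = 1$ and $(\pi, u)_n = \varpi(u)^{(q-1)/n}$ for $u \in \O^*$, and since the Hilbert symbol is unramified in the tame case (cf.\ \S\ref{s:HS}), we get $(s,t)_n = 1$ for all $s, t \in \O^*$. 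Hence the cocycle defining $\tH$ via $\rh{1}$ and $\rh{2}$ is trivial on $\O^* \times \O^*$, so the central extension $\bmu_n \to \tH_\O \to H_\O$ is split; more precisely, using Lemma \ref{H:straight} every element of $\tH_\O$ is uniquely $\zeta \prod_{i} \h_{a_i}(s_i)$ with $s_i \in \O^*$ and the relations force the product of $\h_{a_i}(s_i)$ to faithfully mirror $\prod h_{a_i}(s_i) \in H_\O$, so $p(x) = 1$ with $x \in \tH_\O$ forces $\zeta = 1$, $x = 1$. Third, I would assemble the pieces: take $\ti \in \tI$ with $p(\ti) = 1$, write $\ti = u_+ \, h \, u_-$ with $u_\pm \in \tI_\pm$ and $h \in \tH_\O$; applying $p$ gives $1 = p(u_+) p(h) p(u_-)$ in $I = I_+ H_\O I_-$, and by the uniqueness of the Iwahori factorization of $G$ (which itself follows from the disjointness of the Birkhoff-type decomposition, cf.\ Theorem \ref{bruhat-gen}) each factor is trivial: $p(u_+) = 1$, $p(h) = 1$, $p(u_-) = 1$. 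By injectivity of $p$ on $U$, $U^-$ (splitting) and on $\tH_\O$ (step two), we conclude $u_+ = h = u_- = 1$, hence $\ti = 1$.

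\textbf{Main obstacle.} The step requiring the most care is the injectivity of $p$ on $\tH_\O$, together with ensuring that the factorization \eqref{iwa:dec} is compatible with the ambient splittings over $U$ and $U^-$ so that the factors $u_\pm$ genuinely lie in the split copies of $U_\pm$ inside $\tG$ (not merely set-theoretic lifts). This is really a bookkeeping matter: one must check that the generators $\xi_a(s)$, $s \in \O$, $a \in R_{+,re}$, and $\xi_{-a}(t)$, $t \in \pi\O$, listed in \eqref{Ip:gen} indeed generate subgroups of the split images $\xi(U), \xi(U^-)$ of Proposition \ref{Um:split}, which follows because the splitting $\xi$ is defined uniformly on all root groups. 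Everything else — surjectivity of $p$, the triangular decompositions, the vanishing of the tame symbol on units — is either already in the excerpt or an immediate consequence of it. One should also note explicitly that the uniqueness in the factorization of $\tI$ is not needed; only uniqueness of the factorization of $I$ downstairs is used, and that is standard.
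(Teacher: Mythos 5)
Your argument is correct and follows essentially the same route as the paper: both use the factorization $\tI = \tI_+\,\tH_{\O}\,\tI_-$ from \eqref{iwa:dec}, push it down via $p$, invoke the uniqueness of the corresponding factorization of $I$ inside $G$ to kill each factor separately, and then use the splittings over $U$, $U^-$, and $\tH_{\O}$ to lift the triviality back up (the paper phrases the last step via simple transitivity of $E$ on $S$, which is equivalent to your direct appeal to injectivity on each factor). Your explicit verification that the tame Hilbert symbol vanishes on $\O^*\times\O^*$, so that $p|_{\tH_{\O}}$ is injective, is a detail the paper leaves implicit but is exactly the right justification.
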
 
\begin{proof} Let $x \in \tI$ and use (\ref{iwa:dec}) we write $x = \ti_+ \h_{\O} \ti_- \in \tI$ with $\ti_+ \in \tI_{+}, \ti_- \in \tI_-$ and $\h_{\O} \in \tH_{\O}.$ Write $i_+, i_-$, and $h_{\O}$ for the images of these elements in $G$ under $p.$ By definition (recall the setup of \S \ref{sec:cover1}) we have $x.\os= (i_+ h_{\O} i_-, \h_{\O} \nt)$ with $\ti^-.\os= (i_-, \nt)$ we must have $i_-=h_{\O}=i_+=1$. If $p(x)=1$ then $i_+h_{\O}i_- =1$ which forces $i_+=i_-=h_{\O}=1$. As we have already seen that $p$ splits over both $U^-$ (and hence $\tI_-$) and $\tH_{\O}$, we must have $\tn=1$ and $\h_{\O}=1.$ Thus $x.\os=\os$ and by the simple transitivity $x=1.$ 
\end{proof}  

Let $\tK \subset \tG$ be the subgroup generated by elements $\xi_a(s)$ with $s \in \O,$ $a \in R_{re}$. Note that for each $w \in W$ we have $\tw \in K:$ indeed, this follows from the simple transitivity (cf. Corollary \ref{E:st}) and the fact that for $a \in \Pi,$ we have  \be{twa:K} \xi_{a}(-1) \xi_{-a}(1) \xi_{a}(-1) \os = (\dw_a, \tw_a). \ee The last equation is verified directly from the definitions. Now one can show (again as in \cite{iwa:mat}) \be{K:I} \tK = \bigsqcup_{w \in W} \tI \, \tw \, \tI. \ee Moreover, $p(\tI \, \tw \, \tI) =I \, \dw \, I$. Thus if $x \in \tK$ is such that $p(x)=1$ then $x \in \tI$ and, from the previous paragraph, in fact $x=1$. Thus $p: \tK \rr K$ is an isomorphism. Similarly we can define the Iwahori subgroup $\tI^-$ and decompositions similar to (\ref{iwa-mat:K}) follow immediately. 

\tpoint{Iwasawa decomposition} The Iwasawa decomposition for $G$ is well-known (cf. \cite[\S 3.2]{bkp} for the affine case; the statement is similar for general Kac-Moody groups).  As for $\tG,$ we have the following. 

\begin{nprop} \label{iwa:dec} Every $g \in \tG$ has an Iwasawa decomposition $g = k a u$ with $k \in \tK, \, u \in \tU$, and $a \in \tH$ as well an opposite Iwasawa decomposition $g = k a' u^-$ with $k \in \tK, u \in \tU^-$ and $a' \in \tH.$ Moreover the classes of $a$ and $a'$ in $\tH_{\O} \setminus \tH $ are unique in any such decomposition. \end{nprop}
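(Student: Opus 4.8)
The plan is to reduce the metaplectic Iwasawa decomposition to the one already known for the underlying group $G=\bG(\mc{K})$, using the section-theoretic description of $\tG$ as the group $E$ of operators on $S=G\times_N\tN$ and the splitting of the cover over $U$, $U^-$, and $\tK$. For the existence statement: given $g\in\tG$, set $\bar{g}:=p(g)\in G$ and apply the classical Iwasawa decomposition $\bar{g}=k_0 a_0 u_0$ with $k_0\in K$, $a_0\in H$, $u_0\in U$. Using the splittings $\xi:U^-\hookrightarrow\tG$ (Proposition \ref{Um:split}), $\lam:U\hookrightarrow E$ (the definition of $E$), and the isomorphism $p|_{\tK}:\tK\rr K$ established just above, I would lift $k_0$ to $k\in\tK$ and $u_0$ to $u\in\tU$; then $g$ and $k\,u$ have the same image in $G$, so $g=k\,a\,u$ for a unique $a\in\ker(p)\cdot(\text{lift of }a_0)$, i.e. $a\in\tH$ with $p(a)=a_0$ up to the central $\bmu_n$. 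Here one uses that $\lam:\tH\rr E$ is injective and $\ker(p)=\lam(A)=\bmu_n\subset\tH$ (Corollary \ref{p:ker}), so indeed $a\in\tH$. The opposite decomposition is identical, using the opposite Iwahori/Iwasawa decomposition for $G$ and the splitting over $U^-$.

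For the uniqueness of the class of $a$ in $\tH_{\O}\setminus\tH$: suppose $k a u = k' a' u'$ in $\tG$ with $k,k'\in\tK$, $a,a'\in\tH$, $u,u'\in\tU$. Projecting to $G$ and invoking uniqueness of the class of $p(a)$ in $H_{\O}\setminus H$ for the classical group (which in turn follows from $K\cap B = H_{\O}\ltimes U$, a standard consequence of the Iwahori factorization \eqref{iwa:dec} together with \eqref{iwa-mat:K}), we get $p(a)^{-1}p(a')\in H_{\O}$; hence $a^{-1}a'\in\tH$ maps into $H_{\O}$, i.e. $a^{-1}a'\in p^{-1}(H_{\O})\cap\tH$. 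Since $p$ splits over $\tH_{\O}$ (this is where $q\equiv 1\bmod 2n$ enters: by \eqref{q:2} and relations $\rh{1}$, $\rh{2}$, $\tH_{\O}$ is abelian and the central $\bmu_n$ meets it trivially, so $p|_{\tH_{\O}}:\tH_{\O}\rr H_{\O}$ is an isomorphism), the preimage $p^{-1}(H_{\O})\cap\tH$ equals $\tH_{\O}$. Therefore $a^{-1}a'\in\tH_{\O}$, giving equality of classes in $\tH_{\O}\setminus\tH$.

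I expect the main obstacle to be bookkeeping rather than conceptual: one must make sure that the various lifts ($k$ from $k_0$, $u$ from $u_0$) are compatible, i.e. that after lifting $k$ and $u$ the leftover factor genuinely lies in $\tH$ and not merely in some larger coset — this is exactly where Corollary \ref{p:ker} (the kernel of $p$ is the central $\lam(A)\subset\lam(\tH)$) is used, and it is worth stating explicitly that $\ker(p)\subset\tH$ so that $a$ is well-defined in $\tH$. A secondary point needing care is the precise form of the classical fact $K\cap B=\tH_{\O}U$ (equivalently, the uniqueness of the torus class in the Iwasawa decomposition for $G$); for affine groups this is in \cite[\S 3.2]{bkp}, and for general Kac-Moody groups it follows formally from the Iwahori-Matsumoto decomposition \eqref{iwa-mat:K} and the Iwahori factorization. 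I would cite these and not reprove them. Finally, I should remark that $\tU$ and $\tU^-$ here denote the images of the fixed splittings, so "$u\in\tU$" is unambiguous; with that convention in place the argument is a clean transfer of the group-level statement through the central extension.
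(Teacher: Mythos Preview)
Your existence argument is correct (modulo the slip ``$g$ and $ku$ have the same image in $G$'': you mean $g$ and $k\hat a_0 u$ for a chosen lift $\hat a_0\in\tH$ of $a_0$) and takes a different route from the paper. The paper works intrinsically in $\tG$: since $(\tB,\tN)$ and $(\tB^-,\tN)$ are Tits systems by Corollary~\ref{titsax:met}, Steinberg's classical BN-pair argument applies directly, the only extra input being a rank-one statement that for each $a\in\Pi$ there is $Y_a\subset\tK$ with $\tB\setminus\tB\,\w_a\,\tB=\tB\setminus\tB\,Y_a$. Your lifting approach is more elementary in that it reuses the known Iwasawa decomposition for $G$ together with the splittings over $\tK$, $\tU$, $\tU^-$; the paper's is more self-contained.

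Your uniqueness argument, however, has a genuine gap. From $p(a^{-1}a')\in H_\O$ you obtain $a^{-1}a'\in p^{-1}(H_\O)\cap\tH$, but this preimage is $\bmu_n\cdot\tH_\O$, \emph{not} $\tH_\O$: the very fact you cite---that $\bmu_n$ meets $\tH_\O$ trivially---says precisely that $\bmu_n\cdot\tH_\O\supsetneq\tH_\O$ when $n>1$. A section over $H_\O$ does not force the full preimage to coincide with the image of the section; projecting to $G$ alone cannot kill the central $\bmu_n$-ambiguity. What is missing is the paper's key observation $\tK\cap\tH=\tH_\O$. In your setup it enters as follows: from $kau=k'a'u'$ one has $(k')^{-1}k=v\,(a'a^{-1})$ with $v=a'(u'u^{-1})(a')^{-1}\in\tU$; projecting and using the classical structure of $K\cap B$ (your ``$K\cap B=H_\O\ltimes U$'' should read the $\O$-points of $U$, not all of $U$) gives $p(v)$ integral, hence $v\in\tI_+\subset\tK$ since $p|_{\tU}$ is an isomorphism. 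Then $a'a^{-1}=v^{-1}(k')^{-1}k\in\tK\cap\tH$. Writing $a'a^{-1}=\zeta h_0$ with $\zeta\in\bmu_n$, $h_0\in\tH_\O\subset\tK$, one gets $\zeta\in\tK\cap\bmu_n=\{1\}$ by the injectivity of $p|_{\tK}$ that you already used for existence. This closes the gap and recovers exactly the fact the paper singles out.
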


The existence of these decompositions follows immediately from the Bruhat decompositions (\ref{G:w}) together with the classical argument of Steinberg \cite[Ch. 8]{stein:chev} which relies on the Tits axioms (i.e. the ordinary $BN$-pair axioms) for $(\tB, \tN)$ and $(\tB^-, \tN)$  as well as a rank one result of the following form: for each $a \in \Pi$, there exists a set $Y_a \subset \tK$ such that $\tB \setminus \tB \, \w_a \, \tB = \tB \setminus \tB Y_a.$ A similar approach establishes the Iwasawa decomposition with respect to the opposite Borel, and uniqueness in both cases follows along the same lines as in the classical case (note that we have $\tK \cap \tH = \tH_{\O}$). 

We now wish to establish some further notation for these decompositions: for $g \in \tG$ written (non-uniquely) as $g = k a u$, we denote the class of $a$ in $\tH_{\O} \setminus \tH$ by $\iw_{\tA}(g).$ Using (\ref{tH:Lv}), we set \be{log:cen:g} \begin{array}{lcr} \ln (g):= \ln(\iw_{\tA}(g)) \in \Lv & \text{ and } & \z(g) :=  \z(\iw_{\tA}(g)) \in \bmu \end{array} \ee so that image of $\iw_{\tA}(g)$ in $\Lv \times \bmu_n$ is $(\ln(g), \z(g)).$  

\tpoint{Some finiteness results} We would next like to state the finiteness results which are necessary for the formulation of the Whittaker function.

\renewcommand{\th}{\widetilde{h}}

\begin{nthm} \label{thm:fin} The following sets are finite \begin{enumerate}[(1)] \item $K \setminus K h U \cap K h' U^-$ where $h, h' \in H$
\item $\tK \setminus \tK \, \th \, \tU \cap K \, \th' \, \tU^-$ where $\th, \th' \in \tH$ \end{enumerate} \end{nthm}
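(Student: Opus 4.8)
The plan is to deduce part (2) from part (1), so the essential content is the finiteness statement for the linear group $G$, which in turn should be reduced to H\'ebert's finiteness theorem (building on Gaussent--Rousseau) cited in the introduction. First I would treat part (1). Writing the Iwasawa decompositions $g = k h u$ and $g = k' h' u^-$, the double coset space $K \backslash KhU \cap Kh'U^-$ parametrizes the set of $K$-orbits on the intersection of a ``positive'' and a ``negative'' horosphere, and the key point is that this is exactly the kind of set controlled by the Gaussent--Rousseau--H\'ebert finiteness results for the masure (hovel) of $G$: the intersection of a positive and a negative ``sector-germ'' type subset is contained in a bounded region, and $K$ (the stabilizer of the base point) acts on it with finitely many orbits. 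Concretely, one reduces to showing that for fixed $h, h'$ only finitely many $w \in W$ have $KhU \cap Kh'U^- \cap K\dot{w}I \neq \emptyset$, and within each such Bruhat cell the set of relevant $I$-orbits (equivalently, coordinates on $U_w$ modulo $\O$) is finite; this is precisely the statement proven in \emph{op. cit.}, so I would invoke it rather than reprove it. (This matches the role played in \cite{pat:whit} by the main result of \cite{bgkp}, now available in the generality we need.)

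Next I would deduce part (2) from part (1) using the splitting of the cover over $U$, $U^-$, and $\tK$. The map $p : \tG \rr G$ restricts to an isomorphism on $\tK$ (Lemma \ref{iwa-split} and the subsequent discussion), and it carries $\tU \rr U$, $\tU^- \rr U^-$ isomorphically as well, since our chosen splittings $\xi$ were used to define all three subgroups. Therefore $p$ induces a surjection
\be{fin:surj}
\tK \backslash \tK \, \th \, \tU \cap \tK \, \th' \, \tU^- \ \longrightarrow \ K \backslash K \, p(\th) \, U \cap K \, p(\th') \, U^-.
\ee
The fibres of this surjection are controlled by $\ker(p) = \bmu_n$, which is finite (here we use $|\bmu_n| = n$): if $\tk_1 \th \tu_1 = \zeta \, \tk_2 \th \tu_2$ with $\zeta \in \bmu_n$ central and $p(\tk_1\th\tu_1) = p(\tk_2\th\tu_2)$, then absorbing $\zeta$ into the $\tK$-factor (possible since $\bmu_n \subset \tH_{\O} \subset \tK$) shows the fibre over a given point of the target has size at most $n$. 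Combined with the finiteness of the target from part (1), this gives finiteness of the source, which is exactly part (2).

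I expect the main obstacle to be part (1): pinning down precisely which statement in H\'ebert's work (or in Gaussent--Rousseau \cite{gau:rou:sph}) to cite and checking that its hypotheses apply to the general Kac-Moody group $G$ in our normalization --- in particular that the ``masure'' machinery is set up for the Tits functor over $\mc{K}$ with our choice of root datum, and that the double-coset finiteness in the form stated (intersection of a $KhU$ with a $Kh'U^-$, modulo $K$) is literally what is proven there, as opposed to a closely related but formally different statement about retractions or about $\mathbb{Z}$-valued functions on the building. The reduction (2)$\Rightarrow$from (1) is routine once the splittings are in hand, and I would not belabour the bookkeeping with $\bmu_n$; the real work is isolating and invoking the correct external finiteness input.
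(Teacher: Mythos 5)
Your proposal matches the paper's treatment: part (1) is simply the finiteness theorem of H\'ebert \cite[Theorem 5.6]{heb} (generalizing \cite[Theorem 1.9]{bgkp} from the untwisted affine case), and part (2) is deduced from it via the splittings of $p$ over $\tK$, $\tU$, $\tU^-$ together with finiteness of $\ker(p)=\bmu_n$. One small caveat: $\bmu_n$ is \emph{not} contained in $\tK$ (indeed $p|_{\tK}$ is an isomorphism, so $\tK\cap\bmu_n=1$), so you cannot "absorb $\zeta$ into the $\tK$-factor"; but the conclusion stands since the fibres of the induced map on double cosets are orbits of the finite group $\bmu_n$, hence of size at most $n$.
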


The first result was proven for (untwisted) affine Kac-Moody groups in \cite[Theorem 1.9]{bgkp} and more generally by H\'ebert in \cite[Theorem 5.6]{heb}. The second finiteness follows immediately from the first.

\subsection{Whittaker and Iwahori-Whittaker functions} \label{s:whit}

Most of this section is not absolutely necessary to understand the Casselman-Shalika formula presented in the next one. We include it to cast our results in a more familiar representation theoretic framework, though one needs Conjecture \ref{whit-dim-conj} to make the full link. In any case, we do not use this Conjecture in this work and the reader may prefer directly to skip to \S \ref{avg:op} where the actual definition of the Whittaker function used in this paper is defined.

\spoint \label{prin:ch} Recall that in \S \ref{s:GS} we have chosen an additive character $\psi: \mc{K} \rr \C^*$ with conductor $\O$ to define the Gauss sums in \S \ref{s:GS}. We extend $\psi$ to a \emph{principal} character $U^- \rr \C^*$ as follows: first for each $a \in \Pi$ we can use $\psi$ to define a map from $\tU_{-a} \rr \C^*, \, \xi_{-a}(s) \mapsto \psi(s)$; then, we take the product of these to define a map $\prod_{a \in \Pi} \tU_{-a} \rr \C^*$; and finally, we use the natural isomorphism $\tU^- / [\tU^-, \tU^-] \rr \prod_{a \in \Pi} \tU_{-a}$ to define the map $\psi$ on $U^-$.

\newcommand{\M}{M^{\epsilon}}
\spoint  \label{prin-ser} Throughout this section, we fix an embedding $\epsilon: \bmu \rr \C^*$. A function $f: \tG \rr X$ where $X$ is a $\C$-vector space will be called \emph{$\epsilon$-genuine} if \be{ep-gen} f (\zeta g) = \epsilon(\zeta) f(g) \text{ where } \zeta \in \bmu, g \in \tG. \ee Let $\M(\tG)$ be the vector space of $\epsilon$-genuine functions $f:\tG \rr \C$ such that \be{M:def} f( g u a_{\O}  ) = f(g) \text{ for } a_{\O} \in \tH_{\O}, u \in U, g \in \tG. \ee  The space $\M(\tG)$ carries an action of $\tG$ by left translation denoted $g.f(x) = f(g^{-1}x)$ with $g,\ x \in \tG$ and an action on the right by $\C[\tLv]$ written as follows: for $\mv \in \tLv$ and $f \in \M(\tG)$ 
\be{act:f} (e^{\mv} \circ f) (g)  = q^{  - \la \mv, \rho \ra} f(  g \pi^{\mv}). \ee The same expression will also be used when $\mv \in \Lv$ as well. One uses here the fact that $\mathtt{T}$ is an abelian subgroup of $\tH$. Note that these two actions commute. Also, for a subgroup $V \subset \tG$ we write $\M(\tG, V)$ for the subspace of functions which are left $V$-invariant.

\newcommand{\Ie}{\mathtt{I}^{\epsilon}}

\spoint Consider now the vector space $\Ie$ of $\epsilon$-genuine functions $F: \tG \rr \C[\Lv]$ satisfying the condition  
\be{prin:ind} F ( \zeta g  a_{\O} \pi^{\mv}  u )  = q^{ \la  \mv, \rho \ra}  \epsilon(\zeta) \, e^{ - \mv} \, F(g) \text { for } \zeta \in \bmu_n, a_{\O} \in \tH_{\O}, \mv \in \tLv, u \in U,  g \in \tG,  \ee where the action of $e^{-\mv}$ on $\C[\Lv]$ is by usual multiplication in the group algebra.   The group $\tG$ acts on $\Ie$ by left translation again. Define the action $\circ$ of $\C[\tLv]$ on $\Ie$ as \be{act:I} \left( e^{\xv} \circ F \right) (g) := q^{- \la \xv, \rho \ra} F(  g \pi^{\xv}) \text{ for } F \in \Ie, \xv \in \tLv. \ee Again the same formula will be used for all $\xv \in \Lv,$ and for $f \in \M(\tG),$ we define an element in $\Ie$ as 
\be{map:real} f \mapsto F_{f}(g):= \sum_{\mv \in \Lv} f( g \pi^{\mv}) e^{\mv} q^{ - \la \mv ,\rho \ra} = \sum_{\mv \in \Lv} (e^{\mv} \circ f)(g) [e^{\mv} ]. \ee This is an isomorphism of $(\tG, \C[\tLv])$-modules which we denote by $\Phi: \M \rr \Ie, \, f \mapsto F_f$.

\newcommand{\one}{\mathbf{v}}

\spoint Consider the case when $V=\tK$, i.e. $\M(\tG, \tK).$ By the Iwasawa decomposition (Proposition \ref{iwa:dec}), every $\epsilon$-genuine function $f: \tG \rr \C$ can be written in the form \be{f:K} f = \sum_{\lv \in \Lv} \, c_{\lv} \one_{\tK, \lv} \text{ with } c_{\lv} \in \C \ee and where we adopt the notation (cf. (\ref{log:cen:g}) ) that for $g \in \tG$  \be{one:K} \one_{\tK, \lv}(g) = \begin{cases} \epsilon(\z(g)) & \text{ if } \ln(g) = \lv ; \\ 0 & \text{ otherwise. } \end{cases} \ee For a function as in (\ref{f:K}), we define its support $\Supp(f):= \{ \lv \in \Lv \mid c_{\lv} \neq 0 \}$ and we let  $\M_{\leq}(\tG, \tK)$ be the space of all $f \in \M(\tG, \tK)$ such that $\Supp(f)$ satisfies a condition as in Definition \ref{loo-space}. Denote by $\unp \subset \Ie$ the image under the map (\ref{map:real}) of $\M_{\leq}(\tG, \tK)$. If $\lv=0$ we shall usually just write $\one_{\tK}$ in place of $\one_{\tK, 0}$ and call this the \emph{spherical vector}. Finally let us record here  \be{trans:sph} e^{\xv} \circ \one_{\tK, \lv} = q^{ -\la \rho, \xv \ra} \one_{\tK, \lv - \xv} \text{ for } \xv \in \tLv. \ee 

\spoint Consider now the case when $V= \tI$ or $V=\tI^-$, i.e. $\M(\tG, \tI)$ or $\M(\tG, \tI^-)$. Let us consider the group $\aw:= W \ltimes \Lv$ whose elements are written as pairs $(w, \lv)$. For such an $x \in \aw$ we define the element $\wt{x}:= \tw \, \pi^{\lv} $ which lies in $\tG.$\footnote{This is the ``affine'' Weyl group of \cite{bkp}, which plays a role analogous to the usual affine Weyl group in the theory of $p$-adic groups.} A combination of the Iwasawa and Iwahori-Matsumoto decompositions (cf. \ref{iwa-mat:K}) shows that for each $g \in \tG$, there exist unique elements $\zeta \in \bmu_n$ and $x \in \aw$ such that $g \in \tI \, \zeta \, \wt{x} \, U$. For $y \in \aw,$ we then define \be{def:vw}\ve_{\tI, y}(g) &=& \begin{cases} 0 & \text{ if } y \neq x; \\ \epsilon(\zeta) & \text{ if } y=x. \end{cases} \ee In a similar way we can define the functions $\ve_{\tI^-,y}$ with respect to the opposite Iwahori subgroup $\tI^-$. One then notes that every function on $\M(\tG, I^{\pm})$ can be written as a (possibly infinite) linear combination of the functions $\tve_x^{\pm}.$ Also, note the equality of functions \be{sph:dec} \one_{\tK} = \sum_{w \in W} \ve_{\tI, w} = \sum_{w \in W} \ve_{\tI^-,w}, \ee and that if $\xv \in \tLv$ then we have $e^{\xv} \circ \ve_{\tI, x} = q^{- \la \rho, \xv \ra}  \ve_{\tI, x- \xv}$ and similarly for the $\ve_{\tI^-, x}$. 

\newcommand{\wfun}{\mc{W}}

\tpoint{Whittaker functionals} \label{s:whit-fun} 
Let $\psi$ be a principal character on $U^-$ as in \S \ref{prin:ch}. We define a $V$-Whittaker \emph{functional} to be a map of vector spaces $L: \M(\tG, V) \rr \C$ satisfying \be{whit-fun} L(n^-.f) = L(f) \psi(n^-) \text{ for } n^- \in U^-,\ f \in \M(\tG, V). \ee Let $\whi_V$ be the vector-space of all such functionals. If $V$ is the identity subgroup, we often drop it from our notation and just write $\whi$ in this case. If $V'  \subset V$ then we have a natural map $\whi_V \rr \whi_{V'}.$   Letting $\C_{\psi}$ be the one-dimensional module on which $U^-$ acts via the character $\psi$, we see that $\whi_V= \Hom_{U^-}(\M(\tG, V), \C_{\psi})$. 

We shall prefer to work with a formal or generic version in the sequel, so let $\C_{\psi}[\Lv]$ to be the module on which $U^-$ acts via the character $\psi$, i.e. $n^-. e^{\mv} = \psi(n^-) e^{\mv}$ for $n^- \in U^-$ and $\mv \in \Lv.$ Given $L \in \whi_V,$ we then define an element $\mathtt{L}$ in $\whit_V:=\Hom_{U^-}(\Ie, \C_{\psi}[\Lv])$ by the formula (cf. \ref{map:real}) \be{bbL} \mathtt{L} (F_f) = \sum_{\mv \in \Lv} L(e^{\mv} f) [e^{\mv}]. \ee

\tpoint{Whittaker functions} Associated to $L \in \whi_V$ and $f \in \M(\tG, V)$ we obtain a corresponding unramified \emph{Whittaker function} as $\wf_{L, f}(g):= L(g. f)$ for $g \in \tG.$ Note that we have \be{whit-fun-trans} \wf_{L, f} (v \, g \, n^-) = \psi(n^-) \, L (f) (g) \text{ for }  g \in \tG, n^- \in U^-,\ v \in V. \ee This gives the following result (using the same argument as in the finite dimensional case).
\begin{nlem} \label{whit-vanish} Let $f \in \M(\tG, \tK)$ and $L\in \whi_{\tK}$ . Then $\wf_{L, f}(g)$ is determined by its values on the elements $g=\pi^{\lv}$ with $\lv \in \Lv_+$  using the relation (\ref{whit-fun-trans}), i.e. $\wf_{L, f}(\pi^{\mv})=0$ unless $\mv \in \Lv_+.$ \end{nlem}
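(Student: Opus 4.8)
\textbf{Proof plan for Lemma \ref{whit-vanish}.}

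The plan is to combine the transformation property \eqref{whit-fun-trans} of the Whittaker function with the Iwasawa decomposition (Proposition \ref{iwa:dec}) and the $\tK$-invariance of $f$. First I would note that since $f \in \M(\tG, \tK)$ and $L \in \whi_{\tK}$, the Whittaker function $\wf_{L, f}(g) = L(g.f)$ satisfies $\wf_{L, f}(v g n^-) = \psi(n^-) \wf_{L,f}(g)$ for $v \in \tK$, $g \in \tG$, $n^- \in U^-$. Using the opposite Iwasawa decomposition from Proposition \ref{iwa:dec}, every $g \in \tG$ may be written $g = k a' u^-$ with $k \in \tK$, $a' \in \tH$, $u^- \in U^-$, and the class of $a'$ in $\tH_{\O} \setminus \tH$ is well-defined; modulo $\tH_{\O}$ and $\bmu$ this class is recorded by $\ln(g) \in \Lv$. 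Since $f$ (hence $g.f$, evaluated appropriately) is invariant under $\tH_{\O}$ on the right and transforms by $\epsilon$ on $\bmu$, we reduce the computation of $\wf_{L, f}(g)$ to the values $\wf_{L,f}(\pi^{\lv})$ for $\lv = \ln(g)$, up to a scalar in $\epsilon(\bmu)$ and a power of $q$ coming from \eqref{act:f}. This gives the first assertion: $\wf_{L,f}$ is determined by its values on $\pi^{\lv}$, $\lv \in \Lv$.

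Next I would establish the vanishing when $\lv \notin \Lv_+$. The argument is the standard one: suppose $\lv \in \Lv$ is such that $\la \lv, a_i \ra < 0$ for some simple root $a_i$ (equivalently $\lv \notin \Lv_+$). Choose a one-parameter unipotent subgroup $\tU_{-a_i} \subset U^-$; its elements $\xi_{-a_i}(s)$ with $s \in \O$ satisfy $\pi^{\lv} \, \xi_{-a_i}(s) \, \pi^{-\lv} = \xi_{-a_i}(\pi^{\la \lv, a_i \ra} s)$, and since $\la \lv, a_i \ra < 0$ we have $\pi^{\la \lv, a_i\ra} s \in \O$ ranges over a subgroup $\O' \supsetneq \O$ (in fact all of $\pi^{-1}\O$ or larger) as $s$ ranges over $\O$; more carefully, $\pi^{\lv}\,\xi_{-a_i}(\O)\,\pi^{-\lv}$ contains $\xi_{-a_i}(s)$ for $s$ in $\O$ scaled, and these lie in $\tK$. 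Using left $\tK$-invariance together with the right translation property one derives that $\wf_{L,f}(\pi^{\lv})$ equals $\psi(n^-)$ times itself for $n^- = \xi_{-a_i}(\pi^{\la\lv,a_i\ra}s)$ with $s \in \O^*$; since $\psi$ has conductor exactly $\O$, for suitable $s$ the value $\psi(\pi^{\la\lv,a_i\ra}s) \neq 1$, forcing $\wf_{L,f}(\pi^{\lv}) = 0$. The precise bookkeeping uses that $\xi_{-a_i}(\O) \subset \tK$ and that conjugating by $\pi^{\lv}$ moves $\xi_{-a_i}(\O)$ to a group on which $\psi$ is nontrivial precisely when $\la \lv, a_i \ra < 0$.

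The main obstacle I anticipate is purely bookkeeping: making sure the conjugation identity $\pi^{\lv} \xi_{-a_i}(s) \pi^{-\lv} = \xi_{-a_i}(\pi^{\langle \lv, a_i\rangle} s)$ holds with the correct sign of the exponent (depending on the convention relating $\pi^{\lv}$ to the coweight pairing in \eqref{p:lv}), and that the relevant unipotent elements genuinely lie in $\tK$ so that left $\tK$-invariance of $f$ applies — this is where one invokes that the splitting $\xi: U^- \to \tG$ of \S \ref{Um:split} restricts compatibly to $\tK$. Once the conjugation formula and the membership in $\tK$ are pinned down, the vanishing is the classical one-line argument: a function invariant on the left under a group and transforming by a nontrivial character on a subgroup of that group on the right must vanish. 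No Kac-Moody-specific difficulty arises here since everything takes place inside the rank-one subgroup attached to $a_i$.
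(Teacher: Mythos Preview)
Your approach is correct and is precisely what the paper intends: it gives no proof beyond the remark ``using the same argument as in the finite dimensional case,'' and your outline is that classical argument. One small correction to your bookkeeping (which you already flagged): from \textbf{KM}1 one has $\pi^{\lv}\,\xi_{-a_i}(s)\,\pi^{-\lv}=\xi_{-a_i}(\pi^{-\la \lv,a_i\ra}s)$, so the relevant computation is $\xi_{-a_i}(s)\,\pi^{\lv}=\pi^{\lv}\,\xi_{-a_i}(\pi^{\la \lv,a_i\ra}s)$ for $s\in\O$; with $\la\lv,a_i\ra<0$ the right-hand unipotent has argument in $\pi^{\la\lv,a_i\ra}\O\supsetneq\O$, on which $\psi$ is nontrivial, and the vanishing follows.
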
 

 Let $L \in \whi$ (which then induces elements $L \in \whi_V$ for any $V$) and  $f \in \M(\tG, V).$ Then we have the formal version of the above construction $\wfun_{L,f}: \tG \rr \C[\Lv]$:  
\be{gen-fun} \wfun_{L, f} (g) := \mathtt{L}(F_{g.f}) = \sum_{\mv \in \Lv} L( e^{\mv}  (g.f) ) [e^{\mv} ].\ee It is again right $(U^-, \psi)$-invariant and left $V$-invariant. Also note there exists an action of $\C[\tLv]$  on $\whi$:  \be{act-on-fun} (e^{\mv} \circ L) ( f ) = L ( e^{\mv} \circ f) \text{ for } \mv \in \tLv, \, f \in \M(\tG). \ee We shall write $L_{\mv}:= e^{\mv} \circ L.$ Actually, for any $\mv \in \Lv$ (not necessarily in $\tLv$) we can still define a new Whittaker functional by the formula (\ref{act-on-fun}). In this notation, one finds \be{L:prin} \wfun_{L, f}(g) = \sum_{\mv \in \Lv} L_{\mv}(g.f) \, [e^{\mv}]. \ee

\begin{de} For $L \in \whi$ we define the \emph{$L$-spherical Whittaker function}  and \emph{$L$-Iwahori-Whittaker} function as \be{Lsph-fun}\begin{array}{lcr}  \wfun_{L, \one_{\tK}}: \tG \rr \C[\Lv]\, & \text{ and } &  \wfun_{L, \tve^-_w}:  \tG \rr \C[\Lv]. \end{array} \ee \end{de} \noindent Note that we have the equality of functions $\tG \rr \C[\Lv]$ \be{Iwa-Whit} \wfun_{L, \one_{\tK}} (g) = \sum_{w \in W} \wfun_{L, \tve^-_w}(g). \ee

\tpoint{Averaging operators} \label{avg:op} Now we want to construct the actual object which we compute a Casselman-Shalika formula for in this paper. We refer to \cite{pat:whit} for more details.  If $\Gamma$ is any group and $X$ is a right $\Gamma$-set and $Y$ is a left $\Gamma$-set, we define \be{fiber} X \times_{\Gamma} Y =  X \times Y  / \sim \ee where $\sim$ is the equivalence relation generated by $(x\gamma, \gamma^{-1} y) \sim (x, y)$ for $x \in X, y \in Y,$ and $\gamma \in \Gamma.$ For each $w \in W, \lv \in \Lv$ multiplication induces a natural map \be{m_w} m_{w, \lv}: \bmu_n U \, \tw \, \tI^- \times_{\tI^-} \tI^- \pi^{\lv} U^- \rr \tG. \ee If $\lv \in \Lv_+$ and $\mv \in \Lv,$ then for each $x \in m_{w, \lv}^{-1}(\pi^{\mv}),$ there are natural projections  \be{proj-fib} \begin{array}{lcr} \nn: m_{w, \lv}^{-1}(\pi^{\mv}) \rr (U^- \cap \tK) \setminus U^- & \text{ and } &  \z_w: m_{w, \lv}^{-1}(\pi^{\mv}) \rr \bmu_n \end{array} \ee defined as follows. Let $x \in m_{w, \lv}^{-1}(\pi^{\mv})$ have representative $(a, b)$ with $a \in \bmu_n U \tw \tI^-$ and $b \in \tI^- \pi^{\lv} U^-$ some chosen representatives. Writing $b = i \pi^{\lv} u^-$ with $i \in \tI^-,$ $u^- \in U^-$ one can verify that since $\lv \in \Lv_+$ the class of $u^- \in U^-_{\O} \setminus U^-$ is well-defined (cf. \cite[\S 4.2]{pat:whit}), and will be denoted as $\nn(x).$  Writing $a= \zeta u \tw i$ with $\zeta \in \bmu_n,$ $u \in U^-,$ $i \in \tI^-,$ the element $\z_w(x):= \zeta$ is well-defined and independent of the representatives $(a, b)$ taken for $x.$ Using these maps, we define for each $\mv \in \Lv$ and $\lv \in \Lv_+$ the averaging operator\footnote{Note that we should probably index the operators below by the inverse of $\ve_{\tI^-, w}$ and $\tve_{\tK}$.}  \be{Avg:mu} \Av_{\mv}(\ve_{\tI^-, w})(\pi^{\lv}):= \sum_{x \in m_{w, \lv}^{-1}(\pi^{\mv}) } \psi(\nn(x)) \iota(\z_w(x)). \ee Using the multiplication map $m_{\one_{\tK, \lv}}: \bmu_n U \tK \times_{\tK} \tK \pi^{\lv} U^- \rr \tG,$  we set \be{Avg:K} \Av_{\mv}(\one_{\tK})(\pi^{\lv}):= \sum_{x \in m_{\tK, \lv}^{-1}(\pi^{\mv}) } \psi(\nn(x)) \iota(\z(x)) \text{ for } \lv \in \Lv_+, \mv \in \Lv \ee and where $\nn(x) \in U^-_{\O} \setminus U^-$ and $\z(x) \in \bmu_n$ are defined in a similar manner to the above. The above constructions (\ref{Avg:K}) and (\ref{Avg:mu}) are in fact well-defined (i.e. the fibers of $m_{w, \lv}$ and $m_{\tK, \lv}$ are finite) by virtue of Theorem \ref{thm:fin}(2) as well as the decomposition \cite[Lemma 4.4]{pat:whit} (the proof given there is written for loop groups, but works in general).  Setting \be{avg-gen-funs} \begin{array}{lcr} \W(\pi^{\lv}) = \sum_{\mv \in \Lv} \Av_{\mv}(\one_{\tK})(\pi^{\mv}) & \text{ and } & \W_{w, \lv} = \sum_{\mv \in \Lv} \Av_{\mv}(\tve_w)(\pi^{\lv}) \text{ for } \lv \in \Lv_+, w \in W, \end{array} \ee one can verify as in \emph{loc. cit} that \be{W:w} \W(\pi^{\lv}) = \sum_{w \in W} \W_{w, \lv} \ee and in fact both sides are well defined in $\C_{\leq}[\Lv].$  In what follows we shall refer to the left hand side as \emph{the} Whittaker function, and the summands in the right hand side as \emph{the} Iwahori-Whittaker functions.   Note that in defining these functions, we have not explicitly mentioned any Whittaker \emph{functionals}. So to explain the connection with the constructions from the previous paragraph, we propose.
\begin{nconj} \label{whit-dim-conj} In terms of the action (\ref{act-on-fun}), we have the following. \begin{enumerate} \item The space of Whittaker functionals $\whi$ is a free module of rank $\# \,  \Lv / \tLv$ over $\C[\tLv].$  \item  \label{whit-dim-conj:1} If $L \in \whi_{\tK}$ then there exists $\mv \in \Lv$ such that  for all $\lv \in \Lv_+, $\be{prec-conj} \wfun_{L_{\mv}, \one_{\tK}}(\pi^{\lv}) = \mc{W}(\pi^{\lv}). \ee  \end{enumerate} \end{nconj}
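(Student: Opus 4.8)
\textbf{Proof proposal for Conjecture \ref{whit-dim-conj}.}

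I should first note that the excerpt labels the final statement a \emph{Conjecture}, not a theorem, and the surrounding text explicitly says that the authors ``do not use this Conjecture in this work.'' So strictly speaking there is nothing to prove; what I can offer is a plan for how one \emph{would} attempt a proof, indicating where the genuine difficulty lies.

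The plan would be to reduce both parts to a Casselman--Shalika--type analysis of the Jacquet/Whittaker integral over the unipotent radical, broken up along the Iwahori--Matsumoto cells. For part (2), the natural strategy is: pick any $L \in \whi_{\tK}$, decompose the spherical vector as $\one_{\tK} = \sum_{w \in W} \ve_{\tI^-, w}$ (equation \eqref{sph:dec}), and show term-by-term that $\wfun_{L_{\mv}, \ve_{\tI^-, w}}(\pi^{\lv})$ agrees with the Iwahori--Whittaker average $\W_{w, \lv}$ defined in \eqref{avg-gen-funs}, up to the single twist $e^{\mv}$. Concretely, one would unwind the definition $\wfun_{L, \tve^-_w}(g) = \sum_{\nuv} L(e^{\nuv}(g.\tve^-_w))[e^{\nuv}]$ and match it against the sum over the fiber $m_{w,\lv}^{-1}(\pi^{\mv})$ appearing in \eqref{Avg:mu}; the projections $\nn$ and $\z_w$ in \eqref{proj-fib} are precisely engineered so that the summand $\psi(\nn(x))\iota(\z_w(x))$ reproduces the local integrand of the Whittaker functional evaluated on the relevant double coset. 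The finiteness needed to make all these sums legitimate is exactly Theorem \ref{thm:fin}(2), and the Iwasawa/Iwahori bookkeeping that underlies the matching is the decomposition \cite[Lemma 4.4]{pat:whit}. Summing over $w \in W$ and using \eqref{W:w} together with \eqref{Iwa-Whit} would then yield \eqref{prec-conj}, with the specific $\mv$ being the one that records the ``defect'' between the canonical choice of $L$ on the open cell and the spherical normalization — in the finite-dimensional metaplectic case this is the $\mv$ that appears in \cite{cgp}, and I expect the same element to work here.

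For part (1), the assertion that $\whi$ is free of rank $\#(\Lv/\tLv)$ over $\C[\tLv]$ would follow the template of the finite-dimensional metaplectic uniqueness-of-Whittaker-models theorem: one shows $\Hom_{U^-}(\Ie, \C_\psi[\Lv])$ is spanned by the ``standard'' Jacquet integrals indexed by the longest cell, that these are parametrized up to the $\C[\tLv]$-action by coset representatives for $\Lv/\tLv$ (reflecting the fact that the maximal abelian subgroup $\mathtt{T}$ of $\tH$ has index tied to $\Lv/\tLv$, cf. \eqref{tH:Lv} and the remarks after it), and that there are no further relations. The key inputs are the maximality of $\mathtt{T}$ in $\tH$ (proved as in \cite[Lemma 5.3]{macn:ps}) and the Bruhat/Birkhoff decompositions guaranteed by Proposition \ref{titsax:met}.

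The main obstacle, and the reason this remains a conjecture, is convergence and finiteness of the Jacquet integral in the Kac-Moody setting: the unipotent radical $U^-$ is infinite-dimensional, so ``integration over $U^-$'' only makes sense after rewriting it as the sums in \eqref{Avg:mu}--\eqref{Avg:K}, and one must know not merely that individual fibers $m_{w,\lv}^{-1}(\pi^{\mv})$ are finite (which is Theorem \ref{thm:fin}(2)) but that the \emph{resulting} element of $\C[\Lv]$ lies in the Looijenga-type completion $\C_{\leq}[\Lv]$ and genuinely represents a Whittaker functional in the sense of \eqref{whit-fun} — i.e. that the formal manipulations interchanging the $U^-$-translation with the infinite sums are valid. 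Establishing that the map $L \mapsto \wfun_{L,\one_{\tK}}$ is well-defined and injective on the whole space $\whi$ (not just on a distinguished cyclic vector) is precisely the gap; the asymptotic/convergence arguments available in \cite{bgkp}, \cite{heb} control the \emph{support} of these sums but do not by themselves pin down the dimension of $\whi$, and I would expect that closing this would require a genuinely new finiteness statement about the structure of $U^-$-coinvariants of the principal series $\Ie$.
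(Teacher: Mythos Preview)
You have correctly identified the essential point: the statement is labeled a \emph{Conjecture} in the paper, and the paper offers no proof of it whatsoever --- the authors explicitly remark that they ``do not use this Conjecture in this work'' and present it only to cast their averaging-operator construction in a more familiar representation-theoretic framework. There is therefore nothing in the paper to compare your outline against; your speculative plan (reducing part (2) to a term-by-term match of $\wfun_{L_{\mv},\ve_{\tI^-,w}}$ against the fiber sums $\W_{w,\lv}$, and part (1) to a Kac--Moody analogue of metaplectic multiplicity-one via the maximal abelian $\mathtt{T}$) is a reasonable sketch of how one might proceed, and your identification of the convergence/finiteness issues for the Jacquet integral over the infinite-dimensional $U^-$ as the genuine obstruction is apt.
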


\subsection{The Casselman-Shalika formula} \label{s:CS}

\tpoint{Recursions for Iwahori-Whittaker functions and metaplectic Demazure-Lusztig Operators}  Recall that for each $w \in W$ and $\lv \in \Lv$ we have defined an element $\Tm_w(e^{\lv}) \in \C^{\fin}_{v, \gf}[\Lv].$ In other words for each $\mv \in \Lv$ there exists a polynomials $\Upsilon^{\lv}_{w, \mv}(v, \gf_i) \in \C[v, v^{-1}, \gf_i]$\footnote{In fact, it lies in $\C[v, \gf_0, \ldots, \gf_{n-1}].$ by Remark \ref{T:polv}, which is why we drop the $v^{-1}$ from our notation} such that we can write \be{T:Up}  \Tm_w(e^{\lv}) = \sum_{\mv \in \Lv} \Upsilon^{\lv}_{w, \mv}(v, \gf_i) \, e^{\mv}. \ee Then the same proof which gives \cite[Corollary 5.4]{pat:pus} shows

\begin{nprop} \label{DL-whit} For any $w \in W$ and $\lv \in \Lv_+$ the Iwahori-Whittaker function $\W_{w, \lv}$ (cf. \ref{avg-gen-funs}) is the $p$-adic specialization (cf. \S \ref{p-spe}) of  $v^{   \la \lv, \rho \ra} \Tm_w(e^{\lv}).$ Informally, we write this as \be{Tw:spec} 
\W_{w, \lv} = q^{ - \la \lv, \rho \ra} \sum_{\mv \in \Lv} \Upsilon^{\lv}_{w, \mv}(q^{-1}, \g_i) e^{\mv}.
 \ee \end{nprop}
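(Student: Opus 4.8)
The plan is to reduce the statement to the Iwahori-Whittaker recursion already established in \cite{pat:pus} combined with the intertwiner structure of the metaplectic principal series, and then to track the $p$-adic specialization carefully. First I would recall that the averaging operators $\Av_\mv(\ve_{\tI^-, w})$ of \eqref{Avg:mu} are the metaplectic analogues of the Iwahori-Whittaker integrals: by construction they compute a finite sum (finiteness being guaranteed by Theorem \ref{thm:fin}(2) together with \cite[Lemma 4.4]{pat:whit}) over the fiber $m_{w,\lv}^{-1}(\pi^\mv)$, weighted by the character $\psi$ on the unipotent part and the genuine factor $\iota(\z_w(x))$. The first step is therefore to verify that these finite sums satisfy the \emph{same} recursion in $w$ (with respect to multiplication by simple reflections $\w_a$) as the operators $\Tm_a$ do on $\C^{\fin}_{v,\gf}[\Lv]$. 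This is exactly the content of the computation behind \cite[Corollary 5.4]{pat:pus}: one uses the rank-one decomposition of $\tI^- \tw \tI^-$, the explicit rank-one integral identities \eqref{rk1}, and the relation $\dw_a x_a(s)\dw_a^{-1} = x_a(-s)$, to express $\Av_\mv(\ve_{\tI^-, s_a w})$ in terms of the $\Av_{\mv'}(\ve_{\tI^-, w})$; the Gauss sums $\g_k$ emerge precisely from integrating $\psi$ against the Hilbert-symbol cocycle on a rank-one piece, matching the appearance of $\gf_k$ in \eqref{cg-act}.

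The second step is the base case. For $w = 1$ the operator $\Tm_1$ is the identity, so $\Tm_1(e^\lv) = e^\lv$, and one checks directly that $\W_{1,\lv} = q^{-\la\lv,\rho\ra} e^\lv$ — i.e. the Iwahori-Whittaker function attached to the ``lowest'' Iwahori vector is supported (after the averaging) only at $\mv = \lv$ with the expected normalizing power of $q$. This is a direct unwinding of \eqref{Avg:mu} and \eqref{avg-gen-funs} using the Iwasawa decomposition and the triviality of $\psi$ on $U^-_\O$; the fiber $m_{1,\lv}^{-1}(\pi^\mv)$ is a single point (the trivial coset) when $\mv=\lv$ and empty otherwise. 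With the base case and the recursion matching those of $\Tm_w$, an induction on $\ell(w)$ (using that $\Tm_w = \Tm_{b_1}\cdots\Tm_{b_k}$ is reduced-decomposition independent by the braid relations of \S \ref{w-defs}) gives that $\W_{w,\lv}$ equals the specialization $v = q^{-1}$, $\gf_i = \g_i$ of $v^{\la\lv,\rho\ra}\Tm_w(e^\lv)$, which is exactly \eqref{Tw:spec} once we record that each coefficient $\Upsilon^\lv_{w,\mv}$ is genuinely a polynomial in $v$ and $\gf_0,\dots,\gf_{n-1}$ — so the specialization is literally well-defined with no $v^{-1}$ ambiguity — by Remark \ref{CG:polv} and Remark \ref{T:polv}.

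The main obstacle I expect is bookkeeping the normalization and the genuine factors consistently: one must make sure the power of $q$ in \eqref{act:I}, \eqref{act:f} (the $q^{-\la\mv,\rho\ra}$ twists built into the $\circ$-action) is exactly the one that converts the group-theoretic sum $\Av_\mv$ into the operator $\Tm_w$ without an extra shift, and that the specialization $q \equiv 1 \bmod 2n$ (so that \eqref{q:2} and \eqref{g:sum} hold, and $(-1,-1)=1$) is what makes the cocycle contributions in the rank-one computation collapse to the clean Gauss-sum form $\g_k$ rather than a twisted variant. Concretely, the delicate point is the rank-one identity: expanding $\xi_{-a}(s)$ via \eqref{rk1} inside the fiber and integrating $\psi(s)\,(u,\pi)_n^{k}$ over $s \in \O^*$ (resp. over cosets of $\pi^j\O$) must reproduce $\g_k$ up to the predicted power of $q$, and this is where one invokes that $q\equiv 1\bmod 2n$ to kill sign ambiguities from $(s,s)$ and $(\pi,\pi)$. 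Once that single rank-one identity is pinned down, the rest is a formal induction identical in structure to \cite[Corollary 5.4]{pat:pus}, and I would simply cite that corollary for the inductive step rather than reprove it, noting that the only Kac-Moody input needed (finiteness of the relevant fibers) has been supplied by Theorem \ref{thm:fin}.
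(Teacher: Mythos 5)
Your proposal is correct and follows essentially the same route as the paper: the paper's entire proof of this proposition is the observation that the intertwiner/recursion argument of \cite[Corollary 5.4]{pat:pus} carries over once the Kac--Moody finiteness input (Theorem \ref{thm:fin}, via the well-definedness of the averaging operators) is in place, which is exactly the reduction you make. Your extra detail on the base case $w=1$ and on Remarks \ref{CG:polv} and \ref{T:polv} ensuring the specialization is unambiguous is consistent with the paper's treatment.
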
 For an interpretation of the above statement in the language of Whittaker functionals and a link to the work of Kazhdan-Patterson \cite{ka:pa}, we refer to \cite[\S 7.5]{pat:pus}.


\tpoint{Casselman-Shalika formula} Finally we state and prove the Casselman-Shalika formula.

\begin{nthm} \label{main-cs} For each $\lv \in \Lv_+,$ $\W(\pi^{\lv})$ is the $p$-adic specialization (cf. \S \ref{p-spe}) of the expression \be{main-cs-1} 
v^{ \la \lv, \rho \ra} \, \mf{m} \,    \wt{\Delta} \,  \sum_{w \in W} (-1)^{\ell(w)} \left( \prod_{a \in R(w)}  e^{-\tav} \right) w \star e^{\lv}. \ee
 \end{nthm}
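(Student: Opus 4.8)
The plan is to combine three ingredients already established in the paper: the decomposition $\W(\pi^{\lv}) = \sum_{w \in W} \W_{w, \lv}$ of \eqref{W:w}, the operator-theoretic expression for each Iwahori-Whittaker piece given by Proposition \ref{DL-whit}, and the proportionality/polynomiality result for the Hecke symmetrizer from Theorem \ref{form:sym_met} (specifically part (2), applied in the metaplectic setting, together with the ``applied to $e^{\lv}$'' statement of Lemma \ref{well-def-met-Iw} and the Corollary following it). First I would note that by Proposition \ref{DL-whit}, each Iwahori-Whittaker function $\W_{w, \lv}$ is the $p$-adic specialization of $v^{\la \lv, \rho \ra} \Tmw_w(e^{\lv})$ --- here I should be careful that the relevant operators for the Whittaker (as opposed to spherical) story are the $\flat$-decorated ones $\Tmw_w$, built from $\cw(\tav)$; this matches the footnote convention in \S \ref{rats} that $\flat$ signals the Whittaker variant, and matches the fact that the final formula \eqref{main-cs-1} involves $\wt{\Delta}$ and the sign-alternating sum over $R(w)$, i.e. the ``$\mIw$'' shape.

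Next I would sum over $w \in W$. The left-hand side sums to $\W(\pi^{\lv})$ by \eqref{W:w}, and this sum is well-defined in $\C_{\leq}[\Lv]$ as stated there. On the operator side, $\sum_{w \in W} v^{\la \lv, \rho \ra}\Tmw_w(e^{\lv}) = v^{\la \lv, \rho \ra} \Pmw(e^{\lv})$, where $\Pmw = \sum_{w} \Tmw_w$ is the metaplectic Hecke symmetrizer. The content of Theorem \ref{form:sym_met}(1),(3) (the weak and strong Cherednik lemmas for the metaplectic root system) is precisely that this infinite sum of operators is well-defined, and Lemma \ref{well-def-met-Iw} together with its Corollary guarantees that applying it to $e^{\lv}$ for $\lv \in \Lv_+$ lands in the dual coweight algebra $\C^{\fin}_{v, \gf, \leq}[\Lv]$, so that term-by-term $p$-adic specialization is legitimate and compatible with the specialization of each summand. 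Thus $\W(\pi^{\lv})$ is the $p$-adic specialization of $v^{\la \lv, \rho \ra}\Pmw(e^{\lv})$.

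Finally I would invoke Theorem \ref{form:sym_met}(2), which gives $\Pmw = \mcprow\, \mIw$ with $\mcprow = \ct(\wt{\Delta}^{-1})$, i.e. $\mcprow = \wt{\mf{m}}$ in the notation of \S ``Simple Hecke symmetrizers'' (the metaplectic analogue of Proposition \ref{mac-ct}; in the notation of the statement \eqref{main-cs-1} this is written simply $\mf{m}$, understood for the metaplectic root system $\tR$). Combining with the definition \eqref{mI} of $\mIw$ and the ``applied'' formula $\mIw(e^{\lv}) = \mD \sum_{w}(-1)^{\ell(w)}(\prod_{\ta \in \tR(w)} e^{-\tav})(w \star e^{\lv})$ from \eqref{met-Iw-lv}, one obtains exactly
\be{cs-final-reassembly}
v^{\la \lv, \rho \ra}\, \Pmw(e^{\lv}) = v^{\la \lv, \rho \ra}\, \mf{m}\, \wt{\Delta}\, \sum_{w \in W} (-1)^{\ell(w)}\left(\prod_{a \in R(w)} e^{-\tav}\right) w \star e^{\lv},
\ee
whose $p$-adic specialization is the asserted formula \eqref{main-cs-1}. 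I expect the main obstacle to be bookkeeping rather than any deep new idea: one must check that the $p$-adic specialization map genuinely commutes with the infinite reassembly --- i.e. that specializing each $\W_{w,\lv}$ and then summing agrees with specializing the well-defined symmetrizer $\Pmw(e^{\lv})$ --- which is exactly where the strong Cherednik Lemma (finiteness of the coefficient of each $e^{\mv}$ as a polynomial in $v, \gf_0, \ldots, \gf_{n-1}$, per Remark \ref{fin-comb:v}) and the convergence statement in \eqref{W:w} are doing the work. A secondary point requiring care is verifying that Proposition \ref{DL-whit} is being quoted with the $\flat$-operators and that the normalization factor $v^{\la \lv,\rho\ra} = q^{-\la\lv,\rho\ra}$ appears consistently on both sides.
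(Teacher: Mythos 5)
Your proposal is correct and follows essentially the same route as the paper: decompose $\W(\pi^{\lv})$ into Iwahori--Whittaker pieces via \eqref{W:w}, identify each piece with a specialized metaplectic Demazure--Lusztig operator via Proposition \ref{DL-whit}, and reassemble using the metaplectic proportionality/polynomiality results (Corollary \eqref{fin-comb} and Remark \ref{fin-comb:v}) to justify that specialization commutes with the infinite sum. Your side remark about the $\flat$-decorated operators $\Tmw_w$ being the relevant ones is well taken --- the paper's statement of Proposition \ref{DL-whit} writes $\Tm_w$, but the final formula is the $\mIw$-shape, so the Whittaker variant is indeed what is meant.
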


\begin{proof}  Recalling the polynomials $\Upsilon^{\lv}_{w, \mv}(v, \gf_i)$ from (\ref{T:Up}) and defining \be{Up:all} \Upsilon^{\lv}_{\mv}(v, \gf_i) := \sum_{w \in W} \Upsilon^{\lv}_{w, \mv}(v, \gf_i) \ee it follows form Corollary \ref{fin-comb} that each $\Upsilon^{\lv}_{\mv}(v, \gf_i) \in \C[v, v^{-1}, \gf_i]$\footnote{Again, it can be written as a polynomial in $v, \gf_0, \ldots, \gf_{n-1}$.} and also that \be{Psym:Up} [e^{\mv}] \sum_{w \in W} \Tm_w(e^{\lv}) = \Upsilon^{\lv}_{\mv}(v, \gf_i). \ee Thus again invoking Corollary \ref{fin-comb}, the expression (\ref{main-cs-1}) has a well-defined $p$-adic specialization. The Theorem follows by applying Proposition \ref{DL-whit} and (\ref{W:w}).
\end{proof}


\begin{bibsection}
\begin{biblist}

\bib{bour}{book}{
   author={Bourbaki, N.},
   title={\'El\'ements de math\'ematique. Fasc. XXXIV. Groupes et alg\`ebres
   de Lie. Chapitre IV: Groupes de Coxeter et syst\`emes de Tits. Chapitre
   V: Groupes engendr\'es par des r\'eflexions. Chapitre VI: syst\`emes de
   racines},
   language={French},
   series={Actualit\'es Scientifiques et Industrielles, No. 1337},
   publisher={Hermann, Paris},
   date={1968},
   pages={288 pp. (loose errata)},
   review={\MR{0240238}},
}

\bib{bk:sph}{article}{
   author={Braverman, Alexander},
   author={Kazhdan, David},
   title={The spherical Hecke algebra for affine Kac-Moody groups I},
   journal={Ann. of Math. (2)},
   volume={174},
   date={2011},
   number={3},
   pages={1603--1642},
   issn={0003-486X},
   review={\MR{2846488}},
   doi={10.4007/annals.2011.174.3.5},
}

\bib{bgkp}{article}{
   author={Braverman, A.},
   author={Garland, H.},
   author={Kazhdan, D.},
   author={Patnaik, M.},
   title={An affine Gindikin-Karpelevich formula},
   conference={
      title={Perspectives in representation theory},
   },
   book={
      series={Contemp. Math.},
      volume={610},
      publisher={Amer. Math. Soc., Providence, RI},
   },
   date={2014},
   pages={43--64},
   review={\MR{3220625}},
   doi={10.1090/conm/610/12193},
}

\bib{bkp}{article}{
   author={Braverman, Alexander},
   author={Kazhdan, David},
   author={Patnaik, Manish M.},
   title={Iwahori-Hecke algebras for $p$-adic loop groups},
   journal={Invent. Math.},
   volume={204},
   date={2016},
   number={2},
   pages={347--442},
   issn={0020-9910},
   review={\MR{3489701}},
   doi={10.1007/s00222-015-0612-x},
}
	
\bib{wmds}{article}{
   author={Brubaker, Benjamin},
   author={Bump, Daniel},
   author={Chinta, Gautam},
   author={Friedberg, Solomon},
   author={Hoffstein, Jeffrey},
   title={Weyl group multiple Dirichlet series. I},
   conference={
      title={Multiple Dirichlet series, automorphic forms, and analytic
      number theory},
   },
   book={
      series={Proc. Sympos. Pure Math.},
      volume={75},
      publisher={Amer. Math. Soc., Providence, RI},
   },
   date={2006},
   pages={91--114},
   review={\MR{2279932}},
   doi={10.1090/pspum/075/2279932},
}

\bib{bbl}{article}{
  title={Whittaker functions and Demazure operators},
  author={Brubaker, B.},
  author={Bump, D.},
  author={Licata, A.},
  journal={Journal of Number Theory},
  volume={146},
  pages={41--68},
  year={2015},
  publisher={Elsevier}
}

\bib{car:gar}{article}{
   author={Carbone, Lisa},
   author={Garland, Howard},
   title={Existence of lattices in Kac-Moody groups over finite fields},
   journal={Commun. Contemp. Math.},
   volume={5},
   date={2003},
   number={5},
   pages={813--867},
   issn={0219-1997},
   review={\MR{2017720}},
   doi={10.1142/S0219199703001117},
}

\bib{ca:sh}{article}{
   author={Casselman, W.},
   author={Shalika, J.},
   title={The unramified principal series of $p$-adic groups. II. The
   Whittaker function},
   journal={Compositio Math.},
   volume={41},
   date={1980},
   number={2},
   pages={207--231},
   issn={0010-437X},
   review={\MR{581582}},
}

\bib{cher:ma}{article}{
   author={Cherednik, Ivan},
   author={Ma, Xiaoguang},
   title={Spherical and Whittaker functions via DAHA I},
   journal={Selecta Math. (N.S.)},
   volume={19},
   date={2013},
   number={3},
   pages={737--817},
   issn={1022-1824},
   review={\MR{3101119}},
   doi={10.1007/s00029-012-0110-6},
}

\bib{cg}{article}{
   author={Chinta, Gautam},
   author={Gunnells, Paul E.},
   title={Constructing Weyl group multiple Dirichlet series},
   journal={J. Amer. Math. Soc.},
   volume={23},
   date={2010},
   number={1},
   pages={189--215},
   issn={0894-0347},
   review={\MR{2552251}},
   doi={10.1090/S0894-0347-09-00641-9},
}

\bib{cgp}{article}{
   author={Chinta, Gautam},
   author={Gunnells, Paul E.},
   author={Puskas ,Anna}
   title={Metaplectic Demazure operators and Whittaker functions},
   journal={arXiv:1408.5394, to appear in the Indiana Journal of Mathematics}
   }

\bib{di:pa}{article}{
author={Diaconu, Adrian},
author={Pasol, Vicenctiu},
title={Moduli of Hyperelliptic Curves and Multiple Dirichlet Series},
journal={preprint} 
}

\bib{del:bry}{article}{
   author={Brylinski, Jean-Luc},
   author={Deligne, Pierre},
   title={Central extensions of reductive groups by $\bold K_2$},
   journal={Publ. Math. Inst. Hautes \'Etudes Sci.},
   number={94},
   date={2001},
   pages={5--85},
   issn={0073-8301},
   review={\MR{1896177}},
   doi={10.1007/s10240-001-8192-2},
}

\bib{gar:lep}{article}{
   author={Garland, Howard},
   author={Lepowsky, James},
   title={Lie algebra homology and the Macdonald-Kac formulas},
   journal={Invent. Math.},
   volume={34},
   date={1976},
   number={1},
   pages={37--76},
   issn={0020-9910},
   review={\MR{0414645}},
   doi={10.1007/BF01418970},
}

\bib{gar:alg}{article}{
   author={Garland, Howard},
   title={The arithmetic theory of loop groups. II. The Hilbert-modular
   case},
   journal={J. Algebra},
   volume={209},
   date={1998},
   number={2},
   pages={446--532},
   issn={0021-8693},
   review={\MR{1659899}},
   doi={10.1006/jabr.1998.7529},
}

\bib{gar:zhu1}{article}{
   author={Garland, Howard},
   author={Zhu, Yongchang},
   title={On the Siegel-Weil theorem for loop groups, I},
   journal={Duke Math. J.},
   volume={157},
   date={2011},
   number={2},
   pages={283--336},
   issn={0012-7094},
   review={\MR{2783932}},
   doi={10.1215/00127094-2011-007},
}

\bib{gar:zhu2}{article}{
   author={Garland, Howard},
   author={Zhu, Yongchang},
   title={On the Siegel-Weil theorem for loop groups, II},
   journal={Amer. J. Math.},
   volume={133},
   date={2011},
   number={6},
   pages={1663--1712},
   issn={0002-9327},
   review={\MR{2863373}},
   doi={10.1353/ajm.2011.0048},
}

\bib{gau:rou:sph}{article}{
   author={Gaussent, St\'ephane},
   author={Rousseau, Guy},
   title={Spherical Hecke algebras for Kac-Moody groups over local fields},
   journal={Ann. of Math. (2)},
   volume={180},
   date={2014},
   number={3},
   pages={1051--1087},
   issn={0003-486X},
   review={\MR{3245012}},
   doi={10.4007/annals.2014.180.3.5},
}

\bib{heb}{article}{
  title={Gindikin--Karpelevich Finiteness for Kac--Moody Groups Over Local Fields},
  author={H{\'e}bert, Auguste},
  journal={International Mathematics Research Notices},
  pages={rnw224},
  year={2016},
  publisher={Oxford University Press}
}


\bib{iwa:mat}{article}{
   author={Iwahori, N.},
   author={Matsumoto, H.},
   title={On some Bruhat decomposition and the structure of the Hecke rings
   of ${\germ p}$-adic Chevalley groups},
   journal={Inst. Hautes \'Etudes Sci. Publ. Math.},
   number={25},
   date={1965},
   pages={5--48},
   issn={0073-8301},
   review={\MR{0185016}},
}

\bib{kac}{book}{
   author={Kac, Victor G.},
   title={Infinite-dimensional Lie algebras},
   edition={3},
   publisher={Cambridge University Press, Cambridge},
   date={1990},
   pages={xxii+400},
   isbn={0-521-37215-1},
   isbn={0-521-46693-8},
   review={\MR{1104219}},
   doi={10.1017/CBO9780511626234},
}

\bib{kp}{article}{
   author={Kac, V. G.},
   author={Peterson, D. H.},
   title={Defining relations of certain infinite-dimensional groups},
   note={The mathematical heritage of \'Elie Cartan (Lyon, 1984)},
   journal={Ast\'erisque},
   date={1985},
   number={Num\'ero Hors S\'erie},
   pages={165--208},
   issn={0303-1179},
   review={\MR{837201}},
}

\bib{ka:pa}{article}{
   author={Kazhdan, D. A.},
   author={Patterson, S. J.},
   title={Metaplectic forms},
   journal={Inst. Hautes \'Etudes Sci. Publ. Math.},
   number={59},
   date={1984},
   pages={35--142},
   issn={0073-8301},
   review={\MR{743816}},
}

\bib{kostant}{article}{
    AUTHOR = {Kostant, Bertram},
     TITLE = {The principal three-dimensional subgroup and the {B}etti
              numbers of a complex simple {L}ie group},
   JOURNAL = {Amer. J. Math.},
  FJOURNAL = {American Journal of Mathematics},
    VOLUME = {81},
      YEAR = {1959},
     PAGES = {973--1032},
      ISSN = {0002-9327},
   MRCLASS = {22.00},
  MRNUMBER = {0114875},
MRREVIEWER = {A. J. Coleman},
       DOI = {10.2307/2372999},
       URL = {http://dx.doi.org/10.2307/2372999},
}

\bib{kub}{article}{
  title={Topological covering of SL (2) over a local field},
  author={Kubota, Tomio},
  journal={Journal of the Mathematical Society of Japan},
  volume={19},
  number={1},
  pages={114--121},
  year={1967},
  publisher={The Mathematical Society of Japan}
}


\bib{lee:zh}{article}{
  title={Weyl group multiple Dirichlet series for symmetrizable Kac-Moody root systems},
  author={Lee, Kyu-Hwan},
  author={Zhang, Yichao},
  journal={Transactions of the American Mathematical Society},
  volume={367},
  number={1},
  pages={597--625},
  year={2015}
}

\bib{loo}{article}{
   author={Looijenga, Eduard},
   title={Invariant theory for generalized root systems},
   journal={Invent. Math.},
   volume={61},
   date={1980},
   number={1},
   pages={1--32},
   issn={0020-9910},
   review={\MR{587331}},
   doi={10.1007/BF01389892},
}

\bib{lus-K}{article}{
   author={Lusztig, George},
   title={Equivariant $K$-theory and representations of Hecke algebras},
   journal={Proc. Amer. Math. Soc.},
   volume={94},
   date={1985},
   number={2},
   pages={337--342},
   issn={0002-9939},
   review={\MR{784189 (88f:22054a)}},
   doi={10.2307/2045401},
}

\bib{mat}{article}{
   author={Matsumoto, Hideya},
   title={Sur les sous-groupes arithm\'etiques des groupes semi-simples
   d\'eploy\'es},
   language={French},
   journal={Ann. Sci. \'Ecole Norm. Sup. (4)},
   volume={2},
   date={1969},
   pages={1--62},
   issn={0012-9593},
   review={\MR{0240214}},
}

%
%
%
\bib{mac:poin}{article}{
   author={Macdonald, I. G.},
   title={The Poincar\'e series of a Coxeter group},
   journal={Math. Ann.},
   volume={199},
   date={1972},
   pages={161--174},
   issn={0025-5831},
   review={\MR{0322069 (48 \#433)}},
}
\bib{mac:for}{article}{
   author={Macdonald, I. G.},
   title={A formal identity for affine root systems},
   conference={
      title={Lie groups and symmetric spaces},
   },
   book={
      series={Amer. Math. Soc. Transl. Ser. 2},
      volume={210},
      publisher={Amer. Math. Soc., Providence, RI},
   },
   date={2003},
   pages={195--211},
   review={\MR{2018362 (2005c:33012)}},
}

\bib{mac:aff}{book}{
   author={Macdonald, I. G.},
   title={Affine Hecke algebras and orthogonal polynomials},
   series={Cambridge Tracts in Mathematics},
   volume={157},
   publisher={Cambridge University Press, Cambridge},
   date={2003},
   pages={x+175},
   isbn={0-521-82472-9},
   review={\MR{1976581 (2005b:33021)}},
   doi={10.1017/CBO9780511542824},
}

\bib{macn:ps}{article}{
   author={McNamara, P. J.},
   title={Principal series representations of metaplectic groups over local fields},
   conference={ title={Multiple Dirichlet series, L-functions and automorphic forms}, },
   book={
      series={Progr. Math.},
      volume={300},
      publisher={Birkh\"auser/Springer, New York},
   },
   date={2012},
   pages={299--327}
}

\bib{moore}{article}{
  title={Group extensions of p-adic and adelic linear groups},
  author={Moore, Calvin C},
  journal={Publications Math{\'e}matiques de l'Institut des Hautes {\'E}tudes Scientifiques},
  volume={35},
  number={1},
  pages={5--70},
  year={1968},
  publisher={Springer}
}

\bib{neu}{book}{
  title={Algebraic number theory},
  author={Neukirch, J{\"u}rgen},
  volume={322},
  year={2013},
  publisher={Springer Science \& Business Media}
}

\bib{pat:whit}{article}{
   author={Patnaik, Manish M.}
   title={Unramified Whittaker Functions on $p$-adic Loop Groups},
   journal={American Journal of Mathematics},
   volume={139},
   number={1},
   date={2017},
   pages={175--215}
   }

\bib{pat:pus}{article}{
   author={Patnaik, Manish M.}
   author={Pusk\'{a}s, Anna}
   title={On the Casselman-Shalika Formula for Metaplectic Groups}
   journal={\href{https://arxiv.org/abs/1509.01594}{arXiv:1509.01594}}
   note={Latest version under alternate title {\it{On Iwahori-Whittaker Functions for Metaplectic Groups}}, available at  \url{https://sites.ualberta.ca/~patnaik/met_iwhit.pdf}}
   }

%
	
\bib{ser:lf}{book}{
   author={Serre, J.-P.},
   title={Corps locaux},
   language={French},
   series={Publications de l'Institut de Math\'{e}matique de l'Universit\'{e} de
   Nancago, VIII},
   publisher={Actualit\'es Sci. Indust., No. 1296. Hermann, Paris},
   date={1962},
   pages={243},
   review={\MR{0150130 (27 \#133)}},
}
\bib{stein}{proceedings}{
  title={G{\'e}n{\'e}rateurs, relations et rev{\^e}tements de groupes alg{\'e}briques},
  author={Steinberg, Robert},
  booktitle={Colloq. Th{\'e}orie des Groupes Alg{\'e}briques (Bruxelles, 1962)},
  pages={113--127},
  year={1962}
}

\bib{stein:chev}{book}{
   author={Steinberg, Robert},
   title={Lectures on Chevalley groups},
   note={Notes prepared by John Faulkner and Robert Wilson},
   publisher={Yale University, New Haven, Conn.},
   date={1968},
   pages={iii+277},
   review={\MR{0466335}},
}

\bib{tits:cts}{article}{
   author={Tits, J.},
   title={Sur les constantes de structure et le th\'eor\`eme d'existence des
   alg\`ebres de Lie semi-simples},
   language={French},
   journal={Inst. Hautes \'Etudes Sci. Publ. Math.},
   number={31},
   date={1966},
   pages={21--58},
   issn={0073-8301},
   review={\MR{0214638}},
}

\bib{tits:norm}{article}{
   author={Tits, J.},
   title={Normalisateurs de tores. I. Groupes de Coxeter \'etendus},
   language={French},
   journal={J. Algebra},
   volume={4},
   date={1966},
   pages={96--116},
   issn={0021-8693},
   review={\MR{0206117}},
}


\bib{tits:km}{article}{
   author={Tits, Jacques},
   title={Uniqueness and presentation of Kac-Moody groups over fields},
   journal={J. Algebra},
   volume={105},
   date={1987},
   number={2},
   pages={542--573},
   issn={0021-8693},
   review={\MR{873684}},
   doi={10.1016/0021-8693(87)90214-6},
}

\bib{vis}{article}{
   author={Viswanath, Sankaran},
   title={Kostka-Foulkes polynomials for symmetrizable Kac-Moody algebras},
   journal={S\'em. Lothar. Combin.},
   volume={58},
   date={2007/08},
   pages={Art. B58f, 20},
   issn={1286-4889},
   review={\MR{2461998}},
}

\bib{weis:pho}{article}{
   author={Weissman, Martin H.},
   title={Managing metaplectiphobia: covering $p$-adic groups},
   conference={
      title={Harmonic analysis on reductive, $p$-adic groups},
   },
   book={
      series={Contemp. Math.},
      volume={543},
      publisher={Amer. Math. Soc., Providence, RI},
   },
   date={2011},
   pages={237--277},
   review={\MR{2798431}},
   doi={10.1090/conm/543/10738},
}

\bib{weis:sp}{article}{
   author={Weissman, Martin H.},
   title={Split metaplectic groups and their L-groups},
   journal={J. Reine Angew. Math.},
   volume={696},
   date={2014},
   pages={89--141},
   issn={0075-4102},
   review={\MR{3276164}},
   doi={10.1515/crelle-2012-0111},
}

\bib{white}{article}{
   author={Whitehead, Ian},
   title={Affine Weyl group multiple Dirichlet series: type $\widetilde{A}$},
   journal={Compos. Math.},
   volume={152},
   date={2016},
   number={12},
   pages={2503--2523},
   issn={0010-437X},
   review={\MR{3594285}},
   doi={10.1112/S0010437X16007715},
}

\bib{zhu:weil}{article}{
   author={Zhu, Yongchang},
   title={Theta functions and Weil representations of loop symplectic
   groups},
   journal={Duke Math. J.},
   volume={143},
   date={2008},
   number={1},
   pages={17--39},
   issn={0012-7094},
   review={\MR{2414743}},
   doi={10.1215/00127094-2008-014},
}

\end{biblist}
\end{bibsection}

\end{document}